\begin{document}
\title[Van Kampen's Theorem for bisets]{Algorithmic aspects of branched coverings I.\\ Van Kampen's Theorem for bisets}
\author{Laurent Bartholdi}
\email{laurent.bartholdi@gmail.com}
\author{Dzmitry Dudko}
\email{dzmitry.dudko@gmail.com}
\address{\'Ecole Normale Sup\'erieure, Paris \emph{and} Mathematisches Institut, Georg-August Universit\"at zu G\"ottingen}
\thanks{Partially supported by ANR grant ANR-14-ACHN-0018-01 and DFG grant BA4197/6-1}
\date{December 28, 2015}
\begin{abstract}
  We develop a general theory of \emph{bisets}: sets with two
  commuting group actions. They naturally encode topological
  correspondences.

  Just as van Kampen's theorem decomposes into a graph of groups the
  fundamental group of a space given with a cover, we prove
  analogously that the biset of a correspondence decomposes into a
  \emph{graph of bisets}: a graph with bisets at its vertices, given
  with some natural maps. The \emph{fundamental biset} of the graph of
  bisets recovers the original biset.

  We apply these results to decompose the biset of a Thurston map (a
  branched self-covering of the sphere whose critical points have
  finite orbits) into a graph of bisets. This graph closely parallels
  the theory of Hubbard trees.

  This is the first part of a series of five articles, whose main goal
  is to prove algorithmic decidability of combinatorial equivalence of
  Thurston maps.
\end{abstract}
\maketitle

%%%%%%%%%%%%%%%%%%%%%%%%%%%%%%%%%%%%%%%%%%%%%%%%%%%%%%%%%%%%%%%%
\section{Introduction}
This is the first of a series of five articles, and develops the theory of
decompositions of bisets. For an overview of the series,
see~\cite{bartholdi-dudko:bc0}.

\subsection{Bisets}
Bisets are algebraic objects used to describe continuous maps up to
isotopy. Classically, a map $f\colon(Y,\dagger)\to(X,*)$ between pointed
spaces induces a homomorphism between the fundamental groups
$f_*\colon\pi_1(Y,\dagger)\to\pi_1(X,*)$. If, however, $f$ does not
preserve any natural base points, it is much more convenient to consider a
weaker kind of relation between $\pi_1(Y,\dagger)$ and $\pi_1(X,*)$: this
is precisely a \emph{biset}, namely a set $B(f)$ with two commuting group
actions, of $\pi_1(Y,\dagger)$ on the left and of $\pi_1(X,*)$ on the
right. Bisets may naturally be multiplied, and the product of bisets
corresponds to composition of the maps.

Another advantage of bisets is that only bijective maps may be
inverted, while every biset $B$ has a \emph{contragredient} $B^\vee$,
obtained by dualizing the actions. Thus it is almost as easy to handle
correspondences $Y\overset{i}\leftarrow Z\overset{f}\rightarrow X$ as
genuine maps $Y\to X$, see~\S\ref{ss:corr}: the biset of the
correspondence is $B(i)^\vee\otimes B(f)$.

Van Kampen's theorem is a cornerstone in algebraic topology: given a space
$X$ covered by (say) open subsets $(U_\alpha)$, the theorem expresses the
fundamental group of a space in terms of the fundamental groups of the
pieces $U_\alpha$ and of their intersections. This is best expressed in
terms of a \emph{graph of groups}, namely a simplicial graph with groups
attached to its vertices and edges. The \emph{fundamental group} of a graph
of groups is algebraically constructed in terms of the graph data, and
recovers the fundamental group of $X$.

The main construction in this article is a \emph{graph of bisets}
expressing the decomposition of a continuous map, or more generally a
topological correspondence, between spaces given with compatible covers,
see Definition~\ref{defn:graphofbisets}. We single out the subclass of
\emph{fibrant} graphs of bisets. Graphs of bisets can be multiplied, and
the product of fibrant graphs of bisets is again fibrant.

We construct in Definition~\ref{defn:p1biset} the fundamental biset of a
graph of bisets, and show in Corollary~\ref{cor:NormalForm} how they can
be conveniently computed for fibrant graphs of bisets.

Our central result, Theorem~\ref{thm:vankampenbis}, is an analogue of van
Kampen's theorem in the language of bisets. It expresses the biset of a
correspondence as the fundamental biset of a graph of bisets constructed
from restrictions of the correspondence to elements of the cover.

\subsection{Applications}
We then apply these results, in~\S\ref{ss:dynamics}, to complex
dynamics. Following Thurston, we consider branched coverings of the
sphere, namely self-maps $f\colon S^2\righttoleftarrow$ that are
locally modeled on $z\mapsto z^d$ in complex charts. The subset of
$S^2$ at which the local model is $z^d$ with $d>1$ is called the
\emph{critical set} of $f$, and the \emph{post-critical set} $P(f)$ is
the strict forward orbit of the critical set. A \emph{Thurston map} is
a branched covering of the sphere for which $P(f)$ is finite. Examples
are rational maps such as $f(z)=z^2-1$.

The simplest example of all is the map $f(z)=z^d$ itself, with
$P(f)=\{0,\infty\}$. The map $f$ is a covering on
$\hC\setminus\{0,\infty\}$, and
$\pi_1(\hC\setminus\{0,\infty\},1)=\Z$. In this case, we can give the
biset of $f$ quite explicitly: it is $B(f)=\Z$, with right and left
actions given by $m\cdot b\cdot n=d m+b+n$. We call such bisets
\emph{regular cyclic bisets}, and we define a \emph{cyclic biset} as a
transitive biset over cyclic groups.

The next simplest maps are complex polynomials $f(z)$ with finite
post-critical set. They have been intensively studied, in particular
via their \emph{Hubbard tree},
see~\cites{douady-h:edpc1,douady-h:edpc2,poirier:trees}. This is a
dynamically-defined $f$-invariant tree containing $P(f)$ and embedded
in $\C$. We apply the van Kampen Theorem~\ref{thm:vankampenbis} to the
Hubbard tree and obtain in this manner a decomposition of the biset of
$f$ as a graph of cyclic bisets, see Theorem~\ref{thm:GrBsOutOfHT}.

One of the advantages in working with branched coverings rather than
rational maps is that surgery operations are possible. For example,
given a Thurston map $f$ with a fixed point $z$ mapping locally to
itself by degree $d>1$, and given a topological polynomial $g$ of
degree $d$, a small neighbourhood of $f$ may be removed to be replaced
by a sphere, punctured at $\infty$, on which $g$ acts. One calls the
resulting map the \emph{tuning} of $f$ by $g$.

This operation has a transparent interpretation in terms of graphs of
bisets: it amounts to replacing a cyclic biset, at a vertex of the
graph of bisets of $f$, with the biset $B(g)$; see
Theorem~\ref{thm:tuning}.

In case $f$ itself also has degree $d$, one calls the resulting map
$h$ the \emph{mating} of $f$ and $g$.  The sphere on which $h$ acts is
naturally covered by the spheres on which $f$, respectively $g$ act,
and the ``equator'' on which they overlap. Thus $h$ is naturally
expressed by a graph of bisets with two vertices corresponding to $f$
and $g$ respectively, and an edge corresponding to the equator; see
Theorem~\ref{thm:mating}.

Laminations allow Julia sets of polynomials to be obtained out of the
Julia set of $z^d$, namely a circle, by pinching. Similarly, van
Kampen's Theorem~\ref{thm:vankampenbis} applied to the lamination of
$f$ decomposes the biset $B(f)$ into a graph of bisets made of trivial
bisets and one regular cyclic biset (the biset of $z^d$). We compute
it explicitly for the map $f(z)=z^2-1$, and describe in this manner,
in~\S\ref{ss:laminations}, the mating of $z^2-1$ with an arbitrary
quadratic polynomial.

\subsection{Notation}
We introduce some convenient notations, which we will follow
throughout the series of articles. For a set $S$, we write $S\perm$
for its permutation group. This has the mnemonic advantage that
$\#(S\perm)=(\#S)!$. We always write $\one$ for the identity map from
a set to itself. The restriction of a map $f\colon Y\to X$ to a subset
$Z\subset Y$ is written $f\restrict Z$.  Self-maps are written
$f\colon X\selfmap$ in preference to $f\colon X \to X$.

Paths are continuous maps $\gamma\colon[0,1]\to X$; the path starts at
$\gamma(0)$ and ends at $\gamma(1)$. We write $\gamma\#\delta$ for
concatenation of paths, following first $\gamma$ and then $\delta$;
this is defined only when $\gamma(1)=\delta(0)$. We write
$\gamma^{-1}$ for the inverse of a path.

We write $\approx$ for isotopy of paths, maps etc, $\sim$ for
conjugacy or combinatorial equivalence, and $\cong$ for isomorphism of
algebraic objects.

A graphs of groups is a graph $\gf$ with groups associated with $\gf$'s
vertices and edges. We \emph{always} write $G_x$ for the group associated
with $x\in\gf$; thus if $\mathfrak Y$ is another graph of groups, we will
write $G_y$ for the group associated with $y\in\mathfrak Y$, and no
relationship should be assumed between $G_x$ and $G_y$. Similarly, in a
graph of bisets $\mathfrak B$ there are bisets $B_z$ associated with
$z\in\mathfrak B$, and different graphs of bisets will all have their bisets
written in this manner.

%%%%%%%%%%%%%%%%%%%%%%%%%%%%%%%%%%%%%%%%%%%%%%%%%%%%%%%%%%%%%%%%
\section{Bisets}
We show, in this section, how topological data can be conveniently
converted to group theory. We shall extend, along the way, the
classical dictionary between topology and group theory.

Consider a continuous map $f\colon Y\to X$ between path connected topological
spaces, and basepoints $\dagger\in Y$ and $*\in X$. Denote by
$H=\pi_1(Y,\dagger)$ and $G=\pi_1(X,*)$ their fundamental groups. If
$f(\dagger)=*$, then $f$ induces a homomorphism $f_*\colon H\to G$; however, no
such natural map exists if $f$ does not preserve the basepoints.

A solution would be to express $f$ in the fundamental groupoid
$\pi_1(X)$, whose objects are $X$ and whose morphisms from $x$ to $y$
consist of all paths from $x$ to $y$ in $X$ up to homotopy rel their
endpoints. However, for computational purposes, a much more practical
solution exists: one chooses basepoints in $X$ and $Y$, and expresses
$f$ as an \emph{$H$-$G$-biset}.

\begin{defn}[Bisets]
  Let $H,G$ be two groups. An \emph{$H$-$G$-biset} is a set $B$ equipped
  with a left $H$-action and a right $G$-action that commute; namely,
  a set $B$ and maps $H\times B\to B$ and $B\times G\to B$, both
  written $\cdot$, such that
  \[h\cdot h'\cdot (b\cdot g g')=h h'\cdot(b\cdot g g')=(h h'\cdot b)\cdot g g'=(h h'\cdot b)\cdot g\cdot g',
  \]
  so that no parentheses are needed to write any product of $h$'s, $b$,
  and $g$'s. We will also omit the $\cdot$, and write the actions as
  multiplication.

  A \emph{$G$-biset} is a $G$-$G$-biset.

  An $H$-$G$-set $B$ is \emph{left-free} if, qua left $H$-set, it is
  isomorphic to $H\times S$ for a set $S$ (where, implicitly, the left
  action of $H$ is by multiplication on the first co\"ordinate); it is
  \emph{right-free} if, qua right $G$-set, $B$ is isomorphic to
  $T\times G$. It is \emph{left-principal}, respectively
  \emph{right-principal}, if furthermore $S$, respectively $T$ may be
  chosen a singleton, so that the respective action is simply
  transitive; more generally, it is left, respectively right
  \emph{free of rank $r$} if $\#S=r$, respectively $\#T=r$.
\end{defn}

Bisets should be thought of as generalizations of
homomorphisms. Indeed, if $\phi\colon H\to G$ is a group homomorphism, one
associates with it the $H$-$G$-set $B_\phi$, which, qua right $G$-set,
is plainly $G$; the left $H$-action is by
\[h\cdot b=h^\phi b.
\]
All bisets of the form $B_\phi$ are right-principal; and they are
left-free if and only if $\phi$ is injective. Note that we write the
evaluation of maps as `$f(x)$', unless the map is a group
homomorphism, in which case we write `$x^f$'.

Let us return to our continuous map $f\colon Y\to X$, but drop the
assumption $f(\dagger)=*$. The $H$-$G$-biset of $f$ is defined as
homotopy classes of paths rel their endpoints:
\begin{equation}\label{eq:DfnBisOfMap}
  B(f)=B(f,\dagger,*)=\{\gamma\colon[0,1]\to X\mid \gamma(0)=f(\dagger),\;\gamma(1)=*\}/{\sim}.
\end{equation}
For paths $\gamma,\delta\colon[0,1]\to X$ with $\gamma(1)=\delta(0)$, we
denote by $\gamma\#\delta$ their \emph{concatenation}, defined by
\[(\gamma\#\delta)(t)=\begin{cases}
  \gamma(2t) & \text{ if }0\le t\le\frac12,\\
  \delta(2t-1) & \text{ if }\frac12\le t\le1;
\end{cases}
\]
and, for a path $\gamma\colon[0,1]\to X$, its \emph{reverse} $\gamma^{-1}$
is defined by
\[\gamma^{-1}(t)=\gamma(1-t).\]
The left action of $H$ on $B(f)$ is, for a loop $\lambda$ in $Y$ based
at $\dagger$,
\[[\lambda]\cdot[\gamma]=[f\circ\lambda\#\gamma],\]
and the right action of $G$ is, for a loop $\mu$ in $X$ based at $*$,
\[[\gamma]\cdot[\mu]=[\gamma\#\mu].\]

It is then clear that $B(f)$ is a right-principal biset. If
$f(\dagger)=*$, then $B(f)$ is naturally isomorphic to $B_{f_*}$.

As we will see in~\S\ref{ss:corr}, bisets actually encode topological
correspondences, as generalizations of continuous maps.

\subsection{Morphisms}
\label{ss:BisMorph}
We consider three different kinds of maps between bisets.
\begin{defn}[Biset morphisms]
  Let ${}_H B_G$ and ${}_H{B'}_G$ be two $H$-$G$-bisets. A
  \emph{morphism} between them is a map $\beta\colon B\to B'$ such that
  \[h b^\beta g=(h b g)^\beta\quad\text{ for all }h\in H,b\in B,g\in G.
  \]
  The definitions of endomorphism, isomorphism, automorphism,
  monomorphism, and epimorphism are standard.

  Let now ${}_H B_G$ and ${}_{H'}{B'}_{G'}$ be two bisets. A
  \emph{congruence} is a triple $(\psi,\beta,\phi)$ of maps, with
  $\psi\colon H\to H'$ and $\phi\colon G\to G'$ group homomorphisms, and
  $\beta\colon B\to B'$ a map, such that
  \[h^\psi b^\beta g^\phi=(h b g)^\beta\quad\text{ for all }h\in H,b\in B,g\in G.
  \]
  The map $\beta$ itself is called a $(\psi,\phi)$-congruence; therefore a
  morphism is the same thing as a $(\one,\one)$-congruence. A congruence in
  injective if all $\psi,\beta,\phi$ are injective, and similarly for
  surjective etc.

  Consider finally a $G$-biset ${}_GB_G$ and a $G'$-biset
  ${}_{G'}{B'}_{G'}$. Apart from morphisms and congruences, a third
  (intermediate) notion relates them: a \emph{semiconjugacy} is a pair
  $(\phi,\beta)$ of maps, with $\phi\colon G\to G'$ and $\beta\colon B\to B'$,
  such that
  \[h^\phi b^\beta g^\phi=(h b g)^\beta\quad\text{ for all }g,h\in G,b\in B.
  \]
  In other words, $\beta$ is a $(\phi,\phi)$-congruence.  Note that we
  do not require, as is sometimes customary, that $\beta$ be
  surjective. A \emph{conjugacy} is an invertible semiconjugacy, i.e.\
  a semiconjugacy in which both $\phi$ and $\beta$ are invertible.
\end{defn}

Note now the following important, if easy, fact: the isomorphism class
of a biset $B_\phi$ remembers precisely the homomorphism $\phi$ up to
inner automorphisms. More precisely,
\begin{lem}
  Let $\phi,\psi\colon H\to G$ be two homomorphisms. Then the bisets
  $B_\phi$ and $B_\psi$ are congruent if and only if there exists an
  automorphism $\eta$ of $G$ such that $\psi=\phi\eta$; and they are
  isomorphic if and only if $\eta$ is inner.
\end{lem}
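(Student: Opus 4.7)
The plan is to unpack the definition of $B_\phi$---as a set it equals $G$, with right $G$-action by multiplication and left $H$-action $h\cdot b = h^\phi b$---and then derive the algebraic identity $\psi=\phi\eta$ by specializing the defining equation of a congruence at distinguished elements. The two halves of the lemma (congruence vs.\ isomorphism) come out of the same calculation, applied to $(\one,\alpha_G)$-congruences and to $(\one,\one)$-congruences respectively.

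For the forward direction of the first statement, I would take a bijective congruence $(\one,\beta,\alpha_G)\colon B_\phi\to B_\psi$; writing out the defining identity and using the explicit actions on $B_\phi$ and $B_\psi$ it reads
\[
\beta(h^\phi\cdot b\cdot g) \;=\; h^\psi\cdot\beta(b)\cdot g^{\alpha_G},\qquad h\in H,\ b,g\in G.
\]
Setting $h=b=1$ forces $\beta(g)=c\cdot g^{\alpha_G}$ with $c:=\beta(1)$, so $\beta$ is pinned down by the pair $(c,\alpha_G)$. Substituting this form of $\beta$ back and specializing $b=g=1$ yields the group-theoretic identity $h^\psi\cdot c=c\cdot h^{\phi\alpha_G}$ in $G$, i.e.\ $\psi=\phi\cdot\alpha_G\cdot\iota_c$, where $\iota_c$ denotes conjugation by $c$. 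The element $\eta:=\alpha_G\cdot\iota_c$ is manifestly an automorphism of $G$, and $\psi=\phi\eta$ as desired. Conversely, given any $\eta\in\mathrm{Aut}(G)$ with $\psi=\phi\eta$, the triple $(\one,\eta,\eta)$ is a bijective congruence by direct substitution.

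For the isomorphism statement, I repeat the analysis with $\alpha_G=\one$ (which is what makes a congruence a genuine morphism of $H$-$G$-bisets): the formula $\eta=\alpha_G\cdot\iota_c$ degenerates to $\eta=\iota_c$, an \emph{inner} automorphism. Conversely, given $\eta=\iota_c$ inner, the map $\beta(g):=cg$ visibly satisfies the morphism identity $\beta(h^\phi bg)=h^\psi\beta(b)g$ and is bijective, hence defines the required isomorphism $B_\phi\to B_\psi$.

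The only obstacle in the whole argument is bookkeeping: one must keep the exponential composition convention straight when juggling $\phi$, $\alpha_G$, and $\iota_c$, and make sure that $\eta$ is packaged as a single automorphism of $G$ rather than as a product of auxiliary data. Once the substitutions $h=b=1$ and $b=g=1$ are made in the right order, the lemma drops out algebraically; there is no geometric or structural subtlety beyond this.
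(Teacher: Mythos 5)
Your treatment of the isomorphism half is exactly the paper's argument: right\hyp{}principality of $B_\psi$ pins $\beta$ down as left multiplication by $c=\beta(1)$, and substituting back yields $\psi=\phi\,\iota_c$ (the paper writes $g=(1^\beta)^{-1}$ and $\beta(b)=g^{-1}b$, which is the same computation up to naming and the direction of conjugation). The paper explicitly proves \emph{only} that half, so your derivation of the congruence half is an addition rather than a variant; it is correct, but rests on an interpretive choice you should make explicit: you take the congruence to be of the form $(\one,\beta,\alpha_G)$, i.e.\ the identity on the $H$-side. Under the paper's actual definition of congruence, which permits a homomorphism on the $H$-side as well, the same specialization at $h=b=1$ and then $b=g=1$ gives $\psi=\alpha_H^{-1}\phi\,\alpha_G\,\iota_c$, so the clean ``only if'' conclusion would be $\psi=\alpha\phi\eta$ with $\alpha\in\Aut(H)$ and $\eta\in\Aut(G)$; your reading is the one under which the lemma is literally true as stated, but it deserves a sentence of justification. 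Relatedly, for $\eta=\alpha_G\iota_c$ to be an automorphism you need the congruence to be bijective (so that $\alpha_G\in\Aut(G)$); you do assume bijectivity, but the paper's definition of ``congruence'' does not build it in, so state it as part of what ``congruent'' means. The converse directions ($(\one,\eta,\eta)$ as a congruence, and $\beta(b)=cb$ as an isomorphism when $\eta=\iota_c$) are fine as written.
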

\begin{proof}
  We only prove the second assertion (``isomorphic bisets if and only
  if $\eta$ is inner'').

  Assume first $\psi=\phi\eta$, and let $\eta$ be conjugation by
  $g$. Then an isomorphism between the bisets $B_\phi$ and $B_\psi$ is
  given by $\beta\colon b\mapsto g^{-1}b$. Indeed,
  \[(h\cdot b)^\beta=(h^\phi b)^\beta=g^{-1}h^\phi b=h^{\phi\eta}g^{-1}b=h^\psi b^\beta=h\cdot b^\beta.\]

  Conversely, if $B_\phi$ and $B_\psi$ are isomorphic, let $\beta$ be
  such an isomorphism, and set $g=(1^\beta)^{-1}$. Because $\beta$
  commutes with the right $G$-action, which is principal, the map
  $\beta$ must have the form $b^\beta=1^\beta b=g^{-1}b$, and the
  same computation as above shows that $\psi=\phi\eta$, with $\eta$
  conjugation by $g$.
\end{proof}

\subsection{Products}
The \emph{product} of the $H$-$G$-biset $B$ with the $G$-$F$-biset
$C$ is the $H$-$F$-biset $B\otimes_G C$, defined as
\[B\otimes_G C=B\times C\big/\{(b,g\cdot c)=(b\cdot g,c)\text{ for all }b\in
B,g\in G,c\in C\};
\]
it is naturally an $H$-$F$-biset for the left $H$-action on $B$
and the right $F$-action on $C$. We have the easy
\begin{lem}\pushQED{\qed}
  Let $\phi\colon H\to G$ and $\psi\colon G\to F$ be group homomorphisms. Then
  \[B_{\phi\psi} \cong B_{\phi}\otimes_G B_\psi.\qedhere\]
\end{lem}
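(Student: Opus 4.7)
The plan is to exhibit an explicit bijection, check that it respects the tensor relation on one side and the biset axioms on both sides, and then produce an explicit inverse. Since each of the bisets $B_\phi$, $B_\psi$, $B_{\phi\psi}$ is right-principal, its underlying set is canonically identified with its acting group on the right, so I can do everything in coordinates.

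Concretely, I would define
\[\mu\colon B_\phi\otimes_G B_\psi\longrightarrow B_{\phi\psi},\qquad [b,c]\longmapsto b^\psi c,\]
where $b\in B_\phi=G$, $c\in B_\psi=F$, and $b^\psi c$ is interpreted as a product in $F=B_{\phi\psi}$. First I would check that $\mu$ respects the defining relation of the tensor product: for $g\in G$, one has $(bg)^\psi c = b^\psi g^\psi c = b^\psi(gc)$, where the last expression uses the left $G$-action on $B_\psi$. Next I would verify biset-equivariance: the left $H$-action gives $\mu([hb,c])=\mu([h^\phi b,c])=(h^\phi b)^\psi c=h^{\phi\psi}b^\psi c=h\cdot\mu([b,c])$, and the right $F$-action gives $\mu([b,cf])=b^\psi cf=\mu([b,c])\cdot f$.

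For bijectivity I would produce the inverse explicitly: send $f\in B_{\phi\psi}=F$ to $[1,f]$, where $1$ denotes the identity of $G$ viewed as a point of $B_\phi$. Composing one way is obvious: $f\mapsto[1,f]\mapsto 1^\psi f=f$. Composing the other way uses the tensor relation to normalize representatives, namely $[b,c]=[1\cdot b,c]=[1,b\cdot c]=[1,b^\psi c]$, and then $[1,b^\psi c]\mapsto 1^\psi(b^\psi c)=b^\psi c$, which on re-applying $\mu^{-1}$ returns $[1,b^\psi c]=[b,c]$.

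There is no real obstacle; the only thing that requires mild care is keeping straight that the two meanings of the symbol ``$b\cdot c$''—right action of $b\in G$ on $1\in B_\phi$ on the one hand, left action of $b\in G$ on $c\in B_\psi$ on the other—produce precisely the element $b^\psi c\in F$ when one unfolds the definitions. Once that bookkeeping is in place the verification is mechanical.
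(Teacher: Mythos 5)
Your proof is correct, and it is precisely the routine verification that the paper omits entirely (the lemma is stated there with a bare \qed as ``easy''). The map $[b,c]\mapsto b^\psi c$ with inverse $f\mapsto[1,f]$ is the intended argument, and you correctly handle the one point of bookkeeping, namely that the tensor relation $[1\cdot b,c]=[1,b\cdot c]$ converts right multiplication by $b$ in $B_\phi$ into the element $b^\psi c$ of $B_\psi$.
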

Note that we stick to the usual topological convention that $g\circ f$
is first $f$, then $g$; but we use the algebraic order on composition
of homomorphisms and bisets, so that $f g$ means `first $f$, then $g$'
for composable algebraic objects $f$ and $g$. In other words, we may
write $g\circ f=f g$.

The \emph{contragredient} of the $H$-$G$-biset $B$ is the $G$-$H$-biset
$B^\vee$, which is $B$ as a set (but with elements written $\check
b$), and actions
\[g\cdot \check b\cdot h=(h^{-1}\cdot b\cdot g^{-1})^{\displaystyle\check{}}.
\]
If $\phi\colon H\to G$ is invertible, then we have
$(B_\phi)^\vee=B_{\phi^{-1}}$.  In all cases, we have a canonical
isomorphism $(B\otimes C)^\vee=C^\vee\otimes B^\vee$.
\begin{rem}
  Answering a question of Adam Epstein, we may define
  $\Hom(B,C)=B^\vee\otimes C$, and then note that `$\otimes$' is the
  adjoint to this internal Hom functor, namely $\Hom(C\otimes
  B,D)=\Hom(B,\Hom(C,D))$ is natural.
\end{rem}

\noindent Products allow us to write congruences in terms of morphisms:
\begin{lem}\label{lem:congruencesmorphisms}
  Let $\psi\colon H\to H'$ and $\phi\colon G\to G'$ be homomorphisms,
  let $B$ be an $H$-$G$-biset, and let $B'$ be an
  $H'$-$G'$-biset. Then there is an equivalence between congruences
  $(\psi,\beta,\phi)\colon B\to B'$ and morphisms $\gamma\colon
  B_\psi^\vee\otimes B\otimes B_\phi\to B'$, in the following sense:
  there is a natural map $\theta\colon B\to B_\psi^\vee\otimes
  B\otimes B_\phi$ defined by $b\mapsto 1^\vee\otimes b\otimes 1$, and
  the congruence $(\psi,\beta,\phi)$ factors as
  $(\psi,\beta,\phi)=\theta\gamma$.\qed
\end{lem}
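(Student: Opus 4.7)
The plan is to exhibit mutually inverse maps between $(\psi,\phi)$-congruences $\beta\colon B\to B'$ and biset morphisms $\gamma\colon B_\psi^\vee\otimes_H B\otimes_G B_\phi\to B'$.

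First, suppose a morphism $\gamma$ is given; set $\beta:=\theta\gamma$, i.e.\ $b^\beta:=\gamma(1^\vee\otimes b\otimes 1)$. To verify this is a $(\psi,\phi)$-congruence, I would chase the tensor-balancing relations. The $H$-balancing in $B_\psi^\vee\otimes_H B$ combined with the left $H$-action on $B_\psi$ (which is through $\psi$) gives the identity $1^\vee\otimes(hb)\otimes 1=h^\psi\cdot(1^\vee\otimes b\otimes 1)$ in $B_\psi^\vee\otimes_H B\otimes_G B_\phi$, and analogously the $G$-balancing with the left $G$-action on $B_\phi$ gives $1^\vee\otimes(bg)\otimes 1=(1^\vee\otimes b\otimes 1)\cdot g^\phi$. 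Applying $\gamma$ to $1^\vee\otimes hbg\otimes 1$ and using its $(H',G')$-equivariance yields $(hbg)^\beta=h^\psi\,b^\beta\,g^\phi$, so $\beta$ is a congruence.

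Conversely, suppose $(\psi,\beta,\phi)$ is a congruence. Because $B_\psi^\vee$ is left-principal over $H'$ and $B_\phi$ is right-principal over $G'$, every element of $B_\psi^\vee\otimes_H B\otimes_G B_\phi$ can be written in the form $h'^\vee\otimes b\otimes g'$ for some $h'\in H'$, $b\in B$, $g'\in G'$. I would define
\[
\gamma(h'^\vee\otimes b\otimes g'):=(h')^{-1}\cdot b^\beta\cdot g'.
\]
The main obstacle is well-definedness: $\gamma$ must respect the two tensor-balancing relations. Both checks reduce, after unwinding the contragredient and $B_\phi$ actions, to the single identity $(hbg)^\beta=h^\psi b^\beta g^\phi$, so well-definedness is exactly the congruence hypothesis. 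The $(H',G')$-equivariance of $\gamma$ is then immediate from the formula.

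Finally, I would verify these constructions are mutually inverse. Starting from $\gamma$ and extracting $\beta$, the reconstructed morphism sends $h'^\vee\otimes b\otimes g'=(h')^{-1}\!\cdot(1^\vee\otimes b\otimes 1)\cdot g'$ to $(h')^{-1}\gamma(1^\vee\otimes b\otimes 1)g'=\gamma(h'^\vee\otimes b\otimes g')$, recovering $\gamma$. Starting from $\beta$, the reconstructed congruence is $b\mapsto\gamma(1^\vee\otimes b\otimes 1)=1\cdot b^\beta\cdot 1=b^\beta$. The only genuine work in the whole argument is the well-definedness of $\gamma$; everything else is formal.
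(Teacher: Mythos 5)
Your proof is correct, and it is the routine verification that the paper itself omits (the lemma is stated with a \qed and no written proof). The key identities $1^\vee\otimes hb\otimes 1=h^\psi\cdot(1^\vee\otimes b\otimes 1)$ and $1^\vee\otimes bg\otimes 1=(1^\vee\otimes b\otimes 1)\cdot g^\phi$, together with the observation that well-definedness of $\gamma$ is exactly the congruence identity, are precisely the intended argument.
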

In particular, the bisets ${}_GB_G$ and ${}_{G'}B'_{G'}$ are conjugate
if and only if there exists an isomorphism $\phi\colon G\to G'$ with
$B'\cong B_\phi^\vee\otimes B\otimes B_\phi=B_{\phi^{-1}}\otimes
B\otimes B_{\phi}$.

\subsection{Transitive bisets}
We call an $H$-$G$-biset \emph{transitive} if it consists of a single
$H\times G$-orbit. If desired, transitive bisets may be viewed as quotients
of right-principal bisets (namely, bisets coming from homomorphisms), as
follows:
\begin{lem}\label{lem:biset=fibre}
  Let $B$ be a transitive $H$-$G$-biset. Then there exist a group $K$ and
  homomorphisms $\phi\colon K\to G$, $\psi\colon K\to H$ such that
  $B=B_\psi^\vee\otimes_K B_\phi$. Furthermore, there exists a unique
  minimal such $K$, in the sense that if $(K',\phi',\psi')$ also satisfy
  $B=B_{\psi'}^\vee\otimes_{K'}B_{\phi'}$ then there exists a homomorphism
  $\tau\colon K'\to K$ with $\tau\phi=\phi'$ and $\tau\psi=\psi'$:
  \[\begin{tikzpicture}[description/.style={fill=white,inner sep=2pt}]
    \matrix (m) [matrix of math nodes, row sep=3em,
    column sep=2.5em, text height=1.5ex, text depth=0.25ex]
    { & {K'}\\
      & {K}\\
      {H} && {G}\\};
    \path[->,font=\scriptsize]
    (m-1-2) edge[dotted] node[description] {$\tau$} (m-2-2)
    (m-2-2) edge node[description] {$\psi$} (m-3-1)
    (m-2-2) edge node[description] {$\phi$} (m-3-3)
    (m-1-2) edge[dotted] node[description] {$\psi'$} (m-3-1)
    (m-1-2) edge[dotted] node[description] {$\phi'$} (m-3-3);
  \end{tikzpicture}\]
\end{lem}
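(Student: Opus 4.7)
The plan is to realize $K$ as the stabilizer of a chosen basepoint $b_0\in B$ under the natural action of $H\times G$ on $B$, and then to construct an explicit biset isomorphism $f\colon B_\psi^\vee\otimes_K B_\phi\to B$ by the formula $\check h\otimes g\mapsto h^{-1}\cdot b_0\cdot g$. First I would fix any $b_0\in B$, let $H\times G$ act on $B$ on the left by $(h,g)\cdot b:=h\cdot b\cdot g^{-1}$, and set
\[K:=\{(h,g)\in H\times G\mid h\cdot b_0=b_0\cdot g\},\]
taking $\psi,\phi$ to be the restrictions to $K$ of the projections onto $H$ and $G$.

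Second, I would check the four standard properties of $f$. Well-definedness across the tensor relation amounts to the identity $k^\psi\cdot b_0=b_0\cdot k^\phi$ for $k\in K$, which is precisely the defining condition of $K$; unwinding the contragredient action,
\[f(\check{k^{-\psi}x}\otimes g)=x^{-1}k^\psi\cdot b_0\cdot g=x^{-1}\cdot b_0\cdot k^\phi g=f(\check x\otimes k^\phi g).\]
The biset-morphism property is a direct unravelling of the definitions of the left $H$-action on $B_\psi^\vee$ and the right $G$-action on $B_\phi$. Surjectivity of $f$ is exactly transitivity of $B$. Injectivity is then shown by hand: from $h_1^{-1}b_0 g_1=h_2^{-1}b_0 g_2$ one extracts the element $k=(h_2 h_1^{-1},\,(g_1 g_2^{-1})^{-1})$ of $K$ and applies the tensor relation once to identify $\check{h_1}\otimes g_1$ with $\check{h_2}\otimes g_2$.

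Third, for the universal property, suppose $B\cong B_{\psi'}^\vee\otimes_{K'}B_{\phi'}$ for some other triple $(K',\psi',\phi')$. Since the choice of $b_0$ was arbitrary, I may take it to be the image of $\check 1\otimes 1$ under this isomorphism. For each $k'\in K'$, the tensor relation in $B_{\psi'}^\vee\otimes_{K'}B_{\phi'}$ yields
\[(k')^{\psi'}\cdot b_0=\check{(k')^{-\psi'}}\otimes 1=\check 1\otimes(k')^{\phi'}=b_0\cdot(k')^{\phi'},\]
exhibiting $((k')^{\psi'},(k')^{\phi'})$ as an element of $K$. Setting $\tau(k'):=((k')^{\psi'},(k')^{\phi'})$ then defines a group homomorphism $\tau\colon K'\to K$ satisfying $\tau\psi=\psi'$ and $\tau\phi=\phi'$.

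I expect the main obstacle to be bookkeeping rather than anything conceptual: correctly tracking the inversions introduced by passing between $B_\psi$ and its contragredient $B_\psi^\vee$, and between the left and right actions across the tensor bar. Once the formula for $f$ is nailed down, the four verifications and the construction of $\tau$ all reduce to a single application of the relation $k^\psi\cdot b_0=b_0\cdot k^\phi$ defining $K$.
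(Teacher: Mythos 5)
Your proof is correct and follows essentially the same route as the paper: the same stabilizer group $K=\{(h,g)\in H\times G\mid h\cdot b_0=b_0\cdot g\}$ with the coordinate projections, the same isomorphism $\check h\otimes g\mapsto h^{-1}\cdot b_0\cdot g$ verified via the tensor relation, and the same construction of $\tau$ by pushing $k'$ through the decomposition at the basepoint. The only (harmless) difference is that you normalize $b_0$ to be the image of $\check 1\otimes 1$ in the uniqueness step, whereas the paper keeps $b_0$ fixed, writes $\beta^{-1}(b)=(b_1,b_2)$, and reads off $\tau(k')$ from $\beta(b_1,k'b_2)=hb=bg$ --- both devices absorb the same conjugation bookkeeping.
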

\begin{proof}
  Choose a basepoint $b\in B$, and define
  \[K=\{(h,g)\in H\times G\mid b g=h b\}.
  \]
  The homomorphisms $\psi,\phi$ are given by projection on the first,
  respectively second co\"ordinate. To construct an isomorphism
  $\beta\colon B_\psi^\vee\otimes B_\phi\to B$, set $(h^\vee\otimes
  g)^\beta\coloneqq h^{-1}b g$; note that this is well-defined because
  $h^\vee\otimes g=(h')^\vee\otimes g'$ if and only if there exists
  $(h'',g'')\in K$ with $g'=g''g$ and $h'=h''h$; and then
  $h^{-1}b g=(h')^{-1}b g'$.

  To prove the unicity of $K$, consider a group $K'$ with homomorphisms
  $\phi'\colon K'\to G$ and $\psi'\colon K'\to H$, and an isomorphism
  $\beta\colon B_{\psi'}^\vee\otimes_{K'}B_{\phi'}\to B$. Define then the
  map $\tau\colon K'\to K$ as follows. Write $\beta^{-1}(b)=(b_1,b_2)$.
  For $k'\in K'$, define $h\in H,g\in G$ by
  \[\beta(b_1,k' b_2)=h b=b g\text{ and then }\tau(k')=(h,g).\qedhere\]
\end{proof}

Note that the product of two bisets, when expressed as groups $K,L$ with
homomorphisms as in the lemma, is nothing but the fibre product of the
corresponding groups $K,L$.

If furthermore $B$ is left-free, then the construction can be made
even more explicit: the biset $B_\psi^\vee$ is a subbiset of $B$, and
$B_\phi$ is a left- and right-free. We summarize this in the
\begin{prop}\label{prop:DecompOfBisets}
  Let $B$ be a left-free transitive $H$-$G$-biset. Choose $b\in B$.
  Define
  \begin{itemize}
  \item $G_b\coloneqq\{g\in G\mid b g\in H b\}$, the right stabilizer of $b$;
  \item $D\coloneqq{}_{G_b}G_G$, the natural $G_b$-$G$ biset;
  \item $C^\vee\coloneqq H b$, an $H$-$G_b$ subbiset of ${}_H B_{G_b}$.
  \end{itemize}
  Then $C$ is a right-principal biset and $B\cong C^\vee\otimes_{G_b}
  D$.\qed
\end{prop}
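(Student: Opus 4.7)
The plan is to realize this as a direct specialization of Lemma~\ref{lem:biset=fibre}, using the left-freeness of $B$ to identify the minimal group $K$ of that lemma concretely as the right stabilizer $G_b$. The input that left-freeness provides is the following: for every $g\in G_b$, the equation $bg=hb$ has a \emph{unique} solution $h\in H$, because no nontrivial element of $H$ fixes $b$. Define $\psi\colon G_b\to H$ by $bg=\psi(g)b$; a short verification, using associativity of the two commuting actions, shows that $\psi$ is a group homomorphism.

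With $\psi$ in hand, I would first interpret the two factors. The subset $Hb\subset B$ is closed under the left $H$-action tautologically and under the right $G_b$-action since $(hb)g=h\psi(g)b\in Hb$, so it is an $H$-$G_b$-subbiset of $B$; the bijection $H\to Hb$, $h\mapsto hb$, identifies it with the contragredient $B_\psi^\vee$ of the right-principal $G_b$-$H$-biset $B_\psi$. This is exactly the statement that $C$ is right-principal. The second factor $D={}_{G_b}G_G$ requires no work beyond observing that it is the regular $G$-biset with its left action restricted to $G_b$ along the inclusion $G_b\hookrightarrow G$.

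Next I would build the candidate isomorphism $\beta\colon C^\vee\otimes_{G_b}D\to B$ by $\beta(x\otimes g)=xg$, computed inside $B$. Well-definedness over the tensor product is automatic because both sides of the $G_b$-balancing relation reduce to the same product in $B$, and the biset axioms make $\beta$ an $(H,G)$-morphism. Surjectivity is immediate from transitivity of $B$: every element has the form $hbg=(hb)\cdot g=\beta(hb\otimes g)$. For injectivity, suppose $h_1b\otimes g_1$ and $h_2b\otimes g_2$ have equal image, i.e.\ $h_1bg_1=h_2bg_2$ in $B$; setting $g=g_1g_2^{-1}$ one obtains $bg=(h_1^{-1}h_2)b$, so $g\in G_b$ with $\psi(g)=h_1^{-1}h_2$, whence
\[h_1b\otimes g_1=h_1b\otimes gg_2=(h_1b)\cdot g\otimes g_2=h_1\psi(g)b\otimes g_2=h_2b\otimes g_2,\]
as required.

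I expect the only real obstacle to be bookkeeping around which factor carries a contragredient: one must decide once and for all whether $\psi$ goes from $G_b$ to $H$ or the reverse, and then track the $\vee$'s consistently, which is somewhat error-prone. Once the convention is fixed, each verification reduces either to left-freeness (for uniqueness of $\psi$ and the identification $Hb\cong B_\psi^\vee$) or to transitivity (for surjectivity of $\beta$), so no conceptual obstacle remains beyond what Lemma~\ref{lem:biset=fibre} has already handled.
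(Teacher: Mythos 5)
Your proposal is correct and follows essentially the same route as the paper, which states the proposition without separate proof as the left-free specialization of Lemma~\ref{lem:biset=fibre}: left-freeness collapses the fibre-product group $K=\{(h,g)\mid bg=hb\}$ onto $G_b$ via the projection to $G$, yielding your $\psi$, the identification $Hb\cong B_\psi^\vee$, and the isomorphism $h^\vee\otimes g\mapsto h^{-1}bg$, which is exactly your $\beta$. All the verifications you supply (well-definedness, surjectivity from transitivity, injectivity from left-freeness) check out.
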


\subsection{Combinatorial equivalence of bisets}\label{ss:combequiv}
Let ${}_GB_G$ be a left-free biset. For every $n\ge 0$ we have a right
$G$-action on $1\otimes_{G}B^{\otimes n}_G$, which gives a well defined
action on the set
\[T\coloneqq \bigsqcup_{n\ge0} 1\otimes_{G}B^{\otimes n}_G.
\]
The $G$-set $T$ naturally has the structure of a rooted tree, by putting an
edge from $1\otimes b_1\otimes\cdots\otimes b_n$ to $1\otimes
b_1\otimes\cdots\otimes b_n\otimes b_{n+1}$ for all $b_i\in B$. The action
of $G$ on $T$ is self-similar in an appropriate labeling of $T$, see
\cite{nekrashevych:ssg}*{Chapter~2}.

Set $G'\coloneqq G/\ker(\text{action})$; thus $G'$ is the
quotient of $G$ so that the induced action on $T$ is faithful. As in
\cite{nekrashevych:combinatorialmodels} we say that ${}_GB_G$ is
\emph{combinatorially equivalent} to
\[_{G'}B'_{G'} \coloneqq G' \otimes_{G} B\otimes_G G'.
\]
(The motivation is dynamical: if ${}_GB_G$ is contracting, then so is
${}_{G'}B'_{G'}$ and, moreover, the limit dynamical systems associated
with ${}_GB_G$ and ${}_{G'}B'_{G'}$ are topologically conjugate, see
e.g.~\cite{nekrashevych:ssg}*{Corollary~3.6.7}.)

We say that two left-free ${}_GB_G$ and ${}_H C_H$ are \emph{combinatorially
  equivalent} if ${}_{G'}B'_{G'}$ and ${}_{H'}C'_{H'}$ are conjugate.

It is easy to check that any surjective semi-conjugacy
$(\phi,\beta)\colon {}_GB_G\to {}_H C_H$ between left-free bisets
descents into a semi-conjugacy
$ (\phi',\beta')\colon {}_{G'}B'_{G'}\to {}_{H'}C'_{H'}$. We say that
a surjective semi-conjugacy $(\phi,\beta)\colon {}_GB_G\to {}_H C_H$
between left-free bisets \emph{respects combinatorics} if
$ (\phi',\beta')\colon {}_{G'}B'_{G'}\to {}_{H'}C'_{H'}$ is a
conjugacy (so that ${}_GB_G$ and $ {}_H C_H$ are combinatorially
equivalent).

\subsection{Biset presentations}\label{ss:wreath}
Let $B$ be a left-free $H$-$G$-biset. Choose a \emph{basis} of $B$,
namely a subset $S$ of $B$ such that $B$, qua left $H$-set, is
isomorphic to $H\times S$. In other words, $S$ contains precisely one
element from each $H$-orbit of $B$.

The structure of the biset is then determined by the right action in
that description. For $g\in G$ and $s\in S$, there are unique
$(h,t)\in H\times S$ such that $s g=ht$.  The choice of basis therefore
leads to a map $S\times G\to H\times S$, or, which is the same, a map
$\Phi\colon G\to(H\times S)^S$.

Associativity of the biset operations yields, as is easy to see, that
$\Phi$ is a group homomorphism $G\to H\wr S\perm$. This last group, the
\emph{wreath product} of $H$ with the symmetric group $S\perm$ on $S$, is
by definition the semidirect product of $H^S$ with $S\perm$, in which
$S\perm$ acts by permutations of the co\"ordinates in $H^S$. We call $\Phi$
a \emph{wreath map} of the biset $B$.

Wreath products are best thought of in terms of \emph{decorated
  permutations}: one writes permutations of $S$ in the standard arrow
diagram notation, but adds a label belonging to $H$ on each arrow in
the permutation. Permutations are multiplied by concatenating arrow
diagrams and multiplying the labels.

A \emph{presentation} of $B$ is a choice of generating sets $\Gamma,\Delta$
for $G,H$ respectively, and for each $g\in\Gamma$ an expression of the form
\[g=\pair{h_1,\dots,h_d}(i_1,\dots),\] describing $\Phi(g)$; the $h_i$ are
words in $\Delta$, and $(i_1,\dots)$ is a permutation of
$S\cong\{1,\dots,d\}$ in disjoint cycle format.

\begin{lem}\label{lem:conjwr}
  If $\Phi,\Psi\colon G\to H\wr d\perm$ are two wreath maps of the
  same biset ${}_H B_G$, then there exists $w\in H\wr d\perm$ such that
  $\Psi=\Phi\cdot(h\mapsto h^w)$.
\end{lem}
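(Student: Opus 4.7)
The plan is to recognize a wreath map as nothing more than a choice of coordinates on the $H$-equivariant self-bijections of $B$; changing the basis then corresponds to conjugation in $H\wr d\perm$.

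First I would reinterpret the wreath map intrinsically. Because the two actions on $B$ commute, each right multiplication $\rho_g\colon b\mapsto b\cdot g$ is $H$-equivariant, so the right $G$-action is encoded by a canonical homomorphism $\rho\colon G\to\operatorname{Aut}_H(B)$ depending only on the biset. A basis $S=\{s_1,\dots,s_d\}$ determines a left-$H$-isomorphism $\alpha_S\colon H\times\{1,\dots,d\}\to B$ via $(h,i)\mapsto h\cdot s_i$, hence an isomorphism $\iota_S\colon\operatorname{Aut}_H(B)\to\operatorname{Aut}_H(H\times\{1,\dots,d\})=H\wr d\perm$ by $\phi\mapsto\alpha_S^{-1}\phi\alpha_S$. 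Unwinding the definition of the wreath map from \S\ref{ss:wreath} shows that $\Phi=\iota_S\circ\rho$.

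Given a second basis $S'=\{s'_1,\dots,s'_d\}$ yielding $\Psi=\iota_{S'}\circ\rho$, write $s'_j=k_j\cdot s_{\sigma(j)}$ for unique $k_j\in H$ and a permutation $\sigma\in d\perm$; then $w\coloneqq\alpha_S^{-1}\circ\alpha_{S'}$ is precisely the decorated permutation $\pair{k_1,\dots,k_d}\sigma\in\operatorname{Aut}_H(H\times\{1,\dots,d\})=H\wr d\perm$. Since $\alpha_{S'}=\alpha_S\circ w$, the elementary identity
\[
\iota_{S'}(\phi)=\alpha_{S'}^{-1}\phi\alpha_{S'}=w^{-1}\bigl(\alpha_S^{-1}\phi\alpha_S\bigr)w=w^{-1}\iota_S(\phi)w,
\]
valid for every $\phi\in\operatorname{Aut}_H(B)$, applied to $\phi=\rho_g$ yields $\Psi(g)=w^{-1}\Phi(g)w=\Phi(g)^w$ for all $g\in G$, which is the assertion.

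I expect no real obstacle, only the bookkeeping nuisance of keeping the decorated-permutation conventions consistent (whether the inner automorphism by $w$ reads as $w(\cdot)w^{-1}$ or $w^{-1}(\cdot)w$, and the direction in which $\sigma$ enters the formula $s'_j=k_j s_{\sigma(j)}$). Once these are pinned down, the lemma reduces to the tautology that two isomorphisms of a common source to a fixed target group, differing by pre-composition with an inner automorphism of the source, are related by the corresponding inner automorphism of the target.
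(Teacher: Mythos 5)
Your proof is correct and is essentially the paper's argument: both define $w$ as the decorated permutation expressing one basis in terms of the other and then verify that $\Psi(g)=\Phi(g)^w$. You merely package the verification more conceptually, by identifying a wreath map as the canonical homomorphism $G\to\operatorname{Aut}_H(B)$ read in the coordinates $\alpha_S$, whereas the paper carries out the same conjugation computation directly on basis elements.
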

\begin{proof}
  Let $S,T$ be the bases of $B$ in which $\Phi,\Psi$ are computed, and
  identify $S,T$ with $\{1,\dots,d\}$ by writing $S=\{s_1,\dots,s_d\}$
  and $T=\{t_1,\dots,t_d\}$. In $B$, write $t_{i^\pi}=w_i\cdot s_i$
  for $i=1,\dots,d$, defining thus $w=\pair{w_1,\dots,w_d}\pi\in H\wr
  d\perm$.

  Consider an arbitrary $g\in G$, and write
  \[\Phi(g)=\pair{g_1,\dots,g_d}\sigma,\quad\Psi(g)=\pair{h_1,\dots,h_d}\tau.
  \]
  We therefore have $s_i g=g_is_{i^\sigma}$ and $t_i g=h_it_{i^\tau}$ for all $i$. Now
  \[h_{i^\pi}t_{i^{\pi\tau}}=t_{i^\pi}g=w_i^{-1}s_i g=w_i^{-1}g_is_{i^\sigma}=w_i^{-1}g_i w_{i^{\sigma\pi}}t_{i^{\sigma\pi}},\]
  so $\Phi(g)^w=\Psi(g)$.
\end{proof}

It may help to introduce an example here. Consider the map $f(z)=z^d$ from
the cylinder $X=\C\setminus \{0\}$ to itself. Choose $*=\dagger=1$ as
basepoints, and identify $\pi_1(X,*)$ with $\Z$ by choosing as generator
the loop $\tau(t)=\exp(2i\pi t)$.

The biset $B(f)$ is the set of homotopy classes of loops at $1$, and so may
be naturally identified with $\Z$. The biset structure is given by $n\cdot
b\cdot m=d n+b+m$ for $n,m\in\Z\cong\pi_1(X,*)$ and $b\in\Z\cong B(f)$. Thus
$B(f)$ is left-free of rank $d$, and a basis is a complete set of
congruence representatives modulo $d$, for example $\{0,1,\dots,d-1\}$.  In
that basis, the wreath map reads
\[\Phi(\tau)=\pair{1,\dots,1,\tau}(1,2,\dots,d),\]
or in diagram notation
\[\begin{tikzpicture}
  \node at (-1,0.5) {$\tau=$};
  \draw[->] (0,1) -- (1,0);
  \draw[->,dashed] (1,1) -- (2,0);
  \draw[->,dashed] (2,1) -- (3,0);
  \draw[->] (3,1) -- (4,0);
  \draw[->] (4,1) -- node[above]{$\tau$} (0,0);
\end{tikzpicture}\]

\subsection{Conjugacy classes in groups}\label{ss:ccgroups}
We will, in later articles, consider conjugacy classes so as to represent
unbased loops in topological spaces. There are various applications; for
example, a fundamental construction by Goldman~\cite{goldman:pi1lie}
defines a Lie bracket structure on the vector space spanned by conjugacy
classes in $\pi_1(X,*)$ for a closed surface $X$. We consider, here, the
general case of groups and bisets.

Let $G^G$ denote the set of conjugacy classes of the group $G$,
and consider a left-free $H$-$G$-biset $B$. Choose a basis $S$ of $B$,
whence a wreath map $\Phi\colon G\to H\wr S\perm$.  Consider
$g^G\in G^G$, write $\Phi(g)=\pair{h_1,\dots,h_d}\pi$, and let
$S_1,\dots,S_\ell$ be the orbits of $\pi$ on $S$.  For each
$j=1,\dots,\ell$, let $k_j$ be the product of the $h_i$'s along the
orbit $S_j$; namely, if $S_j=\{s_1,\dots,s_{d_j}\}$ with
$s_i^\pi=s_{i+1}$, indices being computed modulo $d_j$, then
$k_j=h_{s_1}h_{s_2}\cdots h_{s_{d_j}}$.  The multiset
$\{(d_j,k_j^H)\mid i=j,\dots,\ell\}$ consisting of degrees and
conjugacy classes in $H$ is called the \emph{lift} of $g^G$.
\begin{lem}
  The lift of $g^G$ is independent of all the choices made: of $g$ in
  its conjugacy class, of the basis $S$, and of the cyclic ordering of
  the orbit $S_j$.
\end{lem}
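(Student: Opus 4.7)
The plan is to handle the three invariances separately, reducing the last two to a single statement about conjugation inside the wreath product.

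Invariance under cyclic reordering of $S_j$ is immediate: starting the cycle at $s_m$ instead of $s_1$ produces the cyclic rotation $h_{s_m}h_{s_{m+1}}\cdots h_{s_{d_j}}h_{s_1}\cdots h_{s_{m-1}}$, which equals the conjugate of $h_{s_1}\cdots h_{s_{d_j}}$ by $h_{s_1}\cdots h_{s_{m-1}}\in H$, so the $H$-conjugacy class is unchanged.

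Replacing $g$ by $xgx^{-1}$ replaces $\Phi(g)$ by $\Phi(x)^{-1}\Phi(g)\Phi(x)$, i.e.\ conjugates it in $H\wr d\perm$. Similarly, by Lemma~\ref{lem:conjwr}, changing the basis replaces $\Phi(g)$ by $w^{-1}\Phi(g)w$ for some fixed $w\in H\wr d\perm$. Both remaining cases therefore follow from the key claim: \emph{the lift of $a\in H\wr d\perm$ depends only on the conjugacy class of $a$ in $H\wr d\perm$.}

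To prove this claim, write $a=\pair{a_1,\dots,a_d}\alpha$ and $w=\pair{w_1,\dots,w_d}\rho$, and apply the wreath multiplication rule to get $b\coloneqq w^{-1}aw=\pair{b_1,\dots,b_d}\beta$ with $\beta=\rho^{-1}\alpha\rho$ and $b_i=w_{i^{\rho^{-1}}}^{-1}\,a_{i^{\rho^{-1}}}\,w_{i^{\rho^{-1}\alpha}}$. The cycles of $\beta$ are then in natural bijection with those of $\alpha$ via $\rho^{-1}$, preserving cycle lengths. If $(c_1,\dots,c_k)$ is a cycle of $\beta$ and $d_i\coloneqq c_i^{\rho^{-1}}$, then $(d_1,\dots,d_k)$ is the matching cycle of $\alpha$ and $b_{c_i}=w_{d_i}^{-1}a_{d_i}w_{d_{i+1}}$ (indices mod $k$). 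The cycle product telescopes to
\[
b_{c_1}b_{c_2}\cdots b_{c_k}=w_{d_1}^{-1}\bigl(a_{d_1}a_{d_2}\cdots a_{d_k}\bigr)w_{d_1},
\]
which is $H$-conjugate to $a_{d_1}\cdots a_{d_k}$. Thus the multisets of pairs $(d_j,k_j^H)$ built from $a$ and $b$ coincide, and the key claim follows. The main obstacle, such as it is, is purely bookkeeping: deriving the indexed formula for $b_i$ from the chosen multiplication convention so that the cancellation of consecutive $w_{d_{i+1}}$ and $w_{d_{i+1}}^{-1}$ in the telescope is transparent; everything else is then formal.
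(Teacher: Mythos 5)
Your proof is correct and follows the same route as the paper, whose entire argument is the one-line observation that different bases give conjugate wreath maps (Lemma~\ref{lem:conjwr}); the key claim you isolate and verify --- that the multiset of pairs (cycle length, $H$-conjugacy class of the cycle product) is invariant under conjugation in $H\wr d\perm$ --- is exactly the content the paper leaves implicit. The only blemish is the direction of conjugation in $\Phi(xgx^{-1})=\Phi(x)\Phi(g)\Phi(x)^{-1}$, which is immaterial since your claim covers conjugation by arbitrary elements.
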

\begin{proof}
  Different choices of bases give conjugate wreath maps, by
  Lemma~\ref{lem:conjwr}.
\end{proof}

Let $\Q G^G$ denote the vector space spanned by the conjugacy classes
$G^G$, and consider again a left-free $H$-$G$-biset $B$. Then the lift
operation gives rise to a linear map
$B^*\colon\Q G^G\to\Q H^H$, defined on the basis by
\begin{equation}\label{eq:conjclasses}
  B^*(g^G)\coloneqq\sum_{(d_i,h_i^H)\in\text{lift of }g^G}\frac{k_j^H}{d_j},
\end{equation}
called the \emph{Thurston endomorphism} of $B$.

%%%%%%%%%%%%%%%%%%%%%%%%%%%%%%%%%%%%%%%%%%%%%%%%%%%%%%%%%%%%%%%%
\section{Graphs of bisets}
We define here the fundamental notion in this article: viewing groups as
fundamental groups of spaces, bisets describe continuous maps between
spaces. Graphs of groups describe decompositions of spaces, and graphs of
bisets describe continuous maps that are compatible with the
decompositions.

\subsection{Graphs of groups}
\label{ss:GrOfGr}
We begin by precising the notion of graph we will use; it is close to
Serre's~\cite{serre:trees}, but allows more maps between graphs. In
particular, we allow graph morphisms that send edges to vertices.
\begin{defn}[Graphs]
\label{defi:Graph}
  A \emph{graph} $\gf$ is a set $\gf=V\sqcup E$,
  decomposed in two subsets called \emph{vertices} and \emph{edges}
  respectively, equipped with two self-maps $x\mapsto x^-$ and
  $x\mapsto\overline x$, and subject to axioms
  \begin{equation}\label{eq:gf}
    \forall x\in\gf:\qquad\overline{\overline x}=x,\quad x^-\in V,\quad
    \text{and}\quad x=x^-\Leftrightarrow x=\overline x\Leftrightarrow x\in V.
  \end{equation}
  The object $\overline x$ is called the \emph{reverse} of
  $x$. Setting $x^+=(\overline x)^-$, the vertices $x^-,x^+$ are
  respectively the \emph{origin} and \emph{terminus} of $x$.
\end{defn}

We write $V(\gf)$ and $E(\gf)$ for the sets of vertices and edges
respectively in a graph $\gf$. A \emph{path} is a sequence
$(e_1,\dots,e_n)$ of edges with $e_i^+=e_{i+1}^-$ for all
$i=1,\dots,n-1$. A graph is \emph{connected} if there exists a path joining
any two objects. A \emph{circuit} is a sequence $(e_1,\dots,e_n)$ of edges
with $e_i^+=e_{i+1}^-$ for all $i$, indices taken modulo $n$. A \emph{tree}
is a graph with no non-trivial circuits; that is in every circuit
$(e_1,\dots,e_n)$ one has $e_{i+1}=\overline{e_i}$ for some $i$.

A \emph{graph morphism} is a map $\theta\colon\mathfrak Y\to\gf$ satisfying
$\theta(\overline y)=\overline{\theta(y)}$ and
$\theta(y^-)=\theta(y)^-$ for all $y\in\mathfrak Y$. Note that $\theta$
maps the vertices of $\mathfrak Y$ to those of $\gf$. It is
\emph{simplicial} if furthermore $\theta$ maps the edges of $\mathfrak
Y$ to those of $\gf$.

A \emph{graph of groups} is a connected graph $\gf=V\sqcup E$, with a
group $G_x$ associated with every $x\in\gf$, and homomorphisms
$()^-\colon G_x\to G_{x^-}$ and $\overline{()}\colon G_x\to G_{\overline x}$ for
each $x\in\gf$, satisfying the same axioms as~\eqref{eq:gf}, namely
the composition $G_x\to G_{\overline x}\to G_{\overline{\overline
    x}}=G_x$ is the identity for every $x\in\gf$, and if $x\in V$ then
the homomorphisms $G_x\to G_{x^-}$ and $G_x\to G_{\overline x}$ are
the identity. For $g\in G_e$ we write $g^+={\overline g}^-$. The graph
of groups is still denoted $\gf$.

Let $\gf=V\sqcup E$ be a graph of groups. For $v,w\in V$, consider the
set
\begin{equation}\label{eq:gog_elements}
  \Pi_{v,w}=\{(g_0,x_1,g_1,\dots,x_n,g_n)\mid
x_i\in\gf,x_1^-=v,x_i^+=x_{i+1}^-,x_n^+=w,g_i\in G_{x_i^+}=G_{x_{i+1}^-}\},
\end{equation}
of (group-decorated) paths from $v$ to $w$. As a special case, if $v=w$
then we allow $n=0$ and $g_0\in G_v=G_w$. Say that two paths are
equivalent, written $\sim$, if they differ by a finite sequence of
elementary local transformations of the form
$(g h)\leftrightarrow(g,x,1,\overline x,h)$ for some $x\in\gf$ and $g,h\in
G_{x^-}$, or of the form $(g h^-,x,k)\leftrightarrow(g,x,h^+k)$ for some
$x\in\gf$ and $g\in G_{x^-},h\in G_x,k\in G_{x^+}$, or of the form
$(g,x,h)\leftrightarrow(g h)$ for some $x\in V$ and $g,h\in G_x$.

The product of two paths $(g_0,x_1,\dots,g_m)$ and
$(h_0,y_1,\dots,h_n)$ is defined if $x_m^+=y_1^-$, and equals
$(g_0,x_1,\dots,g_m h_0,y_1,\dots,h_n)$; this product
$\Pi_{u,v}\times\Pi_{v,w}\to\Pi_{u,w}$ is compatible with the
equivalence relation. The \emph{fundamental groupoid} $\pi_1(\gf)$ of
$\gf$ is a groupoid with object set $V$, and with morphisms between
$v$ and $w$ the set $\pi_1(\gf,v,w)=\Pi_{v,w}/{\sim}$ of equivalence
classes of paths from $v$ to $w$.

In particular, for $v\in V$, the \emph{fundamental group} $\pi_1(\gf,v)$ is
$\Pi_{v,v}/{\sim}$. If $\gf$ is connected, then $\pi_1(\gf,v)$ is up to
isomorphism independent of the choice of $v\in V$.

In fact, the elements $x_i$ in a path may be assumed to all belong to
$E$, and if $x_{i+1}=\overline{x_i}$ then the element $g_{i+1}$ may be
assumed not to belong to $(G_{x_{i+1}})^-$. Such paths are called
\emph{reduced}, and $\Pi_{v,w}/{\sim}$ may be identified with the set
of reduced paths from $v$ to $w$.

In a more algebraic language, the fundamental groupoid $\pi_1(\gf)$ is the
universal groupoid with object set $V$, and whose morphism set is generated
by $\gf\sqcup\bigsqcup_{v\in V}G_v$; the source and target of $x\in\gf$ are
$x^-$ and $x^+$ respectively; the source and target of $g\in G_v$ are $v$;
the relations are those of the $G_v$ as well as $v=1\in G_v$ for all $v\in
V$, and $x\overline x=1\in G_{x^-}$ and $g^-x=x g^+$ for all $x\in\gf$ and
$g\in G_x$.  The path $(g_0,x_1,\dots,g_n)$ is identified with $g_0x_1\dots
g_n$. The following property follows easily from the definitions:
\begin{lem}[e.g.\ Serre~\cite{serre:trees}*{\S5.2}]\label{lem:InjGraphGroups}
   Let $\mathfrak X$ be a graph of groups. If all morphisms $G_x\to
   G_{x^-}$ are injective, then all natural maps
   $G_x\to\pi_1(\mathfrak X,*)$ are injective.\qed
\end{lem}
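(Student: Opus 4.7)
The statement is a standard result of Bass--Serre theory, and the plan is to follow the normal form approach from \cite{serre:trees}*{\S5.2}.

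First, I would reduce to the case $x = v \in V$. If $x$ is an edge, the natural map $G_x \to \pi_1(\mathfrak{X}, *)$ factors as $G_x \xrightarrow{(\cdot)^-} G_{x^-} \to \pi_1(\mathfrak{X}, *)$; the first arrow is injective by hypothesis, so the edge case follows from the vertex case.

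Next, fix $v \in V$ and suppose $g \in G_v$ maps to the identity in $\pi_1(\mathfrak{X}, v)$. In the notation of \eqref{eq:gog_elements}, the length-zero path $(g) \in \Pi_{v,v}$ is then equivalent to $(1)$. Both representatives are already reduced (they contain no edges, hence nothing can backtrack), so injectivity follows as soon as one has the Bass--Serre \emph{normal form theorem}: every equivalence class in $\Pi_{v,w}/{\sim}$ has a unique reduced representative.

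To establish normal form I see two standard routes. The first is to build the Bass--Serre universal cover $\widetilde{\mathfrak{X}}$, a tree whose vertex set is $\bigsqcup_{v \in V} \pi_1(\mathfrak{X}, *)/G_v$ with edges induced from $\gf$, and on which $\pi_1(\mathfrak{X}, *)$ acts without inversions; since any point stabilizer of any group action injects into the acting group, and the stabilizer of the basepoint is precisely the image of $G_v$, we are done. The second is direct: orient a spanning subtree of $\gf$, treat the three local moves of \S\ref{ss:GrOfGr} as a rewriting system on paths, and prove confluence.

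The main obstacle in either approach is ensuring that the amalgamation move $(gh^-, x, k) \leftrightarrow (g, x, h^+ k)$ does not collapse distinct paths to the same reduced form; this is exactly where the hypothesis that $(\cdot)^- \colon G_x \to G_{x^-}$ is injective enters, since it guarantees that $h \in G_x$ is determined by $h^-$, making the rewriting system well-defined. Once normal form is in hand, the lemma is immediate from the observation that $(g)$ with $g \ne 1$ is already its own reduced form and is distinct from $(1)$.
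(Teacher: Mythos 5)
Your proposal is correct and follows the same route the paper itself takes, which is simply to delegate the statement to Serre's \emph{Trees}, \S5.2: the reduction of edges to vertices via $G_x\to G_{x^-}$, followed by the Bass--Serre normal form theorem applied to the length-zero words $(g)$ and $(1)$, is exactly the standard argument being cited. One small imprecision: reduced representatives are not literally unique (an edge-group element $h$ can be pushed across an edge, replacing $(gh^-,x,k)$ by $(g,x,h^+k)$ with both sides reduced), but the weaker invariance statement you actually need --- that a length-zero word $(g)$ is equivalent to $(1)$ only if $g=1$ --- is precisely what the normal form theorem gives, and the injectivity of the edge morphisms enters its proof where you indicate.
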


\subsection{Decompositions and van Kampen's theorem}\label{ss:vk}
Graphs of groups, and their fundamental group, generalize decompositions of
spaces and their fundamental group. Here is a useful example of graphs of
groups:
\begin{exple}
  If $X$ be a path connected surface with at least one puncture, consider a
  graph $\gf$ drawn on $X$ which contains precisely one puncture in each
  face. Then $X$ deformation retracts to $\gf$, and the groups $\pi_1(X,*)$
  and $\pi_1(\gf,*)$ are isomorphic. Here we consider $\gf$ as a graph of
  groups in which all groups $G_x$ are trivial.
\end{exple}

\begin{defn}[Finite $1$-dimensional covers]\label{defn:1dimcovers}
  Consider a path connected space $X$, covered by a finite collection
  of path connected subspaces $(X_v)_{v\in V}$. It is a \emph{finite
    $1$-dimensional cover} of $X$ if
  \begin{itemize}
  \item for every $v,w\in V$ and for every path connected component $X'$ of
    $X_{u}\cap X_{v}$ there are an open neighborhood $\widetilde X'\supset
    X'$ and an $X_{w}\subset X'$ such that $X_{w}\hookrightarrow \widetilde
    X'$ is a homotopy equivalence;
  \item if $X_{u}\subseteq X_{v}\subseteq X_{w}$ then $u=v$ or $v=w$.
  \end{itemize}
  We order $V$ by writing $u<v$ if $X_{u}\varsubsetneqq X_{v}$.
\end{defn}

\begin{rem}\label{rem:OnFinitnOfDecomp}
  In a classical formulation of van Kampen's theorem it is assumed that all
  $X_v$ are open subsets of $X$. Then every curve $\gamma\subset X$ is a
  concatenation of finitely many curves $\gamma_i$ where each $\gamma_i$
  lies entirely in some $X_{v(i)}$.

  For our dynamical purposes it is more convenient to allow the $X_v$ to be
  closed subsets of $X$. Thus we add the condition that $X_w$ is homotopy
  equivalent to an open neighborhood of $X_w$. Under this assumption every
  curve $\gamma\subset X$ is, up to homotopy rel endpoints, a concatenation of
  finitely many curves $\gamma_i$ which all lie entirely in some
  $X_{v(i)}$.
\end{rem}

\begin{defn}[Graphs of groups from covers]\label{defn:gog_1dimcovers}
  Consider a path connected space $X$ with a $1$-dimensional cover
  $(X_v)_{v\in V}$. It has an associated graph of groups $\gf$, defined as
  follows. The vertex set of $\gf$ is $V$. For every pair $u<v$ there are
  edges $e$ and $\bar e$ connecting $u=e^-=\bar e^+$ and $v=e^+=\bar e^-$,
  and we let $E$ be the set of these edges. Set $\gf=V\sqcup E$.

  Choose basepoints $*_v\in X_v$ for all $v\in V$, and set
  $G_v\coloneqq\pi_1(X_v,*_v)$. For every edge $e$ with $e^-<e^+$
  choose a path $\gamma_e\colon[0,1]\to X_{e^+}$ from $*_{e^-}$ to
  $*_{e^+}$, set $G_e\coloneqq G_{e^-}$ and define
  $G_e\to G_{e^-}\coloneqq\one$ and $G_e\to G_{e^+}$ by
  $\gamma\mapsto\gamma_e^{-1}\#\gamma\#\gamma_e$. Finally for edges
  $e$ with $e^->e^+$ set $G_e\coloneqq G_{\bar e}$ and
  $\gamma_{\bar e}=\gamma_e^{-1}$ and define morphisms
  $G_e\to G_{e^-}$ and $G_e\to G_{e^+}$ as
  $G_{\bar e}\to G_{\bar e^+}$ and $G_{\bar e}\to G_{\bar e^-}$
  respectively.
\end{defn}

The graph of groups $\gf$ depends on the choice of basepoints $*_v$ and
paths $\gamma_e$, but mildly: we will show in Lemma~\ref{lem:gfCongClass}
that the congruence class of $\gf$ is independent of the choice of $*_v$
and $\gamma_e$.

Here is a simple example: on the left, the subspaces $X_v$ are the
simple loops and the two triple intersection points; on the right, the
corresponding graph of groups, with trivial or infinite cyclic groups.
\begin{center}\begin{tikzpicture}
  \draw (0,0) .. controls (0,1.5) and (-1.5,0) .. (0,0)
  .. controls (-1.5,0) and (0,-1.5) .. (0,0)
  .. controls (1,0.6) and (2,0.6) .. (3,0)
  .. controls (3,1.5) and (4.5,0) .. (3,0)
  .. controls (3,-1.5) and (4.5,0) .. (3,0)
  .. controls (2,-0.6) and (1,-0.6) .. (0,0);
  \begin{scope}[xshift=6cm,scale=0.6]
    \draw (-1,1) node {$\bullet$} node[left] {$\Z$} -- (0,0) node
    {$\bullet$} node[above right] {$1$} -- (-1,-1) node {$\bullet$}
    node[left] {$\Z$}; \draw (0,0) -- (2,0) node {$\bullet$}
    node[above=2pt] {$\Z$} -- (4,0) node {$\bullet$} node[above left]
    {$1$} -- (5,1) node {$\bullet$} node[right] {$\Z$} (4,0) -- (5,-1)
    node {$\bullet$} node[right] {$\Z$};
  \end{scope}
\end{tikzpicture}
\end{center}

\begin{thm}[van Kampen's Theorem]\label{thm:vankampen}
  Let $X$ be a path connected space with finite $1$-dimensional cover
  $(X_v)$ and a choice of basepoints $*_v\in X_v$ as well as paths
  connecting $*_{u}$ and $*_{v}$ if $X_{u}\varsubsetneqq X_{v}$. Let
  $\gf$ be the associated graph of groups.

  Then, for every $v$, the groups $\pi_1(X,*_v)$ and $\pi_1(\gf,v)$
  are isomorphic.
\end{thm}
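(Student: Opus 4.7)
The plan is to produce mutually inverse homomorphisms between $\pi_1(\gf,v)$ and $\pi_1(X,*_v)$, using Remark~\ref{rem:OnFinitnOfDecomp} to move between words in the graph of groups and concatenations of path-pieces lying in individual $X_{v(i)}$.

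First I would define a homomorphism $\Phi\colon\pi_1(\gf,v)\to\pi_1(X,*_v)$ via the generators-and-relations description of $\pi_1(\gf,v)$ given after Definition~\ref{defn:gog_1dimcovers}: on $g\in G_w$ send $g$ to the loop $g\subset X_w\subset X$, and on an edge $e$ send $e$ to the chosen connecting path $\gamma_e$ (or $\gamma_e^{-1}$ if $e^->e^+$). A word $g_0 e_1 g_1\cdots e_n g_n$ then maps to the concatenation $g_0\#\gamma_{e_1}\#g_1\#\cdots\#\gamma_{e_n}\#g_n$. The two families of defining relations translate to the trivial path identities $\gamma_e\#\gamma_e^{-1}\approx\mathrm{const}$ and $g\#\gamma_e\approx\gamma_e\#(\gamma_e^{-1}\#g\#\gamma_e)$, so $\Phi$ is well defined.

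For surjectivity, take a loop $\alpha$ at $*_v$. By Remark~\ref{rem:OnFinitnOfDecomp}, I may write $\alpha\approx\alpha_1\#\cdots\#\alpha_n$ with each $\alpha_i$ lying in some $X_{v(i)}$ and $\alpha_i(1)=\alpha_{i+1}(0)\in X_{v(i)}\cap X_{v(i+1)}$. The first bullet of Definition~\ref{defn:1dimcovers} provides a $w(i)\in V$ with $X_{w(i)}$ sitting (up to homotopy equivalence with a neighbourhood) inside the component of $X_{v(i)}\cap X_{v(i+1)}$ containing $\alpha_i(1)$, so I may homotope $\alpha_i$ to have endpoints at $*_{w(i-1)}$ and $*_{w(i)}$, then conjugate each piece by the relevant $\gamma_e$'s to a loop based at $*_{v(i)}$. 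The result is visibly in the image of $\Phi$.

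For injectivity, let $W=g_0 e_1 g_1\cdots e_n g_n$ be a reduced word with $\Phi(W)$ null-homotopic, witnessed by a map $F\colon D^2\to X$. Pull the cover $(X_v)$ back along $F$ and, by compactness together with the open-neighbourhood condition of Definition~\ref{defn:1dimcovers}, choose a fine enough triangulation of $D^2$ so that every closed $2$-simplex is mapped into some $X_{v}$ and every closed $1$-simplex into some $X_w$ sitting, as above, inside the corresponding intersection. Reading off the boundaries combinatorially yields, on the one hand, the word $W$, and, on the other hand, a trivial word, the difference between the two being realised by a finite sequence of elementary moves across $1$- and $2$-simplices. The second bullet of Definition~\ref{defn:1dimcovers}, which forbids nested inclusions $X_u\subsetneq X_v\subsetneq X_w$, guarantees that the only moves that arise are precisely the two elementary transformations $(g,x,h)\leftrightarrow(gh)$ for $x\in V$ and $(gh^-,x,k)\leftrightarrow(g,x,h^+k)$ for $x\in E$, plus the free reduction $(g,x,1,\overline{x},h)\leftrightarrow(gh)$. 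Hence $W\sim 1$ in $\pi_1(\gf,v)$.

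The main obstacle is the injectivity step: producing a cellular approximation of the null-homotopy that is compatible with the cover, and then checking that each elementary move of triangulations corresponds to one of the declared relations and nothing more. The ``$1$-dimensional'' hypothesis (forbidding strict triple chains $X_u\subsetneq X_v\subsetneq X_w$) is exactly what is needed to ensure that no higher-order relations are imposed; once this is in place the well-definedness and surjectivity arguments are essentially bookkeeping.
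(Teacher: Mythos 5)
Your proposal is correct and follows essentially the same route as the paper's own (sketched) proof: the same map sending $g_0e_1g_1\cdots e_ng_n$ to $g_0\#\gamma_{e_1}\#\cdots\#\gamma_{e_n}\#g_n$, surjectivity via Remark~\ref{rem:OnFinitnOfDecomp}, and injectivity by the classical van Kampen triangulation argument, which the paper simply delegates to Massey while you spell out a bit more of it.
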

\begin{proof}[Sketch of proof]
  The isomorphism $\theta\colon\pi_1(\gf,v)\to\pi_1(X,*_v)$ is defined by
  \[\theta(g_0,e_1,g_1,\dots,e_n,g_n)=g_0\#\gamma_{e_1}\#\dots \gamma_{e_n}\#g_n.\]
  Every loop $\gamma\colon[0,1]\to X$ with $\gamma(0)=\gamma(1)=*_v$ is
  homotopic to a concatenation of the above type, see
  Remark~\ref{rem:OnFinitnOfDecomp}; thus $\theta$ is surjective.  By the
  classical van Kampen argument (see~\cite{massey:at}*{Chapter~IV}), the
  path $\theta(g)$ is homotopic to $\theta(g')$ in $X$ if and only if $g$
  and $g'$ are equivalent in $\pi_1(\gf,v)$; thus $\theta$ is injective.
\end{proof}

In case $\gf$ is a tree, $\pi_1(\gf,*)$ is an iterated free product with
amalgamation of the $G_v$'s, amalgamated over the $G_e$'s.  It may be
constructed explicitly as follows: consider the set of finite sequences
over the alphabet $\bigcup_{v\in V}G_v$; identify two sequences if they
differ on subsequences respectively of the form $\{(1),()\}$; or
$\{(g,h),(g h)\}$ for $g,h$ in the same $G_v$; or $\{(g),(g)\}$ where the
first, respectively second, `$g$' denote the image of $g\in G_e$ in
$G_{e^-}$, respectively $G_{e^+}$; and quotient by the equivalence relation
generated by these identifications.

Note the following three operations on a graph of groups $\gf$, which
does not change its fundamental group:
\begin{description}
\item[(1) split an edge] choose an edge $e\in E$. Add a new vertex $v$
  to $V$, and replace $e,\overline e$ by new edges
  $e_0,\overline{e_0},e_1,\overline{e_1}$ with $e_0^+=e_1^-=v$ and
  $e_0^-=e^-$ and $e_1^+=e^+$. Define the new groups by
  $G_v=G_{e_0}=G_{e_1}=G_e$, with the obvious maps between them;
\item[(2) add an edge] choose a vertex $v\in V$, and a subgroup
  $H\subseteq G_v$. Add a new vertex $w$ to $V$, and add new edges
  $e,\overline e$ with $e^-=v$ and $e^+=w$. Define the new groups by
  $G_e=G_v=H$, with the obvious maps between them;
\item[(3) barycentric subdivision] construct a new graph $\gf'=V'\sqcup E'$
  with $V'=\gf/\{x=\overline x\}$ and $E'=E\times\{+,-\}$; for $e\in E$ and
  $\varepsilon\in\{\pm1\}$, set $(e,\varepsilon)^\varepsilon=e^\varepsilon$
  and $(e,\varepsilon)^{-\varepsilon}=[e]$ and
  $\overline{(e,\varepsilon)}=(\overline e,-\varepsilon)$. Define finally a
  graph of groups structure $(G'_{[x]})$ on $\gf'$ by setting $G'_{[x]}$ to
  be $G_x$ for all $[x]\in V'$ (recalling that $G_x$ and $G_{\bar x}$ are
  isomorphic), and $G'_{(e,\varepsilon)}=G_e$, with the obvious maps
  between these groups.
\end{description}

Recall that graph morphisms may send edges to vertices, but not vertices to
(midpoints of) edges. If such a map between the topological realizations of
the graphs is needed, it may be expressed as a map between their
barycentric subdivisions.

If $\gf$ be a graph of groups, let $\gf'$ denote the underlying graph
of $\gf$, but with trivial groups. Then, on the one hand,
$\pi_1(\gf')$ is the usual fundamental groupoid of $\gf'$, and
$\pi_1(\gf',v)$ is a free group; on the other hand, there exists a
natural morphism of groupoids $\pi_1(\gf)\to\pi_1(\gf')$.

If $\gf,\mathfrak Y$ be graphs of groups, $\theta\colon\mathfrak Y\to\gf$ be a
graph morphism, and $\theta_y$ be, for every $y\in\mathfrak Y$, a
homomorphism $G_y\to G_{\theta(y)}$ such that
$\theta_y(g)^-=\theta_{y^-}(g^-)$ and
$\overline{\theta_y(g)}=\theta_{\overline y}(\overline g)$ for all $g\in
G_y$, then there is an induced morphism of groupoids $\pi_1(\mathfrak
Y)\to\pi_1(\gf)$.  We do not give a name to these kinds of maps, because
they are too restrictive; see the discussion in the introduction
of~\cite{bass:covering}.  We will give later, in~\S\ref{ss:examplesgfgps},
more examples of graphs of groups; we now describe in more detail the
precise notion of morphisms between graphs of groups that we will use.

\subsection{Graphs of bisets}
Just as bisets generalize appropriately to our setting homomorphisms
between groups, so do ``graphs of bisets'' also generalize the notion
of homomorphisms between graphs of groups. In fact, our notion also
extends the definition of morphisms between graphs of groups given
in~\cite{bass:covering}, in that a ``right-principal graph of bisets''
is the same thing as a morphism between graphs of groups in the sense
of Bass.

\begin{defn}[Graph of bisets]\label{defn:graphofbisets}
  Let $\gf,\mathfrak Y$ be two graphs of groups. A \emph{graph of
    bisets} ${}_{\mathfrak Y}\mathfrak B_\gf$ between them is the
  following data:
  \begin{itemize}
  \item a graph $\mathfrak B$, not necessarily connected;
  \item graph morphisms $\lambda\colon\mathfrak B\to\mathfrak Y$ and
    $\rho\colon\mathfrak B\to\gf$;
  \item for every $z\in\mathfrak B$, a
    $G_{\lambda(z)}$-$G_{\rho(z)}$-biset $B_z$, a congruence
    $()^-\colon B_z\to B_{z^-}$ with respect to the homomorphisms
    $G_{\lambda(z)}\to G_{\lambda(z)^-}$ and $G_{\rho(z)}\to
    G_{\rho(z)^-}$, and a congruence $\overline{()}\colon B_z\to B_{\overline
      z}$ with respect to the homomorphisms $G_{\lambda(z)}\to
    G_{\overline{\lambda(z)}}$ and $G_{\rho(z)}\to
    G_{\overline{\rho(z)}}$.

    These homomorphisms must satisfy the same axioms as~\eqref{eq:gf},
    namely: the composition $B_z\to B_{\overline z}\to
    B_{\overline{\overline z}}=B_z$ is the identity for every
    $z\in\gf$, and if $z\in V$, then the homomorphisms $B_z\to B_{z^-}$
    and $B_z\to B_{\overline z}$ are the identity. For $b\in B_z$ we
    write $b^+={\overline b}^-$.
  \end{itemize}
  We call $\mathfrak B$ a \emph{$\mathfrak Y$-$\gf$-biset}.
\end{defn}

\begin{exple}\label{ex:map=biset}
  A graph morphism $\theta\colon\mathfrak Y\to\gf$, given with homomorphisms
  $\theta_y\colon G_y\to G_{\theta(y)}$ for all $y\in\mathfrak Y$ such that $()^{-} \circ \theta_y= \theta_{y^-}\circ ()^{-}$, naturally
  gives rise to a $\mathfrak Y$-$\gf$-biset $\mathfrak B_\theta$. Its
  underlying graph is $\mathfrak B=\mathfrak Y$; the maps are
  $\lambda=\one$ and $\rho=\theta$. For every $z\in\mathfrak B$, the biset
  $B_z$ is $G_{\theta(z)}$ with its natural right $G_{\theta(z)}$-action,
  and its left $G_z$-action is $g\cdot b=\theta_z(g)b$.
\end{exple}

\begin{defn}[Product of graphs of bisets]
  Let $\mathfrak B$ be a $\mathfrak Y$-$\gf$-biset, and let $\mathfrak C$
  be a $\gf$-$\mathfrak W$-biset. The \emph{product} of $\mathfrak B$ and
  $\mathfrak C$ is the following graph of bisets ${}_{\mathfrak Y}\mathfrak
  D_{\mathfrak W}=\mathfrak B\otimes_\gf\mathfrak C$: its underlying graph
  is the fibre product
  \[\mathfrak D=\{(b,c)\in\mathfrak B\times\mathfrak C\mid \rho(b)=\lambda(c)\},
  \]
  with $(b,c)^-=(b^-,c^-)$ and $\overline{(b,c)}=(\overline
  b,\overline c)$. The map $\lambda\colon\mathfrak D\to\mathfrak Y$ is
  $\lambda(b,c)=\lambda(b)$, and the map $\rho\colon\mathfrak D\to\mathfrak
  W$ is $\rho(b,c)=\rho(c)$. The biset $B_{(b,c)}$ is
  $B_b\otimes_{G_{\rho(b)}}B_c$.
\end{defn}

\noindent Properties of the product will be investigated
in~\S\ref{ss:PropOfTensProd}. For now we content ourselves with some
illustrative examples.

\begin{lem}[See Example~\ref{ex:map=biset}]
  Let $\theta\colon\mathfrak Y\to\gf$ and $\kappa\colon\gf\to\mathfrak W$ be
  graph morphisms, given with homomorphisms $\theta_y\colon G_y\to
  G_{\theta(y)}$ and $\kappa_x\colon G_x\to G_{\kappa(x)}$. Then their
  bisets satisfy $\mathfrak B_\theta\otimes_\gf \mathfrak
  B_\kappa=\mathfrak B_{\theta\kappa}$.
\end{lem}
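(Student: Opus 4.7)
The plan is to write down the underlying graph of the product, reduce it to $\mathfrak Y$ via an obvious bijection, and then verify biset-by-biset that the product bisets coincide with the bisets of $\mathfrak B_{\theta\kappa}$, using the earlier lemma $B_{\phi\psi}\cong B_\phi\otimes_G B_\psi$.

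\textbf{Identifying the graph.} Since the underlying graph of $\mathfrak B_\theta$ is $\mathfrak Y$ with $\lambda=\one$ and $\rho=\theta$, and that of $\mathfrak B_\kappa$ is $\gf$ with $\lambda=\one$ and $\rho=\kappa$, the fibre product
\[\mathfrak D=\{(b,c)\in\mathfrak Y\times\gf\mid\theta(b)=c\}\]
consists precisely of pairs $(b,\theta(b))$, so the projection on the first coordinate gives a bijection $\mathfrak D\to\mathfrak Y$. I would check this bijection is a graph isomorphism by noting that $(b,\theta(b))^-=(b^-,\theta(b)^-)=(b^-,\theta(b^-))$ and analogously for $\overline{(\cdot)}$, using that $\theta$ is a graph morphism. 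Under this identification $\lambda(b,\theta(b))=b$ becomes $\one\colon\mathfrak Y\to\mathfrak Y$, and $\rho(b,\theta(b))=\kappa(\theta(b))=(\theta\kappa)(b)$, matching the graph data of $\mathfrak B_{\theta\kappa}$.

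\textbf{Identifying the vertex/edge bisets.} At $z\in\mathfrak Y$, the product biset is
\[B_z\otimes_{G_{\theta(z)}}B_{\theta(z)}=B_{\theta_z}\otimes_{G_{\theta(z)}}B_{\kappa_{\theta(z)}},\]
which by the lemma equals $B_{\theta_z\,\kappa_{\theta(z)}}=B_{(\theta\kappa)_z}$, precisely the biset attached to $z$ in $\mathfrak B_{\theta\kappa}$. The canonical isomorphism is $1\otimes 1\mapsto 1$, which is natural.

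\textbf{Compatibility with $(\cdot)^-$ and $\overline{(\cdot)}$.} I would finally verify that the congruences along edges are respected. By construction, $()^-\colon B_z\to B_{z^-}$ in $\mathfrak B_\theta$ sends $1\in G_{\theta(z)}$ to $1\in G_{\theta(z)^-}=G_{\theta(z^-)}$, and similarly for $\mathfrak B_\kappa$, so the induced map on the tensor product sends $1\otimes 1$ to $1\otimes 1$, which under the identifications of the previous step matches the map $B_{(\theta\kappa)_z}\to B_{(\theta\kappa)_{z^-}}$ in $\mathfrak B_{\theta\kappa}$. The same computation handles $\overline{(\cdot)}$. The main (mild) obstacle is bookkeeping: making sure the left $G_z$-actions, right $G_{(\theta\kappa)(z)}$-actions, and the two congruences are all simultaneously compatible; but this reduces to the single fact already used, namely that $1\otimes 1\mapsto 1$ realizes $B_\phi\otimes B_\psi=B_{\phi\psi}$ naturally in $\phi$ and $\psi$.
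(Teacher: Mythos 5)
Your proposal is correct and follows essentially the same route as the paper, which simply invokes the natural identification $G_y\otimes_{G_{\theta(y)}}G_{\theta(y)}=G_y$ between the bisets of the product and those of $\mathfrak B_{\theta\kappa}$. You have merely expanded that one-line argument into its constituent checks (graph identification, biset identification via $B_{\phi\psi}\cong B_\phi\otimes B_\psi$, compatibility with the edge congruences), all of which are the intended content.
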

\begin{proof}
  We have natural identifications $G_y\otimes_{G_x}G_x=G_y$ between
  the bisets of $\mathfrak B_\theta\otimes_\gf \mathfrak B_\kappa$ and
  those of $\mathfrak B_{\theta\kappa}$.
\end{proof}

The contragredient $\mathfrak B^\vee$ of the graph of bisets
$\mathfrak B$ is defined by exchanging $\rho$ and $\lambda$, and
taking the contragredients of all bisets $B_z$. If $\theta\colon\mathfrak
Y\to\gf$ is bijective and all $\theta_y$ are isomorphisms, then the
contragredient of $\mathfrak B_\theta$ is $\mathfrak B_{\theta^{-1}}$.

The identity biset for products is the following biset
${}_\gf\mathfrak I_\gf$: its underlying graph is $\gf$, with maps
$\lambda=\rho=\one$, and bisets $B_z=G_z$.

\begin{exple}[See Lemma~\ref{lem:biset=fibre}; a partial converse to
  Example~\ref{ex:map=biset}]
  Let ${}_{\mathfrak Y}\mathfrak B_\gf$ be a graph of bisets such that the
  graph $\mathfrak B$ is connected and $B_z$ is transitive for every $z\in
  \mathfrak B$. Suppose that for every $z\in\mathfrak B$ we are given an
  element $b_z\in B_z$ such that $\overline{b_z}=b_{\overline z}$ and
  $(b_z)^-=b_{z^-}$ for all $z\in\mathfrak B$. Then there exist a graph of
  groups $\mathfrak Z$ and morphisms of graphs of groups
  $\lambda\colon\mathfrak Z\to\mathfrak Y$ and $\rho\colon\mathfrak
  Z\to\gf$ as in Example~\ref{ex:map=biset} such that $\mathfrak B$ is isomorphic to $\mathfrak
  B_\lambda^\vee\otimes\mathfrak B_\rho$.

  Note, however, that it is not always possible to make coherent choices of
  basepoints $b_z\in B_z$. For example, it may happen that $(B_e)^{-}\cap
  (B_f)^-=\emptyset$ for two edges $e,f$ with $e^-=f^-$.
 
  Here is a dynamical situation in which this happens. Let $f$ be a
  rational map whose Julia set is a Sierpi\'nski carpet, and assume that
  its Fatou set has two invariant components $\mathcal U,\mathcal V$,
  perforce with disjoint closures, on which $f$ acts as $z\mapsto z^m,z^n$
  respectively. Remove these components, and attach on each of them a
  sphere with a self-map that has an invariant Fatou component mapped to
  itself by the same degree. To these data correspond a graph of groups
  $\gf$ with three vertices (the main sphere and the two grafted ones), and
  a graph of bisets ${}_\gf\mathfrak B_\gf$ describing the self-map. The
  basepoint on the main sphere cannot be chosen to coincide with the
  basepoints of the grafted spheres, because the Fatou components of the
  Sierpi\'nski map have disjoint closures.
\end{exple}

\subsection{The fundamental biset}
\label{ss:FundBis}
\begin{defn}[Fundamental biset of graph of bisets]\label{defn:p1biset}
  Let $\mathfrak B$ be a $\mathfrak Y$-$\gf$-biset; choose $*\in\gf$
  and $\dagger\in\mathfrak Y$. Write $G=\pi_1(\gf,*)$ and
  $H=\pi_1(\mathfrak Y,\dagger)$. The \emph{fundamental biset} of
  $\mathfrak B$ is an $H$-$G$-biset $B=\pi_1(\mathfrak B,\dagger,*)$,
  constructed as follows.
  \begin{equation}\label{eq:freebiset}
    B=\frac{\bigsqcup_{z\in V(\mathfrak B)}\pi_1(\mathfrak Y,\dagger,\lambda(z))\otimes_{G_{\lambda(z)}}B_z\otimes_{G_{\rho(z)}}\pi_1(\gf,\rho(z),*)}{\left\{q b^-p = q\lambda(z)b^+\overline{\rho(z)}p\quad\forall\begin{array}{c}q\in\pi_1(\mathfrak Y,\dagger,\lambda(z)^-),b\in B_z,\\p\in\pi_1(\gf,\rho(z)^-,*),z\in E(\mathfrak B)\end{array}\right\}}.
  \end{equation}
  In other words, elements of $B$ are sequences
  $(h_0,y_1,h_1,\dots,y_n,b,x_1,\dots,g_{m-1},x_n,g_n)$ subject to the
  equivalence relations used previously to define $\pi_1(\gf)$, as
  well as
  $(y_n,h b^-g,x_1)\leftrightarrow(y_n,h,\lambda(z),b^+,\overline{\rho(z)},g,x_1)$
  for all $z\in\mathfrak B$, $b\in B_z$, $h\in G_{\lambda(z)^-}$,
  $g\in G_{\rho(z)^-}$.
\end{defn}

In particular, if $\mathfrak Y=\gf$ are graphs with one vertex and no edges
and associated groups $H,G$ respectively, the definition simplifies a
little: then $B$ is the quotient of $\bigsqcup_{z\in V(\mathfrak B)}B_z$ by
the relation identifying $b^-$ with $b^+$, for all edges $z\in\mathfrak B$
and all $b\in B_z$, with respective images $b^\pm$ in $B_{z^\pm}$.

If $f\colon Y\to X$ be a continuous map and the spaces $X,Y$ admit
decompositions giving a splitting of their fundamental group as a graph of
groups, as in Theorem~\ref{thm:vankampen}, and if $f$ is compatible with
the decompositions of $X$ and $Y$, then $B(f)$ will decompose into a graph
of bisets, as we will see in Theorem~\ref{thm:vankampenbis}.

Let $\gf$ be a graph of groups. Choose $*\in\gf$, and consider the graph
$\{*\}$ with a single vertex and no edge. We treat it as a graph of groups,
with group $\pi_1(\gf,*)$ attached to $*$, and denote it by
$\gf_0$. Consider now the following biset ${}_\gf(\gf,*)_{\gf_0}$: its
underlying graph is $\gf$; the maps are $\lambda=\one$ and $\rho(z)=*$ for
all $z\in\gf$; and the bisets are $B_z=\pi_1(\gf,z^-,*)$; for vertices,
this is just $\pi_1(\gf,z,*)$, while for edges this is a left $G_z$-set via
the embedding $G_z\to G_{z^-}$. The embedding $B_z\to B_{z^-}$ is the
identity, while the map $B_z\to B_{\overline z}$ is $b\mapsto \overline
z b$.
\begin{lem}\pushQED{\qed}
  Write $G=\pi_1(\gf,*)$. Then we have an isomorphism of
  $G$-$G$-bisets
  \[\pi_1((\gf,*),*,*)\cong {}_G G_G.\qedhere\]
\end{lem}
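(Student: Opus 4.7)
The strategy is to construct an explicit $G$-$G$-biset isomorphism $\theta\colon B\to G$ by groupoid composition, together with an inverse $\iota$ landing in the component of the distinguished vertex $*\in V(\mathfrak B)=V(\gf)$.

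On representatives of the free sum defining $B$, I would set
\[\theta(q\otimes b\otimes p)\coloneqq q\,b\,p\in\pi_1(\gf,*,*)=G\]
for $v\in V(\gf)$, $q\in\pi_1(\gf,*,v)$, $b\in B_v=\pi_1(\gf,v,*)$, and $p\in\pi_1(\gf_0,*,*)=G$, where juxtaposition denotes the groupoid composition in $\pi_1(\gf)$. The tensor relations over $G_v$ and over $G$ respect this map since regrouping factors does not change the composed product. Since $\rho$ collapses $\gf$ to the single vertex $*$ of $\gf_0$, the factor $\overline{\rho(e)}$ reduces to $1\in G$; the edge relation therefore takes the form $(q\otimes b\otimes p)_{e^-}\sim(qe\otimes\overline e b\otimes p)_{e^+}$ for $b\in B_e=\pi_1(\gf,e^-,*)$, and both sides are sent to $qbp$ since $e\overline e=1\in G_{e^-}$ in the fundamental groupoid. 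The left and right $G$-actions on $B$ act respectively on $q$ and on $p$, and under $\theta$ they become left and right multiplication in $G$, so $\theta$ is a biset morphism.

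For the inverse, set $\iota\colon G\to B$ by $g\mapsto(1_*\otimes 1_*\otimes g)_*$, placed in the component of $*\in V(\mathfrak B)$. The identity $\theta\iota=\one_G$ is immediate. To prove $\iota\theta=\one_B$, I choose a reduced expression $q=h_0e_1h_1e_2\cdots e_nh_n$ with $h_i\in G_{v_i}$ and $v_0=*$, $v_n=v$, and proceed inductively: at step $i=n,n-1,\dots,1$, first absorb $h_i\in G_{v_i}$ from the left factor into the middle via the $G_{v_i}$-tensor relation, then apply the edge relation for $e_i$ in reverse to pass from component $v_i$ to $v_{i-1}$. After $n$ such steps the element lies in the component of $*$ in the form $(1_*\otimes qb\otimes p)_*$; a final application of the right $G$-tensor relation (moving $qb\in G$ from the middle factor into the third) yields $(1_*\otimes 1_*\otimes qbp)_*=\iota(\theta(q\otimes b\otimes p))$.

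The only substantive obstacle is the bookkeeping of this walk and the verification that the edge relation, after unpacking $b^-=b\in B_{e^-}$ and $b^+=\overline e b\in B_{e^+}$ in our specific biset $(\gf,*)$, does take the stated simple form in both directions; both facts are immediate from the definition of the maps $B_e\to B_{e^\pm}$ given when $(\gf,*)$ was introduced, together with the groupoid presentation of $\pi_1(\gf)$ recalled just before Lemma~\ref{lem:InjGraphGroups}.
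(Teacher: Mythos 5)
Your argument is correct: the map $q\otimes b\otimes p\mapsto qbp$ given by groupoid composition, with inverse $g\mapsto 1\otimes 1\otimes g$ in the component of $*$, is exactly the natural isomorphism, and your verification of the tensor and edge relations (using $e\overline e=1$ and the walk along a reduced expression of $q$) is sound. The paper states this lemma with no proof at all, treating it as immediate from the definitions, so your proposal simply writes out the intended argument in full detail.
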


The bisets $(\gf,*)$ and $(\mathfrak Y,\dagger)$ enable us,
using~\eqref{eq:freebiset}, to write $\pi_1(\mathfrak B,\dagger,*)$ more
simply. We will prove Lemma~\ref{lem:CharOfFindBiset}
in~\S\ref{ss:PropOfTensProd}:
\begin{lem}\label{lem:CharOfFindBiset}
  Let ${}_{\mathfrak Y}\mathfrak B_\gf$ be a graph of bisets, and
  choose $*\in\gf,\dagger\in\mathfrak Y$. Then $\pi_1(\mathfrak
  B,\dagger,*)=\pi_1((\mathfrak Y,\dagger)^\vee\otimes\mathfrak B\otimes(\gf,*),\dagger,*)$.
\end{lem}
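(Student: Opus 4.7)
The plan is to unpack both sides into the same explicit description as a disjoint union over $V(\mathfrak B)$ modulo edge relations, and then observe they coincide summand-by-summand.

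First, I would compute the underlying graph and the bisets of $\mathfrak D \coloneqq (\mathfrak Y,\dagger)^\vee\otimes\mathfrak B\otimes(\gf,*)$. Because $(\mathfrak Y,\dagger)^\vee$ has $\rho=\one$ and $(\gf,*)$ has $\lambda=\one$, both fibre products defining $\mathfrak D$ degenerate, producing a canonical graph isomorphism $\mathfrak D\cong\mathfrak B$. At a vertex $z\in V(\mathfrak B)$, the resulting biset is the triple tensor product
\[D_z = \pi_1(\mathfrak Y,\dagger,\lambda(z))\otimes_{G_{\lambda(z)}} B_z\otimes_{G_{\rho(z)}}\pi_1(\gf,\rho(z),*),\]
which is exactly the summand appearing in~\eqref{eq:freebiset}. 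At an edge $z\in E(\mathfrak B)$ the same formula holds with $\lambda(z)^-$ and $\rho(z)^-$ in place of the basepoints.

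Second, since the source and target graphs of $\mathfrak D$ are the single-vertex graphs $\{\dagger\}$ and $\{*\}$ with groups $H=\pi_1(\mathfrak Y,\dagger)$ and $G=\pi_1(\gf,*)$, I would invoke the simplified form of Definition~\ref{defn:p1biset} noted just after~\eqref{eq:freebiset}: the fundamental biset of $\mathfrak D$ is
\[\bigsqcup_{z\in V(\mathfrak B)} D_z\;\bigg/\;\bigl(b^-=b^+\text{ for every edge }z\in E(\mathfrak B)\text{ and }b\in D_z\bigr).\]

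Third, I would chase the product congruences $()^-$ and $\overline{()}$ on $\mathfrak D$ through the three factors. Both $(\mathfrak Y,\dagger)^\vee$ and $(\gf,*)$ have identity $()^-$, so for $b=q\otimes c\otimes p\in D_z$ one gets $b^-=q\otimes c^-\otimes p$. For $\overline{()}$: on $(\gf,*)$ the bar map sends $p\mapsto\overline{\rho(z)}\,p$, and dualizing the formula $p\mapsto\overline{z}\,p$ on $(\mathfrak Y,\dagger)$ produces the bar map $q\mapsto q\,\lambda(z)$ on $(\mathfrak Y,\dagger)^\vee$. Thus $\overline b=q\,\lambda(z)\otimes\overline c\otimes\overline{\rho(z)}\,p$, and applying $()^-$ gives
\[b^+ = q\,\lambda(z)\otimes c^+\otimes\overline{\rho(z)}\,p.\]
The relation $b^-=b^+$ on the right-hand side is therefore precisely the edge identification $q c^- p = q\,\lambda(z)\,c^+\,\overline{\rho(z)}\,p$ of~\eqref{eq:freebiset}. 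The identity-on-summands map is then a well-defined isomorphism in both directions.

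The main obstacle is the bookkeeping of the bar map on $(\mathfrak Y,\dagger)^\vee$: one must verify that dualizing the left multiplication $p\mapsto\overline z\,p$ yields right multiplication $q\mapsto q\,\lambda(z)$ on $\pi_1(\mathfrak Y,\dagger,z^-)$ (not, say, left multiplication by $\overline{\lambda(z)}$). Once this orientation is pinned down, and the fibre-product collapse in step one is established, the two descriptions match tautologically.
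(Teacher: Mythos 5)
Your proof is correct, but it takes a genuinely different route from the paper's. The paper defers the proof to \S\ref{ss:PropOfTensProd} and deduces the statement in one line from the product machinery: $(\mathfrak Y,\dagger)$ and $(\gf,*)$ are left-principal graphs of bisets, so by Lemma~\ref{lem:ProdOfFibrGrBis} and Lemma~\ref{lem:tensor commutes with pi1} (which rests on the normal form of Corollary~\ref{cor:NormalForm}) the functor $\pi_1$ commutes with the two tensor products, and tensoring $\pi_1(\mathfrak B,\dagger,*)$ with the regular bisets ${}_HH_H$ and ${}_GG_G$ changes nothing. You instead verify the identity by hand: the fibre products collapse because $(\mathfrak Y,\dagger)^\vee$ has $\rho=\one$ and $(\gf,*)$ has $\lambda=\one$, the vertex and edge bisets of the triple product are literally the summands of~\eqref{eq:freebiset}, and the single edge relation $b^-=b^+$ of the simplified description (source and target being one-vertex graphs) expands to exactly the relation $q b^- p = q\lambda(z)b^+\overline{\rho(z)}p$ once the contragredient bar map $q\mapsto q\lambda(z)$ is pinned down --- which you do correctly, since $\overline{\check b}=(\overline{\lambda(z)}\,b)^{\displaystyle\check{}}=\check b\,\lambda(z)$ in the groupoid. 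What your approach buys is that it is entirely elementary and makes no fibrancy assumption anywhere, matching the fact that the lemma is stated for arbitrary graphs of bisets; in particular you avoid the point that the paper's cited Lemma~\ref{lem:tensor commutes with pi1} is stated only for a left-fibrant \emph{left} factor, so that applying it to the inner product $\mathfrak B\otimes(\gf,*)$ with $\mathfrak B$ arbitrary really requires the (unstated) right-handed variant with $(\gf,*)$ right-principal. What the paper's approach buys is brevity and a conceptual placement of the lemma as an instance of ``$\pi_1$ commutes with products of fibrant graphs of bisets,'' the theme of that section. Both arguments are sound; yours is the more self-contained.
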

\noindent Note, in particular, the expression
$\pi_1((\gf,\dagger)^\vee\otimes_\gf(\gf,*))=\pi_1(\gf,\dagger,*)$ of the
set of paths from $\dagger$ to $*$ in $\gf$ in terms of graphs of bisets.

\subsection{Fibrant and covering bisets}
We now define a class of graphs of bisets for which the fundamental biset
admits a convenient description.

\begin{defn}[Fibrant and covering graph of bisets]\label{dfn:FibrationGrBis}
  Let $\mathfrak B$ be a $\mathfrak Y$-$\gf$-graph of bisets. Then
  $\mathfrak B$ is a \emph{left-fibrant} graph of bisets if $\rho\colon \mathfrak
  B\to\gf$ is a simplicial graph-map, and for every vertex $v\in \mathfrak
  B$ and every edge $f\in \gf$ with $f^-=\rho(v)$ the map
  \begin{equation}\label{eq:dfn:FibrationGrBis}
    \bigsqcup_{\substack{e\in \rho^{-1}(f)\\ e^-=v}} G_{\lambda(v)}
    \otimes_{G_{\lambda(e)}} B_e\rightarrow B_v,\qquad g\otimes b\mapsto g b^-
  \end{equation}
  is a $G_{\lambda(v)}$-$G_f$ biset isomorphism, for the action of $G_f$ on
  $B_v$ given via $()^-\colon G_f\to G_{\rho(v)}$.

  If in addition $B_z$ is a left-free biset for every object $z\in \gf$,
  then $\mathfrak B$ is a \emph{covering}, or \emph{left-free}, graph
  of bisets.
  
  If furthermore $B_z$ is left-principal for every $z\in \mathfrak B$
  and $\rho \colon \mathfrak B\to \mathfrak Y$ is a graph isomorphism,
  then $\mathfrak B$ is \emph{left-principal}.  Right-fibrant, -free,
  and -principal graphs of bisets are defined similarly. For example,
  the biset associated with a morphism of groups as in
  Example~\ref{ex:map=biset} is right-principal.
\end{defn}  
We think about~\eqref{eq:dfn:FibrationGrBis} as a unique lifting
property: we may always rewrite
$p b q\in \pi_1(\mathfrak B,\dagger,*)$ as
$p' b'\in\pi_1(\mathfrak,\dagger,*)$ in an essentially unique way, as
we will see in Corollary~\ref{cor:NormalForm}. We will show in
Lemma~\ref{lem:DefnPrincipGrBis} that for left-principal graphs of
bisets~\eqref{eq:dfn:FibrationGrBis} follows automatically.
  
For every object $y\in \mathfrak Y$ let us denote by $\widehat G_y$
the image of $G_y$ in the fundamental groupoid $\pi_1(\mathfrak Y)$;
we write $g\mapsto \widehat g$ the associated natural quotient map
$G_y\to\widehat G_y$. We define $\widehat G_x$ and $g\to\widehat g$
similarly for $x\in\mathfrak \gf$.  Finally, for $z\in \mathfrak B$ we
set
\[\widehat B_z\coloneqq \widehat G_{\lambda(z)}\otimes_{G_{\lambda(z)}} B_z \otimes_{G_{\rho(z)}}  \widehat G_{\rho(z)}\]
and denote by $b\mapsto\widehat b$ the natural quotient map $B_z\to
\widehat B_z$.

Let $\widehat{\mathfrak B}$ be the graph of bisets obtained from $\mathfrak
B$ by replacing all $G_x, G_y, B_z$ by $\widehat G_x,\widehat G_y,\widehat
B_z$ respectively.  It is immediate that $\pi_1( { \mathfrak B}, \dagger ,
* )$ and $\pi_1(\widehat{\mathfrak B}, \dagger ,*)$ are naturally
isomorphic via $y b x\mapsto \widehat y\widehat b\widehat x$.
\begin{thm}\label{thm:FaithfGrOfBis}
  Suppose that $\mathfrak B$ is a left-fibrant biset. Then
  \begin{enumerate}
  \item \label{en0:thm:FaithfGrOfBis} $\widehat{\mathfrak B}$ is left-fibrant;
  \item \label{en1:thm:FaithfGrOfBis} $\widehat B_z \cong \widehat
    G_{\lambda(z)} \otimes_{ G_{\lambda(z)}} B_z$ via the natural map
    $1\otimes b\otimes 1 \leftarrow 1 \otimes b$;
  \item \label{en2:thm:FaithfGrOfBis} $()^-\colon\widehat B_e\to\widehat
    B_{e^-}$ are monomorphisms for all $e\in\mathfrak B$; and
  \item \label{en3:thm:FaithfGrOfBis} if $\mathfrak B$ is left-free,
    respectively left-principal, then so is $\widehat{\mathfrak B}$.
\end{enumerate}
\end{thm}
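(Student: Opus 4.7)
I plan to prove the four items in the order (\ref{en1:thm:FaithfGrOfBis}), (\ref{en3:thm:FaithfGrOfBis}), (\ref{en0:thm:FaithfGrOfBis}), (\ref{en2:thm:FaithfGrOfBis}): the first contains the main content, and the others follow relatively formally once it is established.

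For (\ref{en1:thm:FaithfGrOfBis}), the natural map $\widehat G_{\lambda(z)}\otimes_{G_{\lambda(z)}}B_z\to\widehat B_z$ sending $\widehat h\otimes b\mapsto \widehat h\otimes b\otimes 1$ is surjective, since any $\widehat h\otimes b\otimes \widehat g$ can be rewritten as $\widehat h\otimes (bg_0)\otimes 1$ using a lift $g_0\in G_{\rho(z)}$ of $\widehat g$. Injectivity is equivalent to showing that $N_{\rho(z)}\coloneqq\ker(G_{\rho(z)}\to\widehat G_{\rho(z)})$ acts trivially on the right on $\widehat G_{\lambda(z)}\otimes_{G_{\lambda(z)}}B_z$. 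To prove this, I use the groupoid presentation of $\pi_1(\mathfrak Y)$ from \S\ref{ss:GrOfGr} to obtain the recursive characterization $N_y=\bigcup_k N_y^{(k)}$, where $N_y^{(0)}=1$ and $N_y^{(k+1)}$ is the normal closure in $G_y$ of $N_y^{(k)}\cup\{h^-:h\in G_f,\ f^-=y,\ h^+\in N_{f^+}^{(k)}\}$. Then I induct on $k$, proving that for every $z\in\mathfrak B$ the right action of $N_{\rho(z)}^{(k)}$ on $\widehat G_{\lambda(z)}\otimes B_z$ is trivial. The base case is vacuous; in the step, normality and multiplicativity reduce matters to a basic relator $h^-$ with $h\in G_f$, $f^-=\rho(z)$, $h^+\in N_{f^+}^{(k)}$. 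When $z$ is a vertex, left-fibrancy yields $\widehat G_{\lambda(z)}\otimes_{G_{\lambda(z)}}B_z\cong \bigsqcup_e \widehat G_{\lambda(z)}\otimes_{G_{\lambda(e)}}B_e$ (over edges $e\in\rho^{-1}(f)$ with $e^-=z$), on each summand of which the action of $h^-$ matches the right $G_f$-action via $B_e$. Applying the congruence $()^+\colon B_e\to B_{e^+}$ and the inductive hypothesis at $e^+$, I obtain $m\in N_{\lambda(e^+)}$ with $m b^+=b^+h^+$, and lift this relation to trivialize the $h$-action in $\widehat G_{\lambda(z)}\otimes_{G_{\lambda(e)}}B_e$. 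For $z\in E(\mathfrak B)$, I apply the argument at the vertex $z^-$ and transport via the congruence $B_z\to B_{z^-}$.

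For (\ref{en3:thm:FaithfGrOfBis}), if $B_z$ is left-free with basis $S$ (resp.\ left-principal, so $|S|=1$), then $\widehat G_{\lambda(z)}\otimes_{G_{\lambda(z)}}B_z\cong \widehat G_{\lambda(z)}\times S$, and (\ref{en1:thm:FaithfGrOfBis}) identifies this with $\widehat B_z$. For (\ref{en0:thm:FaithfGrOfBis}), I tensor the fibrancy isomorphism for $\mathfrak B$ at $(v,f)$ on the left with $\widehat G_{\lambda(v)}\otimes_{G_{\lambda(v)}}-$: the right-hand side becomes $\widehat B_v$ by (\ref{en1:thm:FaithfGrOfBis}), and each summand $\widehat G_{\lambda(v)}\otimes_{G_{\lambda(e)}}B_e$ on the left-hand side becomes $\widehat G_{\lambda(v)}\otimes_{\widehat G_{\lambda(e)}}\widehat B_e$, again by (\ref{en1:thm:FaithfGrOfBis}) applied at $e$; the right $\widehat G_f$-biset structure descends from the $G_f$-one, again by (\ref{en1:thm:FaithfGrOfBis}). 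For (\ref{en2:thm:FaithfGrOfBis}), I factor $()^-\colon \widehat B_e\to\widehat B_{e^-}$ as $\widehat B_e\hookrightarrow \widehat G_{\lambda(e^-)}\otimes_{\widehat G_{\lambda(e)}}\widehat B_e\hookrightarrow \widehat B_{e^-}$; the first arrow is injective because $\widehat G_{\lambda(e)}\hookrightarrow \widehat G_{\lambda(e^-)}$ embeds as a subgroup (both sit in $\pi_1(\mathfrak Y)$ at basepoint $\lambda(e^-)$, as the images of $G_{\lambda(e)},G_{\lambda(e^-)}$ respectively, consistent with Lemma~\ref{lem:InjGraphGroups} applied to $\widehat{\mathfrak Y}$), and the second is the inclusion of one summand in the fibrancy decomposition of $\widehat B_{e^-}$ from (\ref{en0:thm:FaithfGrOfBis}).

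The main obstacle is the inductive step in (\ref{en1:thm:FaithfGrOfBis}): transferring the trivialization from $\widehat G_{\lambda(e^+)}\otimes B_{e^+}$ back to $\widehat G_{\lambda(z)}\otimes_{G_{\lambda(e)}}B_e$ requires tracking how the left-acting group changes across the edge map $G_{\lambda(e)}\to G_{\lambda(e^+)}$, which need not be injective, and producing from $m\in N_{\lambda(e^+)}$ a suitable relation in $B_e$ whose image under the edge map lies in $N_{\lambda(z)}$ and so is killed after tensoring on the left with $\widehat G_{\lambda(z)}$. A second bookkeeping step is the verification of the recursive description of $N_y$, which follows from iteratively reducing a word-representation of an element in the kernel using the groupoid relations $g^- x=x g^+$ and $x\overline x=1$.
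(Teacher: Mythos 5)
Your reduction of everything to claim~(\ref{en1:thm:FaithfGrOfBis}) is sound, and your derivations of (\ref{en0:thm:FaithfGrOfBis}), (\ref{en2:thm:FaithfGrOfBis}), (\ref{en3:thm:FaithfGrOfBis}) from it match what the paper does (tensor \eqref{eq:dfn:FibrationGrBis} on the left by $\widehat G_{\lambda(v)}$). But your proof of (\ref{en1:thm:FaithfGrOfBis}) has a genuine gap at the step you yourself flag as ``the main obstacle'', and the obstacle is not where you locate it. Having obtained $m\in N_{\lambda(e^+)}$ with $b^+h^+=mb^+$ in $B_{e^+}$, you propose to ``lift this relation'' to $B_e$. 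The relation, however, lives downstream of the congruence $()^+\colon B_e\to B_{e^+}$, which is in general far from injective (in the extreme, $B_{e^+}$ could be a single point, making the downstairs relation vacuous), so a relation in $B_{e^+}$ carries no information about $B_e$ by itself. What rescues the step is not bookkeeping about the group map $G_{\lambda(e)}\to G_{\lambda(e^+)}$ but a second application of the left-fibrancy hypothesis, at the vertex $e^+$ for the reversed edge $\overline f$: it identifies $G_{\lambda(e^+)}\otimes_{G_{\lambda(\overline e)}}B_{\overline e}$ with a summand of $B_{e^+}$ preserved by both actions, so the relation $b^+h^+=mb^+$ pulls back to $\overline b\,\overline h=k\overline b$ in $B_{\overline e}$ for some $k\in G_{\lambda(\overline e)}$ with $k^-=m$; since $m$ dies in $\pi_1(\mathfrak Y)$ and $g^-\lambda(\overline e)=\lambda(\overline e)g^+$ in the groupoid, $k^+\in N_{\lambda(v)}$, and applying $\overline{(\ )}$ gives $bh=\overline k b$ with $\overline k{}^-\in N_{\lambda(v)}$, which is exactly what is killed in $\widehat G_{\lambda(v)}\otimes_{G_{\lambda(e)}}B_e$. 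Your sketch never invokes fibrancy at $e^+$, and without it the transfer genuinely fails. A secondary debt is the recursive characterization $N_y=\bigcup_kN_y^{(k)}$: the containment $\supseteq$ is itself a normal-form statement whose clean proof is to pass to the quotient graph of groups by the $N^{(k)}$'s, observe its edge maps are injective, and apply Lemma~\ref{lem:InjGraphGroups} --- more than word-shuffling.

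It is worth comparing with the paper's route, which sidesteps both issues. The paper assembles $C_x=\bigsqcup_{z\in\rho^{-1}(x)}\widehat G_{\lambda(z)}\otimes B_z$, shows (Lemma~\ref{lem:Aut(C_x)}) that the image of $G_f$ in $\Aut(C_f)$ embeds into the image of $G_{f^-}$ in $\Aut(C_{f^-})$ --- precisely the content of the two uses of fibrancy above, but stated once and for all --- and then replaces $\gf$ by the quotient graph of groups $\gf'$ with groups $\theta_x(G_x)$. Since $\gf'$ has injective edge maps, Lemma~\ref{lem:InjGraphGroups} gives $\ker(G_x\to\pi_1(\gf))\subseteq\ker\theta_x$ in one stroke, which is your desired triviality of the $N_x$-action. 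Your induction is an unwinding of that argument; if you repair the transfer step as above and prove (or cite a proof of) the recursive description of $N_y$, it goes through, but you will essentially have re-derived Lemma~\ref{lem:InjGraphGroups} by hand.
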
 
\begin{proof}
  For $z\in\mathfrak B$ let us write $B'_z\coloneqq\widehat G_{\lambda(z)}\otimes
  B_z$, viewed as a right $G_{\rho(z)}$-set. For $x\in \gf$ let us write
  \[C_x\coloneqq\bigsqcup_{z\in\rho^{-1}(x)}B'_z,
  \]
  viewed as a right $G_x$-set. In particular, if $f\in\gf$ is an edge,
  then~\eqref{eq:dfn:FibrationGrBis} gives a bijection
  \begin{equation}\label{eq:RedOf:dfn:FibrationGrBis}
    \bigsqcup_{e\in\rho^{-1}(f)}\big(\widehat G_{\lambda(e^-)}/\widehat
    G_{\lambda(e)}\big)\times B'_e\to C_{f^-}
  \end{equation}
  of right $G_f$-sets. Finally, for $x\in\gf$ we denote by $\Aut(C_x)$ the
  set of \emph{pure automorphisms} of $C_x$:
  \[\Aut(C_x)\coloneqq \{\phi=(\phi_z)_{z\in\rho^{-1}(x)}\mid
  \phi_z\in(B'_z)\perm,\;(g b)^{\phi_z}=g\,b^{\phi_z}\;\forall b\in B'_z,
  g\in \widehat G_{\lambda(z)}\}.
  \]
  The right action of $G_x$ on $C_x$ induces a homomorphism
  \[\theta_x\colon G_{x}\to\Aut(C_x).\]

  \begin{lem}\label{lem:Aut(C_x)}
    Suppose $\mathfrak B$ is a left-fibrant biset. Then for every edge
    $f\in \gf$ there is a natural embedding
    \begin{equation}\label{eq:lem:Aut(C_x)}
      ()^-\colon\begin{cases}\Aut(C_f) \hookrightarrow \Aut(C_{f^-})\\
        (\phi_e)_{e\in\rho^{-1}(f)} \mapsto  \left[(g,b)\in\big(\widehat G_{\lambda(e^-)}/\widehat
    G_{\lambda(e)}\big)\times B'_e\mapsto(g,b^{\phi_e})\right]
      \end{cases}
    \end{equation}
    under the identification of $C_{f^-}$ given
    by~\eqref{eq:RedOf:dfn:FibrationGrBis}. Moreover, $()^-$ embeds
    $\theta_f(G_f)$ into $\theta_{f^-}(G_{f^-})$.
  \end{lem}
  \begin{proof}
    The contents of~\eqref{eq:RedOf:dfn:FibrationGrBis} that we use is that
    every $G_f$-set occurring in $C_f$ occurs as a summand of $C_{f^-}$,
    with multiplicity $\ge1$. Therefore $()^-$ is injective. Since the map
    $()^-\colon G_f\to G_{f^-}$ is equivariant with respect to the actions
    on $C_f$ and $C_{f^-}$, we get $\theta_{f^-}(G_{f^-})\cong (\theta_f(G_f))^-\le
    \theta_{f^-}(G_{f^-})$.
  \end{proof}
 
  Let $\gf'$ be the graph of groups obtained from $\gf$ by replacing each
  $G_x$ with $\theta_{x}(G_x)$, with maps $()^-$ given
  by~\eqref{eq:lem:Aut(C_x)}. By Lemma~\ref{lem:InjGraphGroups}, every
  $\theta_{x}(G_x)$ embeds into the fundamental groupoid $\pi_1(\gf')$. We
  also have an epimorphism $\theta\colon\pi_1(\gf)\to\pi_1(\gf')$ induced
  by the epimorphisms $\theta_x\colon G_x\to\theta_x(G_x)$.  Therefore,
  $\theta_x\colon G_x\to\theta_x(G_x)$ descends to a map $\widehat
  \theta_x\colon \widehat G_x\to\theta_x(G_x)$. The action of $\widehat
  G_{\rho(z)}$ on $\widehat B_z$ therefore lifts to $B'_z$, and this
  completes the proof of Claim~\eqref{en1:thm:FaithfGrOfBis}.

  Multiplying~\eqref{eq:dfn:FibrationGrBis} on the left by $\widehat
  G_{\lambda(v)}$ gives then Claim~\eqref{en0:thm:FaithfGrOfBis} as well as
  Claim~\eqref{en2:thm:FaithfGrOfBis}.
 
  Finally, if $B_z$ is left-free then so is $\widehat
  G_{\lambda(z)}\otimes_{G_{\lambda(z)}} B_z=B'_z\cong \widehat B_z$, and
  similarly if $B_z$ is left-principal then so is $\widehat B_z$; this
  finishes the proof of Theorem~\ref{thm:FaithfGrOfBis}.
\end{proof}

\subsection{Canonical form for fibrant bisets}
We now derive a canonical expression for element in a left-fibrant
biset. Let us set
\begin{multline*}
  S\coloneqq\bigsqcup_{z\in V(\mathfrak B)}\pi_1(\mathfrak Y,\dagger,\lambda(z)) \otimes_{\widehat G_{\lambda(z)}} \widehat B_z \otimes_{\widehat G_{\rho(z)}} \pi_1(\gf,\rho(z),*)
  \\ \cong\bigsqcup_{z\in V(\mathfrak B)}\pi_1(\mathfrak Y,\dagger,\lambda(z))\otimes_{G_{\lambda(z)}}B_z\otimes_{G_{\rho(z)}}\pi_1(\gf,\rho(z),*).
\end{multline*}
Then $\pi_1(\mathfrak B, \dagger,*)\cong S/_{\sim}$, where $\sim$ is the
equivalence relation from~\eqref{eq:freebiset}. For any path $p\in
\pi_1(\gf, x, *)$ starting at some vertex $x\in\gf$, we consider the
following subset of $S$:
\[S(p)\coloneqq\bigcup_{z\in\rho^{-1}(x)}\pi_1(\mathfrak Y,\dagger,\lambda(z))\otimes_{\widehat G_{\lambda(z)}}\widehat B_z\otimes_{\widehat G_{\rho(z)}} p\]
viewed as a left $\pi_1(\mathfrak Y,\dagger)$-set.
  
\begin{prop}\label{prop:S(x)}
  Suppose $\mathfrak B$ is a left-fibrant graph of bisets. Then for every $p_1,p_2\in
  \pi_1(\gf, -, *)$ the relation $\sim$ on $S(p_1)\times S(p_2)$ is, in
  fact, a $\pi_1(\mathfrak Y,\dagger)$-set isomorphism between $S(p_1)$ and
  $S(p_2)$.
\end{prop}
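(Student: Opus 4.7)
\emph{Proof plan.} The approach is to reduce to elementary moves in the fundamental groupoid $\pi_1(\gf)$ and to exploit left-fibrancy as a unique lifting property. Any two elements $p_1,p_2\in\pi_1(\gf,-,*)$ differ by left-multiplication by an element of $\pi_1(\gf)$, which factors as a composition of edges $f\in E(\gf)$ and vertex-group elements $g\in G_x$; it therefore suffices to prove the proposition in the two elementary cases $p_2=gp_1$ and $p_2=\overline f p_1$, and then compose the resulting bijections. The group-element case is trivial: the tensor relation $\otimes_{\widehat G_{\rho(z)}}$ already identifies $q\otimes b\otimes gp_1=q\otimes bg\otimes p_1$ in $S$, so $S(gp_1)=S(p_1)$ as subsets of $S$ and $\sim$ restricts to the identity, which is manifestly $\pi_1(\mathfrak Y,\dagger)$-equivariant.

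The edge case is where left-fibrancy enters. Given $q\otimes b'\otimes p_1\in S(p_1)$ with $b'\in\widehat B_v$ and $v\in\rho^{-1}(f^-)$, the biset isomorphism~\eqref{eq:dfn:FibrationGrBis} produces a unique (modulo the $G_{\lambda(e)}$-action) decomposition $b'=g\cdot b^-$ with $e\in\rho^{-1}(f)$, $e^-=v$, $g\in G_{\lambda(v)}$, $b\in B_e$. The defining edge relation of $\sim$ then yields
\begin{equation*}
q\otimes b'\otimes p_1=qg\otimes b^-\otimes p_1\sim qg\lambda(e)\otimes b^+\otimes\overline f p_1\in S(\overline f p_1),
\end{equation*}
which I take as the value of the forward bijection. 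The inverse is constructed by the symmetric procedure, applying left-fibrancy to the edge $\overline f$ at each $w\in\rho^{-1}(f^+)$ to decompose $c\in\widehat B_w$ uniquely as $c=g'\cdot b^+$ with $\bar e\in\rho^{-1}(\overline f)$ and $\bar e^-=w$ (i.e.\ $e^+=w$). The uniqueness of the fibrant decomposition, combined with the congruence properties of $()^-$ and $\overline{()}$, makes the two maps mutual inverses and $\pi_1(\mathfrak Y,\dagger)$-equivariant (since $q$ is acted on only from the left).

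The main obstacle I anticipate is global coherence: showing that the bijection $S(p_1)\to S(p_2)$ obtained by composing elementary bijections depends only on $p_1$ and $p_2$, not on the chosen factorization of $p_2 p_1^{-1}$ in $\pi_1(\gf)$. I would handle this by replacing $\mathfrak B$ with $\widehat{\mathfrak B}$ (permissible because $\pi_1(\widehat{\mathfrak B},\dagger,*)\cong\pi_1(\mathfrak B,\dagger,*)$ and the construction of $S(p)$ is preserved by the hat functor, using Theorem~\ref{thm:FaithfGrOfBis}\eqref{en1:thm:FaithfGrOfBis}), and then invoking Theorem~\ref{thm:FaithfGrOfBis}\eqref{en2:thm:FaithfGrOfBis}: after hatting, the maps $\widehat B_e\hookrightarrow\widehat B_{e^-}$ are monomorphisms, so the fibrant decomposition $b'=gb^-$ is literally unique and each round-trip $\sim_{\overline f}\circ\sim_f$ on $S(p_1)$ reduces to the identity by a direct calculation (apply the decomposition to $b^+\in\widehat B_{e^+}$, noting by uniqueness that one must have $e$ and $b$ back again, then use $\lambda(e)\overline{\lambda(e)}=1$ and $f\overline f=1$ in the groupoids). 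Coherence for longer words in $\pi_1(\gf)$ then follows from its standard groupoid presentation, whose defining relations --- round-trips $f\overline f=1$ and the vertex-group relations --- are precisely those respected by the elementary bijections. Thus $\sim$ cuts out the graph of a $\pi_1(\mathfrak Y,\dagger)$-set isomorphism $S(p_1)\xrightarrow{\sim}S(p_2)$, as claimed.
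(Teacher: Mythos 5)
Your argument follows essentially the same route as the paper's proof: handle $p\mapsto gp$ trivially via the tensor identification, and for an edge $f$ use the fibrancy isomorphism~\eqref{eq:dfn:FibrationGrBis} to decompose each $\widehat B_v$ with $v\in\rho^{-1}(f^-)$ over the edges above $f$, then use Theorem~\ref{thm:FaithfGrOfBis}\eqref{en2:thm:FaithfGrOfBis} (so that $b^-\mapsto b^+$ is a bijection $\widehat B^-_e\to\widehat B^+_e$) to see that the generating relation of~\eqref{eq:freebiset}, restricted to $S(fp)\times S(p)$, is the graph of a $\pi_1(\mathfrak Y,\dagger)$-equivariant bijection. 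You are more explicit than the printed proof about the coherence question --- that the composite of elementary bijections must depend only on the class of the connecting word in $\pi_1(\gf)$, without which $\sim$ could a priori identify two distinct elements of the same $S(p)$ --- and your round-trip computation for $f\overline f=1$ is the right check. One correction to that step: the presentation of $\pi_1(\gf)$ recalled in~\S\ref{ss:GrOfGr} has, besides the relations of the vertex groups and $f\overline f=1$, the relations $g^-f=fg^+$ for $g\in G_f$, which your list omits and which must also be verified. They do hold: since~\eqref{eq:dfn:FibrationGrBis} is an isomorphism of right $G_f$-sets, the fibrant decomposition of $b'g^-$ is $\tilde g\,(\tilde b g)^-$ whenever that of $b'$ is $\tilde g\,\tilde b^-$, and $(\tilde b g)^+=\tilde b^+g^+$ because $\overline{(\,)}$ is a congruence; so the two composites agree. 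With that supplement your proof is complete and matches the paper's in substance.
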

\begin{proof}
  If $p\in\pi_1(\gf,x,*)$ then, clearly, $S(p)=S(g p)$ for every $g\in
  G_x$. We now show that for every edge $f\in\gf$ with $f^+=x$ the
  equivalence $q b^-p = q\lambda(z)b^+\overline{\rho(z)}p$ in
  \eqref{eq:freebiset} induces an isomorphism between $S(f p)$ and
  $S(p)$. Since $\widehat B^+_e$ is isomorphic to $\widehat B^-_e$ by
  Theorem~\ref{thm:FaithfGrOfBis}\eqref{en2:thm:FaithfGrOfBis} we have the
  required isomorphism:
\begin{align*}
  S(f p) &= \bigsqcup_{z\in \rho^{-1}(f^-)}\pi_1(\widehat{\mathfrak
    Y},\dagger,\lambda(z))\otimes_{\widehat G_{\lambda(z)}}\widehat B_z\otimes f p\\
  &= \bigsqcup_{z\in \rho^{-1}(f^-)}\pi_1(\widehat{\mathfrak Y},\dagger,\lambda(z))
  \otimes_{\widehat G_{\lambda(z)}}\bigg(\bigsqcup_{\substack{e\in \rho^{-1}(f)\\e^-=z}}
  \widehat G_{\lambda(e^-)} \otimes \widehat B^-_e \bigg) \otimes f p\text{
    using }\eqref{eq:dfn:FibrationGrBis}\\
  &= \bigsqcup_{e\in \rho^{-1}(f)}\pi_1(\widehat{\mathfrak Y},\dagger,\lambda(e^-))
  \otimes_{\widehat G_{\lambda(e^-)}}\widehat B^-_e\otimes f p\\
  &\cong \bigsqcup_{e\in \rho^{-1}(f)}\pi_1(\widehat{\mathfrak
    Y},\dagger,\lambda(e^-))\otimes  (\lambda(e) \widehat B^+_e
  \overline{\rho(e)})\otimes f p\text{ using }q b^-p \sim q\lambda(z)b^+\overline{\rho(z)}p\\
  &=\bigsqcup_{e\in \rho^{-1}(f)}\pi_1(\widehat{\mathfrak Y},\dagger,\lambda(e^+))\otimes \widehat B^+_e\otimes p =S(p).\qedhere
\end{align*}  
\end{proof}

\begin{cor}[Canonical form of $\pi_1(\mathfrak B, \dagger, *)$]\label{cor:NormalForm}
  Let ${}_{\mathfrak Y}\mathfrak B_\gf$ be a left-fibrant graph of
  bisets. Then its fundamental graph of bisets has the following
  description
  \begin{equation}
  \label{eq:cor:NormalForm}
    \pi_1(\mathfrak B,\dagger,*)=\bigsqcup_{z\in \rho^{-1}(*)}\pi_1(\mathfrak Y,\dagger,\lambda(z))\otimes_{G_{\lambda(z)}}B_z,
\end{equation}
 with right action given by lifting of paths in $\pi_1(\gf,*)$. 
\end{cor}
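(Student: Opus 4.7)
The plan is to deduce the corollary directly from Proposition~\ref{prop:S(x)} by specializing to the trivial path $1_* \in \pi_1(\gf, *, *)$.

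Recall that $\pi_1(\mathfrak B, \dagger, *) = S/{\sim}$ and that every element of $S$ lies in some $S(p)$ for a path $p \in \pi_1(\gf, -, *)$. Proposition~\ref{prop:S(x)} asserts that, for any two such paths $p_1, p_2$, the restriction of $\sim$ to $S(p_1) \times S(p_2)$ is a bijection $S(p_1) \cong S(p_2)$. Combined with the fact that $\sim$ is an equivalence relation, this forces every $\sim$-class to meet each $S(p)$ in exactly one element. Specializing to $p = 1_*$ will give the set-theoretic bijection $\pi_1(\mathfrak B, \dagger, *) \cong S(1_*)$. I would then simplify
\[
S(1_*) = \bigsqcup_{z \in \rho^{-1}(*)} \pi_1(\mathfrak Y, \dagger, \lambda(z)) \otimes_{\widehat G_{\lambda(z)}} \widehat B_z
\]
(absorbing the redundant $\otimes_{\widehat G_*} 1_*$ factor) and invoke Theorem~\ref{thm:FaithfGrOfBis}\eqref{en1:thm:FaithfGrOfBis} to rewrite this as $\bigsqcup_{z \in \rho^{-1}(*)} \pi_1(\mathfrak Y, \dagger, \lambda(z)) \otimes_{G_{\lambda(z)}} B_z$, yielding the form claimed in~\eqref{eq:cor:NormalForm}.

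It remains to identify the induced right $\pi_1(\gf, *)$-action on this canonical form with path lifting. Given $[q \otimes b]$ with $b \in B_z$, $\rho(z) = *$, and $g \in \pi_1(\gf, *)$, the right action produces $[q \otimes b \otimes g] \in S(g)$, which must be reduced back to $S(1_*)$. Following the proof of Proposition~\ref{prop:S(x)}, if $g = f p'$ with first edge $f$, the left-fibrant property~\eqref{eq:dfn:FibrationGrBis} writes $b$ uniquely as $h \cdot b_e^-$ for a unique edge $e$ of $\mathfrak B$ with $\rho(e) = f$ and $e^- = z$, and the equivalence $b_e^- f \sim \lambda(e) b_e^+ \overline{f} f = \lambda(e) b_e^+$ moves the expression to $S(p')$ at the vertex $e^+$. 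Iterated along an edge decomposition of $g$, this traces out precisely the unique lift of $g$ through $\rho$ starting at $z$, and its endpoint gives the new basepoint for the reduced expression.

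I do not anticipate a serious obstacle: the substance of the argument is already carried by Proposition~\ref{prop:S(x)} and Theorem~\ref{thm:FaithfGrOfBis}. The only point requiring care is that the edge-by-edge reduction is well-defined (independent of the chosen decomposition of $g$), but this is automatic once one has the set-level bijection $\pi_1(\mathfrak B, \dagger, *) \cong S(1_*)$.
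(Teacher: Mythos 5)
Your proposal is correct and follows exactly the paper's route: the paper's proof is the one-line observation that the right-hand side of~\eqref{eq:cor:NormalForm} is $S(1)$, so the claim follows from Proposition~\ref{prop:S(x)}. You simply spell out the same argument in more detail, including the identification $\widehat B_z\cong\widehat G_{\lambda(z)}\otimes_{G_{\lambda(z)}}B_z$ from Theorem~\ref{thm:FaithfGrOfBis} and the edge-by-edge reduction that realizes the right action as path lifting, both of which the paper leaves implicit.
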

\begin{proof}
  Follows immediately from Proposition~\ref{prop:S(x)}, since the
  right-hand side of~\eqref{eq:cor:NormalForm} is $S(1)$.
\end{proof}

Since $\pi_1(\mathfrak B, \dagger, *)\cong \pi_1(\widehat{ \mathfrak B}, \dagger, *)$, by Theorem~\ref{thm:FaithfGrOfBis} we may rewrite~\eqref{eq:cor:NormalForm} as 
  \begin{equation}
  \label{eq:cor:NormalForm2}
    \pi_1(\mathfrak B,\dagger,*)\cong \bigsqcup_{z\in \rho^{-1}(*)}\pi_1(\widehat{\mathfrak Y},\dagger,\lambda(z))\otimes_{\widehat G_{\lambda(z)}}\widehat B_z.
\end{equation}

\begin{cor}\label{cor:HatBisLeftFree}
  Let ${}_{\mathfrak Y}\mathfrak B_\gf$ be a left-free graph of
  bisets.  Then $\pi_1(\mathfrak B, \dagger, *)$ is a left-free biset
  of degree equal to that of $\bigsqcup_{z\in\rho^{-1}(*)} B_z$.
\end{cor}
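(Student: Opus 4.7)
}
The plan is to read off the claim directly from the normal form in Corollary~\ref{cor:NormalForm}, using nothing beyond the definition of a left-free biset and the left-principality of the path bisets of a fundamental groupoid. Since left-free is defined (Definition~\ref{dfn:FibrationGrBis}) as left-fibrant together with left-freeness of each $B_z$, Corollary~\ref{cor:NormalForm} applies and yields
\[\pi_1(\mathfrak B,\dagger,*)\;=\;\bigsqcup_{z\in\rho^{-1}(*)}\pi_1(\mathfrak Y,\dagger,\lambda(z))\otimes_{G_{\lambda(z)}}B_z,\]
so it suffices to check that each summand is left-free over $\pi_1(\mathfrak Y,\dagger)$ of rank equal to the left-rank of $B_z$.

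Fix $z\in\rho^{-1}(*)$, and choose a left-basis $S_z\subset B_z$, so that $B_z\cong G_{\lambda(z)}\times S_z$ as a left $G_{\lambda(z)}$-set, with $|S_z|$ equal to the left-rank $r_z$ of $B_z$. I would then compute
\[\pi_1(\mathfrak Y,\dagger,\lambda(z))\otimes_{G_{\lambda(z)}}B_z\;\cong\;\pi_1(\mathfrak Y,\dagger,\lambda(z))\otimes_{G_{\lambda(z)}}\bigl(G_{\lambda(z)}\times S_z\bigr)\;\cong\;\pi_1(\mathfrak Y,\dagger,\lambda(z))\times S_z\]
as a left $\pi_1(\mathfrak Y,\dagger)$-set, the second bijection being the standard one $\alpha\otimes(h,s)\mapsto(\alpha h,s)$ furnished by the tensor relation $\alpha h\otimes b=\alpha\otimes h b$.

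Now the $\pi_1(\mathfrak Y,\dagger)$-$\pi_1(\mathfrak Y,\lambda(z))$-biset $\pi_1(\mathfrak Y,\dagger,\lambda(z))$ is left-principal (it is a transitive, free left $\pi_1(\mathfrak Y,\dagger)$-set), so choosing any element as a basepoint identifies it with $\pi_1(\mathfrak Y,\dagger)$ as a left $\pi_1(\mathfrak Y,\dagger)$-set. Hence the summand above is isomorphic to $\pi_1(\mathfrak Y,\dagger)\times S_z$ as a left $\pi_1(\mathfrak Y,\dagger)$-set, which is left-free of rank $r_z$.

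Assembling the summands over $z\in\rho^{-1}(*)$ gives a left $\pi_1(\mathfrak Y,\dagger)$-set isomorphic to $\pi_1(\mathfrak Y,\dagger)\times\bigsqcup_{z\in\rho^{-1}(*)}S_z$, which is left-free of rank $\sum_{z\in\rho^{-1}(*)}r_z$; this is exactly the left-rank of $\bigsqcup_{z\in\rho^{-1}(*)}B_z$, as required. There is no real obstacle here: once Corollary~\ref{cor:NormalForm} is in hand, the argument is a bookkeeping computation, and one only has to make sure not to confuse left-principality of $\pi_1(\mathfrak Y,\dagger,\lambda(z))$ (over $\pi_1(\mathfrak Y,\dagger)$) with its right-action data, which plays no role in the left-rank count.
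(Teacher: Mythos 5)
Your proof is correct and follows essentially the same route as the paper: both read the claim directly off the normal form of Corollary~\ref{cor:NormalForm}. The only (cosmetic) difference is that the paper invokes the hatted form~\eqref{eq:cor:NormalForm2} together with Theorem~\ref{thm:FaithfGrOfBis}\eqref{en3:thm:FaithfGrOfBis}, whereas you perform the tensor-collapse $\pi_1(\mathfrak Y,\dagger,\lambda(z))\otimes_{G_{\lambda(z)}}\bigl(G_{\lambda(z)}\times S_z\bigr)\cong\pi_1(\mathfrak Y,\dagger,\lambda(z))\times S_z$ explicitly on the unhatted form, which amounts to the same orbit count.
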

\noindent In particular, the number of left orbits of
$\bigsqcup_{z\in\rho^{-1}(*)}\widehat B_z$ is independent on
$*\in \mathfrak B$.
\begin{proof}
  Follows immediately from~\eqref{eq:cor:NormalForm2} and
  Theorem~\ref{thm:FaithfGrOfBis} Claim~\ref{en3:thm:FaithfGrOfBis}.
\end{proof}

\begin{lem}
  Suppose $\mathfrak B$ is a left-fibrant graph of bisets. Then
  $\pi_1(\mathfrak B, \dagger, *)$ is left-free if and only if
  $\widehat{\mathfrak B}$ is a left-free graph of bisets.
\end{lem}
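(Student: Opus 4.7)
The plan is to read off left-freeness of each $\widehat B_z$ from left-freeness of $\pi_1(\mathfrak B,\dagger,*)$ and vice versa, using the canonical form of Corollary~\ref{cor:NormalForm}.

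For the ``$\Leftarrow$'' direction, assume $\widehat{\mathfrak B}$ is a left-free graph of bisets. Since $\mathfrak B$ is left-fibrant, so is $\widehat{\mathfrak B}$ by Theorem~\ref{thm:FaithfGrOfBis}\eqref{en0:thm:FaithfGrOfBis}. Thus Corollary~\ref{cor:HatBisLeftFree} applies to $\widehat{\mathfrak B}$ and yields that $\pi_1(\widehat{\mathfrak B},\dagger,*)$ is left-free; we conclude via the natural isomorphism $\pi_1(\mathfrak B,\dagger,*)\cong\pi_1(\widehat{\mathfrak B},\dagger,*)$.

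For the ``$\Rightarrow$'' direction, suppose $\pi_1(\mathfrak B,\dagger,*)$ is left-free. By formula~\eqref{eq:cor:NormalForm2} this biset splits as a disjoint union, over $z\in\rho^{-1}(*)$, of summands $\pi_1(\widehat{\mathfrak Y},\dagger,\lambda(z))\otimes_{\widehat G_{\lambda(z)}}\widehat B_z$, each of which must therefore be a left-free $\pi_1(\widehat{\mathfrak Y},\dagger)$-set. The biset $\pi_1(\widehat{\mathfrak Y},\dagger,\lambda(z))$ is left-principal as a $\pi_1(\widehat{\mathfrak Y},\dagger)$-set and right-free as a $\widehat G_{\lambda(z)}$-set; fixing any path $p_0\in\pi_1(\widehat{\mathfrak Y},\dagger,\lambda(z))$ gives an embedding $\phi(h)=p_0 h p_0^{-1}$ of $\widehat G_{\lambda(z)}$ into $\pi_1(\widehat{\mathfrak Y},\dagger)$ together with a $\pi_1(\widehat{\mathfrak Y},\dagger)$-equivariant identification of the $z$th summand with the induced biset $\pi_1(\widehat{\mathfrak Y},\dagger)\otimes_{\phi(\widehat G_{\lambda(z)})}\widehat B_z$. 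For any subgroup inclusion, the stabilizer in $G$ of an element $g\otimes b$ of an induced set $G\otimes_H B$ equals $g\,\mathrm{Stab}_H(b)\,g^{-1}$, so the induced biset is left-free exactly when $\widehat B_z$ is left-free over $\widehat G_{\lambda(z)}$.

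This produces left-freeness of $\widehat B_z$ for all $z\in\rho^{-1}(*)\cap V(\mathfrak B)$. Since the left $\pi_1(\mathfrak Y,\dagger)$-set structure on $\pi_1(\mathfrak B,\dagger,*)$ is independent of $*$ up to $\pi_1(\mathfrak Y,\dagger)$-equivariant bijection (right multiplication with a path in $\gf$), letting $*$ range over $V(\gf)$ covers every vertex of $\mathfrak B$. Finally, for an edge $e\in E(\mathfrak B)$, Theorem~\ref{thm:FaithfGrOfBis}\eqref{en2:thm:FaithfGrOfBis} supplies a monomorphism $\widehat B_e\hookrightarrow\widehat B_{e^-}$ equivariant with respect to the injection $\widehat G_{\lambda(e)}\hookrightarrow\widehat G_{\lambda(e)^-}$ (injective because both embed into $\pi_1(\widehat{\mathfrak Y})$), so trivial stabilizers in $\widehat B_{e^-}$ restrict to trivial stabilizers in $\widehat B_e$. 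The main obstacle is the induced-biset analysis in the converse direction, where one must exploit the left-principal/right-free structure of $\pi_1(\widehat{\mathfrak Y},\dagger,\lambda(z))$ to bridge the left $\pi_1(\widehat{\mathfrak Y},\dagger)$-action and the left $\widehat G_{\lambda(z)}$-action.
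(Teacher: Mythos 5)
Your proof is correct and follows essentially the same route as the paper, which disposes of the lemma in one line by appealing to the canonical form~\eqref{eq:cor:NormalForm2} and observing that $\pi_1(\mathfrak B,\dagger,*)$ is left-free if and only if every $\widehat B_z$ is. Your write-up merely supplies the details the paper leaves implicit: the stabilizer computation for induced sets, varying $*$ over the vertices of $\gf$ to reach every vertex of $\mathfrak B$, and restricting along the monomorphisms of Theorem~\ref{thm:FaithfGrOfBis}\eqref{en2:thm:FaithfGrOfBis} to handle the edge bisets.
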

\begin{proof}
  It follows from~\eqref{eq:cor:NormalForm2} that
  $\pi_1(\mathfrak B, \dagger, *)$ is left-free if and only if every
  $\widetilde {B}_z$ is left-free.
\end{proof}
\noindent The following corollary is an adaptation of
Lemma~\ref{lem:InjGraphGroups} to the context of bisets:
\begin{cor}
  Let $\mathfrak B$ be a left-fibrant graph of bisets. If for $y\in
  \mathfrak Y$ the maps $()^-\colon G_y\to G_{y^-}$ are injective, then the
  natural maps
  \[B_z\to\pi_1(\mathfrak B,\lambda(z),\rho(z))\qquad b\mapsto 1\otimes b
  \otimes 1
  \]
  are also injective for all vertices $z\in\mathfrak B$.
\end{cor}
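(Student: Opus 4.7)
The plan is to identify the natural map $B_z\to\pi_1(\mathfrak B,\lambda(z),\rho(z))$ with an inclusion into one specific summand of the canonical-form decomposition provided by Corollary~\ref{cor:NormalForm}, and then to verify injectivity of the resulting tensor-product map using Lemma~\ref{lem:InjGraphGroups}. Since $z$ is a vertex of $\mathfrak B$ and $\rho$ is simplicial (as $\mathfrak B$ is left-fibrant), $\rho(z)$ is a vertex of $\gf$, so I may take $\dagger=\lambda(z)$ and $*=\rho(z)$ and apply the canonical form:
\[\pi_1(\mathfrak B,\lambda(z),\rho(z))=\bigsqcup_{z'\in\rho^{-1}(\rho(z))}\pi_1(\mathfrak Y,\lambda(z),\lambda(z'))\otimes_{G_{\lambda(z')}}B_{z'}.\]
The vertex $z$ itself lies in $\rho^{-1}(\rho(z))$, contributing a distinguished summand $\pi_1(\mathfrak Y,\lambda(z))\otimes_{G_{\lambda(z)}}B_z$ into which the image of $b\mapsto 1\otimes b\otimes 1$ falls, as $1\otimes b$, after the canonical-form identification.

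For the second step I would apply Lemma~\ref{lem:InjGraphGroups} to the graph of groups $\mathfrak Y$, whose hypotheses match those of our corollary exactly; this yields an injection $G_{\lambda(z)}\hookrightarrow\pi_1(\mathfrak Y,\lambda(z))$. Hence $\pi_1(\mathfrak Y,\lambda(z))$ is right-free as a $G_{\lambda(z)}$-set, so choosing coset representatives $T\subset\pi_1(\mathfrak Y,\lambda(z))$ containing $1$ yields
\[\pi_1(\mathfrak Y,\lambda(z))\otimes_{G_{\lambda(z)}}B_z\cong\bigsqcup_{t\in T}B_z,\]
under which $1\otimes b$ corresponds to $b$ in the $t=1$ copy. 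This makes $b\mapsto 1\otimes b$ visibly injective, and composing with the inclusion of a summand completes the argument.

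I do not anticipate a serious obstacle: both steps are direct applications of earlier results. The only point deserving care is the first step's tracking of $1\otimes b\otimes 1$ through the normal-form identification in the proof of Proposition~\ref{prop:S(x)}. But that identification is precisely the process of moving the right-hand factor to the trivial path $p=1$, which sends $1\otimes b\otimes 1$ in the $z$-summand to $1\otimes b$ in the $z$-summand of the canonical form, exactly as required.
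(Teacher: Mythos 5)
Your proof is correct and follows essentially the same route as the paper's: both arguments combine Lemma~\ref{lem:InjGraphGroups} with the canonical form of Corollary~\ref{cor:NormalForm}, the only cosmetic difference being that the paper phrases the freeness of $\pi_1(\mathfrak Y,\lambda(z))$ as a right $G_{\lambda(z)}$-set via the hatted objects $\widehat G_y\cong G_y$ and $\widehat B_z\cong B_z$ of Theorem~\ref{thm:FaithfGrOfBis}, whereas you unwind the same splitting $\pi_1(\mathfrak Y,\lambda(z))\otimes_{G_{\lambda(z)}}B_z\cong\bigsqcup_{t\in T}B_z$ directly.
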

\begin{proof}
  By Lemma~\ref{lem:InjGraphGroups} we have $G_y \cong \widehat G_y$ for
  all $y\in \mathfrak Y$. Therefore, by
  Theorem~\ref{thm:FaithfGrOfBis}\eqref{en1:thm:FaithfGrOfBis} we have
  $B_z\cong \widehat B_z$ for all $z\in \mathfrak B$. In particular, all
  $B_z\to\pi_1(\mathfrak B,\lambda(z),\rho(z))$ are injections by
  Corollary~\ref{cor:NormalForm}.
\end{proof}

Consider a graph of bisets ${}_{\mathfrak Y}\mathfrak B_{\gf}$. As for
graphs of groups, ${}_{\mathfrak Y}\mathfrak B_{\gf}$ has a barycentric
subdivision ${}_{\mathfrak Y'}\mathfrak B'_{\gf'}$: all graphs in
${}_{\mathfrak Y'}\mathfrak B'_{\gf'}$ are the barycentric subdivisions of
those in ${}_{\mathfrak Y}\mathfrak B_{\gf}$, the vertex groups and bisets
$G_y,G_x, B_z$ with $y\in \mathfrak Y', x\in \gf', z\in \mathfrak B'$ are
the groups and bisets of the associated objects in ${}_{\mathfrak
  Y}\mathfrak B_{\gf}$, the edge groups and bisets in ${}_{\mathfrak
  Y'}\mathfrak B'_{\gf'}$ represent the group morphisms and the biset
congruences of ${}_{\mathfrak Y}\mathfrak B_{\gf}$.

\begin{lem}\label{lmm:BarSubGrBis}
  Let ${}_{\mathfrak Y'}\mathfrak B'_{\gf'}$ be the barycentric subdivision
  of a graph of bisets ${}_{\mathfrak Y}\mathfrak B_{\gf}$. Then for
  $\dagger\in \mathfrak Y$ and $* \in \mathfrak \gf$ we have a natural
  isomorphism $\pi_1(\mathfrak B,\dagger, *)\cong \pi_1(\mathfrak
  B',\dagger, *)$.

  If ${}_{\mathfrak Y}\mathfrak B_{\gf}$ is left-fibrant (left-covering,
  etc.), then so is ${}_{\mathfrak Y'}\mathfrak B'_{\gf'}$.
\end{lem}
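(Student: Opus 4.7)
I will verify both assertions by direct inspection of the subdivided data. First I unwind the construction: $\mathfrak B'$ has vertex set $V(\mathfrak B)\sqcup\{[z]\mid z\in E(\mathfrak B)\}$ and edge set $E(\mathfrak B)\times\{+,-\}$ with $\overline{(z,\varepsilon)}=(\overline z,-\varepsilon)$. The midpoint $[z]$ carries the biset $B_{[z]}\coloneqq B_z$, each half-edge $(z,\varepsilon)$ carries $B_{(z,\varepsilon)}\coloneqq B_z$; the biset congruence at the midpoint end is the identity, while at the non-midpoint end it is the original $()^\varepsilon$ of $\mathfrak B$. The maps $\lambda,\rho$ subdivide analogously, using that the analogous subdivided graphs of groups $\mathfrak Y',\gf'$ are exactly the targets.

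For the isomorphism $\pi_1(\mathfrak B,\dagger,*)\cong\pi_1(\mathfrak B',\dagger,*)$, I use the presentation~\eqref{eq:freebiset}. Standard invariance of the fundamental groupoid of a graph of groups under subdivision gives natural isomorphisms $\pi_1(\mathfrak Y,\dagger,v)\cong\pi_1(\mathfrak Y',\dagger,v)$ and $\pi_1(\gf,v,*)\cong\pi_1(\gf',v,*)$ for every vertex $v$, and hence a canonical candidate map $\Phi\colon\pi_1(\mathfrak B,\dagger,*)\to\pi_1(\mathfrak B',\dagger,*)$ built summand-by-summand over $V(\mathfrak B)\subset V(\mathfrak B')$. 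To check well-definedness, I verify that the edge relation at $z\in E(\mathfrak B)$ corresponds under subdivision to the composition of the two edge relations at $(z,-)$ and $(z,+)$: traversing from $z^-$ through $[z]$ to $z^+$ using identity congruences at the midpoint recovers the original relation, via the identifications $\lambda((z,-))\lambda((z,+))=\lambda(z)$ in $\pi_1(\mathfrak Y')$ and $\rho((z,-))\rho((z,+))=\rho(z)$ in $\pi_1(\gf')$. For the inverse of $\Phi$, each midpoint summand in~\eqref{eq:freebiset} for $\mathfrak B'$ is redundant: any element $q\otimes b\otimes p$ with $b\in B_{[z]}$ can be pushed, via the edge relation at $(z,-)$, into an equivalent element of the summand at $z^-\in V(\mathfrak B)$.

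For the fibrancy claim I check~\eqref{eq:dfn:FibrationGrBis} vertex-by-vertex; simpliciality of the subdivided $\rho$ is immediate from simpliciality of the original $\rho$. At an original vertex $v\in V(\mathfrak B)$, the edges of $\gf'$ starting at $\rho(v)$ are precisely $(f,-)$ for $f\in E(\gf)$ with $f^-=\rho(v)$, and the edges of $\mathfrak B'$ over $(f,-)$ with source $v$ are $(e,-)$ for $e\in\rho^{-1}(f)$ with $e^-=v$; since $B_{(e,-)}=B_e$ and the non-midpoint congruence is the original $()^-$, the subdivided fibrancy map at $v$ over $(f,-)$ coincides verbatim with the original fibrancy map of $\mathfrak B$ at $v$ over $f$. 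At a midpoint $v=[z]$, the edges of $\gf'$ starting at $[\rho(z)]$ are $(\rho(z),+)$ and $(\overline{\rho(z)},+)$, and each is covered by a unique edge of $\mathfrak B'$ with source $[z]$ (namely $(z,+)$ and $(\overline z,+)$ respectively, using simpliciality and $\rho(\overline z)=\overline{\rho(z)}\neq\rho(z)$); the fibrancy map then reduces to the tautology $G_{\lambda(z)}\otimes_{G_{\lambda(z)}}B_z\cong B_z$ because the midpoint-side congruence is the identity. Left-freeness and left-principality carry over because the bisets of $\mathfrak B'$ are literally the bisets of $\mathfrak B$, up to the involution $\overline{()}$.

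The main step requiring care is showing that $\Phi$ and its proposed inverse are mutually inverse, i.e.\ that pushing a midpoint summand of $\mathfrak B'$ down via either of the two half-edge relations produces the same class in $\pi_1(\mathfrak B,\dagger,*)$; this is forced by the compatibility of the two halves of each subdivided edge, and amounts to the calculation sketched above showing that the two half-edge relations compose to the original edge relation.
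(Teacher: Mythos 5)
Your proof is correct, and for the first claim it takes a somewhat different route from the paper's. The paper first invokes Lemma~\ref{lem:CharOfFindBiset} to replace $\mathfrak B$ and $\mathfrak B'$ by $(\mathfrak Y,\dagger)^\vee\otimes\mathfrak B\otimes(\gf,*)$ and its subdivision, thereby reducing to the case where $\mathfrak Y$ and $\gf$ are one-vertex graphs; in that case the fundamental biset is just $\bigsqcup_{z\in V(\mathfrak B)}B_z$ modulo $b^-\sim b^+$, and the comparison with the subdivided version is immediate. You instead work directly with the full presentation~\eqref{eq:freebiset}, keeping the $\pi_1(\mathfrak Y,\dagger,\lambda(z))$ and $\pi_1(\gf,\rho(z),*)$ factors, and verify by hand that the two half-edge relations at $(z,-)$ and $(z,+)$ compose (via $\lambda'(z,-)\lambda'(z,+)=\lambda(z)$ and $\overline{\rho'(z,+)}\,\overline{\rho'(z,-)}=\overline{\rho(z)}$) to the original relation at $z$, and that each midpoint summand can be pushed into an original-vertex summand, with the two possible pushes (towards $z^-$ and towards $z^+$) reconciled by exactly that original relation. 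This is the same underlying mechanism but avoids the reduction step, at the cost of a longer explicit computation; the paper's route is shorter but leans on Lemma~\ref{lem:CharOfFindBiset} and on Lemmas~\ref{lem:tensor commutes with pi1} and~\ref{lem:ProdOfFibrGrBis} hiding behind it. For the fibrancy claim, the paper disposes of it in one sentence (the unique lifting property is respected by subdivision and is automatic at the new midpoint vertices); your vertex-by-vertex check, distinguishing original vertices (where the lifting map coincides verbatim with the original one over the half-edges $(f,-)$) from midpoints (where it is the tautology $G_{\lambda(z)}\otimes_{G_{\lambda(z)}}B_z\cong B_z$ because each of the two outgoing half-edges of $\gf'$ is covered by a unique half-edge of $\mathfrak B'$), is exactly the verification the paper leaves implicit, and is a welcome expansion of it.
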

\begin{proof}
  Write $\mathfrak B'_{\dagger, *}\coloneqq(\mathfrak
  Y',\dagger)^\vee\otimes\mathfrak B'\otimes(\gf',*)$ and $\mathfrak
  B_{\dagger, *}\coloneqq(\mathfrak Y,\dagger)^\vee\otimes\mathfrak
  B\otimes(\gf,*)$. By Lemma~\ref{lem:CharOfFindBiset}, the fundamental
  bisets of $\mathfrak B'_{\dagger, *}$ and $\mathfrak B_{\dagger, *}$ are
  isomorphic to the fundamental bisets of $\mathfrak B'$ and $\mathfrak B$
  respectively. It is also easy to see that $\mathfrak B'_{\dagger, *}$ is
  (isomorphic to) to the barycentric subdivision of $\mathfrak B_{\dagger,
    *}$ because $\pi_1(\mathfrak Y',\dagger)\cong \pi_1(\mathfrak
  Y,\dagger)$ and $\pi_1(\gf',*)\cong \pi_1(\gf,*)$. This reduces the
  problem to the case when $\mathfrak Y$ and $\gf$ are graphs with one
  vertex. In this case we have a simple description of the fundamental
  biset $B$ of $\mathfrak B$, and similarly of $\mathfrak B'$, see
  \S\ref{ss:FundBis}: $B$ is the quotient of $\bigsqcup_{z\in V(\mathfrak
    B)}B_z$ by the relation identifying $b^-$ with $b^+$, for all edges
  $z\in\mathfrak B$ and all $b\in B_z$, with respective images $b^\pm$ in
  $B_{z^\pm}$. The first claim of the lemma easily follows.

  The second part of the lemma is straightforward: the unique lifting
  property~\eqref{eq:dfn:FibrationGrBis} is respected by passing to the
  barycentric subdivision and~\eqref{eq:dfn:FibrationGrBis} holds for the
  new vertices of $\mathfrak B'$ which are the former edges of $\mathfrak
  B$.
\end{proof}

\subsection{Properties of products}\label{ss:PropOfTensProd}
\begin{lem}\label{lm:MorphTensProd}
  For every graphs of bisets ${}_{\mathfrak Y}\mathfrak B_\gf$ and
  ${}_\gf\mathfrak C_{\mathfrak W}$ there exists a biset morphism
  \begin{equation}\label{eq:lm:MorphTensProd}
    \pi_1(\mathfrak B\otimes_\gf\mathfrak C,\dagger,\ddagger)\to\pi_1(\mathfrak
    B,\dagger,*)\otimes_{\pi_1(\gf,*)} \pi_1(\mathfrak C,*,\ddagger)
  \end{equation}
  defined as follows: consider $(v,w)\in V(\mathfrak B\otimes_\gf\mathfrak
  C)$ and $b\otimes c\in B_{(v,w)}$. Consider paths $q\in\pi_1(\mathfrak
  Y,\dagger,\lambda(v))$ and $r\in\pi_1(\mathfrak
  W,\rho(w),\ddagger)$. Then a typical element of $\pi_1(\mathfrak
  B\otimes_\gf\mathfrak C,\dagger,\ddagger)$ is of the form $q(b\otimes
  c)r$, and its image under~\eqref{eq:lm:MorphTensProd} is defined to be
  $(q b p)\otimes(p^{-1}c r)$ for any choice of path
  $p\in\pi_1(\gf,\rho(v),*)=\pi_1(\gf,\lambda(w),*)$.
\end{lem}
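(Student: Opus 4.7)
The plan is to verify that the formula $q(b\otimes c)r\mapsto(qbp)\otimes(p^{-1}cr)$ defines a function on $\pi_1(\mathfrak B\otimes_\gf\mathfrak C,\dagger,\ddagger)$, and then that this function is compatible with the left $\pi_1(\mathfrak Y,\dagger)$-action and right $\pi_1(\mathfrak W,\ddagger)$-action. I proceed in three stages: independence of the auxiliary path $p$; compatibility with the defining relations of~\eqref{eq:freebiset} in the product; and the biset-morphism check.

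For independence of $p$, suppose $p'$ is another path in $\pi_1(\gf,\rho(v),*)$. Then $\pi\coloneqq p^{-1}p'\in\pi_1(\gf,*)$, and
\[(qbp')\otimes(p'^{-1}cr)=(qbp)\pi\otimes\pi^{-1}(p^{-1}cr)=(qbp)\otimes(p^{-1}cr),\]
by the defining identification of the tensor product over $\pi_1(\gf,*)$. Next, the vertex-level tensor relations defining $\pi_1(\mathfrak B\otimes_\gf\mathfrak C,\dagger,\ddagger)$ are absorbed for free: the left $G_{\lambda(v,w)}=G_{\lambda(v)}$-action on $B_v\otimes_{G_{\rho(v)}}B_w$ goes through the left action on $B_v$ (with $p$ unchanged on the right factor), and symmetrically on the right; these become the vertex-level tensor relations in $\pi_1(\mathfrak B,\dagger,*)$ and $\pi_1(\mathfrak C,*,\ddagger)$ respectively.

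The main obstacle is the edge relation of~\eqref{eq:freebiset} applied to the product graph of bisets. Concretely, for an edge $z\in E(\mathfrak B\otimes_\gf\mathfrak C)$ and $b\otimes c\in B_z$, one must check that $q(b\otimes c)^-r$ and $q\lambda(z)(b\otimes c)^+\overline{\rho(z)}r$ have the same image. I would do this by a case analysis according to whether each coordinate of $z=(e_1,e_2)$ is a vertex or an edge of $\mathfrak B$, respectively $\mathfrak C$ (and then on whether $\rho(e_1)=\lambda(e_2)$ is a vertex or an edge of $\gf$). In each case, one chooses $p$ adapted to the edge in question: if $e_1$ is an edge of $\mathfrak B$, choose $p$ beginning with the path over the edge $\rho(e_1)$ so the edge relation is witnessed inside the left factor of the target, and if $e_2$ is an edge of $\mathfrak C$, put the corresponding initial segment of $p^{-1}$ on the right factor. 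The compatibility maps $()^-$ and $\overline{()}$ on $B_v\otimes B_w$ were defined coordinatewise, so the relation splits as (edge relation in $\mathfrak B$)$\otimes$(identity) composed with (identity)$\otimes$(edge relation in $\mathfrak C$), both of which hold in the target by the defining relations of $\pi_1(\mathfrak B,\dagger,*)$ and $\pi_1(\mathfrak C,*,\ddagger)$.

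Finally, the biset-morphism property is immediate from the formula: for $\eta\in\pi_1(\mathfrak Y,\dagger)$ and $\mu\in\pi_1(\mathfrak W,\ddagger)$,
\[\eta\cdot q(b\otimes c)r\cdot\mu=(\eta q)(b\otimes c)(r\mu)\longmapsto(\eta q b p)\otimes(p^{-1}c r\mu)=\eta\cdot\bigl[(qbp)\otimes(p^{-1}cr)\bigr]\cdot\mu,\]
so the map intertwines both actions, establishing the claim.
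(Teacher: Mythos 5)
Your argument is correct and follows essentially the same route as the paper's proof: the paper likewise disposes of the choice of $p$ via the tensor relation over $\pi_1(\gf,*)$, absorbs the vertex-level relations, and handles the edge relation for an edge $(e_1,e_2)$ of the product by adjusting $p$ by the middle object $\rho(e_1)=\lambda(e_2)$ of $\gf$ --- exactly your device of letting $p$ begin with the path over that edge, so that the relation factors as (edge relation in $\mathfrak B$) on the left tensor factor and (edge relation in $\mathfrak C$) on the right. (The paper does this uniformly in one line rather than by your case analysis on which coordinates are edges, but the degenerate cases are trivial either way.) One well-definedness check is absent from your three-stage list: the formula $q(b\otimes c)r\mapsto(qbp)\otimes(p^{-1}cr)$ is written in terms of a representative $(b,c)$ of the element $b\otimes c\in B_{(v,w)}=B_v\otimes_{G_{\rho(v)}}B_w$, so you must also verify independence of that representative, i.e.\ that $bg\otimes g^{-1}c$ for $g\in G_{\rho(v)}$ has the same image. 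This is not one of the relations of~\eqref{eq:freebiset}, so your second stage does not cover it; but it is settled by the very trick you already use, since $(q(bg)p)\otimes(p^{-1}(g^{-1}c)r)=(qb(gp))\otimes((gp)^{-1}cr)$ reduces it to your independence-of-$p$ step. The paper records this explicitly (``changing $b\otimes c$ into $bg\otimes g^{-1}c$ is the same as changing $p$ into $gp$'').
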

\begin{proof}
  To show that~\eqref{eq:lm:MorphTensProd} is well-defined, we must see
  that it is independent of the choice of $q(b\otimes c)r$ in its
  $\sim$-equivalence class and of the choice of $p$. Clearly the product
  over $\gf$ is independent of the choice of $p$. Changing $b\otimes c$ into
  $b g\otimes g^{-1}c$ is the same as changing $p$ into $g p$. For an edge
  $(e,f)$ in $\mathfrak B\otimes_\gf\mathfrak C$, changing $q(b\otimes
  c)^-r$ into $q\lambda(e)(b\otimes c)^+\overline{\rho(f)}r$ is the same as
  changing $p$ into $\lambda(e)p$. It is immediate
  that~\eqref{eq:lm:MorphTensProd} is a biset morphism.
\end{proof}

In general, the morphism in Lemma~\ref{lm:MorphTensProd} need not be
an isomorphism. For instance, $\mathfrak B\otimes_\gf\mathfrak C$ is
the empty graph of bisets if the images of the graphs $\mathfrak B$
and $\mathfrak C$ do not intersect in $\gf$. However,
\begin{lem}\label{lem:tensor commutes with pi1}
  Let $\mathfrak B$ be a left-fibrant $\mathfrak Y$-$\gf$-biset, and let
  $\mathfrak C$ be a $\gf$-$\mathfrak
  W$-biset. Then~\eqref{eq:lm:MorphTensProd} induces an isomorphism
  \[\pi_1(\mathfrak B,\dagger,*)\otimes_{\pi_1(\gf,*)}
  \pi_1(\mathfrak C,*,\ddagger)\cong\pi_1(\mathfrak
  B\otimes_\gf\mathfrak C,\dagger,\ddagger).\]
\end{lem}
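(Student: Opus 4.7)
The plan is to construct a two-sided inverse $\psi$ to the biset morphism $\phi$ of Lemma~\ref{lm:MorphTensProd}, using the canonical form of Corollary~\ref{cor:NormalForm} and the lifting bijections $S(p_1)\cong S(p_2)$ of Proposition~\ref{prop:S(x)}; both tools depend on the left-fibrant hypothesis on $\mathfrak B$.

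Given $(q\otimes b)\otimes\gamma$ in $\pi_1(\mathfrak B,\dagger,*)\otimes_{\pi_1(\gf,*)}\pi_1(\mathfrak C,*,\ddagger)$, by Corollary~\ref{cor:NormalForm} I may assume $q\otimes b$ is in canonical form, with $q\in\pi_1(\mathfrak Y,\dagger,\lambda(z))$, $b\in B_z$, $z\in\rho^{-1}(*)$. I would then decompose $\gamma=\tau\cdot c\cdot r$ with $\tau\in\pi_1(\gf,*,\lambda(w))$, $c\in C_w$ for some vertex $w$ of $\mathfrak C$, and $r\in\pi_1(\mathfrak W,\rho(w),\ddagger)$: such a decomposition always exists by using the equivalence relations defining $\pi_1(\mathfrak C,*,\ddagger)$ to place the biset element $c$ at a vertex between the $\gf$-segment and the $\mathfrak W$-segment. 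By Proposition~\ref{prop:S(x)} applied to $p_1=1_*$ and $p_2=\tau^{-1}$, there is a unique $\tilde q\otimes\tilde b\otimes\tau^{-1}\in S(\tau^{-1})$ that is $\sim$-equivalent to $q\otimes b\otimes 1_*\in S(1_*)$, with $\tilde b\in B_{z'}$ for some $z'\in\rho^{-1}(\lambda(w))$. Since $\rho(z')=\lambda(w)$, the pair $(z',w)$ is a vertex of $\mathfrak B\otimes_\gf\mathfrak C$ with associated biset $B_{z'}\otimes_{G_{\lambda(w)}}C_w$; I set $\psi((q\otimes b)\otimes\gamma):=\tilde q\cdot(\tilde b\otimes c)\cdot r\in\pi_1(\mathfrak B\otimes_\gf\mathfrak C,\dagger,\ddagger)$.

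The identity $\phi\circ\psi=\one$ then follows at once upon picking $p=\tau^{-1}$ in the formula for $\phi$, since $\phi(\tilde q(\tilde b\otimes c)r)=(\tilde q\tilde b\tau^{-1})\otimes(\tau c r)=(q\otimes b)\otimes\gamma$ by construction; the identity $\psi\circ\phi=\one$ follows analogously, using the uniqueness in Proposition~\ref{prop:S(x)} to recover the original vertex. The principal obstacle, and the only genuinely non-trivial step, is the well-definedness of $\psi$: independence of the decomposition $\gamma=\tau c r$ (different choices differ by moving group elements across the tensor factors of $\pi_1(\mathfrak C,*,\ddagger)$, and the corresponding modification of $\tilde b$ is absorbed into $\tilde b\otimes c$ via $G_{\lambda(w)}$-equivariance, with a similar argument at the $\mathfrak W$-end), and independence of the tensor relation $(q\otimes b)\cdot g'\otimes\gamma=(q\otimes b)\otimes g'\gamma$ for $g'\in\pi_1(\gf,*)$. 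For the latter, the uniqueness in Proposition~\ref{prop:S(x)} ensures that the composition of lifts $S(1_*)\cong S(g'^{-1})\cong S(\tau^{-1}g'^{-1})$ agrees with the direct lift $S(1_*)\cong S(\tau^{-1}g'^{-1})$, so both sides of the relation yield the same $(\tilde q,\tilde b,z')$ and hence the same output of $\psi$. This is where the fibrant hypothesis on $\mathfrak B$ is essential.
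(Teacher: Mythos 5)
Your proposal is correct and follows essentially the same route as the paper: both construct the inverse of the morphism from Lemma~\ref{lm:MorphTensProd} by writing a general element of the tensor product as $q\,b\,\tau\,c\,r$ (your $\tau$ is the paper's $p$) and using the unique rewriting $q\,b\,\tau = \tilde q\,\tilde b$ provided by Proposition~\ref{prop:S(x)}/Corollary~\ref{cor:NormalForm} to land in $\pi_1(\mathfrak B\otimes_\gf\mathfrak C,\dagger,\ddagger)$. You are in fact more explicit than the paper about the well-definedness checks, which the paper dismisses with ``clearly the inverse.''
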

\begin{proof}
  By definition, every element in $\pi_1(\mathfrak
  B,\dagger,*)\otimes_{\pi_1(\gf,*)}\pi_1(\mathfrak C,*,\ddagger)$ is of
  the form $q b p c r$ for paths $q,p,r$ in $\mathfrak Y,\gf,\mathfrak W$
  respectively. By Corollary~\ref{cor:NormalForm} the subexpression $q b p$
  can be rewritten in a unique way as $q' b'$. This defines a biset morphism
  \[\pi_1(\mathfrak B,\dagger,*)\otimes_{\pi_1(\mathfrak X,*)}\pi_1(\mathfrak C,*,\ddagger)\to\pi_1(\mathfrak B\otimes_\gf\mathfrak C,\dagger,\ddagger)
  \]
  by $q b p c r\mapsto q'(b'\otimes c)r$, which is clearly the inverse
  of~\eqref{eq:lm:MorphTensProd}.
\end{proof}

\begin{lem}\label{lem:ProdOfFibrGrBis}
  Let ${}_{\mathfrak Y}\mathfrak B_\gf$ and
  ${}_\gf\mathfrak C_{\mathfrak W}$ be left-fibrant (respectively
  left-free, left-principal) bisets. Then
  $\mathfrak B\otimes_\gf\mathfrak C$ is a left-fibrant (respectively
  left-free, left-principal) graph of bisets.
\end{lem}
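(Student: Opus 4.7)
The plan is to check the three properties in order, noting that left-principal $\Rightarrow$ left-free $\Rightarrow$ left-fibrant, so the stronger cases reduce to the fibrant case plus additional bookkeeping on ranks and on the map $\rho_{\mathfrak D}$.

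First I would check that $\rho_{\mathfrak D}\colon\mathfrak D\to\mathfrak W$, defined by $(b,c)\mapsto\rho(c)$, is simplicial. If $(b,c)$ is an edge of $\mathfrak D$, then at least one of $b,c$ is an edge; if only $b$ were, then $\rho(b)$ would be an edge of $\gf$ (as $\rho_{\mathfrak B}$ is simplicial by left-fibrancy of $\mathfrak B$), whereas $\lambda(c)$ would be a vertex, contradicting $\rho(b)=\lambda(c)$. Hence $c$ is an edge whenever $(b,c)$ is, so $\rho(c)$ is an edge of $\mathfrak W$. For the unique lifting property at a vertex $(v,w)$ over an edge $g\in\mathfrak W$ with $g^-=\rho(w)$, I enumerate lifts $(e,f)$ with $(e,f)^-=(v,w)$ by first choosing $f\in\rho_{\mathfrak C}^{-1}(g)$ with $f^-=w$, and then $e\in\mathfrak B$ with $\rho(e)=\lambda(f)$ and $e^-=v$. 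Writing $h=\lambda(f)$, I split into cases: if $h$ is a vertex of $\gf$, then since $\rho_{\mathfrak B}$ is simplicial the only option is $e=v$, so the inner sum collapses tautologically to $B_v$; if $h$ is an edge, left-fibrancy of $\mathfrak B$ at $v$ over $h$ gives $\bigsqcup_e G_{\lambda(v)}\otimes_{G_{\lambda(e)}}B_e\cong B_v$ as $G_{\lambda(v)}$-$G_h$-bisets. In either case, tensoring on the right over $G_h$ with $B_f$ and recalling that $B_{(e,f)}=B_e\otimes_{G_h}B_f$, the $e$-sum becomes $B_v\otimes_{G_h}B_f$. Summing over $f$ and applying left-fibrancy of $\mathfrak C$ after absorbing $B_v$ through the identity biset $G_{\lambda(w)}=G_{\rho(v)}$ produces $B_v\otimes_{G_{\rho(v)}}B_w=B_{(v,w)}$, as required.

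For the left-free case I would invoke the standard fact that the tensor product of two left-free bisets is left-free: given bases $S$ of $B_v$ and $T$ of $B_w$, the set $\{s\otimes t\mid s\in S,\,t\in T\}$ is a basis of $B_v\otimes B_w$, verified by using the wreath map of $B_v$ (as in~\S\ref{ss:wreath}) to put any $b\otimes c$ in the form $h(s\otimes t)$ and using left-freeness of $B_w$ for uniqueness. For left-principality, ranks multiply so each $B_{(v,w)}$ has rank $1$, and $\rho_{\mathfrak D}$ is a graph isomorphism since for each $w'\in\mathfrak W$ there is a unique $c\in\mathfrak C$ with $\rho(c)=w'$ (as $\rho_{\mathfrak C}$ is an isomorphism) and then a unique $b\in\mathfrak B$ with $\rho(b)=\lambda(c)$ (as $\rho_{\mathfrak B}$ is an isomorphism), producing a unique preimage $(b,c)$.

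The main obstacle is the case split in the lifting step, which rests on the asymmetric hypothesis that $\rho_{\mathfrak B}$ (but not $\lambda_{\mathfrak C}$) is simplicial: when $\lambda_{\mathfrak C}$ collapses an edge $f$ of $\mathfrak C$ to a vertex $h$ of $\gf$, one must verify that the $\mathfrak B$-component of any lift necessarily degenerates, which is exactly what the simplicial property of $\rho_{\mathfrak B}$ guarantees.
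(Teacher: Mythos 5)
Your proof is correct and follows essentially the same route as the paper's: you establish that $\rho$ on the fibre product is simplicial by the same vertex/edge parity argument, and you verify the unique lifting property~\eqref{eq:dfn:FibrationGrBis} by the same chain of isomorphisms, with the same case split on whether the intermediate object $\lambda(f)\in\gf$ is a vertex (forcing the $\mathfrak B$-component of the lift to degenerate to $v$) or an edge (where fibrancy of $\mathfrak B$ applies) --- merely reading the computation from the disjoint union toward $B_{(v,w)}$ rather than the reverse. Your treatment of the left-free and left-principal cases matches the paper's (which leaves these as immediate), and you are in fact slightly more explicit about $\rho_{\mathfrak D}$ being a graph isomorphism in the principal case.
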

\begin{proof}
  Observe first that
  $\rho\colon \mathfrak B\otimes_\gf\mathfrak C\to \mathfrak W$ is
  simplicial. Indeed, if $\rho(v, w)$ is a vertex for
  $(v,w)\in \mathfrak B\otimes_\gf\mathfrak C$, then $w$ is a vertex
  of $\mathfrak C$ because $\rho\colon \mathfrak C\to \mathfrak W$ is
  simplicial, and $\lambda(w)$ is a vertex of $\gf$ because morphisms
  send vertices to vertices, so $v\in \rho^{-1}(\lambda(w))$ is a
  vertex of $\mathfrak B$ because $\rho\colon \mathfrak B\to \gf$ is
  simplicial. Similarly, if $\rho(v, w)$ is an edge, then so is $w\in \mathfrak C$ because $\rho \colon \mathfrak C\to \mathfrak W$ is simplicial; thus $(v,w)$ is an edge in $ \mathfrak B\otimes_\gf\mathfrak C$.

  Consider now a vertex $(v,w)\in \mathfrak B\otimes_\gf\mathfrak C$
  and an edge $f\in \mathfrak W$ with $f^-=\rho(v,w)$. Let us
  verify~\eqref{eq:dfn:FibrationGrBis}. Note first that if $(\ell, e)\in \rho^{-1}(f)$ with $(\ell,e)^-=(v,w)$, then $\ell= v$ if and only if $\lambda(\ell)= \lambda(v)$, because $\rho\colon \mathfrak B\to \gf $ is simplicial.
  We have
  \begin{align*}
    B_{(v,w)} &= B_v\otimes B_w \cong  B_v  \otimes \bigsqcup_{\substack{e\in \rho^{-1}(f)\\ e^-=w}} G_{\lambda(w)}
    \otimes_{G_{\lambda(e)}} B_e \quad\text{using }\eqref{eq:dfn:FibrationGrBis}\text{ for }\mathfrak B_w\\
    &= \bigsqcup_{\substack{e\in \rho^{-1}(f)\\ e^-=w}} (B_v\otimes G_{\lambda(w)})
    \otimes_{G_{\lambda(e)}} B_e =  \bigsqcup_{\substack{e\in \rho^{-1}(f)\\ e^-=w}}B_v  
    \otimes_{G_{\lambda(e)}} B_e \quad\text{because }\rho(v)=\lambda(w) \\
    &= \Bigg(\bigsqcup_{\substack{e\in \rho^{-1}(f)\\ e^-=w\\ \lambda(e)=\rho(v)}} B_v
    \otimes_{G_{\lambda(e)}} B_e  \Bigg) \sqcup
    \Bigg(\bigsqcup_{\substack{e\in \rho^{-1}(f)\\ e^-=w\\ \lambda(e)\not =\rho(v)}} B_v
    \otimes_{G_{\lambda(e)}} B_e  \Bigg)\\
    &\cong \Bigg(\bigsqcup_{\substack{(v,e)\in \rho^{-1}(f)\\ (v,e)^-=(v,w)}} B_{(v,e)}  \Bigg)\sqcup\left(\bigsqcup_{\substack{e\in \rho^{-1}(f)\\ e^-=w\\ \lambda(e)\not =\rho(v)}}\Bigg(\bigsqcup_{\substack{\ell\in \rho^{-1}(\lambda(e))\\ \ell^-=v}} G_{\lambda(v)}\otimes_{G_{\lambda(\ell)}} B_\ell \Bigg) \otimes_{G_{\lambda(e)}} B_e\right) \\
    &\cong \Bigg(\bigsqcup_{\substack{(v,e)\in \rho^{-1}(f)\\ (v,e)^-=(v,w)}} G_{\lambda(v)}\otimes B_{(v,e)}\Bigg) \sqcup\Bigg(\bigsqcup_{\substack{(\ell,e)\in \rho^{-1}(f)\\ (\ell,e)^-=(v,w)\\ \ell\not =v}}
    G_{\lambda(v)}\otimes_{G_{\lambda(\ell)}} B_{(\ell, e)}  \Bigg)\\
    &\cong \bigsqcup_{\substack{(\ell,e)\in \rho^{-1}(f)\\ (\ell,e)^-=(v,w)}}
    G_{\lambda(v)}\otimes_{G_{\lambda(\ell)}} B_{(\ell, e)} = \bigsqcup_{\substack{(\ell,e)\in \rho^{-1}(f)\\ (\ell,e)^-=(v,w)}}
    G_{\lambda(v,w)}\otimes_{G_{\lambda(\ell,e)}} B_{(\ell, e)}.
  \end{align*} 
  This shows that $\mathfrak B\otimes_\gf\mathfrak C$ is a
  left-fibrant graph of bisets. If, moreover,
  ${}_{\mathfrak Y}\mathfrak B_\gf$ and
  ${}_\gf\mathfrak C_{\mathfrak W}$ are left-free (respectively
  left-principal), then all $B_{(v,w)}$ are left-free (respectively
  left-principal); thus $\mathfrak B\otimes_\gf\mathfrak C$ is a
  left-free (respectively left-principal) graph of bisets.
\end{proof}

\noindent We are now ready to prove Lemma~\ref{lem:CharOfFindBiset}:
\begin{proof}[Proof of Lemma~\ref{lem:CharOfFindBiset}]
  Both $(\mathfrak B, \dagger)$ and $(\gf,*)$ are left-principal graphs of
  bisets. The proof now follows from Lemmas~\ref{lem:tensor commutes with
    pi1} and~\ref{lem:ProdOfFibrGrBis}.
\end{proof}

\subsection{Principal and biprincipal graphs of bisets}\label{ss:BiprGrBis}
Let us now simplify the part of Definition~\ref{dfn:FibrationGrBis}
concerning principal graphs of bisets.

\begin{lem}\label{lem:DefnPrincipGrBis}
  A graph of bisets ${}_{\mathfrak Y} \mathfrak B_{\gf}$ is
  left-principal if and only if $\rho \colon \mathfrak B\to\gf $ is a
  graph isomorphism and $B_z$ are left-principal for all
  $z\in \mathfrak B$.
\end{lem}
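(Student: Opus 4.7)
The forward implication is immediate from Definition~\ref{dfn:FibrationGrBis}: left-principal graphs of bisets are by definition left-fibrant with $\rho$ a graph isomorphism and each $B_z$ left-principal. So the content lies in the converse: assuming $\rho\colon\mathfrak B\to\gf$ is a graph isomorphism and each $B_z$ is left-principal, I must show that $\mathfrak B$ is automatically left-fibrant, after which the principality conditions already in the hypothesis give the conclusion.

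First, $\rho$ is simplicial, since as a graph isomorphism it sends vertices to vertices and edges to edges. Next, fix a vertex $v\in\mathfrak B$ and an edge $f\in\gf$ with $f^-=\rho(v)$. Because $\rho$ is a bijection of graphs and a graph morphism, there is a unique edge $e\in\mathfrak B$ with $\rho(e)=f$, and it satisfies $\rho(e^-)=\rho(e)^-=f^-=\rho(v)$, hence $e^-=v$. Thus the disjoint union on the left-hand side of~\eqref{eq:dfn:FibrationGrBis} reduces to the single term $G_{\lambda(v)}\otimes_{G_{\lambda(e)}}B_e$, and the fibrant condition becomes the assertion that
\[
G_{\lambda(v)}\otimes_{G_{\lambda(e)}}B_e\to B_v,\qquad g\otimes b\mapsto g b^-,
\]
is an isomorphism of $G_{\lambda(v)}$-$G_f$-bisets.

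The check is then routine. Since $B_e$ is left-principal over $G_{\lambda(e)}$, the extension of scalars $G_{\lambda(v)}\otimes_{G_{\lambda(e)}}B_e$ is left-principal over $G_{\lambda(v)}$; and $B_v$ is left-principal over $G_{\lambda(v)}$ by hypothesis. The displayed map is $G_{\lambda(v)}$-equivariant between two nonempty left $G_{\lambda(v)}$-torsors, so it is automatically a bijection. Right $G_f$-equivariance follows from the congruence property of $()^-\colon B_e\to B_v$: the right $G_f$-action on $B_v$ is given via $()^-\colon G_f\to G_{\rho(v)}$, and for $g_f\in G_f$ one has $(b\cdot g_f)^-=b^-\cdot(g_f)^-=b^-\cdot g_f$, so $g\otimes(b\cdot g_f)$ and $(g\otimes b)\cdot g_f$ have the same image. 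No single step is a real obstacle; the only subtlety worth flagging is the observation that a $G$-equivariant map between two left-principal $G$-sets is automatically bijective, which is what makes the fibrancy square collapse to triviality under the hypotheses.
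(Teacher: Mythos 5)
Your proof is correct and takes essentially the same route as the paper's: only the converse requires an argument, the disjoint union in~\eqref{eq:dfn:FibrationGrBis} collapses to a single summand because $\rho$ is a graph isomorphism, and the resulting map is an isomorphism by principality. The paper verifies this last point by showing that the image $(B_e)^{-}$ is a torsor for $(G_{\lambda(e)})^{-}$ inside the left-principal $B_{e^-}$, whereas you observe that an equivariant map between two left-principal $G_{\lambda(v)}$-sets is automatically bijective; these amount to the same computation.
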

\begin{proof}
  We verify that the conditions stated in
  Lemma~\ref{lem:DefnPrincipGrBis}
  imply~\eqref{eq:dfn:FibrationGrBis}.

  Since $\rho \colon \mathfrak B\to\gf $ is a graph
  isomorphism,~\eqref{eq:dfn:FibrationGrBis} takes the following form:
  for every $z\in \mathfrak B$ the map
  \begin{equation}\label{eq:lem:DefnPrincipGrBis}
    G_{\lambda(e^-)} 
    \otimes_{G_{\lambda(e)}} B_e\rightarrow B_{e^-},\qquad g\otimes b\mapsto g b^-
  \end{equation} 
  is a $G_{\lambda(e^-)}$-$G_{\rho(e)}$ biset isomorphism (for the
  action of $G_{\rho(e)}$ on $B_{e^-}$ given via
  $()^-\colon G_{\rho(e)}\to G_{{\rho(e)}^-}$). Let us
  verify~\eqref{eq:lem:DefnPrincipGrBis}.

  Consider an edge $e\in \mathfrak B$. We claim that
  ${}_{(G_{\lambda(e)})^{-}}(B_e)^{-}_{G_{\rho(e)}}$ is isomorphic to
  $(G_{\lambda(e)})^{-}\otimes_{G_{\lambda(e)}}
  (B_e)_{G_{\rho(e)}}$. Indeed,
  ${}_{(G_{\lambda(e)})^{-}}(B_e)^{-}_{G_{\rho(e)}}$ is left-free as a
  subbiset of a left-free biset $B_{e^-}$. Furthermore,
  ${}_{(G_{\lambda(e)})^{-}}(B_e)^{-}_{G_{\rho(e)}}$ has a single
  orbit under the action of $(G_{\lambda(e)})^{-}$ because this
  property holds for $B_e$. Therefore,
  ${}_{(G_{\lambda(e)})^{-}}(B_e)^{-}_{G_{\rho(e)}}$ is
  left-principal. As a set it is isomorphic to $(G_{\lambda(e)})^{-}$;
  this proves
  ${}_{(G_{\lambda(e)})^{-}}(B_e)^{-}_{G_{\rho(e)}} \cong
  (G_{\lambda(e)})^{-}\otimes_{G_{\lambda(e)}}
  (B_e)_{G_{\rho(e)}}$. Since $B_{e^-}$ is left-principal we obtain a
  natural isomorphism of $G_{\lambda(e^-)}$-$G_{\rho(e)}$-bisets
  \[B_{e^-}= G_{\lambda(e^-)}(B_e)^{-} \cong
  G_{\lambda(e^-)}\otimes_{G_{\lambda(e)}} B_e \]
  which is \eqref{eq:lem:DefnPrincipGrBis}.
\end{proof}

An example of left-principal graph of bisets is 
${}_{\gf}(\gf,*)_{\gf_0}$ defined in \S\ref{ss:FundBis}.

A biset $_H B_G$ is \emph{biprincipal} if it is left- and
right-principal. Clearly, if $_H B_G$ is biprincipal, then the groups
$G$ and $H$ are isomorphic. Similarly, a graph of bisets
${}_{\mathfrak Y} \mathfrak I_{\gf}$ is \emph{biprincipal} if it is
left- and right-principal. By Lemma~\ref{lem:DefnPrincipGrBis} a graph
of bisets ${}_{\mathfrak Y} \mathfrak I_{\gf}$ is biprincipal if and
only if
\begin{enumerate}
\item $\lambda\colon \mathfrak I\to\mathfrak Y$ and $\rho\colon
  \mathfrak I\to\gf$ are graph isomorphisms; and
\item $B_z$ are biprincipal for all objects $z\in \mathfrak I$.\qedhere
\end{enumerate}

\begin{defn}[Congruence of graphs of groups and bisets]
  Two graphs of groups $\mathfrak Y$, $\gf$ are called \emph{congruent} if
  there is a biprincipal graph of bisets ${}_{\mathfrak Y} \mathfrak
  I_{\gf}$.

  Isomorphism of graphs of bisets is meant in the strongest possible sense:
  isomorphism of the underlying graphs, and isomorphisms of the respective
  bisets.

  Two graphs of bisets ${}_{\mathfrak Y} \mathfrak B_{\gf}$ and
  ${}_{\mathfrak Y'} \mathfrak C_{\gf'}$ are \emph{congruent} if there are
  biprincipal graph of bisets ${}_{\mathfrak Y} \mathfrak I_{\mathfrak Y'}$
  and ${}_{\gf} \mathfrak L_{\gf'}$ such that ${}_{\mathfrak Y} \mathfrak
  B_{\gf}$ is isomorphic to $\mathfrak I \otimes {}_{\mathfrak Y'}
  \mathfrak C_{\gf'} \otimes \mathfrak L^\vee$.

  Finally, two graph of bisets ${}_{\gf} \mathfrak B_{\gf}$ and ${}_{\gf'}
  \mathfrak C_{\gf'}$ are \emph{conjugate} if ${}_{\gf} \mathfrak B_{\gf}$
  is isomorphic to $\mathfrak I \otimes {}_{\gf'} \mathfrak C_{\gf'}
  \otimes \mathfrak I^\vee$ for a biprincipal graph of bisets ${}_{\gf}
  \mathfrak I_{\gf'}$.
\end{defn}

Let ${}_H B_G$ be a biprincipal biset and let $H'$ be a subgroup of $H$. For
every $b\in B$ we consider the subgroup $(H')^b$ of $G$ defined by
\[H' b = b(H')^b.
\]
It is easy to see that the conjugacy class of $(H')^b$ in $G$ is
independent of $b$.

\begin{lem}
  Let $\mathfrak Y$, $\gf$ be graphs of groups and let
  $h\colon \mathfrak Y\to\gf$ be an isomorphism of the underlying
  graphs. Suppose also that a $G_y$-$G_{h(y)}$ biprincipal biset $B_y$
  is given for every object $y\in \mathfrak Y$. Set $\mathfrak I$ to
  be $\mathfrak Y$ as a graph. Then the data
  $\{B_z\}_{z\in \mathfrak I}$ extends (via appropriate embeddings of
  edge bisets into vertex bisets) to a biprincipal graph of bisets
  ${}_{\mathfrak Y}\mathfrak I_{\gf}$ if and only if for every edge
  $e\in \mathfrak I$ the following holds. For any, or equivalently for
  every, $b\in B_e$ the groups $(G_{e}^{-})^b$ and $G_{h(e)}^-$ are
  conjugate as subgroups of $G_{h(e^-)}$.
\end{lem}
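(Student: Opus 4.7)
The plan is to reduce the extension problem, edge by edge, to a group-theoretic criterion, using biprincipality to convert bisets into isomorphisms. For each edge $e\in\mathfrak I$ one must produce a congruence $()^-\colon B_e\to B_{e^-}$ compatible with the group maps $()^-\colon G_e\to G_{e^-}$ and $()^-\colon G_{h(e)}\to G_{h(e^-)}$; the other congruence $\overline{()}\colon B_e\to B_{\overline e}$ is automatic because the analogous group maps $\overline{()}$ are isomorphisms by the graph-of-groups axioms. Once all $()^-$-congruences are constructed, biprincipality of the resulting graph of bisets is immediate from Lemma~\ref{lem:DefnPrincipGrBis}, since $\lambda$ and $\rho=h$ are graph isomorphisms and each $B_z$ is biprincipal by hypothesis.

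Since $B_e$ is biprincipal, any $b\in B_e$ determines a group isomorphism $\alpha_b\colon G_e\to G_{h(e)}$ via $g b=b\alpha_b(g)$. A $(()^-,()^-)$-congruence $\beta\colon B_e\to B_{e^-}$ is completely determined by the image $b^-\coloneqq b^\beta\in B_{e^-}$, and the congruence condition $(g b k)^\beta=g^- b^\beta k^-$ unpacks, using biprincipality of $B_{e^-}$, to the pointwise identity $\alpha_{b^-}(g^-)=\alpha_b(g)^-$ for every $g\in G_e$. This forces in particular $\alpha_{b^-}(G_e^-)=G_{h(e)}^-$, and conversely any $b^-$ satisfying the pointwise identity yields a well-defined congruence by equivariant extension.

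For the forward direction, the computation
\[G_e^-\cdot b^-=(G_e\cdot b)^-=(b\cdot\alpha_b(G_e))^-=b^-\cdot G_{h(e)}^-\]
gives $(G_e^-)^{b^-}=G_{h(e)}^-$ directly, and independence of the conjugacy class of $(G_e^-)^b$ on the choice of $b$ extends the conclusion to every $b\in B_e$. For the converse, pick any $b'_0\in B_{e^-}$ and $k\in G_{h(e^-)}$ with $k(G_e^-)^{b'_0}k^{-1}=G_{h(e)}^-$, then replace $b'_0$ by $b'_0 k^{-1}$ to obtain $b^-\in B_{e^-}$ with $\alpha_{b^-}(G_e^-)=G_{h(e)}^-$. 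The remaining task is to promote this subgroup-level match to the pointwise identity $\alpha_{b^-}(g^-)=\alpha_b(g)^-$, which is accomplished by a further adjustment of $b^-$ using the normalizer of $G_{h(e)}^-$ in $G_{h(e^-)}$.

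The main obstacle is precisely this last step of the converse: passing from subgroup conjugacy in $G_{h(e^-)}$ to pointwise equality of two homomorphisms $G_e\to G_{h(e)}^-\subseteq G_{h(e^-)}$. One must verify that after matching subgroups, the normalizer action on $b^-$ provides enough freedom to align the two homomorphisms on the nose; the ``any, or equivalently every'' phrasing in the hypothesis guarantees that the inner-automorphism ambiguity of $\alpha_b$ (coming from the basepoint choice in $B_e$) exactly matches the residual freedom in $\alpha_{b^-}$, so the alignment is always achievable. Once this is done on every edge, the congruences assemble into the desired biprincipal graph of bisets ${}_{\mathfrak Y}\mathfrak I_\gf$.
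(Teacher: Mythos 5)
Your reduction of the problem and your argument for the forward direction are correct and follow essentially the same route as the paper: a congruence $()^-\colon B_e\to B_{e^-}$ is determined by the image $b^-$ of a single point, its existence forces $(G_e^-)^{b^-}=G_{h(e)}^-$ on the nose, and hence the conjugacy condition for every $b$. The direction you call the converse is the one the paper actually writes out, and it uses the same device you propose: replace $b$ by $bg$ so that $(G_e^-)^{bg}=G_{h(e)}^-$ and map $B_e$ into the double coset $G_e^-(bg)G_{h(e)}^-$. The appeal to Lemma~\ref{lem:DefnPrincipGrBis} for biprincipality of the resulting graph of bisets is also fine.

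However, the step you yourself single out as ``the main obstacle'' is a genuine gap, and your final paragraph does not close it. Matching the subgroups, $\alpha_{b^-}(G_e^-)=G_{h(e)}^-$, is strictly weaker than the pointwise identity $\alpha_{b^-}(u^-)=\alpha_b(u)^-$ that well-definedness of the congruence requires. The residual freedom in $b^-$ amounts to composing $\alpha_{b^-}$ with conjugation by elements of $G_{h(e^-)}$, so what must be shown is that the two surjections $G_e\to G_{h(e)}^-$, namely $u\mapsto\alpha_{b^-}(u^-)$ and $u\mapsto\alpha_b(u)^-$, differ by an inner automorphism of $G_{h(e^-)}$. Conjugacy of their images does not imply this: the two maps can have different kernels (when $()^-\colon G_e\to G_{e^-}$ and $()^-\colon G_{h(e)}\to G_{h(e^-)}$ kill different subgroups of the two isomorphic groups $G_e$ and $G_{h(e)}$), and even when both are isomorphisms onto $G_{h(e)}^-$ they can differ by an automorphism of $G_{h(e)}^-$ that is not induced by conjugation inside $G_{h(e^-)}$. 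The ``any, or equivalently every'' phrasing only records that the conjugacy class of $(G_e^-)^b$ is independent of $b$; it carries no information about aligning the two homomorphisms, so the appeal to ``enough freedom in the normalizer'' is an assertion rather than an argument. (The paper's one-line proof elides exactly the same point; but since you explicitly identified the difficulty, you needed either to prove it or to isolate the additional hypothesis --- commutation of the relevant square of homomorphisms up to inner automorphism of $G_{h(e^-)}$ --- under which it holds.)
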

\begin{proof}
  Follows immediately from the definition: if $(G_{e}^{-})^b$ and
  $G_{h(e)}^-$ are conjugate, say $((G_{e}^{-})^b)^g=G_{h(e)}^-$, then we
  may define $()^-\colon B_e\to B_{e^-}$ by mapping $B_e$ into $G_{e}^{-}
  (b g) G_{h(e)}^-$. The converse is obvious.
\end{proof}

\subsection{Examples}\label{ss:examplesgfgps}
We will give more examples and applications
in~\S\ref{ss:dynamics}. Nevertheless, we give here an example that
illustrates the main features of Definition~\ref{defn:p1biset}. First let
us introduce some conventions simplifying descriptions of the examples.

Let the graph $\gf=V\sqcup E$ be a graph as in
Definition~\ref{defi:Graph}. By an \emph{undirected graph} we mean
$\gf_0\coloneqq \gf/\{z=\bar z\}$ with vertex set $V_0=V$ and \emph{geometric} edge set
$E_0\coloneqq E/\{e=\bar e\}$. The undirected graph $\gf_0$ is endowed with
an adjacency relation. A map $\theta\colon\gf_0\to\mathfrak Y_0$ is a
\emph{weak morphism} if $\theta$ sends adjacent objects into adjacent or
equal objects. If $\gf'$ and $\mathfrak Y'$ are the barycentric
subdivisions of $\gf$ and $ \mathfrak Y$ as in \S\ref{ss:vk}, then
$\theta\colon\gf_0\to\mathfrak Y_0$ uniquely determines a graph morphism
$\theta\colon\gf'\to\mathfrak Y'$.

\theoremstyle{definition}\newtheorem*{convention}{Convention}
\begin{convention}
  We will often describe a graph of bisets as ${}_{\mathfrak Y_0}{
    \mathfrak B_0 }_{\gf_0}$ with
  \begin{itemize}
  \item undirected graphs ${\mathfrak Y_0},{ \mathfrak B_0 },{\gf_0}$;
  \item weak graph morphisms $\lambda\colon \mathfrak B_0\to\mathfrak Y_0$
    and $\rho \colon \mathfrak B_0\to\gf_0$;
  \item groups $G_y,G_x$ and $G_{\lambda(z)}$-$G_{\rho(z)}$ bisets $B_z$
    for all $y\in \mathfrak Y_0$, $x\in \gf_0$ and $z\in \mathfrak B_0$
    respectively;
  \item congruences from $B_e$ to $B_v$ and $B_w$ for all edges $e\in
    \mathfrak B_0$ adjacent to vertices $v,w\in \mathfrak B_0$; and,
    similarly, group morphisms from $B_e$ to $B_v$ and $B_w$ for all edges
    $e\in \mathfrak Y_0\sqcup \gf_0$ adjacent to vertices $v,w$.
  \end{itemize}
  By passing to a barycentric subdivision of ${}_{\mathfrak Y_0}{ \mathfrak
    B_0 }_{\gf_0}$ we obtain a graph of bisets ${}_{\mathfrak Y'}{
    \mathfrak B' }_{\gf'}$ satisfying
  Definition~\ref{defn:graphofbisets}. Following
  Lemma~\ref{lmm:BarSubGrBis} we set $\pi_1({}_{\mathfrak Y_0}{ \mathfrak
    B_0 }_{\gf_0}, \dagger, *)\coloneqq\pi_1({}_{\mathfrak Y'}{ \mathfrak
    B' }_{\gf'}, \dagger, *)$.
\end{convention}

\begin{exple}\label{exm:Basil}
  Consider the biset corresponding to the ``Basilica map''
  $f(z)=z^2-1$, from $(\C,\{-1,0,1\})$ to $(\C,\{0,-1\})$.

  On the one hand, choose a basepoint $x_0\cong-0.6$ in
  $\C\setminus\{0,-1\}$, and choose a graph $\gf$ with one vertex
  $x_0$ to which $\C\setminus\{0,-1\}$ deformation retracts; let
  $\mathfrak Y$ be the full preimage of $\gf$ under $f$:
  \[\begin{tikzpicture}
    \begin{scope}
      \node at (-1,0) [above] {$y_0'$};
      \node at (1,0) [above] {$y_0''$};
      \filldraw (1,0) circle (2pt) (-1,0) circle (2pt);
      \draw (1,0) .. controls +(1,1) and +(1,-1) .. +(0,0);
      \draw (-1,0) .. controls +(-1,1) and +(-1,-1) .. +(0,0);
      \draw (1,0) .. controls (0,0.5) .. (-1,0);
      \draw (1,0) .. controls (0,-0.5) .. (-1,0);
    \end{scope}
    \draw[->,thick] (2,0) -- node[above] {$f$} +(1,0);
    \begin{scope}[xshift=4cm]
      \node at (0,0) [above] {$x_0$};
      \filldraw (0,0) circle (2pt);
      \draw (0,0) .. controls +(1,1) and +(1,-1) .. +(0,0);
      \draw (0,0) .. controls +(-1,1) and +(-1,-1) .. +(0,0);
    \end{scope}
  \end{tikzpicture}\]
  There, the graphs of groups $\gf$ and $\mathfrak Y$ have trivial
  groups, and the biset $\mathfrak B$ corresponding to $f$ is
  $\mathfrak Y$ as a graph, with $\lambda=1$ and $\rho=f$.

  This biset may be encoded differently, by considering rather an
  orbispace structure on $\C$ with non-trivial groups at $0,\pm1$; the
  complex plane deformation retracts to the intervals $[-1,0]$ and
  $[-1,1]$ respectively:
  \[\begin{tikzpicture}
    \begin{scope}
      \node at (-1,0) [above] {$y_0'$};
      \node at (-1,0) [below] {$\Z$};
      \node at (1,0) [above] {$y_0''$};
      \node at (1,0) [below] {$\Z$};
      \node at (0,0) [above] {$y_1$};
      \node at (0,0) [below] {$\Z$};
      \filldraw (-1,0) circle (2pt) -- (0,0) circle (2pt) -- (1,0) circle (2pt);
    \end{scope}
    \draw[->,thick] (2,0) -- node[above] {$f$} +(1,0);
    \begin{scope}[xshift=5cm]
      \node at (0,0) [above] {$x_0$};
      \node at (0,0) [below] {$\Z$};
      \node at (-1,0) [above] {$x_1$};
      \node at (-1,0) [below] {$\Z$};
      \filldraw (0,0) circle (2pt) -- (-1,0) circle (2pt);
    \end{scope}
  \end{tikzpicture}\]

  The graphs $\gf$ and $\mathfrak Y$ have the group $\Z$
  attached to each vertex, and the edge groups are all trivial. The
  biset $\mathfrak B$ has $\mathfrak Y$ as underlying graph; the map
  $\lambda$ is the retraction to the segment $[y'_0,y_1]=[x_1,x_0]$
  and $\rho$ folds $\mathfrak Y$ around its middle point. The bisets
  $B_{y'_0}$ and $B_{y''_0}$ are ${}_\Z\Z_\Z$, while
  $B_{y_1}={}_{2\Z}\Z_\Z$ is free of rank $2$ on the left. The edge
  bisets embed in $B_{y_1}$ as two elements of opposite parity.

  Note that the underlying graph of $\mathfrak B$ is the ``Hubbard
  tree'' of $f$; that will be explained in more details
  in~\S\ref{ss:hubbardtrees}.
\end{exple}

\begin{prop}
  Let $\phi\colon H\to G$ be a group homomorphism, and let $\gf$ be a graph
  of groups with fundamental group $G$. Then there exists a graph of
  groups decomposition of $H$ and a graph of bisets with limit
  $f$. Furthermore, there exists a unique minimal such decomposition
  for $H$ in the sense that other decompositions are refinements of
  it.
\end{prop}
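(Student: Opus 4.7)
The plan is to proceed via Bass--Serre theory. Let $T$ denote the Bass--Serre universal cover of $\gf$: a $G$-tree with $G\backslash T=\gf$ whose vertex and edge stabilizers realize the groups of $\gf$. Pull back the $G$-action along $\phi$ to an $H$-action on $T$, and set $\mathfrak Y\coloneqq H\backslash T$, equipped with vertex and edge groups given by the $H$-stabilizers of chosen preimages in $T$. Classical Bass--Serre theory then identifies $\pi_1(\mathfrak Y)$ with $H$, yielding the desired decomposition.

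The accompanying graph of bisets ${}_{\mathfrak Y}\mathfrak B_\gf$ has $\mathfrak Y$ itself as underlying graph; I set $\lambda=\one$ and take $\rho\colon\mathfrak Y\to\gf$ to be the simplicial morphism induced by $H\backslash T\to G\backslash T$, which is well-defined because every $H$-orbit in $T$ sits inside a $G$-orbit via $\phi$. For each $y\in\mathfrak Y$, the restriction of $\phi$ to $G_y$ lands in the $G$-stabilizer of the chosen preimage, which is identified with $G_{\rho(y)}$; this gives a homomorphism $\phi_y\colon G_y\to G_{\rho(y)}$, and I set $B_y\coloneqq B_{\phi_y}$ as in Example~\ref{ex:map=biset}. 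The congruences $B_z\to B_{z^-}$ and $B_z\to B_{\overline z}$ arise from compatible choices of orbit representatives along incident edges in $T$, and exist because $T$ is a tree. The resulting $\mathfrak B$ is right-principal, so by Example~\ref{ex:map=biset} and Corollary~\ref{cor:NormalForm} applied to $\mathfrak B^\vee$ (which is left-fibrant and left-principal), its fundamental biset is the right-principal $H$-$G$-biset attached to the induced homomorphism $\pi_1(\mathfrak Y,\dagger)\to\pi_1(\gf,*)$; tracing reduced paths shows this homomorphism is $\phi$ itself, so $\pi_1(\mathfrak B,\dagger,*)\cong B_\phi$.

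For minimality, suppose ${}_{\mathfrak Y'}\mathfrak B'_\gf$ is another decomposition realizing $\phi$. Since $B_\phi$ is right-principal, each $B'_z$ is forced to be right-principal, so $\mathfrak B'$ amounts (via Example~\ref{ex:map=biset}) to a morphism of graphs of groups $\mathfrak Y'\to\gf$ in the sense of Bass, equivalently an $H$-action on a tree $T'$ together with an $H$-equivariant map $T'\to T$. The quotient $\mathfrak Y'=H\backslash T'$ is then obtained from $\mathfrak Y=H\backslash T$ by the refinement operations of \S\ref{ss:vk} (edge splits, edge additions, barycentric subdivisions), and the biset data descends accordingly. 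The main obstacle will be to pin down a precise notion of ``refinement of a graph of bisets''---generalizing the three operations for graphs of groups so that the biset data is carried along coherently---and to verify that the induced $H$-equivariant tree map $T'\to T$ is unique up to these equivalences, thereby establishing that $\mathfrak Y$ is the unique minimal decomposition.
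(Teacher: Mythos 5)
Your construction coincides with the paper's proof: the Bass--Serre tree $\widetilde\gf$ of $\gf$, the $H$-action pulled back along $\phi$, the graph of groups $\mathfrak Y=\widetilde\gf/\!/H$, and the right-principal graph of bisets with underlying graph $\widetilde\gf/H$, $\lambda=\one$, $\rho$ the quotient map, and vertex/edge bisets of the form $B_{\phi_y}$ as in Example~\ref{ex:map=biset}. The paper's own proof in fact stops after this construction: it neither verifies that the fundamental biset recovers $B_\phi$ nor addresses the minimality claim, so your identification of $\pi_1(\mathfrak B,\dagger,*)\cong B_\phi$ via Corollary~\ref{cor:NormalForm} supplies detail the paper omits, and the uniqueness-of-the-minimal-decomposition step that you explicitly flag as unresolved is likewise left unproved in the paper.
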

\begin{proof}
  Let $G$ act on the tree $\widetilde\gf$, the universal cover of
  $\gf$ (also known as the \emph{Bass-Serre tree} of $G$,
  see~\cite{serre:trees}), with stabilizers the groups $G_x$ of
  $\gf$. Then $H$ acts on $\widetilde\gf$ via $\phi$. We define the
  graph of groups $\mathfrak Y$ as $\widetilde\gf/\!/H$, namely as the
  quotient of $\widetilde\gf$ by the action of $H$, remembering the
  stabilizers in $H$ of vertices and edges in the quotient.

  We next define a graph of bisets $\mathfrak B$ as follows. Its
  underlying graph is $\widetilde\gf/H$. The graph morphism
  $\lambda\colon\mathfrak B\to\mathfrak Y$ is the identity, and the graph
  morphism $\rho\colon\mathfrak B\to\gf$ is the natural quotient map
  $x H\mapsto x G$. The biset at the vertex or edge $x H$ is
  $G_{\rho(x H)}$, on which $G_{\rho(x H)}$ acts naturally by left
  multiplication, while $H_{x H}$ acts on the right via $\phi$.
\end{proof}

\begin{rem}
  In a manner similar to Lemma~\ref{lem:biset=fibre}, every graph of
  bisets ${}_{\mathfrak Y}\mathfrak B_\gf$ may be factored as
  $\mathfrak B=\mathfrak A^\vee\otimes_{\mathfrak K}\mathfrak C$ for
  some graph of groups $\mathfrak K$ and some right-principal bisets
  ${}_{\mathfrak K}\mathfrak A_{\mathfrak Y}$ and ${}_{\mathfrak
    K}\mathfrak C_\gf$. Furthermore, there is an essentially unique
  minimal such $\mathfrak K$.
\end{rem}

\begin{lem}
  Let ${}_{\mathfrak Y}\mathfrak B_\gf$ and ${}_{\mathfrak Y}\mathfrak
  C_\gf$ be graphs of bisets over the same graphs, and let
  $\theta\colon\mathfrak B\to\mathfrak C$ be a ``semiconjugacy of graphs of
  bisets'', namely a graph morphism $\theta\colon\mathfrak B\to\mathfrak C$
  and compatible surjective biset morphisms $\theta_z\colon B_z\to
  C_{\theta(z)}$. Then $\theta$ may be factored as
  $\theta=\kappa\circ\iota$ through semiconjugacies $\iota\colon\mathfrak
  B\to\mathfrak C'$ and $\kappa\colon\mathfrak C'\to\mathfrak C$, in such a
  manner that $\iota$ induces an isomorphism between $\pi_1(\mathfrak
  B)$ and $\pi_1(\mathfrak C')$, and the underlying graph of
  $\mathfrak C'$ is the same as that of $\mathfrak C$.
\end{lem}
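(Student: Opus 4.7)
The plan is to take $\mathfrak C'$ to have the underlying graph of $\mathfrak C$, but with bisets that remember every preimage in $\mathfrak B$ before it is collapsed by the maps $\theta_w$. After first passing to barycentric subdivisions via Lemma~\ref{lmm:BarSubGrBis}, we may assume $\theta$ is simplicial, which avoids the nuisance of an edge of $\mathfrak B$ being sent to a vertex of $\mathfrak C$.

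For each $z\in\mathfrak C$ set
\[C'_z\coloneqq\bigsqcup_{w\in\theta^{-1}(z)}B_w,\]
viewed as a $G_{\lambda(z)}$-$G_{\rho(z)}$-biset summand by summand (the groups match because $\lambda(w)=\lambda(z)$ and $\rho(w)=\rho(z)$ for $w\in\theta^{-1}(z)$). For an edge $z\in\mathfrak C$ and $b\in B_w\subset C'_z$, put $b^-\in B_{w^-}\subset C'_{z^-}$ and $\overline b\in B_{\overline w}\subset C'_{\overline z}$ using the congruences already present in $\mathfrak B$; this equips $\mathfrak C'$ with the structure of a graph of bisets. Define $\iota\colon\mathfrak B\to\mathfrak C'$ to have underlying graph map $\theta$ and biset maps $\iota_w\colon B_w\hookrightarrow C'_{\theta(w)}$ the summand inclusions, and $\kappa\colon\mathfrak C'\to\mathfrak C$ with the identity graph map and with $\kappa_z=\bigsqcup_{w\in\theta^{-1}(z)}\theta_w$, which is surjective because each $\theta_w$ is. By construction $\theta_w=\kappa_{\theta(w)}\circ\iota_w$, so $\theta=\kappa\circ\iota$, and both $\iota$ and $\kappa$ are easily checked to commute with the edge-congruences, hence to be semiconjugacies of graphs of bisets.

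The main step is to verify that $\iota_*\colon\pi_1(\mathfrak B,\dagger,*)\to\pi_1(\mathfrak C',\dagger,*)$ is a biset isomorphism. Representing both sides as equivalence classes of sequences $(h_0,y_1,\dots,b,\dots,g_m)$ per Definition~\ref{defn:p1biset}, the disjoint-union description of $C'_z$ yields a canonical bijection on raw sequences, since every $c\in C'_z$ lies in a unique summand $B_w$ with $w\in\theta^{-1}(z)$. The generating relations of~\eqref{eq:freebiset} correspond term-by-term: the groupoid relations coming from $\pi_1(\mathfrak Y,\dagger)$ and $\pi_1(\gf,*)$ agree because the outer graphs of groups coincide on both sides, and each biset relation $q b^-p\sim q\lambda(e)b^+\overline{\rho(e)}p$ from an edge $e$ of $\mathfrak B$ pushes under $\iota_*$ to the relation attached to the edge $\theta(e)$ of $\mathfrak C'$ restricted to the $e$-summand of $C'_{\theta(e)}$. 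The chief technical obstacle is to rule out any extra identifications on the $\mathfrak C'$-side: this is guaranteed precisely because distinct summands of $C'_z$ remain disjoint, forcing every edge-relation of $\mathfrak C'$ applied to $c\in C'_z$ to come from the edge-relation of the unique $w\in\theta^{-1}(z)$ with $c\in B_w$ in $\mathfrak B$. Hence $\iota_*$ is bijective and the factorization has the claimed property.
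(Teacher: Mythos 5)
The paper states this lemma without a proof, so there is nothing to compare your argument against; on its own merits, your construction of $\mathfrak C'$ by disjoint unions of fibres, and your generator-by-generator, relation-by-relation matching of the two instances of~\eqref{eq:freebiset}, are correct \emph{provided} $\theta$ is simplicial. The gap is the reduction to that case: barycentric subdivision does not make a graph morphism simplicial. If an edge $e$ of $\mathfrak B$ is sent to a vertex $z$ of $\mathfrak C$ --- which the paper's notion of graph morphism explicitly allows --- then after subdividing both sides the new edges $(e,\pm)$ are still sent to the vertex $z$; Lemma~\ref{lmm:BarSubGrBis} preserves fundamental bisets but cannot remove edge-to-vertex collapses. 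For such an $e$ your construction genuinely breaks: since $\theta(e^-)=\theta(e)^-=z$, both $B_e$ and $B_{e^-}$ sit as \emph{disjoint} summands of your $C'_z$, whereas compatibility of $\iota$ with the congruences (the maps $()^-$ on a vertex biset being the identity) would force $\iota_e(b)=\iota_{e^-}(b^-)$ in $C'_z$; so $\iota$ is not a semiconjugacy of graphs of bisets, and $\pi_1(\mathfrak C')$ lacks the identification $q b^-p\sim q\lambda(e)b^+\overline{\rho(e)}p$ attached to $e$, so $\iota$ does not even induce a well-defined map on fundamental bisets, let alone an isomorphism.

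The repair is local and uses exactly the compatibility you already invoked. If $\theta(e)=z$ is a vertex, then $\lambda(e)=\lambda(z)$ and $\rho(e)=\rho(z)$ are vertices of $\mathfrak Y$ and $\gf$, hence equal $1$ in the fundamental groupoids, and the relation attached to $e$ degenerates to $b^-\sim b^+$. So at each vertex $z$ of $\mathfrak C$ you should instead set
\[C'_z\coloneqq\Big(\bigsqcup_{w\in\theta^{-1}(z)\cap V(\mathfrak B)}B_w\Big)\Big/\big\{b^-\sim b^+\mid b\in B_w,\ w\in\theta^{-1}(z)\cap E(\mathfrak B)\big\},\]
keeping your definition at edges of $\mathfrak C$ (whose $\theta$-preimages consist only of edges, since vertices go to vertices). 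Then $\iota_w$ is the summand inclusion followed by the quotient map, $\kappa_z$ is still induced by the $\theta_w$ (they descend to the quotient because $\theta$ intertwines the congruences and the congruences of $\mathfrak C$ at a vertex are identities), and your comparison of the presentations~\eqref{eq:freebiset} of $\pi_1(\mathfrak B)$ and $\pi_1(\mathfrak C')$ goes through: the identifications you quotiented out of $C'_z$ are exactly the relations of $\pi_1(\mathfrak B)$ coming from the collapsed edges, and no others are introduced.
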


%%%%%%%%%%%%%%%%%%%%%%%%%%%%%%%%%%%%%%%%%%%%%%%%%%%%%%%%%%%%%%%%
\section{Correspondences}\label{ss:corr}
Bisets are well adapted to encode more general objects than continuous
maps, \emph{topological correspondences}.
\begin{defn}[Correspondences]
  Let $X,Y$ be topological spaces. A \emph{topological correspondence from
    $Y$ to $X$} is a triple $(Z,f,i)$ consisting of a topological space $Z$
  and continuous maps $f\colon Z\to X$ and $i\colon Z\to Y$. It is often
  abbreviated $(f,i)$, and written $Y\leftarrow Z\to X$.

  In the special case that $f$ is a covering, $(Z,f,i)$ is called a
  \emph{covering correspondence}.  If $X=Y$, then $(Z,f,i)$ is called
  a \emph{self-correspondence}. If $X=Y$ and $f$ is a covering, then
  $(Z,f,i)$ is called a \emph{covering pair}, and is also written
  $f,i\colon Z\rightrightarrows X$.
\end{defn}

Ultimately, we will be interested in dynamical systems that are
partially defined topological coverings $f\colon X\dashrightarrow X$, for
path connected topological spaces $X$; the dashed arrow means that the
map is defined on an open subset $Y\subseteq X$; these can naturally
be viewed as correspondences $(Y,f,i)$ with $i$ the inclusion.
% We will turn to this notion in~\S\ref{ss:partialmaps}; in most
% cases, the general discussion of correspondences carries no extra
% burden.

\subsection{The biset of a correspondence}\label{ss:biset of correspondence}
Let $Y\leftarrow Z\rightarrow X$ be a correspondence, and assume that
$X$ and $Y$ are path connected. Assume first that $Z$ is path
connected, and fix a basepoint $\ddagger\in Z$. Then we
have~\eqref{eq:DfnBisOfMap} a $\pi_1(Z,\ddagger)$-$\pi_1(X,*)$-biset
$B(f)$ and a $\pi_1(Z,\ddagger)$-$\pi_1(Y,\dagger)$-biset $B(i)$; we
\emph{define} the biset $B(f,i)$ to be
$B(i)^\vee\otimes_{\pi_1(Z,\ddagger)} B(f)$; this amounts to first
``lifting'' from $Y$ to $Z$, and then projecting $Z$ back to $X$. If
$Z$ is not path connected, then we \emph{define} $B(f,i)$ to be the
disjoint union of the bisets $B(i)^\vee\otimes_{\pi_1(Z,p)} B(f)$ with
one $p$ per path connected component of $Z$.

The following alternate construction of $B(f,i)$ avoids an
unnecessary reference to the basepoint and fundamental group of $Z$:
\begin{lem}\label{lem:cpbiset}
  The biset $B(f,i)$ may be constructed directly as follows, from
  the correspondence:
  \begin{equation}\label{eq:lem:cpbiset}
    B(f,i)=\left\{\left(\begin{array}{c}\delta\colon[0,1]\to Y\\p\in Z\\\gamma\colon[0,1]\to X\end{array}\middle)\right|\begin{array}{cc}\delta(0)=\dagger,&\delta(1)=i(p)\\\gamma(0)=f(p),&\gamma(1)=*\end{array}\right\}\bigg/{\sim}
  \end{equation}
  in which the equivalence relation $\sim$ allows $p$ to move in $Z$
  as long as $\delta,\gamma$ move smoothly with it\footnote{Note the
    similarity with Definition~\ref{defn:p1biset}}.

  The left action is by pre-composition of $\delta$ by loops in $Y$ at
  $\dagger$, and the right action is by post-composition of $\gamma$ by loops in $X$ at $*$:
  \[[\alpha]\cdot([\delta],p,[\gamma])\cdot[\varepsilon]=([\alpha\#\delta],p,[\gamma\#\varepsilon]).
  \]
\end{lem}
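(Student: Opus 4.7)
The plan is to exhibit mutually inverse maps between the set described in~\eqref{eq:lem:cpbiset} and $B(f,i)=B(i)^\vee\otimes_{\pi_1(Z,\ddagger)}B(f)$. Since a triple $(\delta,p,\gamma)$ picks out the path component of $Z$ containing $p$, it suffices to assume $Z$ is path connected with basepoint $\ddagger$; the general case follows by disjoint union.

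For the forward map $\Phi$, given $(\delta,p,\gamma)$, I would choose any path $\eta\colon[0,1]\to Z$ from $\ddagger$ to $p$ and set
\[\Phi([\delta,p,\gamma])=[i\circ\eta\#\delta^{-1}]^\vee\otimes[f\circ\eta\#\gamma],\]
where $i\circ\eta\#\delta^{-1}$ is a path from $i(\ddagger)$ to $\dagger$ in $Y$ (a representative of $B(i)$) and $f\circ\eta\#\gamma$ a path from $f(\ddagger)$ to $*$ in $X$ (a representative of $B(f)$). I would first check independence of $\eta$: given another such path $\eta'$, the loop $\mu=\eta\#(\eta')^{-1}\in\pi_1(Z,\ddagger)$ satisfies $[\sigma]^\vee\cdot[\mu]=[i\mu^{-1}\#\sigma]^\vee$ (from the dualized right action on $B(i)^\vee$) and $[\mu]\cdot[\tau]=[f\mu\#\tau]$; the tensor identity $b\cdot g\otimes c=b\otimes g\cdot c$ with $g=[\mu]$ then converts the expression for $\eta$ into the one for $\eta'$. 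Compatibility with $\sim$ reduces to the same calculation: a homotopy linking $(\delta_0,p_0,\gamma_0)$ to $(\delta_1,p_1,\gamma_1)$ produces a path $\kappa$ from $p_0$ to $p_1$ in $Z$ with $\delta_1\approx\delta_0\#i\kappa$ and $\gamma_1\approx(f\kappa)^{-1}\#\gamma_0$, and using $\eta$ for the first triple and $\eta\#\kappa$ for the second yields identical tensor representatives.

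The inverse map is $\Psi([\sigma]^\vee\otimes[\tau])=(\sigma^{-1},\ddagger,\tau)$. That it respects the tensor identifications reduces to the observation that moving $p=\ddagger$ along a loop $\mu$ at $\ddagger$ turns $(\sigma^{-1},\ddagger,f\mu\#\tau)$ into $(\sigma^{-1}\#i\mu,\ddagger,\tau)$, which matches applying $[\mu]$ on either side of the tensor product. The compositions $\Phi\Psi$ and $\Psi\Phi$ are then immediate: $\Phi\Psi$ by taking the constant path at $\ddagger$, and $\Psi\Phi$ by applying the $\sim$-equivalence along the chosen $\eta$ to $(\delta\#(i\eta)^{-1},\ddagger,f\eta\#\gamma)$. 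Compatibility with the left $\pi_1(Y,\dagger)$- and right $\pi_1(X,*)$-actions is a direct check: the left action $[\alpha]\cdot[\sigma]^\vee=[\sigma\#\alpha^{-1}]^\vee$ on $B(i)^\vee$ matches prepending $\alpha$ to $\delta$, while post-composing $\gamma$ by $\varepsilon$ reproduces the right action on $B(f)$.

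The only real obstacle is bookkeeping: keeping straight the contragredient convention, the concatenation order, and the direction of each path. No substantive technique is required beyond the defining tensor relation and the fact that any two paths in $Z$ with the same endpoints differ by a loop at $\ddagger$.
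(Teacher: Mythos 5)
Your proof is correct and follows essentially the same route as the paper's: the paper's own argument is a two-sentence sketch that unfolds the tensor product $B(i)^\vee\otimes_{\pi_1(Z,\ddagger)}B(f)$ with the point fixed at a basepoint per component and then observes that the homotopy relation lets the point move freely; your $\Phi$, $\Psi$ and the checks via the path $\eta$ are exactly the details that sketch leaves implicit.
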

\[\begin{tikzpicture}
  \draw (3,3) ellipse (3cm and 1cm) +(2.2,0.4) node {$Z$};
  \draw (0,1) ellipse (2.5cm and 0.9cm) +(-2,-0.2) node {$Y$};
  \draw (6,1) ellipse (2.5cm and 0.9cm) +(2,-0.2) node {$X$};
  \draw (-1.5,1) node {$\bullet$} node[above left] {$\dagger$} .. controls (-0.5,0.6) and (0.5,1.4) .. node[below] {$\delta$} (1.5,1);
  \draw (4.5,1) .. controls (5.5,0.6) and (6.5,1.4) .. node[below] {$\gamma$} (7.5,1) node {$\bullet$} node[above right] {$*$};
  \draw[dashed,->] (3,3) node[above right] {$p$} -- node [pos=0.65,right] {$i$} (1.5,1);
  \draw[dashed,->] (3,3) -- node [pos=0.65,left] {$f$} (4.5,1);
\end{tikzpicture}
\]
\begin{proof}
  By the definition of products of bisets,
  $B(f,i)=\bigsqcup_z\{(\check\delta,\gamma)\}/{\sim}$, with
  $\delta\colon[0,1]\to Y$ ending at $\dagger$ and $\gamma\colon[0,1]\to X$
  ending at $*$ and $\delta(0)=i(z)$ and $\gamma(0)=f(z)$, and with
  one $z$ per path connected component of $Z$. By the homotopy relation, we
  may also allow $z$ to move freely, arriving at the formulation of
  the lemma in which no condition on $z$ is imposed.
\end{proof}

Recall that a continuous map $f\colon Y\to X$ is a \emph{fibration}, or a
\emph{fibrant map} if it has the homotopy lifting property with respect to
arbitrary spaces: for every space $Z$, every homotopy $g\colon
Z\times[0,1]\to X$ and every $h\colon Z\times\{0\}\to Y$ with $f\circ h=g$
on $Z\times\{0\}$ there exists an extension of $h$ to $Z\times[0,1]$ with
$f\circ h=g$.  Clearly, the composition of fibrations is a fibration. If
$X$ is path connected, then in particular all fibres $f^{-1}(x)$ are
homotopy equivalent.

\begin{rem}
  In fact, we may consider at no cost a larger class of maps: let us say
  that a map $f$ is a \emph{$\pi_1$-fibration} if $f$ has the lifting
  property with respect to contractible spaces. This is sufficient for our
  purposes; it implies, for example, that all $f^{-1}(x)$ have isomorphic
  fundamental groups.
\end{rem}

\begin{lem}\label{lmm:FibrCorr}
  Assume that $f\colon Z\to X$ is a fibration. Then the biset of $f$ has
  the following description:
  \begin{equation}\label{eq:Bf:lmm:FibrCorr}
    B(f)=\{\beta\colon[0,1]\to Z\mid\beta(0)=\dagger,f(\beta(1))=*\}/{\sim},
  \end{equation}
  with $\beta\sim \beta'$ if and only if there is a path
  $\varepsilon\colon[0,1]\to f^{-1}(*)$ connecting $\beta(1)$ to
  $\beta'(1)$ such that $\beta\#\varepsilon$ is homotopic to $\beta'$.
    
  Assume that $(f,i)$ is a correspondence with $f$ fibrant. Then
  in~\eqref{eq:lem:cpbiset} we may assume that $\gamma$ is constant, and
  write
  \begin{equation}\label{eq:Bfi:lmm:FibrCorr}
  B(f,i)=\{(\delta\colon[0,1]\to Y,p\in Z)\mid\delta(0)=\dagger,\delta(1)=i(p),f(p)=*\}/{\sim},
  \end{equation}
  with $(\delta,p)\sim(\delta',p')$ if and only if there exists a path
  $\varepsilon\colon[0,1]\to f^{-1}(*)\subseteq Z$ connecting $p$ to $p'$,
  such that $\delta\#i(\varepsilon)$ is homotopic to $\delta'$.
\end{lem}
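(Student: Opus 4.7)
The plan is to exhibit a natural bijection $\Phi\colon \{\beta\}/{\sim}\to B(f)$ between the quotient in~\eqref{eq:Bf:lmm:FibrCorr} and the biset~\eqref{eq:DfnBisOfMap}, defined by $[\beta]\mapsto[f\circ\beta]$. Well-definedness is immediate: if $\beta\#\varepsilon\approx\beta'$ rel endpoints with $\varepsilon$ a path in $f^{-1}(*)$, then $f\circ(\beta\#\varepsilon)=(f\circ\beta)\#\mathrm{const}_*$, which is homotopic rel endpoints to $f\circ\beta$, and hence to $f\circ\beta'$. Surjectivity follows from the ordinary path-lifting property of $f$ (HLP applied with a point): any $\gamma$ in $X$ from $f(\dagger)$ to $*$ lifts to some $\beta\colon[0,1]\to Z$ starting at $\dagger$, and automatically $f(\beta(1))=\gamma(1)=*$.

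The main work is injectivity, and it rests on the relative form of HLP available for Hurewicz fibrations, which says that a homotopy $[0,1]\times[0,1]\to X$ admits a lift extending any given partial lift on $[0,1]\times\{0\}\cup A\times[0,1]$ whenever $A\hookrightarrow[0,1]$ is a cofibration. Given a homotopy $H\colon [0,1]^2\to X$ rel endpoints from $f\circ\beta$ to $f\circ\beta'$, I would first apply this with $A=\{0\}$ to produce $\tilde H\colon[0,1]^2\to Z$ with $\tilde H(s,0)=\beta(s)$ and $\tilde H(0,u)=\dagger$. Then $\varepsilon_1(u)=\tilde H(1,u)$ is a path in $f^{-1}(*)$ starting at $\beta(1)$, and $\hat\beta(s)=\tilde H(s,1)$ is a second lift of $f\circ\beta'$ starting at $\dagger$; reading $\tilde H$ around the square gives $\beta\#\varepsilon_1\approx\hat\beta$ rel endpoints. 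A second application, with $A=\{0,1\}$, to the constant homotopy of $f\circ\beta'$ with boundary $\beta'$ on the bottom, $\hat\beta$ on the top, and $\dagger$ on the left edge, yields a path $\varepsilon_2$ in $f^{-1}(*)$ from $\beta'(1)$ to $\hat\beta(1)$ together with a sliding-endpoint homotopy $\beta'\#\varepsilon_2\approx\hat\beta$. Chaining, $\beta\#(\varepsilon_1\#\varepsilon_2^{-1})\approx\beta'$ with $\varepsilon_1\#\varepsilon_2^{-1}$ a path in $f^{-1}(*)$ from $\beta(1)$ to $\beta'(1)$, so $\beta\sim\beta'$ as desired.

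The second assertion~\eqref{eq:Bfi:lmm:FibrCorr} reduces to the first by absorbing $\gamma$ into $\delta$. Given a representative $(\delta,p,\gamma)$ of an element of $B(f,i)$, I lift $\gamma$ through $f$ to $\tilde\gamma$ in $Z$ starting at $p$ and replace the triple with the pair $(\delta\#(i\circ\tilde\gamma),\tilde\gamma(1))$; conversely, a pair $(\delta,p)$ with $f(p)=*$ corresponds to the triple $(\delta,p,\mathrm{const}_*)$. Both assignments descend to the stated equivalence relations, by the first part of the lemma applied fiberwise over $*$: the ambiguity in the choice of lift $\tilde\gamma$ is precisely captured by a path in $f^{-1}(*)$, which is exactly what the $\sim$ of~\eqref{eq:Bfi:lmm:FibrCorr} allows, and is also exactly the content of the ``$p$ moves smoothly'' relation from~\eqref{eq:lem:cpbiset} when $\gamma=\mathrm{const}_*$. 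The main obstacle throughout is the invocation of the relative HLP in the injectivity argument; once that is on hand, the rest is bookkeeping with squares and path concatenations.
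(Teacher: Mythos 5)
Your proposal is correct and follows essentially the same route as the paper: both hinge on lifting the homotopy square through the fibration via the relative homotopy lifting property and reading off the remaining edge of the lifted square as the fiber path $\varepsilon$, and both reduce the second statement to the first by absorbing a lift of $\gamma$ into $\delta$. The only cosmetic difference is that the paper performs a single lift of the square with three sides prescribed ($\beta$, $\beta'$, and the constant edge at $\dagger$), where you use two successive relative lifts.
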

\begin{proof}
  Recall from~\eqref{eq:DfnBisOfMap} that $B(f)$ is defined as
  $\{\gamma\colon[0,1]\to X\mid \gamma(0)=f(\dagger),\;\gamma(1)=*\}/{\sim}$.
  Since $f$ is fibrant, every $\gamma\in B(f)$ has a lift, say
  $\beta$, with $\beta(0)=\dagger$ and $f\circ\beta = \gamma$. We need
  to show that if $\gamma_0$ is homotopic to $\gamma_1$, say via
  $\gamma_t$, then their lifts $\beta_0$ and $\beta_1$ have the
  property that $\beta_0$ is homotopic to $\beta_1\#\varepsilon$ for
  some $\varepsilon$ in $f^{-1}(*)$. Consider $\gamma_t$ as a map
  $\gamma\colon[0,1]\times [0,1]\to X$, where $t$ is the first variable and
  the second variable parameterizes $\gamma_t$. By construction,
  $\gamma(t,0)=f(\dagger)$ and $\gamma(t,1)=*$. Since $f$ is
  fibrant, there is a lift $\epsilon\colon[0,1]\times[0,1]\to Z$ of
  $\gamma$ such that $\epsilon(t,0)=\dagger$, $\epsilon f=\gamma$,
  $\epsilon(0,\tau)=\beta_0(\tau)$, and
  $\epsilon(1,\tau)=\beta_1(\tau)$. Define
  $\varepsilon(\tau)\coloneqq\epsilon(1,\tau)$. Then
  $\beta^{-1}_0\beta_1\varepsilon$ is
  $\epsilon(\partial([0,1]\times[0,1]))$. Therefore, $\beta_0$ is
  homotopic to $\beta_1\#\varepsilon$.

  Let us now prove the second statement. Using the first part, the
  biset $B(f,i)$ in the sense of~\eqref{eq:lem:cpbiset} is identified
  with
  \[\{(\delta\colon[0,1]\to Y,\beta\colon[0,1]\to
  Z)\mid\delta(0)=\dagger,\delta(1)=i(\beta(0)),f(\beta(1))=*\}/{\sim},
  \]
  where $(\delta_0,\beta_0)\sim (\delta_1,\beta_1)$ if there is a path
  $\varepsilon\colon[0,1]\to f^{-1}(*)$ such that $(\delta_0,\beta_0)$ is
  homotopic to $(\delta_1,\beta_1\varepsilon)$. Projecting
  $\beta_1\varepsilon$ to $Y$ yields the required description.
\end{proof}

\begin{exple}
  Here are some extreme examples of correspondences worth keeping in
  mind.  If $Z=\{\ddagger\}$ is a point, then
  $B(f,i)=\pi_1(Y,\dagger)\times\pi_1(X,*)$. On the other hand, if
  $Z=X=Y$ and $f$ and $i$ are the identity, then
  $B(f,i)\cong\pi_1(Y,\dagger)\cong\pi_1(X,*)$. Finally, one may
  consider $Z=Y\times X$ with $i,f$ the first and second projections
  respectively; then $B(f,i)$ consists of a single element.
\end{exple}

If $f$ is a covering, then the biset $B(f,i)$ is left-free. Indeed,
consider~\eqref{eq:Bfi:lmm:FibrCorr} as a description of
$B(f,i)$. Suppose $f^{-1}(*)=\{p_i\}_{i\in I}$. For every $i$ choose
$\delta_i\colon[0,1]\to Y$ with $\delta_i(0)=\dagger$ and
$\delta_i(1)=i(p_i)$.  Then $(\delta_i,p_i)_{i\in I}$ forms a basis of
$B(f,i)$.

\subsection{Van Kampen's theorem for correspondences}
\label{ss:VanKampCorr}
We describe in this section the decomposition of the biset of a
correspondence into a graph of bisets, using a decomposition of the
underlying spaces. We start by the following straightforward lemma.

\begin{lem}\label{lm:BisMorph}\pushQED{\qed}
  Let
  \begin{itemize}
  \item $f\colon Y\to X$ be a continuous map;
  \item $X'$ and $Y'$ be path connected subsets of $X$ and $Y$ with
    $f(Y')\subseteq X'$;
  \item $\dagger$, $\dagger'$, $*$, and $*'$ be base points in $Y$,
    $Y'$, $X$, and $X'$; and
  \item $\gamma_*\subset X$ and $\gamma_\dagger \subset Y$ be curves
    connecting $*$ to $*'$ and $\dagger$ to $\dagger'$ respectively.
  \end{itemize}
  We allow $X'=X$; in this case we suppose that $\gamma_*$ is the constant
  path and $*'=*$.
 
  Then the maps $\delta\to\gamma_\dagger\delta \gamma^{-1}_\dagger$, $\delta\to
  \gamma_*\delta \gamma^{-1}_*$, and $b\to f(\gamma_\dagger ) b \gamma^{-1}_*$
  define a biset congruence
  \[_{\pi_1(Y',\dagger')}B(f\restrict{Y'},\dagger',*')_{\pi_1(X',*')}\to\ _{\pi_1(Y,\dagger)}B(f,\dagger,*)_{\pi_1(X,*)}.\qedhere\]
\end{lem}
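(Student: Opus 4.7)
The plan is to verify this by directly unpacking the three homomorphism/congruence equations, since the content of the lemma is really bookkeeping of concatenations and cancellations of paths.

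First I would check that the three maps are well-defined on homotopy classes. Conjugation by $\gamma_\dagger$ and $\gamma_*$ sends loops at $\dagger'$, $*'$ to loops at $\dagger$, $*$, and respects the homotopy relation; this gives the group homomorphisms $\psi\colon\pi_1(Y',\dagger')\to\pi_1(Y,\dagger)$ and $\phi\colon\pi_1(X',*')\to\pi_1(X,*)$. For $\beta$, an element of $B(f\restrict{Y'},\dagger',*')$ is the homotopy class of a path from $f(\dagger')$ to $*'$, so $f(\gamma_\dagger)\#b\#\gamma_*^{-1}$ really is a concatenable path from $f(\dagger)$ to $*$, representing an element of $B(f,\dagger,*)$; homotopies of $b$ rel endpoints induce homotopies of the concatenation rel endpoints.

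The key step is the congruence identity $h^\psi\cdot b^\beta\cdot g^\phi=(h\cdot b\cdot g)^\beta$. Unravelling:
\begin{align*}
  h^\psi\cdot b^\beta\cdot g^\phi
  &= f(\gamma_\dagger h\gamma_\dagger^{-1})\#\bigl(f(\gamma_\dagger)\# b\#\gamma_*^{-1}\bigr)\#(\gamma_* g\gamma_*^{-1})\\
  &= f(\gamma_\dagger)\# f(h)\# f(\gamma_\dagger^{-1})\# f(\gamma_\dagger)\# b\#\gamma_*^{-1}\#\gamma_* \# g\#\gamma_*^{-1}\\
  &\sim f(\gamma_\dagger)\# f(h)\# b\# g\#\gamma_*^{-1}\\
  &= \bigl(f\circ h\# b\# g\bigr)^\beta = (h\cdot b\cdot g)^\beta,
\end{align*}
using the cancellations $f(\gamma_\dagger^{-1})\# f(\gamma_\dagger)\sim\mathrm{const}_{f(\dagger')}$ and $\gamma_*^{-1}\#\gamma_*\sim\mathrm{const}_{*'}$, plus the definitions of the left and right biset actions from~\eqref{eq:DfnBisOfMap}. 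The special case $X'=X$ with $\gamma_*$ constant is automatic: $\gamma_*^{-1}\#\gamma_*$ is literally the constant path at $*$.

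I do not expect any real obstacle here — the proof is entirely formal, and all three equalities reduce to the same kind of cancellation. The only point that requires attention is making sure the left action is transported correctly through $f$, i.e.\ that $f(\gamma_\dagger h\gamma_\dagger^{-1})=f(\gamma_\dagger)\# f(h)\# f(\gamma_\dagger)^{-1}$, which is immediate from functoriality of $f$ applied to path concatenation.
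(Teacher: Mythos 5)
Your verification is correct: the three maps are well defined on homotopy classes, $\psi$ and $\phi$ are the usual change-of-basepoint homomorphisms composed with inclusion-induced maps, and the congruence identity reduces to cancelling $f(\gamma_\dagger)^{-1}\#f(\gamma_\dagger)$ and $\gamma_*^{-1}\#\gamma_*$ exactly as you write. The paper labels this lemma as straightforward and omits the proof entirely (it carries a \qed in the statement), so your direct unpacking is precisely the argument being taken for granted.
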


Let $f\colon Z\to X$ be a continuous map between path connected
spaces. Suppose that $(U_{\alpha})$ and $(W_\gamma)$ are finite
$1$-dimensional covers $X$ and $Z$ respectively, as in
Definition~\ref{defn:1dimcovers}. Choose base points
$*_\alpha \in U_{\alpha}$ and $\dagger_\gamma\in W_{\gamma}$ as well
as connections between $*_{\alpha'}$ and $*_{\alpha}$ if
$U_{\alpha'}\varsubsetneqq U_{\alpha}$; and similarly for
$(W_\gamma)$. Consider the associated graphs of groups $\mathfrak X$
and $\mathfrak Z$ according to
Definition~\ref{defn:gog_1dimcovers}. We have
$\pi_1(X,*_\alpha)=\pi_1(\mathfrak X,\alpha)$ and
$\pi_1(Z,\dagger_\gamma)=\pi_1(\mathfrak Z,\gamma)$, by
Theorem~\ref{thm:vankampen}.  Suppose furthermore that $(U_{\alpha})$
and $(W_\gamma)$ are \emph{compatible} with $f$ in the following
sense: for every $\gamma$ there is an $\alpha\eqqcolon\rho(\gamma)$
with $f(W_\gamma)\subset U_\alpha$.

To the above data there is an associated graph of bisets ${}_{\mathfrak
  Z}\mathfrak B(f)_{\mathfrak X}$ is defined as follows. As a graph,
$\mathfrak B$ is $\mathfrak Z$, with $\lambda\colon\mathfrak B\to\mathfrak
Z$ the identity and $\rho\colon\mathfrak B\to\gf$ given above. For every
vertex $\gamma\in\mathfrak B$, the biset $B_\gamma$ is $B(f\colon
W_\gamma\to
U_{\rho(\gamma)},\dagger_{\lambda(\gamma)},*_{\rho(\gamma)})$. For every
edge $e\in \mathfrak B$ representing the embedding
$W_{\gamma'}\varsubsetneqq W_\gamma$ the biset $B_e$ is $B_{\gamma'}$. If
$e$ is oriented so that $e^-=\gamma'$ and $e^+=\gamma$, the congruences
$()^\pm$ are $()^-=\one\colon B_e\to B_{\gamma'}$ and $()^+\colon B_e\to
B_\gamma$ given by Lemma~\ref{lm:BisMorph}.
    
By construction, ${}_{\mathfrak Z}\mathfrak B(f)_{\mathfrak X}$ is a
right-principal graph of bisets.

Consider now a correspondence $(f,i)$, with $f\colon Z\to X$ and $i\colon
Z\to Y$, between path connected spaces $X$ and $Y$.  Suppose $(Z_k)$ are
the path connected components of $Z$, and suppose $(U_{\alpha})$,
$(V_\beta)$, and $(W_\gamma)$ are finite $1$-dimensional covers of $X$,
$Y$, and $Z$ respectively, \emph{compatible with} $f$ and $i$: for every
$\gamma$ there are $\lambda(\gamma)$ and $\rho(\gamma)$ such that
$f(W_\gamma)\subset U_{\rho(\gamma)}$ and $i(W_\gamma)\subset
V_{\lambda(\gamma)}$.

Let $(U_{\alpha})$, $(V_\beta)$, and $(W_\gamma)$ be finite
$1$-dimensional covers of $X$, $Y$, and $Z$ compatible with $f$ and
$i$ as above. We then have graphs of bisets
${}_{\mathfrak Z_k}\mathfrak B(f\restrict{Z_k})_{\mathfrak X}$ and
${}_{\mathfrak Z_k}\mathfrak B(i\restrict{Z_k})_{\mathfrak Y}$. We
define the graph of bisets of the correspondence $(f,i)$ as
\[ {}_{\mathfrak Y}\mathfrak B(f,i)_{\mathfrak X}\coloneqq\bigsqcup_{k} {}_{\mathfrak Y}\mathfrak B(i\restrict{Z_k})^\vee_{\mathfrak Z_k} \otimes_{\mathfrak Z_k} {}_{\mathfrak Z_k}\mathfrak B(f\restrict{Z_k})_{\mathfrak X}.
\]
The following equivalent description of $ {}_{\mathfrak Y}\mathfrak
B_{\mathfrak X}$ follows immediately from the definition; see
Lemma~\ref{lem:cpbiset}
\begin{lem}\label{lem:gob_correspondence}\pushQED{\qed}
  Let $(Z,f,i)$ be a topological correspondence from $Y$ to $X$, for path
  connected spaces $X,Y$. Let $(U_{\alpha})$, $(V_\beta)$, and $(W_\gamma)$
  be finite $1$-dimensional covers of $X$, $Y$, and $Z$ compatible with $f$
  and $i$ as above. Then the graph of bisets ${}_{\mathfrak Y}\mathfrak
  B_{\mathfrak X}$ of $(f,i)$ with respect to the above data is as follows:
  \begin{itemize}
  \item the graphs of groups $\mathfrak X$ and $\mathfrak Y$ are
    constructed as in Definition~\ref{defn:gog_1dimcovers} using the
    covers $(U_\alpha)$ and $(V_\beta)$ of $X,Y$ respectively. Choices
    of paths $\ell_e$, $m_e$ were made for edges $e$ in $\mathfrak X$,
    $\mathfrak Y$ respectively;
  \item the underlying graph of $\mathfrak B$ is similarly constructed
    using the cover $(W_\gamma)$ of $Z$. Note that neither $Z$ nor the
    underlying graph of $\mathfrak B$ need be connected;
  \item for every vertex $z\in \mathfrak B$ the biset
    ${}_{G_{\lambda(z)}}B_z {}_{G_{\rho(z)}}$ is $B(f\restrict{W_z},
    i\restrict{W_z})$;
  \item for every edge $e\in \mathfrak B$ representing the embedding
    $W_{z'}\varsubsetneqq W_z$ the biset $B_e$ is $B_{z'}$, and if $e$
    is oriented so that $e^-=z'$ then the congruences $()^\pm$ are the
    maps $()^-=\one\colon B_e\to B_{z'}$ and $()^+\colon B_e\to B_z$
    given by
    $(\gamma^{-1},\delta)\mapsto(m_{\lambda(e)}^{-1}\#\gamma^{-1},\delta\#\ell_{\rho(e)})$
    in the description of $B_e$ as
    $B(i\restrict{W_{e^-}})^\vee\otimes B(f\restrict{W_{e^-}})$, see
    Lemma~\ref{lm:BisMorph}.\qedhere
  \end{itemize}
\end{lem}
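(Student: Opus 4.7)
The plan is to unpack the definition $\mathfrak B(f,i)=\bigsqcup_k\mathfrak B(i\restrict{Z_k})^\vee\otimes_{\mathfrak Z_k}\mathfrak B(f\restrict{Z_k})$ and match each piece against the data listed in the statement. First I would recall the construction of $\mathfrak B(f\restrict{Z_k})$ and $\mathfrak B(i\restrict{Z_k})$ given just before the lemma: both have underlying graph $\mathfrak Z_k$ (built from $(W_\gamma)$ by Definition~\ref{defn:gog_1dimcovers}), with $\lambda=\one$; the graph map $\rho$ is the one induced by compatibility of the cover with $f$, respectively $i$; vertex bisets are the right-principal bisets $B(f\restrict{W_\gamma},\dagger_\gamma,*_{\rho(\gamma)})$ and $B(i\restrict{W_\gamma},\dagger_\gamma,\star_{\lambda(\gamma)})$; and the edge congruences come from Lemma~\ref{lm:BisMorph} applied to the inclusions $W_{\gamma'}\hookrightarrow W_\gamma$, using the chosen connecting paths.

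Next I would compute the contragredient $\mathfrak B(i\restrict{Z_k})^\vee$, which keeps the same underlying graph but swaps $\lambda$ and $\rho$ and dualizes each vertex and edge biset, and then form the product with $\mathfrak B(f\restrict{Z_k})$ via the definition (fibre product of graphs, tensor along $G_\gamma=\pi_1(W_\gamma,\dagger_\gamma)$ at each vertex). Since both factors have underlying graph $\mathfrak Z_k$ and the common map to $\mathfrak Z_k$ is the identity $\lambda$, the fibre product is again $\mathfrak Z_k$; taking the disjoint union over $k$ then produces an underlying graph of $\mathfrak B$ which is the one built from $(W_\gamma)$ exactly as required. At each vertex $z$ the biset is
\[B(i\restrict{W_z},\dagger_z,\star_{\lambda(z)})^\vee\otimes_{\pi_1(W_z,\dagger_z)}B(f\restrict{W_z},\dagger_z,*_{\rho(z)}),\]
which is the very definition of $B(f\restrict{W_z},i\restrict{W_z})$ recalled at the start of \S\ref{ss:biset of correspondence}; under the alternative description of Lemma~\ref{lem:cpbiset} an element is a triple $(\check\gamma,p,\delta)$ with $\gamma$ ending at $\star_{\lambda(z)}$ in $V_{\lambda(z)}$ and $\delta$ starting at $*_{\rho(z)}$ in $U_{\rho(z)}$. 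This matches the stated vertex bisets.

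For edges, I would take an edge $e$ with $e^-=z'$ and $e^+=z$ representing $W_{z'}\varsubsetneqq W_z$. By construction, the edge biset in each factor $\mathfrak B(i\restrict{Z_k})$ and $\mathfrak B(f\restrict{Z_k})$ is the corresponding vertex biset at $z'$, with congruence to the $z$-biset given by Lemma~\ref{lm:BisMorph} using paths $m_{\lambda(e)}$ in $V_{\lambda(z)}$ and $\ell_{\rho(e)}$ in $U_{\rho(z)}$ (these are the paths $\gamma_\ast,\gamma_\dagger$ from that lemma for the respective covers). After dualizing the $i$-factor and tensoring, an element $(\check\gamma,\delta)$ of $B_e$ embeds into $B_z$ as $(\,\overline{m_{\lambda(e)}\#\gamma}\,,\,\delta\#\ell_{\rho(e)})$, i.e.\ as $(m_{\lambda(e)}^{-1}\#\gamma^{-1},\,\delta\#\ell_{\rho(e)})$, exactly the formula in the statement; the map $()^-=\one\colon B_e\to B_{z'}$ is the identity since edge and source-vertex bisets coincide in each factor before tensoring.

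The only genuine bookkeeping obstacle is the sign inversion in the first coordinate produced by the contragredient: one must verify that dualization converts the \emph{post}-composition by $m_{\lambda(e)}$ given by Lemma~\ref{lm:BisMorph} on $B(i\restrict{W_{z'}})$ into the \emph{pre}-composition by $m_{\lambda(e)}^{-1}$ stated here. This follows from the defining identity $g\cdot\check b\cdot h=(h^{-1}bg^{-1})\check{}$ for the contragredient and from compatibility of the tensor $(B\otimes C)^\vee\cong C^\vee\otimes B^\vee$ with the edge congruences. Once this is checked, the four bullet points of the statement are verified, proving the lemma.
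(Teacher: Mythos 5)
Your proposal is correct and follows exactly the route the paper intends: the paper marks this lemma with \qedhere and offers no proof beyond the remark that it ``follows immediately from the definition,'' and your argument is precisely the careful unpacking of that definition, including the one genuinely delicate point (that the contragredient turns the post-composition by $m_{\lambda(e)}$ from Lemma~\ref{lm:BisMorph} into pre-composition by $m_{\lambda(e)}^{-1}$). Nothing is missing; you have simply written out in full what the authors left implicit.
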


\begin{thm}[Van Kampen's theorem for correspondences]\label{thm:vankampenbis}
  Let $(f,i)$ be a topological correspondence from a path connected
  space $Y$ to a path connected space $X$, and let
  $ {}_{\mathfrak Y}\mathfrak B_{\mathfrak X}$ be the graph of bisets
  subject to compatible finite $1$-dimensional covers of spaces in
  question.
  
  Then for every $v\in\mathfrak Y$ and $u\in\mathfrak X$ we have
  \[B(f,i,\dagger_v,*_u)\cong \pi_1(\mathfrak B, v,u),\]
  where $\dagger_v$ and $*_u$ are basepoints. 
\end{thm}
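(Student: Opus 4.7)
The plan is to reduce to the classical van Kampen theorem in two stages. First I would use the tensor-product decomposition of the biset of a correspondence together with the compatibility of $\pi_1$ with products (Lemma~\ref{lem:tensor commutes with pi1}) to reduce from correspondences to genuine continuous maps. Then I would establish the map case by directly constructing the comparison map and verifying surjectivity and injectivity, the latter by a grid-subdivision argument in the style of the classical proof.

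For the reduction, by definition
\[\mathfrak B(f,i) = \bigsqcup_k \mathfrak B(i\restrict{Z_k})^\vee \otimes_{\mathfrak Z_k} \mathfrak B(f\restrict{Z_k}), \qquad B(f,i) = \bigsqcup_k B(i\restrict{Z_k})^\vee \otimes_{\pi_1(Z_k,\ddagger_k)} B(f\restrict{Z_k}).\]
The graph of bisets $\mathfrak B(i\restrict{Z_k})$ is right-principal, so its contragredient is left-principal and in particular left-fibrant (Lemma~\ref{lem:DefnPrincipGrBis}); thus Lemma~\ref{lem:tensor commutes with pi1} gives
\[\pi_1(\mathfrak B(f,i),v,u) \cong \bigsqcup_k \pi_1(\mathfrak B(i\restrict{Z_k}),k,v)^\vee \otimes_{\pi_1(\mathfrak Z_k,k)} \pi_1(\mathfrak B(f\restrict{Z_k}),k,u),\]
where I have also used the easy fact that passing to contragredients commutes with $\pi_1$ (a straightforward check from the definitions). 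Since classical van Kampen (Theorem~\ref{thm:vankampen}) identifies $\pi_1(\mathfrak Z_k,k)$ with $\pi_1(Z_k,\ddagger_k)$, the statement reduces to the following: for any continuous map $g\colon A \to B$ between path-connected spaces equipped with compatible finite $1$-dimensional covers, $B(g, \dagger, *) \cong \pi_1(\mathfrak B(g), \dagger, *)$.

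For this map case I would construct the comparison map $\theta\colon \pi_1(\mathfrak B(g), v, u) \to B(g, \dagger_v, *_u)$ explicitly. After barycentric subdivision, which preserves $\pi_1$ by Lemma~\ref{lmm:BarSubGrBis}, a representative of an element of $\pi_1(\mathfrak B(g), v, u)$ is an alternating sequence of group elements, edges of $\mathfrak Z'$ (each decorated by its chosen connecting path $\gamma_e$), a biset element (itself a local path in $B$), edges of $\mathfrak X'$, and further group elements. Sending this sequence to the concatenation of its pieces in $B$, with $g$ applied to every piece coming from $Z'$, produces an element of $B(g, \dagger_v, *_u)$. Well-definedness on $\sim$-classes amounts to checking that each defining relation in~\eqref{eq:freebiset} becomes a trivial homotopy in $B$, which is immediate from the construction of the local bisets $B(g\restrict{W_\gamma})$.

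Surjectivity of $\theta$ follows at once from Remark~\ref{rem:OnFinitnOfDecomp}: every path in $B$ from $g(\dagger_v)$ to $*_u$ is homotopic rel endpoints to a concatenation of finitely many pieces each lying in a single $U_\alpha$ and meeting the basepoints as required, and any such concatenation is visibly in the image of $\theta$. The main obstacle is injectivity, which I would handle by a grid-subdivision argument parallel to the classical proof sketched for Theorem~\ref{thm:vankampen}: given a homotopy $[0,1]^2 \to B$ between the $\theta$-images of two sequences, subdivide so finely that each small square maps into a single $U_\alpha$; each such square then realizes one of the defining relations of~\eqref{eq:freebiset}, and chaining the relations across the grid exhibits the two sequences as $\sim$-equivalent. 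The extra delicacy over the classical case is that both the $\mathfrak Z'$- and $\mathfrak X'$-sides must be refined simultaneously so that $g$-images of sub-squares from the $Z'$-side land in the appropriate $U_\alpha$; compatibility of the covers with $g$ is precisely what allows this simultaneous refinement.
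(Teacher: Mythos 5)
Your proposal is correct, but it is organized differently from the paper's proof, and the comparison is worth recording. The paper proves the theorem in one pass directly on the correspondence: it defines the concatenation map $\theta\colon\pi_1(\mathfrak B,\dagger,*)\to B((Z,f,i),\dagger_\dagger,*_*)$ on representatives $(h_0,y_1,\dots,b,\dots,x_n,g_n)$, gets surjectivity from Remark~\ref{rem:OnFinitnOfDecomp}, and gets injectivity by the classical Massey-style argument, listing the elementary homotopy moves (homotopies inside a single $U_i$ or $V_j$, conjugation of a group letter by a connecting path, and replacement of $b$ by $\gamma_{\lambda(z)}^{-1}\#b\#\gamma_{\rho(z)}$) and checking that each fixes the class in $\pi_1(\mathfrak B,\dagger,*)$. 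Your second stage is exactly this argument specialized to a single map, so the topological core of the two proofs coincides. What you do differently is the first stage: you peel off the correspondence structure algebraically, using the definition $\mathfrak B(f,i)=\bigsqcup_k\mathfrak B(i\restrict{Z_k})^\vee\otimes\mathfrak B(f\restrict{Z_k})$ together with Lemma~\ref{lem:DefnPrincipGrBis} (so that $\mathfrak B(i\restrict{Z_k})^\vee$, being left-principal, is left-fibrant) and Lemma~\ref{lem:tensor commutes with pi1} to commute $\pi_1$ past the product. This is legitimate --- the fibrancy hypothesis of Lemma~\ref{lem:tensor commutes with pi1} is automatically satisfied on the left factor --- and it buys a cleaner separation between the algebraic bookkeeping (products and contragredients of graphs of bisets) and the genuinely topological statement about maps; it also makes visible exactly where left-principality of $B(i)^\vee$ is used. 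The price is a longer chain of identifications (compatibility of $\pi_1$ with disjoint unions, with $(\cdot)^\vee$, and with the change of base group from $\pi_1(\mathfrak Z_k,k)$ to $\pi_1(Z_k,\ddagger_k)$), each of which you correctly flag as routine but should be checked as congruences rather than mere bijections. The paper's single-pass argument avoids all of this at the cost of carrying the triples $(\delta,p,\gamma)$ of Lemma~\ref{lem:cpbiset} through the grid-subdivision step, which is precisely the ``simultaneous refinement'' delicacy you identify at the end; your formulation of why compatibility of the covers resolves it matches the paper's implicit use of the same fact.
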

\begin{proof}
  Recall from \S\ref{ss:vk} that for all vertices $y\in \mathfrak Y$ there
  are chosen basepoints $\dagger_y\in V_y$ identifying the groups $G_y$
  with $\pi_1(V_y, \dagger_y)$ and for all edges $e\in\mathfrak Y$ there
  are curves $\gamma_e$ connecting $\dagger_{e^-}$ to $\dagger_{e^+}$
  satisfying $\gamma_y=\gamma^{-1}_{\overline y}$ and describing
  $()^-$-maps. Similarly, for objects in $\mathfrak X$ there are
  basepoints $*_x\in U_x$ and curves $\gamma_e\subset X$.

  Then every biset element $\overline b=(h_0,y_1,\dots,b,\dots x_n,g_n)\in
  \pi_1(\mathfrak B,\dagger, *)$ defines a certain element
  $\theta(\overline b)\in B((Z,f,i),\dagger_\dagger,*_*)$ that is the
  concatenation $h_0\#\gamma_{y_1}\#\dots\#b\dots\# \gamma_{x_n}g_n$. We
  get a biset morphism
  \[\theta\colon \pi_1(\mathfrak B,\dagger,*)\to B((Z,f,i),\dagger_\dagger,*_*).
  \]
  Clearly $\theta$ is surjective, because every element in
  $B((Z,f,i),\dagger_\dagger,*_*)$ can be presented as a concatenation
  $h_0\#\gamma_{y_1}\#\dots\#b\dots\# \gamma_{x_n}g_n$, see
  Remark~\ref{rem:OnFinitnOfDecomp}. To prove that $\theta$ is injective,
  we show that if $\theta(\overline b_1)$ is homotopic to $\theta(\overline
  b_2)$, then $\overline b_1=\overline b_2$.

  By the classical van Kampen argument (see~\cite{massey:at}*{Chapter~IV})
  a homotopy between $\theta(\overline b_1)$ and $\theta(\overline b_2)$
  can be expressed as a combination of the following operations: (1)
  homotopies within $U_i$ and in $V_j$; (2) replacement of $g_k$ or $h_k$
  by $\gamma_{z} g_k\gamma^{-1}_{z}$ or $\gamma_{z} h_k\gamma^{-1}_{z}$ for
  some $z$, and (3) replacement of $b$ by
  $\gamma^{-1}_{\lambda(z)}\#b\#\gamma_{\rho(z)}$ for some $z\in \mathfrak
  B$.  All the above operations fix the corresponding elements in
  $\pi_1(\mathfrak B,\dagger,*)$.
\end{proof}

\begin{lem}\label{lem:gfCongClass}
  Consider a path connected space $X$ with a $1$-dimensional cover
  $(X_v)_{v\in V}$ and the associated graph of groups $\gf$ as in
  Definition~\ref{defn:gog_1dimcovers}.

  Then, up to congruence, $\gf$ and the graph of bisets associated with a
  topological correspondence are independent of the choices of basepoints
  $\{*_v\}$ and connecting paths $\{\gamma_e\}$,
\end{lem}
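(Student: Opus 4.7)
Let us make two choices of data on the cover $(X_v)$: basepoints $\{*_v\}$ with connecting paths $\{\gamma_e\}$, producing the graph of groups $\gf$ with edge morphisms $()^{\pm}$ and vertex groups $G_v=\pi_1(X_v,*_v)$; and basepoints $\{*'_v\}$ with connecting paths $\{\gamma'_e\}$, producing $\gf'$ with vertex groups $G'_v=\pi_1(X_v,*'_v)$. The plan is to build a biprincipal graph of bisets $_{\gf}\mathfrak I_{\gf'}$ whose underlying graph is $\gf$ itself (with $\lambda=\rho=\one$), and apply the definition of congruence from~\S\ref{ss:BiprGrBis}.

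First, for every vertex $v$, choose a path $\eta_v$ in $X_v$ from $*_v$ to $*'_v$; this exists since $X_v$ is path connected. Let $I_v$ be the $G_v$-$G'_v$-biset which is a singleton $\{\iota_v\}$ on which the actions are $g\cdot\iota_v\cdot g'=\eta_v^{-1}\#\gamma\#\eta_v$-style conjugation; concretely, $I_v=B_{\phi_v}$ where $\phi_v\colon G_v\to G'_v$ sends $[\gamma]\mapsto[\eta_v^{-1}\#\gamma\#\eta_v]$. Each $I_v$ is biprincipal. For every edge $e$ with $e^-=u,e^+=v$, define $I_e$ similarly from $\phi_e$, using a path $\eta_e$ in $X_{e^-}$ from $*_{e^-}$ to $*'_{e^-}$ (we may take $\eta_e=\eta_{e^-}$ since both basepoints lie in $X_{e^-}$, and similarly for $\bar e$).

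The required congruences $()^-\colon I_e\to I_{e^-}$ and $()^+\colon I_e\to I_{e^+}$ amount to verifying that, on the nose, the diagram of group homomorphisms
\[G_e\xrightarrow{()^-}G_{e^-},\qquad G'_e\xrightarrow{()^-}G'_{e^-}\]
commutes with $\phi_e,\phi_{e^-}$ after absorbing the discrepancy $\delta_e\coloneqq[\gamma_e^{-1}\#\eta_{e^-}\#(\gamma'_e)\#\eta_{e^+}^{-1}]\in G_{e^+}$ into the biset element (i.e.\ replacing $\iota_{e^+}$ by $\delta_e\iota_{e^+}$); this is the only place where the choices of $\gamma_e,\gamma'_e$ interact, and the loop $\delta_e$ is by construction a well-defined element of $G_{e^+}$. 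Checking the axioms~\eqref{eq:gf} for $\mathfrak I$ then reduces to checking that $\overline{\overline{\iota_e}}=\iota_e$, which holds by the symmetric choice $\eta_{\bar e}=\eta_e$ (and a compatible choice of the twisting loop on the reverse edge). This produces the desired biprincipal $_{\gf}\mathfrak I_{\gf'}$, giving congruence of $\gf$ and $\gf'$.

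For the correspondence $(f,i)$ with compatible covers $(U_\alpha),(V_\beta),(W_\gamma)$, perform the construction above on each side to obtain biprincipal graphs of bisets $_{\mathfrak X}\mathfrak I_{\mathfrak X'}$ and $_{\mathfrak Y}\mathfrak J_{\mathfrak Y'}$. Independence of the choices on $Z$ (the basepoints $\ddagger_\gamma\in W_\gamma$) is subsumed in Lemma~\ref{lem:cpbiset}, which built $B(f,i)$ without reference to such choices. What remains is to exhibit an isomorphism $_{\mathfrak Y}\mathfrak B(f,i)_{\mathfrak X}\cong\mathfrak J\otimes{}_{\mathfrak Y'}\mathfrak B(f,i)'_{\mathfrak X'}\otimes\mathfrak I^\vee$, which follows element-wise from Lemma~\ref{lm:BisMorph} applied at each vertex/edge biset: the concatenation operations $\delta\mapsto\eta_{i(\gamma)}\#\delta\#\eta^{-1}_{f(\gamma)}$ assemble into the required biset congruences, and compatibility with the edge congruences is verified by the same absorption-of-loops argument as above.

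The main obstacle is purely bookkeeping: ensuring that the loops $\delta_e$ introduced at edges lie in the correct subgroups ($G_{e^+}$ and its image under $()^+$), and that the choices at $e$ and $\bar e$ are compatible so that $\mathfrak I$ genuinely satisfies the axioms of a graph of bisets rather than merely a collection of vertex and edge bisets. This is a finite, diagrammatic check that does not require any new ideas beyond Definition~\ref{defn:graphofbisets} and the biprincipal criterion of Lemma~\ref{lem:DefnPrincipGrBis}.
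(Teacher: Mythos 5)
Your proof is correct and ends up constructing the same object as the paper's: a biprincipal graph of bisets ${}_{\gf}\mathfrak I_{\gf'}$ implementing the change of basepoints and connecting paths, which is exactly what the paper's definition of congruence of graphs of groups requires. The execution is genuinely different, though. The paper obtains $\mathfrak I$ in one line by applying its own machinery to the \emph{identity correspondence}: take $(X,f,i)$ with $f=i=\one$, build the range-side graph of groups from $\{*'_v,\gamma'_e\}$ and the domain-side one from $\{*_v,\gamma_e\}$, and observe that the resulting graph of bisets of Lemma~\ref{lem:gob_correspondence} is biprincipal because each vertex biset $B(\one\restrict{X_v},\one\restrict{X_v})$ is (it is just the set of homotopy classes of paths from $*_v$ to $*'_v$ in $X_v$). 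All edge congruences and the axioms of Definition~\ref{defn:graphofbisets} then come for free from the already-verified general construction, whereas you rebuild them by hand: your discrepancy loops $\delta_e=[\gamma_e^{-1}\#\eta_{e^-}\#\gamma'_e\#\eta_{e^+}^{-1}]$ are precisely the data that Lemma~\ref{lm:BisMorph} packages automatically. Your version is more explicit and shows exactly where the two sets of choices interact, at the cost of the ``finite diagrammatic check'' you defer; the paper's version hides that check inside a construction it has already done, and also dispenses with the auxiliary paths $\eta_v$ entirely by using the basepoint-free description of Lemma~\ref{lem:cpbiset}. Two small corrections: a biprincipal $G_v$-$G'_v$-biset is a torsor, not a singleton, so the phrase ``a singleton $\{\iota_v\}$'' should be dropped in favour of the (correct) identification $I_v=B_{\phi_v}$ that follows it; and for the second claim, your factorization $\mathfrak B\cong\mathfrak J\otimes\mathfrak B'\otimes\mathfrak I^\vee$ is the right statement and supplies slightly more detail than the paper's terse ``the second claim follows from the first''.
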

\begin{proof}
  Let $\gf$, $\gf'$ be two graphs of groups associated with $X$ and
  with a fixed $1$-dimensional cover $(X_v)_{v\in V}$ but with
  different choices of basepoints and connecting paths. Consider a
  topological correspondence $(X,f,i)$ such that $f\colon X\selfmap$
  and $i\colon X\selfmap$ are the identity maps. We may assume that
  $\gf'$, resp $\gf$, is the graph of groups associated with the range
  of $f$, resp $i$. Let $_{\gf}\mathfrak B_{\gf'}$ be the graph of
  bisets associated with $(X,f,i)$ and the cover $(X_v)_{v\in
    V}$. Then $\mathfrak B$ is a biprincipal graph of bisets because
  $B(f|_{X_v}, i\mid_{X_v})$ is biprincipal for every $v$.

  The second claim follows from the first.
\end{proof}

\subsection{Products of correspondences}\label{ss:product_corr}
Correspondences, just as continuous maps, may be composed; the
operation is given by fiber products.

Let $(Z_1,f_1,i_1)$ and $(Z_2,f_2,i_2)$ be two correspondences such
that $f_1\colon Z_1\to X$ and $i_2\colon Z_2\to X$ have the same range $X$. Their
product is the correspondence $(Z,f,i)$ given by
\[Z=\{(z_1,z_2)\in Z_1\times Z_2\mid f_1(z_1)=i_2(z_2)\},\quad
f(z_1,z_2)=f_2(z_2),\quad i(z_1,z_2)=i_1(z_1).
\]
We have natural maps $\widetilde{i_2}\colon Z\to Z_1$ and
$\widetilde{f_1}\colon Z\to Z_2$ given respectively by
\[\widetilde{i_2}(z_1,z_2)=z_1,\qquad \widetilde{f_1}(z_1,z_2)=z_2.\]
It is easy to check that $f$ is a fibration, respectively a covering, if
both $f_1$ and $f_2$ are fibrations, respectively coverings.

The biset of a product of two correspondences is, in favourable cases,
the product of the corresponding bisets:
\begin{lem}\label{lem:BisetOfProd}
  Let $(Z,f,i)$ be the product of two correspondences $(Z_1,f_1,i_1)$ and
  $(Z_2,f_2,i_2)$, with $f_1\colon Z_1\to X$ and $i_2\colon Z_2\to X$ and
  $X$ path connected. Then there is a biset morphism
  \begin{equation}\label{eq:LemBisetOfProd}
    \begin{cases}
      B(f,i) &\to B(f_1,i_1)\otimes B(f_2,i_2)\\
      (\delta,p,\gamma) & \mapsto(\delta,\widetilde{i_2}(p),
      \varepsilon^{-1})\otimes (\varepsilon,\widetilde{f_1}(p),\gamma),
    \end{cases}
  \end{equation}
  for any choice of path $\varepsilon$ from the basepoint of $X$ to
  $f_1(\widetilde{i_2}(p))=i_2(\widetilde{f_1}(p))$.\qed
\end{lem}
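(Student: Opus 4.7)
The map to verify has three things to check: it is independent of the auxiliary path $\varepsilon$; it descends to the equivalence relation $\sim$ on $B(f,i)$ described in Lemma~\ref{lem:cpbiset}; and it is equivariant for the $\pi_1(Y,\dagger)$-action on the left and the $\pi_1(X',*)$-action on the right (where I use $X'$ for the target of $f_2$, so that $B(f,i)$ and the tensor product are both $\pi_1(Y,\dagger)$-$\pi_1(X',*)$-bisets).

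First I would dispatch independence of $\varepsilon$. If $\varepsilon,\varepsilon'$ are two paths from $*_X$ to $f_1(\widetilde{i_2}(p))=i_2(\widetilde{f_1}(p))$, set $g\coloneqq\varepsilon'\#\varepsilon^{-1}\in\pi_1(X,*_X)$. Using the right action on $B(f_1,i_1)$ one gets $(\delta,\widetilde{i_2}(p),\varepsilon^{-1})=(\delta,\widetilde{i_2}(p),(\varepsilon')^{-1})\cdot g$, and using the left action on $B(f_2,i_2)$ one gets $g\cdot(\varepsilon,\widetilde{f_1}(p),\gamma)=(\varepsilon',\widetilde{f_1}(p),\gamma)$ (after cancelling backtracks). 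The balancing relation in $\otimes_{\pi_1(X,*_X)}$ then equates the two tensor products.

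Next I would check invariance under $\sim$. The equivalence relation allows $p$ to move along a path $\eta\colon[0,1]\to Z$ with $\delta,\gamma$ moving smoothly. Since $Z$ is the fiber product, $\eta=(\eta_1,\eta_2)$ with $\eta_1=\widetilde{i_2}\circ\eta$, $\eta_2=\widetilde{f_1}\circ\eta$, and crucially $f_1\circ\eta_1=i_2\circ\eta_2$. If the original data use a chosen path $\varepsilon_0$ from $*_X$ to $f_1(\eta_1(0))$, then for the moved point $p'=\eta(1)$ I choose $\varepsilon\coloneqq\varepsilon_0\#f_1(\eta_1)=\varepsilon_0\#i_2(\eta_2)$. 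Under this choice, the move of the triple in the first factor is exactly the $\sim$-equivalence on $B(f_1,i_1)$ induced by moving $\widetilde{i_2}(p)$ to $\widetilde{i_2}(p')$ along $\eta_1$, and the move in the second factor is exactly the corresponding $\sim$-equivalence on $B(f_2,i_2)$ induced by $\eta_2$. Hence both sides of the map receive the same movement.

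The biset-morphism property is then immediate: pre-concatenation of a loop $\alpha$ at $\dagger$ on $\delta$ only touches the first factor, and post-concatenation of a loop $\mu$ at $*$ on $\gamma$ only touches the second factor, so both actions commute trivially with the formula~\eqref{eq:LemBisetOfProd}. The main obstacle in this proof is purely bookkeeping: keeping track of which basepoints paths start and end at, and ensuring the auxiliary $\varepsilon$ is coherently chosen so the $\sim$-move on $Z$ translates simultaneously to the $\sim$-moves on $Z_1$ and $Z_2$; the fiber product identity $f_1\circ\widetilde{i_2}=i_2\circ\widetilde{f_1}$ is exactly what makes this coherence possible.
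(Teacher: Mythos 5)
Your proof is correct. The paper in fact states this lemma with no proof at all (it ends in a \qed), so your write-up simply supplies the routine verification the authors leave to the reader: well-definedness in the choice of $\varepsilon$ via the balancing relation of $\otimes_{\pi_1(X,*)}$, compatibility with the $\sim$-moves of $p$ using the fibre-product identity $f_1\circ\widetilde{i_2}=i_2\circ\widetilde{f_1}$ to choose $\varepsilon$ coherently, and the evident equivariance of the two outer actions --- exactly the checks one would expect.
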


\begin{exple}
  In general, the map in the above lemma need not be an isomorphism;
  for instance, the ranges of $i_2$ and $f_1$ need not intersect, in
  which case $Z=\emptyset$ so $B(f,i)=\emptyset$, while $B(f_1,i_1)$
  and $B(f_2,i_2)$ are non-empty so their product is not empty.

  For a less artificial example, consider the correspondence shown on
  Figure~\ref{Fg:BadCorrProduct}. Denote by $1$ the trivial group; we
  also view $1$ as a set consisting of one element.  Then
  $B(i_1,f_1)={}_1\Z_\Z$, $B(i_2,f_2)={}_\Z 1_\Z$; thus
  $B(i_1,f_1)\otimes B(i_2,f_2)={}_11_\Z $. On the other hand,
  $B(\widetilde{i_2}i_1,\widetilde{f_1}f_2)={}_1Z_\Z$.

  Sufficient conditions on the map~\eqref{eq:LemBisetOfProd} being an
  isomorphism are given by Lemma~\ref{lem:BisetOfGoodProd}, in analogy
  with Lemma~\ref{lem:tensor commutes with pi1}.
\end{exple}

\begin{figure}
\label{Fg:BadCorrProduct}
\[\begin{tikzpicture}[dot/.style={}]

\draw [rotate around={ 55:(0.5,1.1) }] (0.5,1.1) ellipse (0.7 and 0.3);
\draw  (-0.9,0.5) ellipse (0.99 and 0.2);
\draw  (-2.3,-1) ellipse (0.99 and 0.2);
\draw [rotate around={ 55:(1,-1) }] (1,-1) ellipse (0.7 and 0.3);

\draw[->] (0.6,0.7) -- node[left] {$f_2$} (0.8,-0.6);
\draw[->] (-0.7,0.2) -- (0.5,-1.4);

\draw[->] (-1,0.2) --  node[right] {$i_2$} (-1.9,-0.7);
\draw[->] (0.3,0.4) -- (-1.3,-1.0);

\node (a) at (-2.4,2) {\textbullet};
\node (b) at (-3.9,0.5) {\textbullet};
\node (c) at (-1.6,0.63) {\textbullet};
\node (d) at (-4.9,-1) {\textbullet};
\node (e) at (-3,-0.87) {\textbullet};

\draw[->] (a) -- node [above left] {$\widetilde{i_2}$} (b);
\draw[->] (a) -- node [above right] {$\widetilde{f_1}$} (c);
\draw[->] (b) -- node [above left] {$i_1$} (d);
\draw[->] (b) -- node [above right] {$f_1$} (e);

\end{tikzpicture}\]
\caption{An example with $B(f,i) \nsimeq B(f_1,i_1)\otimes
  B(f_2,i_2)$.}
\end{figure}
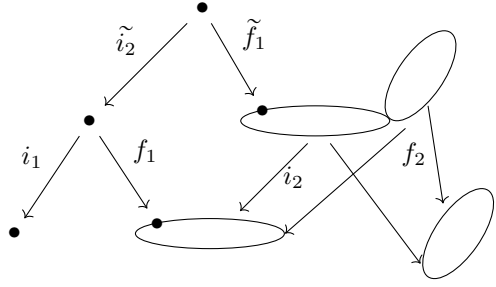

\begin{lem}\label{lem:BisetOfGoodProd} 
  If in Lemma~\ref{lem:BisetOfProd} at least one of the maps $i_2,f_1$
  is fibrant, then~\eqref{eq:LemBisetOfProd} is an isomorphism of
  bisets.
\end{lem}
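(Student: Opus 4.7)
I would construct an explicit inverse $\Psi$ to~\eqref{eq:LemBisetOfProd}. By a contragredient symmetry (using $B(f,i)^\vee\cong B(i,f)$ and $(B\otimes C)^\vee\cong C^\vee\otimes B^\vee$, which exchanges the roles of the two ``middle'' maps $f_1$ and $i_2$), it suffices to treat the case where $f_1$ is fibrant.

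Given a representative $(\delta_1,p_1,\gamma_1)\otimes(\delta_2,p_2,\gamma_2)$, the concatenation $\gamma_1\#\delta_2$ is a path in $X$ from $f_1(p_1)$ to $i_2(p_2)$. I lift it along $f_1$ to a path $\beta\colon[0,1]\to Z_1$ with $\beta(0)=p_1$ and $f_1\circ\beta=\gamma_1\#\delta_2$. Since $f_1(\beta(1))=i_2(p_2)$, the point $(\beta(1),p_2)$ lies in $Z$, and I define
\[
\Psi\bigl((\delta_1,p_1,\gamma_1)\otimes(\delta_2,p_2,\gamma_2)\bigr)=\bigl(\delta_1\#i_1(\beta),\,(\beta(1),p_2),\,\gamma_2\bigr)\in B(f,i).
\]

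Well-definedness of $\Psi$ splits into three points. The first is independence of the lift $\beta$: by Lemma~\ref{lmm:FibrCorr} any two lifts of $\gamma_1\#\delta_2$ from $p_1$ have endpoints joined by a path $\varepsilon$ in the fibre $f_1^{-1}(i_2(p_2))$, and the curve $t\mapsto(\varepsilon(t),p_2)$ lies in $Z$ and realises the needed equivalence in $B(f,i)$. The second is invariance under the equivalence on each tensor factor: moving $p_1$ along $\eta\subset Z_1$ or $p_2$ along $\zeta\subset Z_2$ is absorbed by prepending or extending $\beta$ with the corresponding lifts, and the curves $(\eta,\mathrm{const}_{p_2})$ or $(\text{lift of }i_2(\zeta),\zeta)$ in $Z$ supply the equivalence. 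The third is compatibility with the $\pi_1(X,*)$-tensor relation: for a loop $\mu$ at $*$, both sides of the relation yield the same path $\gamma_1\#\mu\#\delta_2$ to lift, and hence the same image.

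The two inversion identities are then direct. Applied to $(\delta,(z_1,z_2),\gamma)$, the composition $\Psi\circ\eqref{eq:LemBisetOfProd}$ asks to lift $\varepsilon^{-1}\#\varepsilon$ starting at $z_1$; the constant path at $z_1$ is such a lift (any other choice is absorbed by the first point above), returning $(\delta,(z_1,z_2),\gamma)$. Conversely, taking $\varepsilon=\delta_2$ in $\eqref{eq:LemBisetOfProd}\circ\Psi$ produces a first factor $(\delta_1\#i_1(\beta),\beta(1),\delta_2^{-1})$ that is equivalent in $B(f_1,i_1)$ to $(\delta_1,p_1,\gamma_1)$ by transporting the midpoint back along $\beta^{-1}$ in $Z_1$, which restores $\delta_1$ and turns $\delta_2^{-1}$ into $f_1(\beta)\#\delta_2^{-1}=\gamma_1\#\delta_2\#\delta_2^{-1}\simeq\gamma_1$. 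The principal obstacle is the independence-of-lift point: fibrancy provides a relation between two lifts only inside $Z_1$, but one needs an equivalence in the product space $Z$. The decisive feature is the specific form of $Z=\{(z_1,z_2):f_1(z_1)=i_2(z_2)\}$: since $f_1$ is constant along any path in its fibre over $i_2(p_2)$, pairing such a path with the constant path at $p_2$ automatically stays inside $Z$, so the $B(f_1,i_1)$-equivalence transports verbatim to a $B(f,i)$-equivalence; everything else is bookkeeping with homotopies.
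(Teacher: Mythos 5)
Your proposal is correct and is essentially the paper's own argument: both construct the inverse by lifting the connecting path $\gamma_1\#\delta_2$ through whichever of the two middle maps is fibrant, the paper treating the case where $i_2$ is fibrant (lifting into $Z_2$ and moving $p_2$ backwards to $\varepsilon(0)$) while you treat the mirror case where $f_1$ is fibrant (lifting into $Z_1$ and moving $p_1$ forwards to $\beta(1)$), the two cases being exchanged by the contragredient symmetry that you invoke explicitly and the paper invokes via ``without loss of generality.'' Your verification of well-definedness and of the two inversion identities is more detailed than the paper's, but contains no new idea.
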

\begin{proof}
  We construct an inverse to the map~\eqref{eq:LemBisetOfProd},
  keeping the notations of Lemma~\ref{lem:BisetOfProd}.  Suppose
  without loss of generality that $i_2$ is fibrant. Consider
  $b=(\delta_1,p_1,\gamma_1)\otimes(\delta_2,p_2,\gamma_2)\in
  B(f_1,i_1)\otimes B(f_2,i_2)$. Changing the basepoint in $X$ to
  $*=f_1(p_1)$ and denoting by $*$ the constant path in $X$ at $*$, we
  rewrite $b$ as
  $(\delta_1,p_1,*)\otimes(\gamma_1\#\delta_2,p_2,\gamma_2)$. Since
  $i_2$ is fibrant, the curve $\gamma_1\#\delta_2$ admits a lift
  $\varepsilon$ to $Z_2$ with $\varepsilon(1)=p_2$. Write
  $p'_2=\varepsilon(0)$. We then have $b=(\delta_1,p_1,*)\otimes
  (*,p'_2,(f_2\circ\varepsilon)\#\gamma_2)$, see
  Lemma~\ref{lmm:FibrCorr}. Since $f_1(p_1)=*=i_2(p'_2)$, there is a
  unique $p\in Z$ with $\widetilde{i_2}(p)=p_1$ and
  $\widetilde{f_1}(p)=p'_2$. Then
  $(\delta_1,p,(f_2\circ\varepsilon)\#\gamma_2)\in B(f,i)$ is a
  preimage of $b$ under~\eqref{eq:LemBisetOfProd}.
\end{proof}

\subsection{Fibrant maps and covers}
Suppose $f\colon Z\to X$ is a fibrant map between path connected spaces and
$(U_\alpha)$ is a cover of $X$ consisting of path connected sets
$U_\alpha$. The \emph{pullback} $(W_\gamma)\coloneqq f^*(U_\alpha)$ is the
cover of $Z$ consisting of all path connected components of
$f^{-1}(U_\alpha)$, for all $U_\alpha$ in the cover. It is easy to see
that if $(U_\alpha)$ is a finite cover and $f^{-1}(*)$ consists of
finitely many connected components for some or equivalently any $*\in
X$, then $(V_\alpha)$ is a finite cover. If, furthermore, $(U_\alpha)$
is $1$-dimensional, then so is $(W_\gamma)$. It is also immediate that
all $f\restrict{W_k}$ are fibrant maps.
\begin{prop}\label{prop:DecompFibrMaps}
  Suppose ${}_{\mathfrak Y}\mathfrak B_{\mathfrak X}$ is the graph of
  bisets of a topological correspondence $(Z,f,i)$ from $Y$ and $X$
  subject to finite $1$-dimensional covers $(U_\alpha)$, $(V_\beta)$,
  and $(W_\gamma)$ of $X$, $Y$, and $Z$ respectively.
  
  If $f\colon Z\to X$ is fibrant, respectively a covering, and
  $(W_\gamma)=f^*(U_\alpha)$, then ${}_{\mathfrak Y}\mathfrak B_{\mathfrak
    X}$ is a left-fibrant, respectively a left-free, graph of bisets.
\end{prop}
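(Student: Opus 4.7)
The plan is to verify the three requirements of Definition~\ref{dfn:FibrationGrBis} in turn: simpliciality of $\rho\colon\mathfrak B\to\mathfrak X$, the lifting property~\eqref{eq:dfn:FibrationGrBis}, and (in the covering case) the left-freeness of each vertex biset.

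Simpliciality of $\rho$ follows directly from $(W_\gamma)=f^*(U_\alpha)$. An edge of $\mathfrak B$ corresponds to a proper inclusion $W_{\gamma'}\subsetneq W_\gamma$, with each $W_\gamma$ a path-connected component of $f^{-1}(U_{\rho(\gamma)})$. If $\rho(\gamma')=\rho(\gamma)$ held, then $W_{\gamma'}$ and $W_\gamma$ would be two components of the same preimage, hence disjoint, contradicting $W_{\gamma'}\subsetneq W_\gamma$. A quick check using $f(W_{\gamma'})\subseteq U_{\rho(\gamma)}$ and that $W_{\gamma'}$ is a component of $f^{-1}(U_{\rho(\gamma')})$ shows the ordering is respected, so $\rho$ sends edges to edges.

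For the covering case, the remark following Lemma~\ref{lmm:FibrCorr} already gives that each $B_z=B(f\restrict{W_z},i\restrict{W_z})$ is left-free, with basis indexed by $(f\restrict{W_z})^{-1}(*_{\rho(z)})$.

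The main step is the lifting property. Fix $v=\gamma$ and $e\in\mathfrak X$ with $e^-=\rho(\gamma)=\alpha$. The "ascending" orientation ($U_\alpha\subsetneq U_{e^+}$) is trivial: the edge bisets coincide with $B_v$ and $()^-$ is the identity. The interesting orientation is $U_{e^+}\subsetneq U_\alpha$; write $\alpha^*=e^+$. The edges $e'\in\rho^{-1}(e)$ with $(e')^-=\gamma$ are exactly the inclusions $W_{\gamma'}\subsetneq W_\gamma$ with $\rho(\gamma')=\alpha^*$, i.e., the path components of $f^{-1}(U_{\alpha^*})\cap W_\gamma$, and the corresponding edge bisets are $B_{e'}=B_{\gamma'}=B(f\restrict{W_{\gamma'}},i\restrict{W_{\gamma'}})$. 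I will use Lemma~\ref{lmm:FibrCorr} to represent elements of $B_\gamma$ as classes $(\delta,p)$ with $p\in f^{-1}(*_\alpha)\cap W_\gamma$; lifting the basepoint path $\gamma_e$ from $*_\alpha$ to $*_{\alpha^*}$ through the fibration $f\restrict{W_\gamma}$ and absorbing it into $\delta$, one may assume $p\in f^{-1}(*_{\alpha^*})\cap W_\gamma=\bigsqcup_{\gamma'}(f\restrict{W_{\gamma'}})^{-1}(*_{\alpha^*})$. This partition defines the candidate inverse
\[
B_\gamma\to\bigsqcup_{\gamma'} G_{\lambda(\gamma)}\otimes_{G_{\lambda(\gamma')}}B_{\gamma'}, \qquad [(\delta,p)]\mapsto g\otimes[(\delta',p)],
\]
where $\delta'$ is $\delta$ truncated at $\dagger_{\lambda(\gamma')}$ using the chosen connecting path, and $g\in G_{\lambda(\gamma)}$ absorbs the difference, checking against the map $g\otimes b\mapsto g b^-$ of~\eqref{eq:dfn:FibrationGrBis}.

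The main obstacle will be showing that this candidate inverse is well-defined on the homotopy classes of Lemma~\ref{lmm:FibrCorr} and equivariant for both actions: if two representatives $(\delta,p)\sim(\delta',p')$ differ by a path $\varepsilon$ in $f^{-1}(*_{\alpha^*})\subseteq W_\gamma$, then $\varepsilon$ stays in a single component $W_{\gamma'}$ (as $f^{-1}(*_{\alpha^*})\cap W_\gamma$ is disconnected along the components), so the assignment to a single $\gamma'$-summand is unambiguous. Equivariance of the left $G_{\lambda(\gamma)}$-action is immediate from the tensor-product structure, while equivariance under the right $G_{\alpha^*}$-action (acting on $B_\gamma$ through $()^-\colon G_{\alpha^*}\hookrightarrow G_\alpha$) reduces to the observation that a loop in $U_{\alpha^*}$ lifts through each $W_{\gamma'}$ individually. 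In the covering case, all lifts are unique, so the verifications simplify accordingly and the left-freeness is transported through the tensor decomposition.
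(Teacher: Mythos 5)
Your argument is correct, and its core step --- decomposing $B_\gamma$ according to which path component of $f^{-1}(U_{\alpha^*})\cap W_\gamma$ the marked point $p$ lies in, justified via the fibrant normal form of Lemma~\ref{lmm:FibrCorr} --- is exactly the content of the paper's Lemma~\ref{lem:LiftInFibrBis}. The difference is one of organization. The paper factors ${}_{\mathfrak Y}\mathfrak B_{\mathfrak X}$ as $\bigsqcup_k\mathfrak B(i\restrict{Z_k})^\vee\otimes_{\mathfrak Z_k}\mathfrak B(f\restrict{Z_k})$, verifies the unique-lifting isomorphism~\eqref{eq:dfn:FibrationGrBis} only for the factor $\mathfrak B(f\restrict{Z_k})$ (where no paths in $Y$ intervene, so the verification is literally Lemma~\ref{lem:LiftInFibrBis}), and then transports left-fibrancy to the product via Lemma~\ref{lem:ProdOfFibrGrBis} together with the left-principality of $\mathfrak B(i\restrict{Z_k})^\vee$; left-freeness in the covering case follows from the same factorization. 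You instead verify~\eqref{eq:dfn:FibrationGrBis} directly on the correspondence bisets $B(f\restrict{W_\gamma},i\restrict{W_\gamma})$, which obliges you to carry the $Y$-side paths $\delta$ and the tensor bookkeeping over $G_{\lambda(e)}$ through the whole computation --- precisely what Lemma~\ref{lem:ProdOfFibrGrBis} absorbs once and for all. Both routes work; the paper's is more modular, yours avoids one lemma at the cost of a heavier direct check. If you write yours out in full, two points deserve explicit mention: for injectivity of your candidate inverse you need that the connecting path $\varepsilon\subseteq W_{\gamma'}$ has $i(\varepsilon)\subseteq V_{\lambda(\gamma')}$, so that the difference $[\delta_1'\# i(\varepsilon)\#(\delta_2')^{-1}]$ is an honest element of $G_{\lambda(\gamma')}$ and can be pushed across the tensor sign; and for the ``ascending'' orientation you should note that $(W_\gamma)=f^*(U_\alpha)$ guarantees there is exactly one edge over $f$ with origin $\gamma$ (the unique component of $f^{-1}(U_{f^+})$ containing $W_\gamma$), without which the sum in~\eqref{eq:dfn:FibrationGrBis} would not collapse to $B_\gamma$.
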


\noindent The proof of the above proposition is based on the following
property:
\begin{lem}\label{lem:LiftInFibrBis}
  Let
  \begin{itemize}
  \item $f\colon W\to U$ be a fibrant map between path connected spaces,
    with biset $B(f,\dagger, *)$;
  \item $U'\subset U$ be path connected with basepoint $*'$;
  \item $\{W'_{z}\}_{z\in I}$ be the set of connected components of $f^{-1}(U')$;
  \item $B(f\restrict{W'_z},\dagger'_z, *')$ be the biset of $f\colon W'_z\to U'$;
  \item $()^-\colon {}_{\pi_1(W'_z,\dagger'_z )}B(f\restrict{W'_z},\dagger'_z, *') _{\pi_1(U',*') }\to {}_{\pi_1(W,\dagger)}B(f,\dagger, *)_{\pi_1(U,*)}$ be congruences as in Lemma~\ref{lm:BisMorph}.
  \end{itemize}
  Then we have a $\pi_1(W,\dagger)$-$\pi_1(U',*)$ isomorphism 
  \begin{equation}\label{eq:lem:LiftInFibrBis}
    \bigsqcup_{z\in I} \pi_1(W,\dagger) \otimes_{\pi_1(W'_z,\dagger'_z )^-} B(f\restrict{W'_z}, \dagger'_z, *') \longrightarrow B(f\restrict{W}, \dagger, *), \qquad g\otimes b\mapsto g b^-
  \end{equation}  
  with right action of $\pi_1(U',*')$ on $B(f\restrict{W}, \dagger, *)$
  given via $()^-\colon\pi_1(U',*')\to\pi_1(U,*)$.
\end{lem}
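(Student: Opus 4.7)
The plan is to construct an explicit inverse to $g\otimes b\mapsto gb^{-}$ by path-lifting, exploiting Lemma~\ref{lmm:FibrCorr} to describe both sides in terms of lifted paths in $W$. Concretely, $B(f\restrict W,\dagger,*)$ consists of homotopy classes of paths $\beta\colon[0,1]\to W$ with $\beta(0)=\dagger$ and $f(\beta(1))=*$, modulo right-concatenation with paths in $f^{-1}(*)$; and each $B(f\restrict{W'_z},\dagger'_z,*')$ likewise consists of paths in $W'_z$ from $\dagger'_z$ to a point of $(f\restrict{W'_z})^{-1}(*')$, with the analogous equivalence.

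Given a representative $\beta\in B(f\restrict W,\dagger,*)$, I would use fibrancy of $f$ to lift the chosen path $\gamma_{*}\colon[0,1]\to U$ from $*$ to $*'$ to a path $\tilde\gamma$ in $W$ with $\tilde\gamma(0)=\beta(1)$. Setting $q\coloneqq\tilde\gamma(1)$, we have $f(q)=*'$, so $q$ lies in a unique component $W'_z$. Picking any path $\alpha$ from $\dagger'_z$ to $q$ inside $W'_z$, and writing $\gamma_\dagger$ for the path from $\dagger$ to $\dagger'_z$ in $W$ implicit in the $()^{-}$ congruence, the proposed inverse sends
\[\beta\;\longmapsto\;\bigl[\beta\#\tilde\gamma\#\alpha^{-1}\#\gamma_\dagger^{-1}\bigr]\otimes[\alpha].\]
A direct verification shows this inverts $g\otimes b\mapsto gb^{-}$, because applying the forward map produces $[\beta\#\tilde\gamma\#\alpha^{-1}\#\gamma_\dagger^{-1}]\cdot(\gamma_\dagger\#\alpha\#\tilde\gamma^{-1})\simeq\beta$ in the fibrant description; and the composition the other way is similarly a telescoping cancellation.

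The verifications to carry out are: (a) independence from the choice of $\alpha$, where two choices differ (up to homotopy) by a loop $\lambda\in\pi_1(W'_z,\dagger'_z)$ whose $()^{-}$-image in $\pi_1(W,\dagger)$ is absorbed across the tensor product balanced over $\pi_1(W'_z,\dagger'_z)^{-}$; (b) independence from the choice of lift $\tilde\gamma$ and from the representative of $\beta$, which follows by lifting the relevant homotopies through the fibration $f$ and observing that endpoints stay in the same path-component of $(f\restrict{W'_z})^{-1}(*')$; and (c) left $\pi_1(W,\dagger)$-equivariance is immediate, while right $\pi_1(U',*')$-equivariance uses that a loop $\mu\in\pi_1(U',*')$ lifts through $f\restrict{W'_z}$ to a path entirely in $W'_z$ starting at $q$, so that right-acting on $\beta$ by $\mu^{-}=\gamma_{*}\#\mu\#\gamma_{*}^{-1}$ precisely amounts to extending $\alpha$ by that lift, i.e.\ the right action of $\mu$ on $[\alpha]$. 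The main obstacle will be the homotopy bookkeeping in (b): one must ensure that all the fibrancy-lifts and homotopies agree modulo the equivalence relations defining both sides (using that $W'_z$ is a \emph{path-component} of $f^{-1}(U')$, not merely a subset), after which bijectivity and biset-equivariance follow formally.
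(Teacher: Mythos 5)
Your argument is correct and takes essentially the same route as the paper's: both rest on the fibrant-path description of Lemma~\ref{lmm:FibrCorr} together with the key observation that the terminal point of (a lift of) a representative lands in a well-defined path component $W'_z$ of $f^{-1}(U')$, invariant under the defining equivalence. The only difference is presentational: the paper first reduces to the case $*=*'$ so that no lift of $\gamma_*$ is needed and the decomposition into the subbisets of paths terminating in each $W'_z$ is immediate, whereas you keep the connecting path, lift it through the fibration, and write down the explicit two-sided inverse.
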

\begin{proof}
  The statement is clearly stable under motion of $*$, thus we may assume
  that $*=*'$ and that $()^-\colon\pi_1(U', *')\to\pi_1(U, *)$ is given by
  taking each $\gamma\in \pi_1(U', *')$ modulo homotopy in $U$. By
  Lemma~\ref{lmm:FibrCorr}
  \[B(f,\dagger, *)=\{\beta\colon[0,1]\to W\mid\beta(0)=\dagger,f(\beta(1))=*\}/{\sim},
  \]
  with $\beta\sim \beta'$ if and only if there is a path
  $\varepsilon\colon[0,1]\to f^{-1}(*)$ connecting $\beta(1)$ to
  $\beta'(1)$ such that $\beta\#\varepsilon$ is homotopic to
  $\beta'$. Therefore, each $\beta\colon [0,1]\to W $ in $B(f)$ ends at a
  unique $W'_z$ independent on the choice representing $\beta$. It is now
  easy to see that the $\pi_1(W,\dagger)$-$\pi_1(U',*)$ subbiset of
  $B(f,\dagger, *)$ consisting of all $b\in B(f,\dagger, *)$ terminating at
  $W'_z$ is exactly $\pi_1(W,\dagger) \otimes_{\pi_1(W'_z,\dagger'_z )^-}
  B(f\restrict{W'_z}, \dagger'_z, *')$. This
  implies~\eqref{eq:lem:LiftInFibrBis}.
 \end{proof}

\begin{proof}[Proof of Proposition~\ref{prop:DecompFibrMaps}]
  Let $(Z_k)$ be the path connected components of $Z$. By definition,
  ${}_{\mathfrak Y}\mathfrak B_{\mathfrak X} = \bigsqcup_{k} {}_{\mathfrak
    Y}\mathfrak B(i\restrict{Z_k})^\vee_{\mathfrak Z_k} \bigotimes_{\mathfrak
    Z_k} {}_{\mathfrak Z_k}\mathfrak B(f\restrict{Z_k})_{\mathfrak
    X}$. Lemma~\ref{lem:LiftInFibrBis} implies that every ${}_{\mathfrak
    Z_k}\mathfrak B(f\restrict{Z_k})_{\mathfrak X}$ is left-fibrant:
  Equation~\eqref{eq:lem:LiftInFibrBis} is
  exactly~\eqref{eq:dfn:FibrationGrBis}.  Since the product of left-fibrant
  bisets is left-fibrant (Lemma~\ref{lem:ProdOfFibrGrBis}), ${}_{\mathfrak Y}\mathfrak B_{\mathfrak X}$ is a
  left-fibrant biset. This proves the part of
  Proposition~\ref{prop:DecompFibrMaps} concerning fibrant maps. The case
  of covering maps observe that all bisets in $\mathfrak B(f\restrict{Z_k})$ are left-free because $f\restrict{Z_k}$ are coverings while all bisets in $\mathfrak B(i\restrict{Z_k})^\vee$ are left-principal; hence all 
  bisets in $\mathfrak B_{\mathfrak X} = \bigsqcup_{k} {}_{\mathfrak
    Y}\mathfrak B(i\restrict{Z_k})^\vee_{\mathfrak Z_k} \bigotimes_{\mathfrak
    Z_k} {}_{\mathfrak Z_k}\mathfrak B(f\restrict{Z_k})$ are left-free. 
\end{proof}

\subsection{Partial self-coverings}
\label{ss:PartSelfCov}
We now turn to a restricted class of covering correspondences $(Z,f\colon
Z\to X, i\colon Z\to Y)$, in which the map $i$ is an inclusion, namely a
homeomorphism on its image. We then view $Z$ as a subset of $Y$, and write
the correspondence as a \emph{partial covering} $f\colon Y\dashrightarrow
X$. Here the dashed arrow means that the map is defined on the image of
$i$.

In this case, the definition of the biset of a correspondence
(see~\S\ref{ss:biset of correspondence}) can be simplified as follows:
\begin{lem}
  Let $f\colon X\dashrightarrow X$ be a partially defined self-covering. The
  biset $B(f)$ may then be constructed directly as follows:
  \[B(f)=\{\gamma\colon[0,1]\to X\mid\gamma(0)=*=f(\gamma(1))\}/{\sim}.
  \]
  The left action is by pre-composition by loops in $X$ at $*$, and
  the right action is by lifting loops through $f$:
  \[[\alpha]\cdot[\gamma]=[\alpha\#\gamma],\qquad[\gamma]\cdot[\alpha]=[\gamma\#\tilde\alpha]
  \]
  for the unique $f$-lift $\tilde\alpha$ of $\alpha$ that starts at
  $\gamma(1)$.
\end{lem}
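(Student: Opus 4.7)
The plan is to specialize the general construction of $B(f,i)$ from Lemma~\ref{lem:cpbiset}, and then simplify using the fact that $f$ is a covering (hence fibrant), via Lemma~\ref{lmm:FibrCorr}.

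First, set $Y=X$, $\dagger=*$, and treat $i$ as an inclusion $Z\hookrightarrow X$, so that every point $p\in Z$ is identified with $i(p)\in X$. Then the general formula~\eqref{eq:lem:cpbiset} describes $B(f,i)$ as equivalence classes of triples $(\delta,p,\gamma)$ where $\delta\colon[0,1]\to X$ runs from $*$ to $p\in Z$ and $\gamma\colon[0,1]\to X$ runs from $f(p)$ to $*$. Since $f$ is a covering, hence fibrant, Lemma~\ref{lmm:FibrCorr} allows us to take $\gamma$ to be the constant path, so $B(f)$ is identified with equivalence classes of pairs $(\delta,p)$ with $\delta(0)=*$, $\delta(1)=p\in Z$, and $f(p)=*$, modulo the relation that $(\delta_0,p_0)\sim(\delta_1,p_1)$ if there is a path $\varepsilon$ in $f^{-1}(*)\subseteq Z\subseteq X$ from $p_0$ to $p_1$ with $\delta_0\#\varepsilon\approx\delta_1$. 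Concatenating $\delta$ with its endpoint, we see that a pair $(\delta,p)$ is the same data as a single path $\gamma\colon[0,1]\to X$ with $\gamma(0)=*$ and $f(\gamma(1))=*$ (setting $\gamma\coloneqq\delta$ and $p\coloneqq\gamma(1)$); the equivalence relation becomes homotopy rel endpoints within the subspace of such paths.

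Next I would verify the two actions. The left action of $[\alpha]\in\pi_1(X,*)$ is inherited from Lemma~\ref{lem:cpbiset}, which simply pre-composes $\delta$ by $\alpha$, giving $[\alpha]\cdot[\gamma]=[\alpha\#\gamma]$ as claimed. For the right action of $[\alpha]\in\pi_1(X,*)$, the general formula post-composes the $\gamma$-factor of the triple, producing $(\delta,p,\alpha)$ out of $(\delta,p,\mathrm{const}_*)$. This triple no longer has constant third coordinate, so I would put it back into the simplified form: since $f$ is a covering, the loop $\alpha$ has a \emph{unique} lift $\tilde\alpha\colon[0,1]\to Z$ with $\tilde\alpha(0)=p$, and $f(\tilde\alpha(1))=*$. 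The homotopy from $(\delta,p,\alpha)$ to $(\delta\#\tilde\alpha,\tilde\alpha(1),\mathrm{const}_*)$ is realized precisely by sliding $p$ along $\tilde\alpha$ while simultaneously shrinking $\alpha$ to a constant, which is a legal $\sim$-move in~\eqref{eq:lem:cpbiset}. Translating back via $\gamma=\delta$, this gives $[\gamma]\cdot[\alpha]=[\gamma\#\tilde\alpha]$ with $\tilde\alpha$ the $f$-lift starting at $\gamma(1)$.

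The only delicate point is checking that both actions are well-defined on homotopy classes and that they commute. Well-definedness of the right action uses unique path lifting for coverings (so homotopic $\alpha$'s lift to homotopic $\tilde\alpha$'s) together with the fact that a homotopy of $\gamma$ rel endpoints moves $\gamma(1)$ within the fibre $f^{-1}(*)$, which is discrete, hence keeps $\gamma(1)$ fixed. Commutativity of the two actions is immediate because left pre-composition at $*$ and right concatenation at $\gamma(1)$ occur at opposite ends of $\gamma$. I expect this bookkeeping to be the only real subtlety; the core content is just unique lifting plus the fibrant simplification of Lemma~\ref{lmm:FibrCorr}.
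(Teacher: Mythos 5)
Your proposal is correct and follows the same route as the paper: the paper's own proof is a one-liner observing that, in the notation of Lemma~\ref{lem:cpbiset}, the point $p$ equals $i^{-1}(\gamma(1))$ and may be dropped, with the reduction to a constant second path and the lifting description of the right action left implicit in Lemma~\ref{lmm:FibrCorr}. You simply make those implicit steps explicit (including the use of discreteness of $f^{-1}(*)$ and unique path lifting), which is a faithful elaboration rather than a different argument.
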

\begin{proof}
  Simply remark that, in the notation of Lemma~\ref{lem:cpbiset}, the
  point $p$ is determined as $i^{-1}(\gamma(1))$, so may be removed
  from the definition.
\end{proof}

Let us consider now a self-correspondence
$(Z,f\colon Z\to X, i\colon Z\to X)$. It may arise as follows:
$f\colon X\selfmap$ is a branched covering, $Z$ is the subset of $X$
on which $f$ is a genuine covering, and $i$ is the inclusion.

Such correspondences may be iterated; however, their iterates are not
defined on the same $Z$ anymore. We set $X^{(1)}\coloneqq Z$ and, more
generally,
\[X^{(n)}=\{(x_1,\dots,x_n)\in Z^n \mid f(x_{i})=i(x_{i+1})\}.
\]

Define the maps $f^{(n)}(x_1,\dots,x_n)=f(x_n)$ and
$i^{(n)}(x_1,\dots,x_n)=i(x_1)$. Then the $n$-th power, in the sense
of \S\ref{ss:product_corr}, of $(f,i)$ is $(f^{(n)},i^{(n)})$.

If $f$ is fibrant, then so are all $f^{(n)}$, and naturally
(Lemma~\ref{lem:BisetOfGoodProd}), the biset of $(f^{(n)},i^{(n)})$ is
none other than $B(f,i)^{\otimes n}$.

If $i\colon X^{(1)}\to X$ is an inclusion, then so are all
$i^{(n)}\colon X^{(n)}\to X$.  Then $(f,i)$ is identified with a partially
defined map $f\colon X\dashrightarrow X$; and all
$(f^{n}\colon X^{(n)}\to X,i^{n}\colon X^{(n)}\to X)$ are identified with
$f^{(n)}\colon X\dashrightarrow X$. If $f$ is fibrant, then the bisets of
$(f^{(n)},i^{(n)})$ and of $f^{(n)}$ are naturally isomorphic.

Iteration may also be interpreted purely in the language of
homomorphisms, using Lemma~\ref{lem:biset=fibre}. Given a
$G$-$G$-biset $B$, write $G_0=G$, and find a group $G_1$ such that $B$
decomposes as $B_{\psi_1}^\vee\otimes_{G_1}B_{\phi_1}$, for
homomorphisms $\phi_1,\psi_1\colon G_1\to G_0$. Define then iteratively
$G_{n+1}$ as the fibre product of $G_n$ with $G_n$ over $G_{n-1}$:
\[\begin{tikzpicture}[description/.style={fill=white,inner sep=2pt}]
  \matrix (m) [matrix of math nodes, row sep=3em,
  column sep=2.5em, text height=1.5ex, text depth=0.25ex]
  { {G_{n+1}} & {G_n}\\
    {G_n} & {G_{n-1}}\\};
  \path[->,font=\scriptsize]
  (m-1-1) edge[dotted] node[above] {$\phi_{n+1}$} (m-1-2)
  (m-1-1) edge[dotted] node[left] {$\psi_{n+1}$} (m-2-1)
  (m-1-2) edge node[description] {$\psi_n$} (m-2-2)
  (m-2-1) edge node[description] {$\phi_n$} (m-2-2);
\end{tikzpicture}
\]
Then $B^{\otimes n}=(B_{\psi_n\cdots\psi_1})^\vee\otimes_{G_n}B_{\phi_n\cdots\phi_1}$.

\subsection{Generic maps}\label{ss:generic}
We can slightly relax the previous setting, in which $i\colon Z\to Y$
is an inclusion, to ``generic'' maps in the following sense.

\begin{defn}[Generic maps]
  A continuous map $f\colon Y\to X$ is \emph{generic} if there exists a
  continuous map $g\colon X\to Y$ such that $f\circ g$ is isotopic to the
  identity on $X$.
\end{defn}

Here is a typical example: Consider a topological space $X$ and an
injective path $\gamma\colon[0,1]\to X$, such that a neighbourhood of the
image of $\gamma$ is contractible in $X$. Then the inclusion
$f\colon X\setminus\{\gamma(0),\gamma(1)\}\to X\setminus\{\gamma(0)\}$ is a
generic map. Indeed contractibility of the image of $\gamma$ implies
the existence of a homeomorphism
$g\colon X\setminus\{\gamma(0)\}\to X\setminus\gamma([0,1])$.

\begin{lem}
  Let $f\colon Y\to X$ be a generic map, and let $\dagger\in Y$ be a
  basepoint. Write $f(\dagger)=*$. Then
  $f_*\colon\pi_1(Y,\dagger)\to\pi_1(X,*)$ is a split epimorphism.

  Therefore, the biset of $f$ is left-invertible and right-principal;
  namely, it is a $\pi_1(Y,\dagger)$-$\pi_1(X,*)$-biset $B$ such that
  $B^\vee\otimes_{\pi_1(Y,\dagger)}B\cong{}_{\pi_1(X,*)}\pi_1(X,*)_{\pi_1(X,*)}$.
\end{lem}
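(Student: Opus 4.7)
The plan is to construct an explicit splitting of $f_*$ from the isotopy witnessing genericness, and then deduce the biset statement from surjectivity of $f_*$. Let $H\colon X\times[0,1]\to X$ be an isotopy with $H_0=\one_X$ and $H_1=f\circ g$, and set $\dagger'\coloneqq g(*)\in Y$. Let $\gamma(t)\coloneqq H(*,t)$ be the corresponding path in $X$ from $*$ to $f(\dagger')$. Applying $H$ to $\alpha\times\one_{[0,1]}$ for a loop $\alpha$ based at $*$ produces a square whose boundary reads $\alpha\#\gamma=\gamma\#(f\circ g\circ\alpha)$; interpreted rel basepoints, this yields
\[
(f\circ g)_*(\alpha)=\gamma^{-1}\#\alpha\#\gamma\quad\text{in }\pi_1(X,f(\dagger')).
\]
In particular $f_*\circ g_*$ becomes the identity after conjugating by $\gamma$, so $f_*$ is surjective.

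Choose any path $\mu$ in $Y$ from $\dagger$ to $\dagger'$. Both $f(\mu)$ and $\gamma$ are paths from $*$ to $f(\dagger')$, so their difference $\eta\coloneqq f(\mu)\#\gamma^{-1}$ is an element of $\pi_1(X,*)$. By the surjectivity of $f_*$ just established, there is a loop $\nu$ at $\dagger$ in $Y$ with $f_*(\nu)=\eta^{-1}$; replacing $\mu$ by $\nu\#\mu$ we may assume $f(\mu)\approx\gamma$. Define
\[
s\colon\pi_1(X,*)\to\pi_1(Y,\dagger),\qquad s(\alpha)\coloneqq\mu\#g_*(\alpha)\#\mu^{-1}.
\]
Then $f_*(s(\alpha))=f(\mu)\#(f\circ g)_*(\alpha)\#f(\mu)^{-1}=\gamma\#\gamma^{-1}\#\alpha\#\gamma\#\gamma^{-1}=\alpha$, so $f_*\circ s=\one$ and $f_*$ is a split epimorphism.

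For the biset conclusion, $B(f)$ is right-principal by construction, as remarked just after~\eqref{eq:DfnBisOfMap}, and since $f(\dagger)=*$ it is naturally isomorphic to $B_{f_*}$. A direct computation using only the surjectivity of $\phi\colon H\to G$ shows that every element of $B_\phi^\vee\otimes_H B_\phi$ admits a unique representative of the form $\check 1\otimes a$: given $\check b_1\otimes b_2$, pick $h\in H$ with $\phi(h)=b_1^{-1}$ and apply the tensor identity $\check b_1\otimes b_2=\check{\phi(h)^{-1}b_1}\otimes\phi(h)b_2$ to reach $\check 1\otimes b_1 b_2$; different choices of $h$ differ by $\ker\phi$, which acts trivially on the right factor. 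One then checks that the two $G$-actions transport to the standard biset actions on $G$, identifying $B_\phi^\vee\otimes_H B_\phi\cong{}_GG_G$. Applying this with $\phi=f_*$ yields the claimed left-invertibility.

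The chief obstacle throughout is the basepoint bookkeeping: $g$ does not preserve basepoints, and $f\circ g$ is only isotopic (not equal) to the identity, so the correction path $\gamma$ and the lift $\mu$ chosen so that $f(\mu)\approx\gamma$ must be carried through every identity. Once these auxiliary paths are fixed, the computations collapse formally as above.
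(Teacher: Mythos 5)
Your proof is correct and takes essentially the same route as the paper's, which is just the one\nobreakdash-line observation that $g_*$ splits $f_*$; you have (usefully) made explicit all the basepoint corrections that the paper suppresses, and your handling of the trace path $\gamma$ and the lift $\mu$ with $f(\mu)\approx\gamma$ is sound. One caveat: in the tensor computation the displayed identity is off by an inverse. With the paper's convention $\check b\cdot h=(\phi(h)^{-1}b)^{\check{}}$, the relation $(\check b_1\cdot h)\otimes b_2=\check b_1\otimes(h\cdot b_2)$ reads $\check b_1\otimes b_2=\check{\phi(h)b_1}\otimes\phi(h)b_2$ (not $\check{\phi(h)^{-1}b_1}\otimes\phi(h)b_2$), so choosing $\phi(h)=b_1^{-1}$ yields the normal form $\check 1\otimes b_1^{-1}b_2$ rather than $\check 1\otimes b_1b_2$; the invariant carried to ${}_GG_G$ is $b_1^{-1}b_2$. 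This is a computational slip, not a gap: surjectivity of $\phi$ is still exactly what is needed both to reach the normal form and to see it is unique, and the two $G$-actions do transport to left and right multiplication on $G$ under $\check b_1\otimes b_2\mapsto b_1^{-1}b_2$.
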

\begin{proof}
  Let $g$ be a homotopy left inverse of $f$. The first statement
  follows simply from $g_*f_*=id_X$. The second one follows from the
  first.
\end{proof}

\subsection{Conjugacy classes in bisets}\label{ss:cc in bisets}
Let ${}_GB_G$ be a biset. Its set of conjugacy classes is
\[B^{G}\coloneqq{}_GB_G/\{b=g b g^{-1} \mid b\in B, g\in G\}.\]

Consider a self-correspondence $f,i\colon Z\rightrightarrows X$. A homotopy
\emph{pseudo-fixed point}~\cite{ishii-smillie:shadowing} is the data
$(p,\gamma)$ such that $p \in Z$ and $\gamma\colon [0,1]\to X$ with
$\gamma(0)=f(p)$ and $\gamma(1)=i(p)$. In other words, $\gamma$ encodes a
homotopy difference between $f(p)$ and $i(p)$. If $\gamma$ is a constant
path, then $p$ is a \emph{fixed point of $(f,i)$}. Two homotopy
pseudo-fixed points $(p,\gamma)$ and $(q,\delta)$ are \emph{conjugate} if
there is a path $\ell\colon [0,1]\to Z$ with $\ell(0)=p$, $\ell(1)=q$ such
that $f(\ell)\#\beta \# i(\ell^{-1})$ is homotopic to $\gamma$.

The set of fixed points conjugate to a given fixed point $p\in Z$ is also
known as the \emph{Nielsen class} of $p$. The \emph{Nielsen number}
$N(f,i)$ is the set of fixed points of $(f,i)$ considered up to conjugacy.

Every $(\delta^{-1}, p, \gamma )\in B(f,i)$, in the notation of
Lemma~\ref{lem:cpbiset}, naturally defines a homotopy pseudo-fixed point
$(p,\gamma\#\delta)$. Conversely, if $X$ is path connected, then for every
homotopy pseudo-fixed point $(p,\gamma)$ we may choose a path $\ell\colon
[0,1]\to X$ with $\ell(0)=*$ and $\ell(1)=f(p)$ and construct an element
$(\ell^{-1},p,\gamma\#\ell)\in B(f,i)$ encoding $(p,\gamma)$.

\noindent The following proposition is immediate. 
\begin{prop}
  Two elements $(\delta^{-1}, p, \gamma), (\delta'^{-1}, p', \gamma')\in
  B(f,i)$ are conjugate as elements of the biset $B(f,i)$ if and only if
  the homotopy fixed points $(p,\gamma\#\delta),(p',\gamma'\#\delta')$ are
  conjugate.
\end{prop}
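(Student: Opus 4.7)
The plan is to unpack each side of the equivalence as a concrete condition on paths and check directly that they match. First, biset conjugacy of $b_1 = (\delta_1^{-1}, p_1, \gamma_1)$ and $b_2 = (\delta_2^{-1}, p_2, \gamma_2)$ in $B(f,i)$ asserts the existence of a loop $\alpha$ at $*$ with $[\alpha]\cdot b_1\cdot [\alpha^{-1}] = b_2$ in $B(f,i)$. By the left--right action formulas of Lemma~\ref{lem:cpbiset}, the left-hand side equals $(\alpha\#\delta_1^{-1},\, p_1,\, \gamma_1\#\alpha^{-1})$, and the equivalence relation defining $B(f,i)$ permits the middle coordinate to move along a chosen path $\ell\colon p_1 \to p_2$ in $Z$, at the cost of composing $\delta_1^{-1}$ with $i(\ell)$ on the right and $\gamma_1$ with $f(\ell^{-1})$ on the left. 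Hence the equality $[\alpha]\cdot b_1\cdot[\alpha^{-1}] = b_2$ unfolds into the two path-level homotopies
\[
\delta_2^{-1} \simeq \alpha \# \delta_1^{-1} \# i(\ell) \text{ in } Y, \qquad \gamma_2 \simeq f(\ell^{-1}) \# \gamma_1 \# \alpha^{-1} \text{ in } X,
\]
both taken rel endpoints.

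Next, I would concatenate these two identities in the pattern of the pseudo-fixed-point path $\gamma \# \delta$, so that the two copies of $\alpha^{\pm}$ cancel; this cancellation uses the self-correspondence hypothesis $X = Y$ and $\dagger = *$, which places $\alpha$ in a single ambient fundamental group. The resulting relation rearranges as
\[
f(\ell)\#(\gamma_2\#\delta_2)\#i(\ell^{-1}) \simeq \gamma_1\#\delta_1,
\]
which is by definition conjugacy of the pseudo-fixed points $(p_1,\gamma_1\#\delta_1)$ and $(p_2,\gamma_2\#\delta_2)$ realized by the path $\ell$.

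For the converse, given $\ell\colon p_1\to p_2$ witnessing pseudo-fixed-point conjugacy, I would reconstruct $\alpha$ explicitly via the natural choice $\alpha \coloneqq \gamma_2^{-1}\#f(\ell^{-1})\#\gamma_1$, a loop at $*$. Substituting back, both displayed path identities of the previous paragraph become immediate, so $[\alpha]\cdot b_1\cdot[\alpha^{-1}] = b_2$ in $B(f,i)$. The main obstacle is careful bookkeeping of path orientations (in particular the distinction between $\delta$ and the first coordinate $\delta^{-1}$ of the biset element) and of the basepoints at which each concatenation takes place; the self-correspondence hypothesis is essential to ensure that the common loop $\alpha$ is well-defined and cancels as claimed.
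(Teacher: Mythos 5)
Your argument is correct and is precisely the direct verification the paper has in mind: the paper offers no proof beyond declaring the proposition immediate, and your unwinding of the conjugation $[\alpha]\cdot b_1\cdot[\alpha]^{-1}=b_2$ into the two path homotopies, the cancellation of $\alpha$ in the concatenation $\gamma\#\delta$, and the explicit reconstruction $\alpha=\gamma_2^{-1}\#f(\ell^{-1})\#\gamma_1$ for the converse is exactly the intended check. The only caution (which you already flag) is the orientation bookkeeping between $\delta$ and $\delta^{-1}$, an ambiguity already present in the paper's own notation in this subsection.
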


As a corollary, if $X$ is path connected, then the Nielsen number $N(f,i)$
is bounded by the cardinality of $B^{G}$.

In \cite{bartholdi-dudko:bc4} we will further investigate homotopy
pseudo-periodic orbits of a Thurston map.

%%%%%%%%%%%%%%%%%%%%%%%%%%%%%%%%%%%%%%%%%%%%%%%%%%%%%%%%%%%%%%%%
\section{Dynamical systems}\label{ss:dynamics}
We turn now to applications of the previous sections. They will mainly
be to the theory of iterations of branched self-coverings of
surfaces. The main objective is an algorithmic understanding of these
maps up to isotopy, and will be developed in later articles. Here are
some more elementary byproducts.

Consider first a polynomial $p(z)\in\C[z]$. We recall some basic
definitions and properties; see~\cites{douady-h:edpc1,douady-h:edpc2}
for details.

\noindent Let us denote by $\widetilde P(p)\subset \C$ the forward orbit of
$p$'s critical points:
\[\widetilde P(p)\coloneqq\{p^n(z)\mid p'(z)=0,\,n\ge0\, z\in \C\}.\]
The \emph{post-critical set} $P(p)\coloneqq p(\widetilde P(p))$ is the
forward orbit of $p$'s critical values. The polynomial $p$ is
\emph{post-critically finite} if $P(p)$ is finite. The \emph{Julia set}
$J(p)$ of $p$ is the closure of the repelling periodic points of
$p$. Equivalently, $J(p)$ is the boundary of the filled-in Julia set
$K(p)$:
\[K(p)\coloneqq\{z\in \C\mid P(p)\text{ is bounded}\},\quad J_c=\partial K_c.
\]
The \emph{Fatou set} $F(p)$ is the complement of $J(p)$.

Let us assume that $p$ is post-critically finite. This assumption is
essential to obtain simple combinatorial descriptions of $p$. Then each
bounded connected component of $F(p)$ is a disk, and $p$ acts on this set
of disks. Furthermore, the boundary of each disk component is a circle in
$J(p)$.

Furthermore, every grand orbit of $p\colon \widetilde P(p)\to\widetilde
P(p)$ contains a unique periodic cycle. If a periodic cycle $C\subset
\widetilde P(p)$ contains a critical point of $p$, then $C$ lies entirely
in the Fatou set, and moreover each element $c\in C$ lies in a different
disk of $F(p)$. On the other hand, if a cycle $C$ contains no critical
point of $p$, then $C$ lies in $J(p)$. If no cycle of $P(p)$ contains a
critical point of $p$, then $F(p)$ has no bounded component and $J(p)$ is a
\emph{dendrite}.

Let $\HT$ be the smallest tree in $K(p)$ containing $\widetilde P(p)$ and
containing $P(p)$ in its vertex set such that $\HT$ intersects $F(p)$ along
radial arcs. Since $\widetilde P(p)$ is forward invariant we get a self-map
$p\colon\HT\righttoleftarrow$. For every pair of adjacent edges $e_1, e_2$
there is the angle $\sphericalangle(e_1,e_2)\in \Q/\Z$ uniquely specified
by the following conditions, see~\cite{poirier:trees}:
\begin{itemize}
\item $\sphericalangle(e_1,e_2)=0$ if and only if $e_1=e_2$; 
\item if $v$ is a common vertex of $e_1$ and $e_2$ and $\deg_v(p)$ is the
  local degree of $p$ at $v$, then
  \[\sphericalangle(p(e_1),p(e_2))=\deg_v(p)\sphericalangle(e_1,e_2);\]
\item for all edges $e_1, e_2,e_3$ adjacent to a common vertex we have
  \[\sphericalangle(e_1,e_3)=\sphericalangle(e_1,e_2)+\sphericalangle(e_2,e_3).\] 
\end{itemize}

\begin{defn}[Hubbard trees]
  The \emph{(angled) Hubbard tree} of a complex polynomial $p$ is the data
  consisting of $p\colon\HT\righttoleftarrow$, the values $\deg_p(v)\in \N$
  measuring the local degrees of $p$ at vertices $v\in p^{-1}(\HT)$, and the angle
  structure ``$\sphericalangle$'' on $\HT$ satisfying the above axioms.
\end{defn}

In~\cite{poirier:trees} post-critically finite polynomials are classified
in terms of their Hubbard trees.

\begin{rem}
  If $p$ has degree $2$, then the angled structure of
  $p\colon\HT\selfmap$ is uniquely determined by how $\HT$ is
  topologically embedded into the plane. However, in general, the
  planarity of $\HT$ is insufficient to recover $p$; one needs to
  endow $p \colon\HT\selfmap$ with extra information sufficient to
  recover the embedding of $p^{-1}(\HT)$ into the plane.
\end{rem}

It may be more convenient to consider a variant, the \emph{Hubbard
  complex}, which is a special case of topological automaton
(see~\cite{nekrashevych:combinatorialmodels}*{\S3}):

\begin{defn}[Hubbard complexes]
  Let $p$ be a complex polynomial. Its \emph{Hubbard complex} is the
  self-correspondence $p,i\colon H^1\rightrightarrows H^0$ defined as
  follows:
  \begin{itemize}
  \item $H^0$ is the smallest $1$-dimensional subcomplex of the plane
    containing $P(p)\cap J(p)$ and all $\partial D$ for $D\subseteq F(p)$ a
    Fatou component intersecting $P(p)$;
  \item $H^1$ is the smallest $1$-dimensional subcomplex of the plane
    containing $P(p)\cap J(p)$ and all $\partial D$ for $D\subseteq F(p)$ a
    Fatou component intersecting $p^{-1}(P(p))$;
  \item $p\colon H^1\to H^0$ is the restriction of $p$ to $H^1$ and is a
    covering map;
  \item $i\colon H^1\to H^0$ retracts $H^1$ into $H^0$.\qedhere
  \end{itemize}
\end{defn}
\noindent (As for the Hubbard tree, we assume that $H^1, H^0$ intersects
$F(p)$ along radial lines. Points in $H^{0}\cap P(p)$ and in $H^{1}\cap \widetilde P(p)$ are treated as orbifold points.)

\begin{figure}[h]
\begin{center}
  \begin{tikzpicture}
    \begin{scope}[xshift=0cm,yshift=-1cm]
  \node[anchor=south west,inner sep=0] (basilica) at (0,0)       {\includegraphics[width=0.5\textwidth]{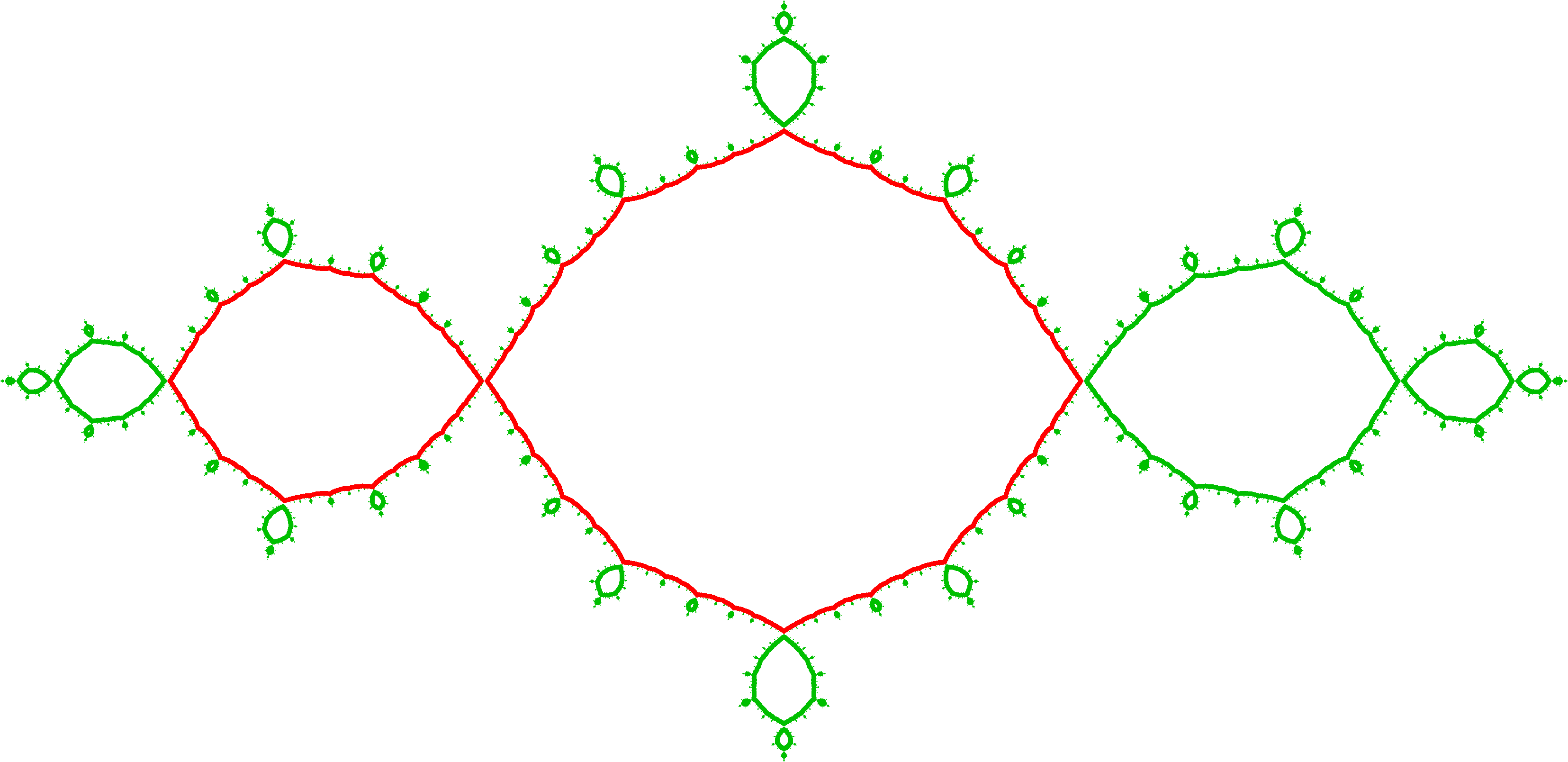}};
  \begin{scope}[x={(basilica.south east)},y={(basilica.north west)}]
    \draw[thin] (0.5,0.5) -- (0.2,0.5);
    \coordinate (alpha_basilica) at (0.31,0.5);
  \end{scope}
\end{scope}

\begin{scope}[xshift=0.5\textwidth,yshift=0cm]
  \node[anchor=south west,inner sep=0] (rabbit) at (0,0) {\includegraphics[width=0.45\textwidth]{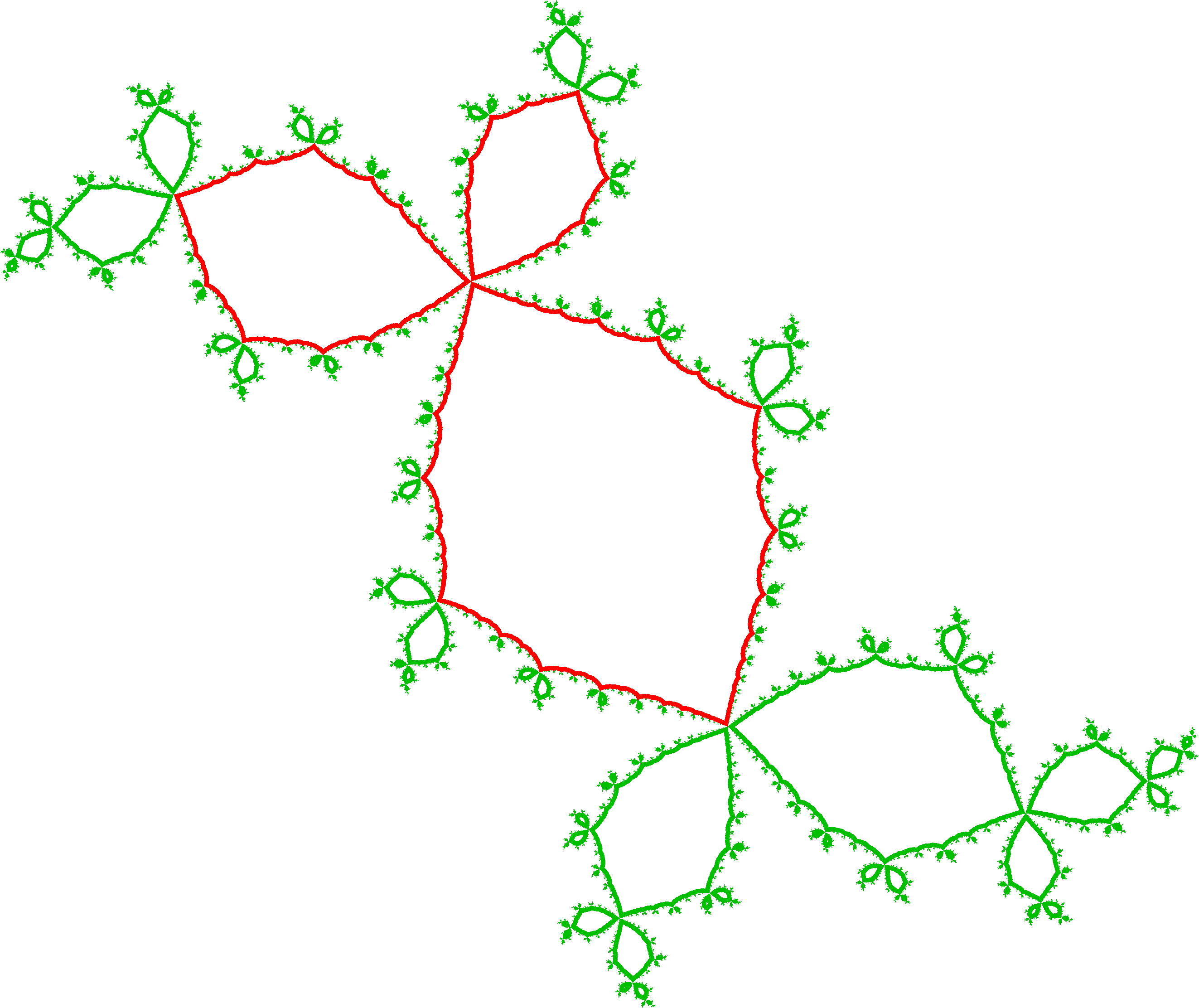}};
  \begin{scope}[x={(rabbit.south east)},y={(rabbit.north west)}]
    \coordinate (alpha_rabbit) at (0.3934,0.7204);
    \draw[thin] (0.5,0.5) -- (alpha_rabbit);
    \draw[thin] (0.452,0.843) -- (alpha_rabbit);
    \draw[thin] (0.244,0.759) -- (alpha_rabbit);
  \end{scope}
\end{scope}

\begin{scope}[xshift=15mm,yshift=2cm]
\node[anchor=south west,inner sep=0] (z2pi) at (0,0) {\includegraphics[width=0.35\textwidth]{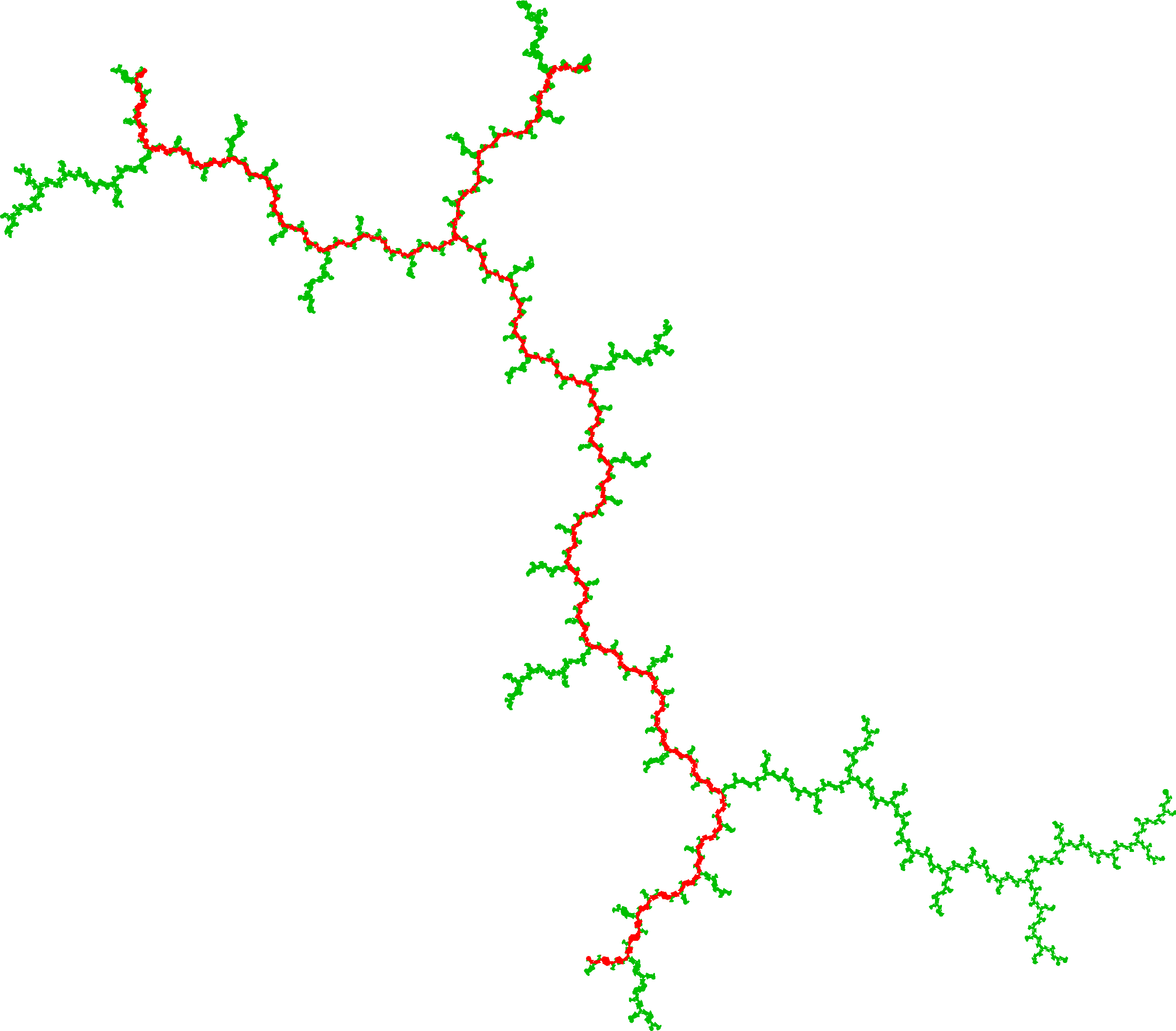}};
  \begin{scope}[x={(z2pi.south east)},y={(z2pi.north west)}]
    \coordinate (alpha_z2pi) at (0.387,0.232);
%    \draw[help lines,xstep=.1,ystep=.1] (0,0) grid (1,1);
%    \foreach \x in {0,1,...,9} { \node [anchor=north] at (\x/10,0) {0.\x}; }
%    \foreach \y in {0,1,...,9} { \node [anchor=east] at (0,\y/10) {0.\y}; }
  \end{scope}
\end{scope}
\end{tikzpicture}
\end{center}
\caption{The Julia sets of the maps $z^2+i$, $z^2-1$ (the ``Basilica'') and $z^2+c$ for $(c^2+c)^2+c=0$ (the ``Rabbit''). The Hubbard complex is drawn in red, and (if it differs) the Hubbard tree is drawn in black.}
\end{figure}

\subsection{Thurston maps}\label{ss:thurston}
We may consider the more general situation of a \emph{branched
  self-covering} of the sphere $S^2$, namely a map
$p\colon S^2\selfmap$ that is locally modelled, in complex charts, by
$z\mapsto z^n$ for some $n\in\N$. Those points $z\in S^2$ at which
$n\ge2$ are \emph{critical points}, and $P(p)$ is the forward orbit of
$p$'s critical values.  If furthermore there is a point
$\infty\in S^2$ with $p^{-1}(\infty)=\{\infty\}$, then $p$ is a
topological polynomial. If $P(p)$ is finite, then $p$ is called a
\emph{Thurston map}.

Unless $p$ expands a metric on $S^2$, there is no well-defined notion
of Julia set. There is, however, a convenient encoding of $p$ by a
biset. One sets $X=Y=S^2\setminus P(p)$ and
$Z=S^2\setminus p^{-1}(P(p))$, with maps $i\colon Z\to Y$ the
inclusion and $p\colon Z\to X$ the restriction of $p$. Thus
$p\colon S^2\selfmap$ is given by a covering correspondence
$p,i\colon Z\rightrightarrows X$.

Fix a basepoint $*\in X$, and write $G=\pi_1(X,*)$. Let $B(p)$ denote
the biset of the above correspondence associated with $p$; it is a
$G$-$G$-biset, left-free of degree $\deg(p)$.

Let $p_0,p_1$ be Thurston maps. They are called \emph{combinatorially
  equivalent} if there is an isotopy $(p_t)_{t\in[0,1]}$ from $p_0$ to
$p_1$ along which $P(p)$ moves continuously (and, in particular, has
constant cardinality). The biset of a Thurston map is a complete
invariant for combinatorial equivalence:
\begin{thm}[Kameyama~\cite{kameyama:thurston}, Nekrashevych~\cite{nekrashevych:ssg}*{Theorem~6.5.2}]
\label{thm:Kameyama}
  Let $p_0,p_1$ be Thurston maps. Then $p_0,p_1$ are Thurston
  equivalent if and only if the bisets $B(p_0),B(p_1)$ are conjugate
  by an isomorphism
  $\pi_1(S^2\setminus P(p_0),*_0)\to\pi_1(S^2\setminus P(p_1),*_1)$
  induced by a surface homeomorphism.
\end{thm}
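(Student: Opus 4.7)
The forward implication is the easy direction. An isotopy $(p_t)_{t \in [0,1]}$ of Thurston maps for which $P(p_t)$ moves continuously gives a continuous family of finite subsets of $S^2$; the parametric isotopy extension theorem then produces an ambient isotopy $(h_t)_{t \in [0,1]}$ of $S^2$ with $h_0 = \one$ and $h_t(P(p_0)) = P(p_t)$. Setting $h = h_1$, one checks that $p_1 \circ h$ is isotopic to $h \circ p_0$ rel $P(p_0)$, so $h$ is a homeomorphism of pairs $(S^2, P(p_0)) \to (S^2, P(p_1))$ conjugating $p_0$ to $p_1$ up to isotopy. By Lemma~\ref{lm:BisMorph} together with the product formula of~\S\ref{ss:product_corr}, the induced isomorphism $h_* \colon \pi_1(S^2 \setminus P(p_0), *_0) \to \pi_1(S^2 \setminus P(p_1), *_1)$ yields the required conjugacy of bisets.

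\textbf{Reverse direction, reduction.} Suppose a surface homeomorphism $h \colon (S^2, P(p_0)) \to (S^2, P(p_1))$ realizes the conjugacy. Replacing $p_0$ by the combinatorially equivalent map $h \circ p_0 \circ h^{-1}$, we may assume $P(p_0) = P(p_1) =: P$, $*_0 = *_1 =: *$, and that we are given an isomorphism $\beta \colon B(p_0) \to B(p_1)$ of $G$-$G$-bisets for $G = \pi_1(S^2 \setminus P, *)$. The goal becomes producing an isotopy of branched covers between $p_0$ and $p_1$.

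\textbf{Reverse direction, reconstruction from the biset.} Pick a finite graph $\Gamma \subset S^2$ with vertex set containing $P$ and $*$, such that $\Gamma \setminus P$ is a spine of $S^2 \setminus P$ (each complementary face contains exactly one point of $P$). Then $\pi_1(\Gamma \setminus P, *) = G$ is free, and each generator is represented by a loop in $\Gamma$. For $i \in \{0,1\}$, the preimage $\Gamma_i := p_i^{-1}(\Gamma)$ is a finite graph on $S^2$ whose complementary faces each contain a single point of $p_i^{-1}(P)$, and $p_i|_{\Gamma_i \setminus p_i^{-1}(P)} \colon \Gamma_i \setminus p_i^{-1}(P) \to \Gamma \setminus P$ is a covering whose associated left $G$-set is exactly $B(p_i)$. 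The isomorphism $\beta$ of left $G$-sets therefore yields a covering isomorphism $\tilde\beta_0 \colon \Gamma_0 \setminus p_0^{-1}(P) \to \Gamma_1 \setminus p_1^{-1}(P)$ over $\Gamma \setminus P$; compatibility of $\beta$ with the right $G$-action guarantees that, in a basis-dependent wreath recursion $\Phi_i \colon G \to G \wr d\perm$ (see~\S\ref{ss:wreath}), the data $\Phi_0$ and $\Phi_1$ coincide after applying $\beta$, so the covering isomorphism respects the labels that record how each edge of $\Gamma_i$ projects under $p_i$. Extend $\tilde\beta_0$ to a homeomorphism $h_1 \colon S^2 \to S^2$ carrying $p_0^{-1}(P)$ onto $p_1^{-1}(P)$, taking the puncture of each face of $\Gamma_0$ to the puncture of the corresponding face of $\Gamma_1$; let $h_0 = \one$. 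Then $h_0 \circ p_0$ and $p_1 \circ h_1$ agree on the 1-skeleton $\Gamma_0$ up to isotopy rel vertices.

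\textbf{Main obstacle and conclusion.} The most delicate step is extending across the 2-cells: on each complementary face of $\Gamma_0$, the maps $h_0 \circ p_0$ and $p_1 \circ h_1$ restrict to branched coverings of a disk branched at a single interior point, with identical boundary behavior. The local degree at that interior point is recorded by the right-action of a loop in $G$ around the corresponding puncture of $P$ on the appropriate basis element of $B(p_i)$, and equality of these local degrees follows from $\beta$'s compatibility with the right action. Two branched coverings of a disk with matching local degree and matching boundary map are isotopic rel boundary (by uniformization, or by direct construction via radial extension), yielding the desired isotopy between $h_0 \circ p_0$ and $p_1 \circ h_1$ on each face, hence on all of $S^2$. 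This exhibits the combinatorial equivalence.
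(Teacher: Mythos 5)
First, note that the paper does not prove Theorem~\ref{thm:Kameyama}: it is stated as a known result and attributed to Kameyama and Nekrashevych, so your proposal can only be measured against the standard argument in those references, which it broadly follows. Your forward direction (isotopy extension plus invariance of the biset under homotopy rel the postcritical set) is fine as a sketch.

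The reverse direction, however, has a genuine gap at the decisive step. Combinatorial equivalence (in the form $h_0\circ p_0=p_1\circ h_1$ with $h_0\approx h_1$ rel $P$, equivalent to the isotopy definition in~\S\ref{ss:thurston}) requires, in your normalization $h_0=\one$, that the homeomorphism $h_1$ you build be isotopic to the identity rel $P$. You never verify this, and your conclusion ``this exhibits the combinatorial equivalence'' does not follow from $p_0\approx p_1\circ h_1$ alone: that weaker statement holds whenever the monodromy actions agree, i.e.\ it uses only the permutation part of the wreath recursion, and it would fail to distinguish $p_1$ from $p_1$ precomposed with a Dehn twist (whose bisets are $B(p_1)\otimes B_{\tau_*}$ and are generally \emph{not} isomorphic). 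The $G$-valued decorations of $\Phi_i$ are exactly the data that rule this out, but you have misidentified what they record: the permutation part encodes how $\Gamma_i$ covers $\Gamma$ (the $\rho$-direction), while the decorations $g_i=\ell_i\#\widetilde g\#\ell_{i^\sigma}^{-1}$ encode the homotopy classes, in $S^2\setminus P$, of the lifted edges together with the connecting paths --- that is, how $\Gamma_i$ is \emph{embedded} in $S^2\setminus P$ (the $i$-direction). Equality of the decorations (after normalizing the bases via Lemma~\ref{lem:conjwr}; they need only be conjugate in $G\wr d\perm$, not equal) is what shows that $\tilde\beta_0$ followed by the inclusion $\Gamma_1\hookrightarrow S^2\setminus P$ is homotopic to the inclusion $\Gamma_0\hookrightarrow S^2\setminus P$, and hence --- by the standard fact that homotopic embeddings of spines with matching peripheral/face structure are ambient isotopic --- that $h_1$ can be chosen isotopic to $\one$ rel $P$. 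This argument is missing; without it the extension of $\tilde\beta_0$ to a sphere homeomorphism is not even guaranteed to exist (the ribbon structures of $\Gamma_0$ and $\Gamma_1$ must be matched), let alone to lie in the right isotopy class. A smaller symptom of the same confusion: your graph $\Gamma$ is described both as containing $P$ in its vertex set and as having one point of $P$ in each complementary face, which is contradictory; the spine must be disjoint from $P$.
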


By definition, $p$ behaves locally as $z\mapsto z^{\deg_z(p)}$ at a point
$z\in S^2$; set
\[\ord(v)=\lcm\{\deg_z(p^n)\mid n\ge0,z\in p^{-n}(v)\}.\]

Clearly, $\ord(z)>1$ if and only if $z\in P(p)$, and $\ord(v)=\infty$ if
and only if $v$ is in a periodic cycle containing a critical point. For
example, the degrees at $P(p)$ are all $\infty$ for the Basilica and the
Rabbit maps, and are $2$ at $i,-i,i-1$ for $z^2+i$.

This order function $\ord$ defines an \emph{orbispace} structure on
$S^2$: in our simplified context, a topological space with the extra
data of a non-trivial group attached at a discrete set of points. We
will not go into details of orbispaces, but simply note that, if $v$
is a point with group $G_v$ attached to it, then $v$ has canonical
neighbourhoods with fundamental group isomorphic to $G_v$. In our
situation, the group attached to $v\in P(p)$ is cyclic of order
$\ord(v)$. If $\ord(v)=\infty$, then the point $v$ may be treated as a
puncture rather than as a point with $\Z$ attached to it.

For each $z\in P(p)$, let $\gamma_z$ denote a small loop around $z$, and
identify $\gamma_z$ with a representative of a conjugacy class in
$\pi_1( S^2\setminus P(p),*)$. It follows that the fundamental group of the
orbispace defined by $\ord$ is given as follows:
\begin{equation}
\label{eq:G_p}
G_p = \pi_1( S^2\setminus P(p),*)/\langle \gamma_z^{\ord(z)}:z\in P(p)\rangle.
\end{equation}
For every $z\in S^2$ define $\ord^1(z)\coloneqq \ord(p(z))/\deg_z(p)$. Then
$p\colon (S^2,\ord^1)\to (S^2,\ord)$ is a covering between orbispaces while
$(S^2,\ord^1)\hookrightarrow (S^2,\ord)$ is an orbispace inclusion. This
defines the $G_p$-biset $B(p)$. Note that Theorem~\ref{thm:Kameyama} was
stated for bisets over the group $\pi_1(S^2\setminus P(p))$, not for bisets
over the orbispace fundamental group $G_p$. However, an analogue of
Theorem~\ref{thm:Kameyama} is also true in that context; this will be
proven in~\cite{bartholdi-dudko:bc2}.

Much structure in the space of Thurston maps can be obtained by comparing,
or deriving, maps from the simplest example $f(z)=z^d$. This map has
$J(f)=\{|z|=1\}$ and $P(f)=\{0,\infty\}$ so that $\pi_1(\C\setminus
P(f),*)=\langle t\rangle\cong\Z$. We call the corresponding biset the
\emph{regular cyclic biset of degree $d$}: as a left $\langle t\rangle$-set, it is
$\langle t\rangle\times\{1,\dots,d\}$; and the right action on the basis
$\{1,\dots,d\}$ is given by
\[1\cdot t=2,\quad\dots,\quad(d-1)\cdot t=d,\quad d\cdot t=t\cdot 1.\] It
may also be defined more economically as $B(z^d)=\{t^{j/d}:j\in\Z\}$, with
left and right actions given by $t^i\cdot t^{j/d}\cdot t^k=t^{i+j/d+k/d}$.

Since topological polynomials of degree $d$ behave as $z^d$ in a
neighbourhood of $\infty$, their bisets contain a copy of the
regular degree-$d$ cyclic biset. More generally, if $p$ has a fixed point in a
neighbourhood of which it acts as $z\mapsto z^n$, then $p$ contains a
regular degree-$n$ cyclic subbiset.

The graph of bisets decompositions that we shall consider essentially
attempt to describe bisets of Thurston maps in terms of cyclic bisets.

\subsection{(Graphs of) bisets from Hubbard trees}\label{ss:hubbardtrees}
Let us consider the (angled) Hubbard tree $p\colon \HT\righttoleftarrow$ of
a complex polynomial $p$. We will now apply Van Kampen's theorem to
$p\colon \HT\righttoleftarrow$ to decompose $B(p)$ as a graph of bisets
${}_\gf\mathfrak T_\gf$ as in Example~\ref{exm:Basil}.

Let $\HT^1$ be the preimage of $\HT$ under $p\colon\C\selfmap$. We
note that $\HT^1$ is easily reconstructable from the data
$p\colon\HT\selfmap$ and $\sphericalangle$ and $\deg_v(p)$ for all
$v\in \HT$. The angled structure of $\HT$ lifts via
$p\colon \HT^1\to\HT$ to an angled structure on $\HT^1$. We denote by
$\iota\colon\HT^1\to\HT$ the natural retraction of $\HT^1$ into its
subtree. We write $\lambda(z)=v$ if an object $z\in \HT^1$ retracts
into a vertex $v\in \HT$ and we write $\lambda(z)=e$ if an object
$z\in \HT^1$ retracts into a subset of an edge $e\in \HT$ but
$\iota(z)$ is not a vertex of $\HT$. This defines a graph morphism
$\lambda\colon \HT^1\to\HT$ between undirected graphs,
see\S\ref{ss:examplesgfgps} such that the images of adjacent objects
are adjacent or equal objects.

Let $X$ be the space obtained from $\HT$ by blowing up each vertex $v\in
\HT$ into a closed unit disc $D_v$ with $\partial D_v\cong \R/\Z$ and by
blowing down each edge $e$, say adjacent to vertices $v$ and $w$, into a
point $D_e$ with $\{D_e\}=D_v\cap D_w$ such that for every pair of edges
$e_1,e_2$ adjacent to a common vertex $v$ we have
\[\sphericalangle(e_1,e_2)=D_{e_1}-D_{e_2}\in \R/\Z \cong \partial D_v .\]
For every vertex $v\in \HT$ put an orbifold point of order $\ord(v)$ at the
center of $D_v$, so that the fundamental group of $D_v$ is a cyclic group
of order $\ord(v)$.

Similarly, let $X^1$ be the space obtained from $\HT^1$ by blowing up each
vertex $v\in \HT^1$ into a closed disc $D_v$ with orbifold point of order
$\ord^1(v)$ at the center of $D_v$ and by blowing down each edge $e$ into a
point $D_e$ satisfying same properties as above.

Then $p\colon\HT^1\to\HT$ naturally induces a covering map $p\colon X^1\to
X$ specified so that all maps between unit discs are of the form $z\to
z^d$. Furthermore, we have a natural retraction $\iota\colon X^1\to X$
satisfying $\iota(D_z)\subset D_{\lambda(z)}$ for every object $z\in
\HT^1$. Applying Van Kampen's theorem Theorem~\ref{thm:vankampenbis} to the
covering pair $(X^1,p, \iota)$ subject to covers $\{D_z\}_{z\in \HT^1},
\{D_z\}_{z\in \HT}, p, \lambda\colon \HT^1\to\HT$ we get the graph of
bisets ${}_\gf{\mathfrak T}_{\gf}$.

This graph of bisets can in fact also directly be described out of the
Hubbard tree data. We present below an algorithm that computes
${}_\gf{\mathfrak T}_{\gf}$.

We say that a vertex $v\in \HT^1$ is \emph{essential} if it is the image of
a vertex under the embedding $\HT\hookrightarrow \HT^1$. By definition,
every vertex $v\in \HT$ has a unique essential preimage under $\lambda$. We
say a vertex $v\in \HT^1$ is \emph{critical} if $\ord(v)>1$. Observe that
if $v$ is a critical but non-essential, then $\lambda(v)$ is an edge.

On the level of graphs, $\gf \overset\lambda\leftarrow \mathfrak
T\overset\rho\rightarrow\mathfrak Y$ is the barycentric subdivision of
$\HT\overset\lambda\leftarrow \HT^1\overset p\rightarrow \HT$. For
convenience let us write $\ord(e)=1$ for every edge $e\in \HT$. For every
object $z\in \HT$ set
\[G_z\coloneqq \Z/\ord(z).
\]
In particular, $G_z$ is a non-trivial group if and only if $z$ is an
essential vertex. This constructs the graph of groups $\gf$. The
fundamental group of $\gf$ is isomorphic to a free product of $G_v$ over
all essential vertices $v\in \HT$. For every $z\in \HT^1$ set
\begin{equation}
\label{eq:HT:VertBis}B_z \coloneqq \big(\tfrac{1}{\deg_z(p)}\Z \big)/\ord(\lambda(v))\end{equation}
as a set with $G_{\lambda(z)}$-$G_{p(z)}$ actions given by
\begin{equation}\label{eq:HT:VertBis:Act}
  m\cdot b\cdot n=\begin{cases} m+b+\frac{n}{\deg_z(p)} & \text{if $z$ is an essential or a critical vertex},\\ m+b &\text{otherwise}. 
\end{cases}
\end{equation}

It remains to specify a congruence from $B_e$ into $B_v$ for every edge
$e\in \HT^1$ adjacent to $v\in \HT^1$. Suppose first that $v$ is a
non-essential non-critical vertex. Then $\lambda(e)=\lambda(v)$ and we
define $B_e\to B_v$ to be the natural bijection coming
from~\eqref{eq:HT:VertBis}; this is a well defined congruence between
bisets because the right actions are trivial.

Let $e\in \HT^1$ be an edge adjacent to either an essential or a critical
vertex $v\in \HT^1$. Then the congruence $B_e\to B_v$ is given by
\[b\to b+\lfloor D_e\rfloor
\]
with $D_e$ treated as an element of $\R/\Z\cong \partial D_v$ and $\lfloor
D_e\rfloor$ means ``round $D_e$ down to an element in $B_v$''.

\begin{thm}\label{thm:GrBsOutOfHT}
  The fundamental group of $\gf$ is isomorphic to $G_p$~\eqref{eq:G_p}. The
  fundamental biset of ${}_\gf{\mathfrak T}_{\gf}$ constructed as above is
  isomorphic to $B(p)$. All bisets $B_z, z\in \mathfrak T$ are cyclic.
\end{thm}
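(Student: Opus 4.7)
The plan is to apply Van Kampen's theorem for correspondences (Theorem~\ref{thm:vankampenbis}) to the covering correspondence $p,\iota\colon X^1\rightrightarrows X$ equipped with the covers $\{D_z\}_{z\in\HT^1}$ and $\{D_v\}_{v\in\HT}$. By direct inspection, the graph of bisets produced by Theorem~\ref{thm:vankampenbis} from these data is precisely the ${}_\gf\mathfrak T_\gf$ assembled above: the combinatorics of discs and edge-points reproduces the barycentric subdivision of $\HT^1\to\HT$, and each vertex biset is the biset of the restriction $p\colon D_z\to D_{p(z)}$ pulled back along $\iota$ to $D_{\lambda(z)}$; this restriction is a degree-$\deg_z(p)$ orbifold cover of discs composed with a rotation by the attaching angle, which unpacks to~\eqref{eq:HT:VertBis}--\eqref{eq:HT:VertBis:Act}, while the edge congruences arise from how boundary points $D_e\in\partial D_v$ are identified (rounded to the nearest basepoint).

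To prove $\pi_1(\gf)\cong G_p$, I would apply the ordinary Van Kampen Theorem~\ref{thm:vankampen} to the cover $\{D_v\}_{v\in\HT}$ of $X$: each $D_v$ is a disc whose only orbispace contribution is the cyclic group $\Z/\ord(v)$ at its center, and edges correspond to points $D_e$ with trivial $\pi_1$, so $\pi_1(\gf)=\pi_1(X)$ is the free product of the $\Z/\ord(v)$ taken over essential vertices $v$. On the other side, from~\eqref{eq:G_p} and the fact that $\infty\in P(p)$ is a critical fixed point with $\ord(\infty)=\infty$ (so $\gamma_\infty^{\ord(\infty)}=1$ is vacuous), the surface relation $\prod_{z\in P(p)}\gamma_z=1$ eliminates $\gamma_\infty$ and presents $G_p$ as the free product of $\Z/\ord(z)$ over $z\in P(p)\cap\C$. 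The essential vertices of $\HT$ coincide with $P(p)\cap\C$, so the two groups agree.

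For the biset isomorphism, Theorem~\ref{thm:vankampenbis} yields $\pi_1({}_\gf\mathfrak T_\gf,*,*)\cong B(X^1,p,\iota)$, and I would identify the right-hand side with $B(p)$ through orbispace homotopy equivalences: $X$ retracts onto an orbispace neighborhood of $\HT$ in $(S^2,\ord)$, with the orbifold points at the centers of the $D_v$ matching those of $(S^2,\ord)$ at $P(p)\cap\C$ and the unbounded complementary region corresponding to the puncture at $\infty$; an analogous retraction works for $X^1$ inside $(S^2,\ord^1)$. These retractions intertwine $p$ with the covering $p\colon(S^2,\ord^1)\to(S^2,\ord)$ and $\iota$ with the inclusion, so by Lemma~\ref{lm:BisMorph} the induced biset congruence is an isomorphism $B(X^1,p,\iota)\cong B(p)$. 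The main obstacle will be verifying that a single pair of equivalences can be chosen to intertwine $p$ and $\iota$ simultaneously; this is possible because the blow-up of $X^1$ from $\HT^1$ is $p$-equivariant by design, and $\iota$ is modeled on the retraction $\HT^1\to\HT$.

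Finally, the third claim follows directly from~\eqref{eq:HT:VertBis}--\eqref{eq:HT:VertBis:Act}: the groups $G_{\lambda(z)}\cong\Z/\ord(\lambda(z))$ and $G_{p(z)}\cong\Z/\ord(p(z))$ are cyclic by construction, so it suffices to check transitivity of $B_z$. When $z$ is an essential or critical vertex, starting from $0\in B_z$ the left action sweeps through every integer coset of $(\tfrac{1}{\deg_z(p)}\Z)/\ord(\lambda(z))$ while the right action adds $\tfrac{1}{\deg_z(p)}$, so their joint orbit is all of $B_z$. Otherwise $\deg_z(p)=1$, so $B_z$ is the cyclic group $\Z/\ord(\lambda(z))$ acted on regularly from the left and trivially from the right, which is again transitive. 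Hence every $B_z$ is a transitive biset between cyclic groups, i.e., a cyclic biset.
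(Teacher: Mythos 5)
Your proposal follows essentially the same route as the paper's proof: both apply Theorem~\ref{thm:vankampenbis} to the blown-up correspondence $(X^1,p,\iota)$ and transfer the result to $B(p)$ via a dynamics-compatible homotopy equivalence with $p$ acting on the orbifold plane. The one remark worth making is that the bulk of the paper's written argument is the coordinate computation (boundary circles $\partial D_v\cong\R/\Z$, the affine maps $x\mapsto\deg_v(p)x+a_v$, and the $\lfloor D_e\rfloor$ congruences) verifying that the combinatorial recipe~\eqref{eq:HT:VertBis}--\eqref{eq:HT:VertBis:Act} really is the Van Kampen graph of bisets --- exactly the step you dispatch ``by direct inspection'' --- while conversely your explicit free-product identification of $\pi_1(\gf)$ with $G_p$ and your check that each $B_z$ is transitive are correct but left implicit in the paper.
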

\begin{proof}
  By construction, the covering pair $((\C,\ord^1), p,\one)$ is
  homotopic to the covering pair $(X^1,p,\iota)$ along a homotopy path of
  covering pairs. Therefore, by Theorem~\ref{thm:vankampen} the fundamental
  group of $\gf$ is isomorphic to $G_p$; and by
  Theorem~\ref{thm:vankampenbis} the fundamental biset of
  $\pi_1({}_\gf{\mathfrak T}_{\gf})$ is isomorphic to $B(p)$. Let us verify
  that the above algorithm computes ${}_\gf{\mathfrak T}_{\gf}$.

  Let us assume that $0\in \R/\Z\cong \partial D_v$ is the basepoint of
  $\pi_1(D_v)$ for every vertex $v\in \HT$. For every edge $e$ adjacent to
  $v$ set $[0,D_e]\subset \R/\Z\cong \partial D_v$ to be the path
  connecting the basepoint of $D_v$ to the basepoint of $D_e$ (recall that
  $D_e$ is a singleton).

  For every vertex $v\in \HT^1$ there is an $a_v\in \R/\Z$ such that the
  map $p\colon\partial D_v\to\partial D_{p(v)}$ is given by $x\to
  \deg_v(p) x+ a_v$ in the $\R/\Z$-coordinates. Moreover, for every
  essential vertex $v\in \HT^1$ we may choose coordinates $\partial
  D_{v}\cong \R/\Z$ such that the map $\iota\colon\partial D_v\to\partial
  D_{\lambda(v)}$ is the identity in $\R/\Z$-coordinates.

  Clearly, $G_z$ is isomorphic to $\Z/\ord(z)$ for every object $z\in \HT$.

  For every essential vertex $v\in \HT^1$ the biset $B_v$ is computed
  by~\eqref{eq:Bfi:lmm:FibrCorr} as
  \[B_v=\{b\colon[0,1]\to\R/\Z\cong \partial D_{\lambda(v)}\mid b(0)=0 ,
  \deg_v(p)b(1)+a_v=0\}/{\sim}
  \]
  with $G_{\lambda(v)}$ actions are given by pre-concatenation and by
  post-concatenation via
  lifting. Writing
  \[\frac{t}{\deg_v(p)}\coloneqq\left[0,\frac{t-a_v}{\deg_v(p)}\right]\subset
  \R/\Z\cong \partial D_{\lambda (v)}
  \]
  we see that $B_v$ takes the form given in~\eqref{eq:HT:VertBis}
  and~\eqref{eq:HT:VertBis:Act}. The case of non-essential vertices is
  immediate because the right action is trivial.

  If $e\in \HT^1$ is an edge adjacent to a non-essential vertex $v\in
  \HT^1$, then the congruence $B_e\to B_v$
  respects~\eqref{eq:HT:VertBis}. Suppose $e\in \HT^1$ is an edge adjacent
  to an essential vertex $v\in \HT^1$. If $\lambda(e)$ is an edge, then
  $B_e\cong \{D_e\}$ embeds into $B_v$ as $ [0,D_{e} ]\subset \R/\Z
  \cong\partial D_{v}$ followed by the lift of $[\deg_{v}(p) D_{e}+a_v,
  0]\subset \partial D_{p(v)}$ under $p\colon \partial D_v\to\partial
  D_{p(v)}$ starting at $D_{e}\in \partial D_v$. This gives an element
  $\lfloor D_e\rfloor \in B_v$ with $B_v$ viewed in the
  form~\eqref{eq:HT:VertBis}.

  If $\lambda(e)$ is a vertex, then $\lambda(e)=v$ and the $0$-element of
  $B_e$ is $[0,D_{e}]\subset \R/\Z \cong D_v$. Again, this element is
  mapped to $\lfloor D_e \rfloor$ via the congruence $B_e\to B_v$.
\end{proof}

\subsection{(Graphs of) bisets from Hubbard complexes}\label{ss:hubbardcompl}
Consider now a Hubbard complex $p, i\colon H^1\rightrightarrows H^0$. As in
the case of Hubbard trees~\S\ref{ss:hubbardtrees} we blow up each vertex
$v\in H^1\sqcup H^0$ into a closed disc $D_v$ with an orbifold point of
order $\ord(v)$ at the center of $D_v$, and we blow down each edge $e\in
H^1\sqcup H^0$ into a point $D_e$. Observe that all $D_v$ have finite
fundamental groups. Then $p, i\colon H^1\rightrightarrows H^0$ naturally
induces a correspondence $p, i\colon Y^1\rightrightarrows Y^0$ normalized
so that all maps between unit discs are of the form $z\to z^d$. As
in~\S\ref{ss:hubbardtrees} we specify a graph morphism $\lambda\colon
H^1\to H^0$ by $i(D_z)\subset D_{\lambda(z)}$ for every object $z\in
H^1$. Applying Van Kampen's Theorem~\ref{thm:vankampenbis} to the covering
pair $(Y^1,p, i)$ subject to covers $\{D_z\}_{z\in H^1}, \{D_z\}_{z\in
  H^0}$ with maps $p, \lambda\colon H^1\to H^0$ we get a graph of bisets
${}_{\mathfrak Y}{\mathfrak H}_{\mathfrak Y}$. We remark that
${}_{\mathfrak Y}{\mathfrak H}_{\mathfrak Y}$ could be explicitly computed
out of $p,i\colon H^1\to H^0$ in the same way as ${}_{\gf}{\mathfrak
  T}_{\gf}$ was computed out of the Hubbard tree \S\ref{ss:hubbardtrees}.

\begin{thm}\label{thm:GrBsOutOfHC}
  The fundamental group of $\mathfrak Y$ is isomorphic to
  $G_p$~\eqref{eq:G_p}. The fundamental biset of ${}_{\mathfrak
    Y}{\mathfrak H}_{\mathfrak Y}$ is isomorphic to $B(p)$.

  All groups $G_y$, $y\in \mathfrak Y$ and all bisets $B_z, z\in \mathfrak
  H$ are finite.
\end{thm}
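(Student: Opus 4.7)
The proof would closely mirror that of Theorem~\ref{thm:GrBsOutOfHT}; only the treatment of Fatou components requires extra care. The strategy is to apply Theorem~\ref{thm:vankampen} to identify $\pi_1(\mathfrak Y,*)$ with $G_p$, and then Theorem~\ref{thm:vankampenbis} to the covering correspondence $(Y^1,p,i)$ to identify $\pi_1({}_{\mathfrak Y}{\mathfrak H}_{\mathfrak Y})$ with $B(p)$.

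First I would check that $\{D_v\}_{v\in H^0}$ and $\{D_z\}_{z\in H^1}$ are finite $1$-dimensional covers in the sense of Definition~\ref{defn:1dimcovers}: each $D_v$ is a closed disk with at most one finite-order orbifold point, intersections between neighbouring $D_v$'s are single points $D_e$, and the combinatorics of inclusions is precisely encoded in the adjacency of $H^0$ (and similarly for $H^1$). Next I would verify that these covers are compatible with both $p$ and $i$, since $p(D_z)\subseteq D_{p(z)}$ by normalization of $p$ to the model $z\mapsto z^d$ between discs, and $i(D_z)\subseteq D_{\lambda(z)}$ by construction of $\lambda$. Finally, I would exhibit a homotopy equivalence of orbispaces between $(S^2,\ord)$ and $(Y^0,\text{orbispace structure})$, and between the Thurston correspondence of $p$ and the correspondence $(Y^1,p,i)$; this is essentially a deformation retract, since each complementary region of $H^0$ in $S^2$ is a disk and each complementary region of $H^1$ in $S^2$ is a disk that maps to a disk under $p$. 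Once these equivalences are in place, Theorems~\ref{thm:vankampen} and~\ref{thm:vankampenbis} yield the first two claims directly.

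For the finiteness claim, the key observation is that in the Hubbard complex setting, every orbifold point of $(Y^0,\ord)$ has \emph{finite} order: periodic cycles in $P(p)$ containing a critical point sit in Fatou components whose boundaries $\partial D$ have been included in $H^0$, so the relevant $\ord(v)=\infty$ points have been ``opened up'' and never appear as interiors of disks $D_v$. Therefore every group $G_v=\Z/\ord(v)$ is a finite cyclic group (possibly trivial), and likewise every edge group is trivial or finite cyclic. For bisets, each $B_z$ is computed as in~\eqref{eq:Bfi:lmm:FibrCorr} from a map $z\mapsto z^d$ between discs, and has cardinality $\ord(\lambda(z))$, which is finite by the same reasoning.

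The main obstacle I anticipate is carefully setting up the orbispace-level homotopy equivalence between $(Y^0,\ord)$ and the orbispace $(S^2,\ord)$, and the corresponding equivalence of correspondences between $(Y^1,p,i)$ and $(S^2\setminus p^{-1}(P(p)),\,p,\,\text{inclusion})$. In the Hubbard tree case this was immediate because the tree collapses the Fatou structure to points; here the boundaries $\partial D$ of periodic Fatou components are retained, so one must verify that their inclusion in $H^0$, together with blow-ups $D_v$ at their vertices, still gives an orbispace deformation retract of $S^2\setminus(\text{interiors of relevant Fatou components})$, and that this latter space is orbifold-equivalent to $(S^2,\ord)$. Once this is checked, the rest of the proof reduces to the same kind of explicit computation with angle structures and degree data that was carried out in the proof of Theorem~\ref{thm:GrBsOutOfHT}.
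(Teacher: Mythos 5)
Your proposal follows exactly the paper's route: the published proof of Theorem~\ref{thm:GrBsOutOfHC} consists of the single remark that it is the same as the proof of Theorem~\ref{thm:GrBsOutOfHT} with the finiteness of the $G_y$ and $B_z$ being straightforward, and your elaboration of the van Kampen argument and of why every orbifold point occurring in $H^0$ and $H^1$ has finite order (the $\ord=\infty$ points lie in periodic Fatou components, whose boundary circles rather than centers are retained) is precisely the intended content. Only a minor slip: by the analogue of~\eqref{eq:HT:VertBis} the cardinality of $B_z$ is $\deg_z(p)\cdot\ord(\lambda(z))$ rather than $\ord(\lambda(z))$, which is of course still finite.
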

\begin{proof}
  The proof is the same as of Theorem~\ref{thm:GrBsOutOfHT}. The claim
  about finiteness of $G_y$ and $B_z$ is straightforward.
\end{proof}

\subsection{(Graphs of) bisets from subdivision rules}\label{ss:SubdRul}
Let us now generalize the setup of~\S\ref{ss:hubbardtrees}. Suppose that
$p\colon S^2\righttoleftarrow$ is a \emph{topological} Thurston map. By a
\emph{subdivision rule} or a ``puzzle partition'' we mean graphs
$G^0\subset G^1\subset S^2$ such that
\begin{itemize}
\item $G^1= p^{-1} (G^0)$;
\item there is a retraction $\iota\colon G^1\to G^0$, with
  $\iota\restrict{G_0}=\one$ such that $G^1$ is homotopic in $S^2$ to
  $\iota(G^1)$ rel the post-critical set;
\item each connected component $S^2\setminus G^1$ contains at most one
  post-critical point.
\end{itemize}
The last condition guarantees that $p,\iota\colon G^1 \rightrightarrows
G^0$ captures all combinatorial information of $p\colon
S^2\righttoleftarrow$.

Assume, furthermore, that $\iota$ can be chosen in such a way that it
maps vertices and edges of $G^1$ into vertices and edges of $G^1$.  We
may construct a graph of bisets
${}_{\mathfrak Z}{\mathfrak G}_{\mathfrak Z}$ associated with
$p,\iota\colon G^1 \rightrightarrows G^0$. As
in~\S\ref{ss:hubbardtrees} we blow up each vertex $v\in G^1\sqcup G^0$
into a closed disc $D_v$ with orbifold point of order $\ord(v)$ at the
center of $D_v$ and we blow down each edge $e\in G^1\sqcup G^0$ into a
point $D_e$. Then $p, \iota \colon G^1\rightrightarrows G^0$ naturally
descents into a correspondence
$p, \iota \colon Z^1\rightrightarrows Z^0$ normalized so that all maps
between unit discs are of the form $z\to z^d$. As
in~\S\ref{ss:hubbardtrees} we specify a graph morphism
$\lambda\colon G^1\to G^0$ by $\iota(D_z)\subset D_{\lambda(z)}$ for
every object $z\in G^1$. Applying Van Kampen's
Theorem~\ref{thm:vankampenbis} to the covering pair $(Z^1,p, i)$
subject to covers $\{D_z\}_{z\in G^1}, \{D_z\}_{z\in G^0}$ with
$p, \lambda\colon G^1\to G^0$ we get the graph of bisets
${}_{\mathfrak Z}{\mathfrak G}_{\mathfrak Z}$.

\begin{thm}\label{thm:GrBsOutOfSR}
  There is a natural epimorphism $\phi\colon \pi_1(\mathfrak Z,*)\to
  G_p$~\eqref{eq:G_p} and there is a natural surjective map $\beta\colon
  \pi_1({}_{\mathfrak Z}{\mathfrak G}_{\mathfrak Z},*)\to B(p)$ such
  that
  \[(\phi, \beta)\colon \pi_1({}_{\mathfrak Z}{\mathfrak G}_{\mathfrak
    Z})\to {}_{G_p}B(p)_{G_p}
  \]
  is a semi-conjugacy respecting combinatorics (see~\S\ref{ss:combequiv}).
\end{thm}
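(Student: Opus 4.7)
The plan is to mimic the proofs of Theorems~\ref{thm:GrBsOutOfHT} and~\ref{thm:GrBsOutOfHC}, while accepting that the resulting maps may have a kernel detectable only after passage to the combinatorial quotients of~\S\ref{ss:combequiv}. First I would construct the natural maps $\phi,\beta$. Each closed disc $D_v$ in $Z^0$ (resp.\ in $Z^1$) embeds homeomorphically into $(S^2,\ord)$ (resp.\ $(S^2,\ord^1)$) as a small orbifold neighbourhood of $v$; the hypothesis that $G^1$ is homotopic in $S^2$ to $\iota(G^1)=G^0$ rel $P(p)$ means that, under these embeddings, $\iota$ is homotopic to the inclusion used in the definition of $B(p)$. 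Together with $p$ these data give a morphism of correspondences from $(Z^1,p,\iota)$ to the correspondence $(S^2,p,\mathrm{id})$ defining $B(p)$; Theorem~\ref{thm:vankampenbis} and Lemma~\ref{lm:BisMorph} then produce the pair $(\phi,\beta)$, and the semiconjugacy identity follows formally.

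Second, I would verify surjectivity. The group $G_p$ is generated by the peripheral loops $\gamma_v$ around $v\in P(p)$ together with classes from $\pi_1(S^2\setminus P(p),*)$. The puzzle-piece hypothesis on $G^1$ forces every point of $P(p)$ to be a vertex of $G^0$, so each $\gamma_v$ is realised by $\partial D_v$ in $Z^0$. An arbitrary loop in $S^2\setminus P(p)$ can be placed in general position with respect to $G^1$ and then deformed, using the puzzle-piece hypothesis, into a neighbourhood of $G^0$; hence it lifts to $Z^0$. Surjectivity of $\beta$ follows, since both bisets are left-free and every preimage of $*$ under $p$ lies in some disc $D_w$ with $w\in V(G^1)$, yielding a representative in $\pi_1(\mathfrak G,*)$ via the canonical form of Corollary~\ref{cor:NormalForm}.

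The essential content of the theorem is that $(\phi,\beta)$ respects combinatorics. For this I would exhibit an equivariant bijection between the orbit tree $T_{\mathfrak G}$ attached to $\pi_1(\mathfrak G,*)$ (as in~\S\ref{ss:combequiv}) and the orbit tree $T_{B(p)}$ attached to $B(p)$. Because $p\colon Z^1\to Z^0$ is a bona fide covering of the same degree as $p\colon S^2\to S^2$ and $G^1=p^{-1}(G^0)$, Corollary~\ref{cor:NormalForm} identifies the level-$n$ vertices of $T_{\mathfrak G}$ with a set that projects bijectively, via the natural map $Z^0\to S^2$ iterated through the covering, onto $p^{-n}(*)\subseteq S^2$; this bijection intertwines the two monodromy actions once related through $\phi$. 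Consequently the kernel of the $\pi_1(\mathfrak Z,*)$-action on $T_{\mathfrak G}$ is the preimage under $\phi$ of the kernel of the $G_p$-action on $T_{B(p)}$, so $\phi$ descends to an isomorphism on faithful quotients; the parallel statement for $\beta$ then produces the required conjugacy.

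The main obstacle is the equivariant identification of orbit trees in the third step. The delicate point is that $\iota$ is only homotopic, rather than equal, to the relevant inclusion, so lifts through the iterated correspondence $(p^{(n)},i^{(n)})$ of~\S\ref{ss:PartSelfCov} must be tracked up to homotopy in $p^{-1}(*)$ using Lemma~\ref{lmm:FibrCorr}. One must further check that the ``extra'' generators of $\pi_1(\mathfrak Z,*)$, arising from non-tree edges of $G^0$ that do not correspond to genuine loops in $S^2\setminus P(p)$, act trivially on $T_{\mathfrak G}$ and hence belong to the action-kernel; it is here, and only here, that the puzzle-piece hypothesis on components of $S^2\setminus G^1$ (rather than merely of $S^2\setminus G^0$) is genuinely needed.
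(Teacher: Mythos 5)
Your proposal follows essentially the same route as the paper: the embedding of $Z^1\rightrightarrows Z^0$ into $S^2$ yields the semiconjugacy $(\phi,\beta)$, surjectivity comes from $G^0$ separating the post-critical points, and ``respects combinatorics'' comes from identifying the monodromy actions on the preimage trees, the key input being that $G^1=p^{-1}(G^0)$ (the paper phrases this as: a loop of $Z^0$ that dies in $G_p$ has all its $p$-lifts closed, hence lies in the kernel of the action). One small correction: the puzzle-piece hypothesis does not force points of $P(p)$ to be vertices of $G^0$ --- they may sit inside faces --- so in your surjectivity step the peripheral loop $\gamma_v$ should be realised by the boundary of the face of $G^0$ containing $v$ rather than by $\partial D_v$.
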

\begin{proof}
  The space $Z^0$ has a natural embedding into $S^2$, unique up to homotopy
  rel the post-critical set, such that $Z^0$ separates post-critical
  points. Therefore, there is a natural epimorphism
  $\phi\colon\pi_1(\mathfrak G)\to G_p$.

  The embedding of $p,\iota\colon Z^1\rightrightarrows Z^0$ into $S^2$
  defines a semi-conjugacy $(\phi, \beta)\colon B(Z^1,p,\iota)\to
  B(p)$. Observe that if a loop $\gamma\in \pi_1(Z^0)$ is trivial in $G_p$,
  then all lifts of $\gamma$ via $p\colon Z^1\to Z^0$ are loops. Therefore,
  $\gamma$ has trivial monodromy action, so $(\phi, \beta)$ respects
  combinatorics.
\end{proof}

\subsection{Tuning and mating}
The \emph{tuning} operation takes as input a topological polynomial
$p$, a periodic cycle $z_0,z_1,\dots,z_n=z_0$ for $p$, and $n$
topological polynomials $q_0,\dots,q_{n-1}$ with
$\deg(q_i)=\deg_{z_i}(p)$ for all $i=0,\dots,n-1$; and produces a
topological polynomial of degree $\deg(p)$.

For each $i$, we give ourselves a set $P_i$ containing the critical values
of $q_{i-1}$ and such that $q_i(P_i)\subset P_{i+1}$. In particular, $P_i$
contains the post-critical set of $q_{i-1}\circ\cdots\circ q_{i+1}\circ
q_i$.

For simplicity, let us assume that the cycle $z_0,z_1,\dots,z_n=z_0$ has no
iterated critical preimages outside of the cycle. This condition could be lifted,
but the construction would require slight modifications.

Up to isotopy, we may assume that around each $z_i$ there is a small closed
topological disc $F_i$ such that $p$ restricts to a covering map $p\colon
F_i\setminus \{z_i\}\to F_{i+1}\setminus \{z_{i+1}\}$ and such that there
are homeomorphisms (known as ``B\"ottcher co\"ordinates'') $\psi_i\colon
\operatorname{int} (F_i)\to\C$ so that $\psi_{i+1}\circ p\circ\psi_i^{-1}$
is the map $z\mapsto z^{\deg_{z_i}(p)}$. Also, up to isotopy, we may assume
that $q_i(z)=z^{\deg (q_i)} + o(z^{\deg (q_i)})$ so that the extension of
$q_i$ to the circle at infinity coincides with the extension of $z\to
z^{\deg_{z_i} (p)} $. We consider the following \emph{tuning} map
$t(p,\{z_i\},\{q_i\})$:
\[t(z)=\begin{cases}
  \psi_{i+1}^{-1}(q_i(\psi_i(z))) & \text{ if $z\in \operatorname{int}(F_i)$ for some $i$},\\
  p(z) & \text{ otherwise}.
\end{cases}\]

Note that if $q_i=z^{\deg_{z_i}(p)}$ for all $i$ then $t$ is isotopic to
$p$.

\begin{thm}\label{thm:tuning}
  Suppose that $t(p,\{z_i\},\{q_i\})$ is the tuning of a complex
  post-critically finite polynomial $p$ with polynomials
  $q_i\colon\C\setminus P_i\dashrightarrow \C\setminus P_{i+1}$ as
  above. Suppose also $\ord(z_i)=\infty$ for all $i$. Let ${}_\gf{\mathfrak
    T}_{\gf}$ denote the graph of bisets constructed out of the Hubbard
  tree of $p$, see Theorem~\ref{thm:GrBsOutOfHT}. Set
  $G_i=\pi_1(\C\setminus P_i,*_i)$ for a basepoint $*_i\gg0$, and let
  $\phi_i\colon G_{z_i}\to G_i$ be the monomorphism defined by identifying
  $\operatorname{int}(F_i)$ with $\C$ via $\psi_i$, so that the circle
  $\partial F_i$ corresponds to the circle at infinity in $\C\setminus
  P_i$. Let $\beta_i\colon B_{z_i}\to B(q_i)$ be the
  $(\phi_i,\phi_{i+1})$-congruence defined by viewing $p\colon\partial
  F_i\to\partial F_{i+1}$ as a map at infinity of $q_i\colon \C\setminus
  P_i \dashrightarrow \C\setminus P_{i+1}$.
  
  For all $i$, replace the cyclic group $G_{z_i}$ at $z_i\in\mathfrak X$ by
  $G_i\coloneqq\pi_1(\C\setminus P_i)$, and replace the regular cyclic biset
  $B_{z_i}$ at $z_i\in\mathfrak B$ by the $G_{i}$-$G_{i+1}$-biset
  $B(q_i)$. Modify accordingly all non-essential bisets: replace all $B_z$
  with $z\in \lambda^{-1}(z_i)\setminus \{z_i\}$ by $G_i \otimes_{G_{z_i}}
  B_z$ and declare for all $z\in p^{-1}(z_{i+1})\setminus \{z_i\}$ the
  right $G_{i+1}$-action on $B_z$ to be trivial. Congruences of edge bisets
  into $B(q_i)$ are given through the congruence $\beta_i\colon B_{z_i}\to
  B(q_i)$. This defines a new graph of bisets $\mathfrak B$ with same
  underlying graph as $\mathfrak T$.

  Then $B(t)$ and $\pi_1(\mathfrak B)$ are isomorphic.
\end{thm}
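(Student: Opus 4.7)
The plan is to apply the van Kampen Theorem~\ref{thm:vankampenbis} for correspondences to $t$ with a $1$-dimensional cover of $S^2$ that interpolates between the orbispace model of the Hubbard tree of $p$ outside $\bigcup_i F_i$ and a model of $q_i\colon \C\setminus P_i\dashrightarrow \C\setminus P_{i+1}$ inside each $F_i$, and then to identify the resulting graph of bisets with $\mathfrak B$.

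First I would construct the cover. Outside the interiors of the $F_i$, I would use the discs $\{D_v\}$ from the Hubbard-tree construction of \S\ref{ss:hubbardtrees}. For each cycle point $z_i$, rather than blowing up $z_i$ into a single orbifold disc $D_{z_i}$ carrying the cyclic group $G_{z_i}$, I would keep $\operatorname{int}(F_i)$ intact, identified via $\psi_i$ with $\C$, and equip it with the post-critical set $P_i$ of $q_i$; its fundamental group is $G_i = \pi_1(\C\setminus P_i,*_i)$. The boundary circle $\partial F_i$, identified via $\psi_i$ with the circle at infinity of this copy of $\C$, realizes the embedding $\phi_i\colon G_{z_i}\hookrightarrow G_i$ on the level of loops around $z_i$ versus loops around infinity.

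Next I would verify that this cover is compatible with the correspondence $(t,\iota)\colon Z\rightrightarrows S^2$ in the sense of \S\ref{ss:VanKampCorr}. Outside the $F_i$ we have $t=p$, so compatibility is inherited from Theorem~\ref{thm:GrBsOutOfHT}. Inside each $F_i$ the tuning formula gives $\psi_{i+1}\circ t\circ \psi_i^{-1}=q_i$, so the restriction of $(t,\iota)$ to $F_i$ is conjugate, via the $\psi_j$, to the correspondence defining $q_i$. Then Theorem~\ref{thm:vankampenbis} yields a graph of bisets $\mathfrak B'$ for $t$: by construction, its underlying graph is that of $\mathfrak T$, vertex and edge data away from the cycle and its preimages are unchanged, the vertex group at $z_i$ is $G_i$, the vertex biset at $z_i$ is (canonically isomorphic to) $B(q_i)$, and the non-essential bisets over $\lambda^{-1}(z_i)\setminus\{z_i\}$, respectively $p^{-1}(z_{i+1})\setminus\{z_i\}$, are automatically extended to $G_i$-sets on the left, respectively equipped with a trivial right $G_{i+1}$-action, exactly as prescribed for $\mathfrak B$.

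The main obstacle is to check that the edge congruences of $\mathfrak B'$ agree with those of $\mathfrak B$. For each edge $e$ of $\mathfrak T$ incident to $z_i$, the algorithm of \S\ref{ss:hubbardtrees} sends its generator to the element $\lfloor D_e\rfloor\in B_{z_i}$ determined by a marked point $D_e\in\partial D_{z_i}$. Under the identification $\partial D_{z_i}\cong\partial F_i$ sitting at infinity of the $q_i$-sphere, the same marked point produces the image of $e$ in $\mathfrak B'$; since $q_i$ is normalized so that $q_i(z)=z^{\deg q_i}+o(z^{\deg q_i})$ near infinity, its dynamics on this circle is $z\mapsto z^{\deg_{z_i}(p)}$, which is precisely the local dynamics of $p$ at $z_i$, and this is exactly what is encoded by $\beta_i$. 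Hence the edge map $B_e\to B(q_i)$ produced by van Kampen factors as $\beta_i$ composed with the original embedding $B_e\to B_{z_i}$, matching the edge congruence of $\mathfrak B$. Independence of the basepoints and connecting paths is guaranteed by Lemma~\ref{lem:gfCongClass}, so $\mathfrak B'$ and $\mathfrak B$ are congruent, and Theorem~\ref{thm:vankampenbis} then gives $B(t)\cong\pi_1(\mathfrak B',\dagger,*)\cong\pi_1(\mathfrak B,\dagger,*)$.
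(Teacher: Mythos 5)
Your proposal is correct and follows essentially the same route as the paper: both replace the disc model at each cycle point $z_i$ in the Hubbard-tree correspondence by a copy of $q_i\colon\C\setminus P_i\dashrightarrow\C\setminus P_{i+1}$ glued along $\partial F_i$, observe that the resulting correspondence is isotopic to $t$, and apply Theorem~\ref{thm:vankampenbis}. Your write-up is merely more explicit about the compatibility of the cover and the identification of the edge congruences with $\beta_i$, details the paper compresses into the phrase ``appropriately glued with $\partial D_i$, $\partial D_{i+1}$''.
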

\begin{proof}
  Let $p,\iota \colon X^1\rightrightarrows X$ be the correspondence
  from~\S\ref{ss:hubbardtrees} associated with the Hubbard tree of $p$. Let
  $t,\iota\colon\widetilde X^1\rightrightarrows \widetilde X$ be the
  correspondence obtained by replacing each $p\colon \operatorname{int}
  (D_{z_i})\to\operatorname{int} (D_{z_{i+1}})$ with $q_i\colon
  \C\setminus P_i \dashrightarrow \C\setminus P_{i+1}$ appropriately glued
  with $\partial D_i$, $\partial D_{i+1}$. Since $t$ is isotopic to
  $t,\iota\colon\widetilde X^1\rightrightarrows \widetilde X$, the biset of
  $t$ is isomorphic to the biset of a covering pair $(\widetilde X^1,
  t,\iota)$ which by Van Kampen's Theorem~\ref{thm:vankampenbis} is
  $\pi_1(\mathfrak B)$.
\end{proof}

This operation may be performed with $p$ a Thurston map, not necessarily a
topological polynomial. As a special case, a topological polynomial has a
critical fixed point $\infty$. The \emph{formal mating} $p\sqcup q$ of two
topological polynomials $p,q$ of same degree is the tuning $t(p,\{\infty\},
q)$. In other words, $p\sqcup q$ is a topological map on a two dimensional
sphere where $p$ acts on the southern hemisphere while $ q$ acts on the
northern hemisphere.

Since the vertex $\infty$ does not belong to the Hubbard tree of $p$,
Theorem~\ref{thm:tuning} cannot apply. We do have a simple description of
the mating in terms of graphs of bisets, though:

\begin{thm}\label{thm:mating}
  Let $p,q$ be two topological polynomials of same degree $d$. Write
  $G_p=\pi_1(\C\setminus P(p),*)$ and $G_{ q}=\pi_1(\C\setminus P(
  q),\dagger)$ with $*,\dagger$ close to $\infty$. Consider the graph of
  groups $\mathfrak X$ with one edge $e$ and two vertices $p, q$. The
  groups at $p, q$ are $G_p,G_{ q}$ respectively; the group at $e$ is $\Z$,
  included in $G_p$ and $G_{ q}$ as the loop around infinity. Consider the
  graph of bisets $\mathfrak B$ with one edge and two vertices. The graph
  maps $\lambda,\rho$ are the identity, the biset $B_e$ on the edge is regular cyclic of degree $d$, and the bisets at the vertices are $B(p),B( q)$
  respectively. The congruences $ B_e\hookrightarrow B(p)$ and $
  B_{\overline e}\hookrightarrow B({ q})$ are defined by viewing $z\to
  z^d\colon S^1\righttoleftarrow$ as a map on the circle at infinity of
  $p\colon \C\setminus P(p)\dashrightarrow \C\setminus P(p) $ and $q\colon
  \C\setminus P(q)\dashrightarrow \C\setminus P(q)$.
  
  Then $\pi_1(\mathfrak B)$ is isomorphic to $B(p\sqcup q)$.
\end{thm}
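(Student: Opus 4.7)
The plan is to realize $\pi_1(\mathfrak B)$ as the fundamental biset of a graph of bisets coming, via Theorem~\ref{thm:vankampenbis}, from a natural $1$-dimensional cover of the sphere on which $p\sqcup q$ acts. Let $S^2=D_p\cup_{\partial} D_q$ be the sphere decomposed as the closed southern disc $D_p$ (on which $p\sqcup q$ restricts to $p$, with post-critical set $P(p)\subset\operatorname{int}(D_p)$) and the closed northern disc $D_q$ (on which it restricts to $q$, with $P(q)\subset\operatorname{int}(D_q)$), glued along the equator $E=\partial D_p=\partial D_q$. On $E$ the mating acts as $z\mapsto z^d$. Choose as the $1$-dimensional cover the two path-connected pieces $X_p\coloneqq D_p\setminus P(p)$ and $X_q\coloneqq D_q\setminus P(q)$, together with a thin annular neighbourhood $X_e$ of $E$ that deformation retracts onto $E\cong S^1$; these satisfy $X_p\cap X_q=X_e$ up to homotopy, so form a $1$-dimensional cover in the sense of Definition~\ref{defn:1dimcovers}. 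The associated graph of groups $\mathfrak X$ has two vertices with groups $G_p,G_q$ (by Theorem~\ref{thm:vankampen}) and one edge with group $\Z$, identified in $G_p,G_q$ as the loop at infinity; this is exactly the graph of groups $\mathfrak X$ of the statement.

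Next I would pull this cover back through the correspondence $(f,i)$ representing the mating, where $f=p\sqcup q$ is defined on $Z\subseteq S^2$ and $i$ is the inclusion. Since $p\sqcup q$ preserves the north/south decomposition setwise (it maps $D_p$ to $D_p$ and $D_q$ to $D_q$) and restricts on a neighbourhood of $E$ to $z\mapsto z^d$, the pullback cover on $Z$ is again of the same combinatorial type: two vertices (the preimages of $X_p$ and $X_q$ inside $Z$) joined by one edge (a thinner annulus around $E$). This makes the cover compatible with $f$ and $i$ in the sense of \S\ref{ss:VanKampCorr} and identifies the graph morphisms $\lambda,\rho$ on the resulting graph of bisets $\mathfrak B'$ with the identity, matching the description in the theorem.

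I then identify, vertex by vertex, the biset at each piece of the cover. At the southern vertex the restricted correspondence is $(D_p\cap Z, p, \text{inclusion})$ which \emph{is} the covering correspondence associated with the topological polynomial $p$, hence its biset is $B(p)$; symmetrically at the northern vertex we obtain $B(q)$. At the edge, the restriction to the annular neighbourhood of $E$ is the correspondence associated with $z\mapsto z^d$ on $S^1$, whose biset is by definition the regular cyclic biset of degree $d$ (cf.\ \S\ref{ss:thurston}). The congruences $B_e\to B(p)$ and $B_{\overline e}\to B(q)$ come from Lemma~\ref{lm:BisMorph} applied to the embeddings of the annular neighbourhood into $X_p$ and $X_q$; these are exactly the maps ``viewing $z\mapsto z^d$ on the circle at infinity'' described in the statement. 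Thus $\mathfrak B'\cong\mathfrak B$, and Theorem~\ref{thm:vankampenbis} gives $\pi_1(\mathfrak B)\cong B(p\sqcup q)$.

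The main obstacle will be the careful treatment of the equator: since $P(p\sqcup q)=P(p)\sqcup P(q)$ lies entirely in the interiors of the two hemispheres, the annulus $X_e$ is disjoint from the post-critical set, and on a neighbourhood of $E$ the mating is genuinely conjugate to $z\mapsto z^d$ on $S^1\times(-\varepsilon,\varepsilon)$, so the edge biset really is the regular cyclic biset. The subtlety lies in checking that the congruences $B_e\hookrightarrow B(p)$ and $B_{\overline e}\hookrightarrow B(q)$ induced by Lemma~\ref{lm:BisMorph} agree, after the natural identifications $\operatorname{int}(D_p)\cong\C$ and $\operatorname{int}(D_q)\cong\C$, with the inclusion at infinity used in the statement; this requires the standard observation that $\pi_1(\C\setminus P(p))$ contains the loop around $\infty$ as the image of $\pi_1(E)=\Z$, compatibly with the right action of $z\mapsto z^d$. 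Once these identifications are verified, the theorem follows directly from Theorem~\ref{thm:vankampenbis}.
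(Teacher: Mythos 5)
Your proposal is correct and follows essentially the same route as the paper: apply Theorem~\ref{thm:vankampenbis} to $p\sqcup q$ with the cover given by the two closed hemispheres and the equator, and identify the resulting vertex and edge bisets with $B(p)$, $B(q)$ and the regular cyclic biset. The only detail you gloss over is that a three-element cover produces (per Definition~\ref{defn:gog_1dimcovers}) a graph with \emph{three} vertices, namely the barycentric subdivision of $\mathfrak B$ rather than $\mathfrak B$ itself; the paper states this explicitly and it is harmless by Lemma~\ref{lmm:BarSubGrBis}, and similarly your annular neighbourhood should be replaced by the equator itself so that the edge piece is contained in $X_p\cap X_q$ as the definition of a $1$-dimensional cover requires.
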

\begin{proof}
  Applying Theorem~\ref{thm:vankampenbis} to $p\sqcup q$ subject to the
  cover consisting of the closed southern hemisphere, the closed northern
  hemisphere and the equator, we obtain a graph of bisets isomorphic to the
  barycentric subdivision of $\mathfrak B$.
\end{proof}

\subsection{Laminations}\label{ss:laminations}
Consider a complex polynomial $p$ of degree $d$. The Fatou component
around $\infty$ admits a B\"ottcher parameterization
$\phi\colon F_\infty\to\{|z|<1\}$ such that
$\phi(p(z))=\phi(z)^d$. Since $J(p)=\partial F_\infty$, every element
of $J(p)$ may be described as $\lim_{r\to1}\phi^{-1}(re^{i\theta})$
for some (non-unique) angle $\theta$; this gives a surjective map
$c_p\colon \{|z|=1\}\to J(p)$ encoding the Julia set. In fact, the
biset of $p$ may be read from the base-$d$ expansion of the
\emph{kernel} of $c_p$, namely the equivalence relation
$\Xi_p=\{(z_1,z_2)\mid c_p(z_1)=c_p(z_2)\}$.

The equivalence relation may be presented by a \emph{lamination} of
the disk $\{|z|\le1\}$: the disjoint collection of subsets of the
closed disk, called \emph{leaves}, that are convex hulls (in the
hyperbolic metric, say) of equivalence classes of $\Xi_p$.

\def\myarc#1#2{\path[name path=a] ({#1}:1) -- ($({#1}:1)!10cm!270:(0,0)$);
  \path[name path=b] ({#2}:1) -- ($({#2}:1)!10cm!90:(0,0)$);
  \draw[name intersections={of=a and b,by=t}] (t) let \p1 = ($(t)-({#1}:1)$) in
  circle ({veclen(\x1,\y1)});
}

\begin{center}
  \begin{tikzpicture}[scale=3]
    \draw (0,0) circle (1cm);
    \clip (0,0) circle (1cm);
    \begin{scope}[very thick]\myarc{120}{240}\end{scope}
    \myarc{300}{60}
    \foreach\j/\k in {1/360,2/720,4/1440,8/2880,16/5760} {
      \foreach\i in {150,330,...,\k} {\myarc{\i/\j}{(\i+60)/\j}};
    }
  \end{tikzpicture}
\end{center}

The Julia set $J(p)$ is the quotient of the circle by the relation
$\Xi_p$, so the filled-in Julia set is the quotient of the disk
obtained by contracting all leaves of the lamination to points. The
dynamics on the circle is the ``multiply the angle by $d$'' map. The
van Kampen theorem~\ref{thm:vankampen} may therefore be applied to the
covering consisting of the boundary circle and the leaves. In fact, if
$p$ is post-critically finite it suffices to consider a finite
collection of leaves. We consider, as a simple illustration, the
Basilica map $z^2-1$ obtained from the circle by pinching the
above lamination.

The space under consideration consists of a circle $C$ and an arc $A$
connecting the points $x=\exp(2i\pi/3)$ and $y=\exp(4i\pi/3)$, see
Figure~\ref{Fig:GenLamBas}. We fix basepoints $*=1$ on $C$ and $\dagger=x$
on $A$.
\begin{figure}
\label{Fig:GenLamBas}
\begin{center}
  \begin{tikzpicture}
    \begin{scope}[scale=2]
      \node[above] at (60:1) {$x_1$};
      \node[above] at (120:1) {$y_1$};
      \node[below] at (240:1) {$x_2$};
      \node[below] at (300:1) {$y_2$};
      \node[right] at (1,0) {$*_1$};
      \node[left] at (-1,0) {$*_2$};
      \node[right] at (-0.27,0) {$\dagger_1$};
      \node[left] at (0.27,0) {$\dagger_2$};
      \draw[very thick] (1,0) arc (0:180:1);
      \draw (0,0) circle (1cm);
      \clip (0,0) circle (1cm);
      \myarc{120}{240}
      \myarc{300}{60}
    \end{scope}
    \draw[thick,->] (2,1) -- node[above] {$f$} (4,1);
    \draw[thick,->] (2,-1) -- node[below] {$i$} (4,-1);
    \begin{scope}[xshift=6cm,scale=2]
      \node[above] at (120:1) {$x$};
      \node[below] at (240:1) {$y$};
      \node[right] at (1,0) {$*$};
      \node[right] at (-0.27,0) {$\dagger$};
      \draw (0,0) circle (1cm);
      \clip (0,0) circle (1cm);
      \myarc{120}{240}
    \end{scope}
  \end{tikzpicture}
\end{center}
\caption{The correspondence generating the lamination of $z^2-1$. Note that the two preimages of $x$ are $x_1,x_2=\pm\sqrt x$, and
similarly for $y$.}
\end{figure}
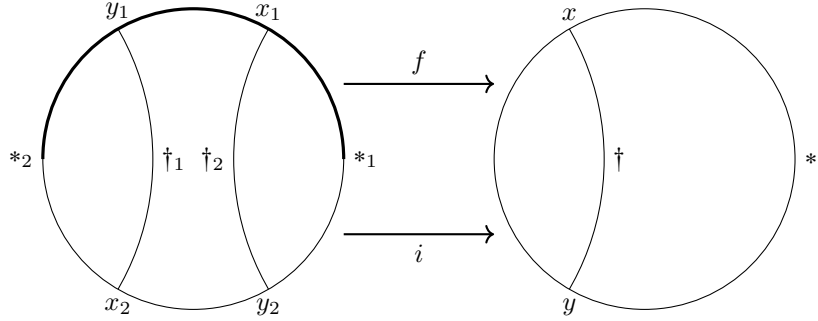
 
Let us denote by $S$, $T$, $U$ the bisets of $f,i\colon C\rightrightarrows
C$, $f,i\colon A\rightrightarrows A$, and $f\colon-A\to A$, $i\colon-A\to
C$ respectively.  We thus have the following graph of bisets $\mathfrak B$:
\begin{center}
  \begin{tikzpicture}
    \begin{scope}
      \node (A1) at (0,0) {$T$};
      \node (C) at (2,0) {$S$};
      \node (A2) at (4,0) {$U$};
      \draw (C) edge [bend right] node[above] {$y_1$} (A1)
               edge [bend left] node[below] {$x_2$} (A1)
               edge [bend right] node[below] {$y_2$} (A2)
               edge [bend left] node[above] {$x_1$} (A2);
    \end{scope}
    \begin{scope}[xshift=6cm]
      \node (C) at (3,0) {$\pi_1(C,*)$};
      \node (A) at (0,0) {$\pi_1(A,\dagger)$};
      \draw (C) edge [bend right] node[above] {$x$} (A)
                edge [bend left] node[below] {$y$} (A);
    \end{scope}
  \end{tikzpicture}
\end{center}
in which $S={}_{2\Z}\Z_\Z$ is the regular cyclic biset encoding the doubling map on
$C$ and the other bisets $T,U$ are trivial. The maps $\rho,\lambda$ are
given by
\[\rho(T)=\rho(U)=\lambda(T)=\pi_1(A,\dagger),\qquad\rho(S)=\lambda(S)=\lambda(U)=\pi_1(C,*).\]
The structure of $\mathfrak B$ will have been completely given when we
fix bases of $S,T,U$ and describe the edge bisets $x_1,y_1,x_2,y_2$
as maps from $T$ or $U$ into $S$. Let us write
$H=\pi_1(C,*)=\langle t\rangle$ and $1=\pi_1(A,\dagger)$. Then
\[{}_HS=H\times\{1,2\},\quad{}_1T=1\times\{3\},\quad{}_H U=H\times\{4\}.\]
Recall that elements of a biset $B(f,i,*'',*')$ are written, in their
most general form~\eqref{eq:lem:cpbiset}, as $(\alpha,p,\beta)$ for
paths $\alpha,\beta$ with
$\alpha(0)=*'',\alpha(1)=i(p),\beta(0)=f(p),\beta(1)=*'$. We fix paths
$p,x,\ell,y$ starting at $*=1$, turning counterclockwise on $C$, and
ending respectively at $\exp(2i\pi/6),x,*_2,y$. We also write
$\epsilon$ for any constant path. In this notation, the bases
$1,2,3,4$ are respectively

\[1=(\epsilon,*_1,\epsilon),\quad 2=(\ell,*_2,\epsilon),\quad
3=(\epsilon,\dagger_1,\epsilon),\quad 4=(p,\dagger_2,\epsilon).\]
We may now view $y_1,x_2$ as maps $T\to S$ and $x_1,y_2$ as maps
$U\to S$. They are given by
\begin{xalignat*}{2}
  y_1(3)&=(x,\dagger_1,y^{-1})=1, & x_1(4)&=(p,\dagger_2,x^{-1})=1,\\
  x_2(3)&=(y,\dagger_1,x^{-1})=2, & y_2(4)&=(p,\dagger_2,y^{-1})=t^{-1}2.
\end{xalignat*}

We are finally ready to compute the fundamental biset $B$ of
$\mathfrak B$. It is a $G$-$G$-biset for the group
$G=H*\langle y x^{-1}\rangle=\langle t,y x^{-1}\rangle$. We may keep the
basis $\{1,2\}$ of $S$, so that ${}_GB=G\times\{1,2\}$, and we have,
just as in $S$,
\[1\cdot t=2,\quad 2\cdot t=t\cdot 1.\]
We now compute
\begin{gather*}
  1\cdot y x^{-1}=y_1(3)y x^{-1}=x3x^{-1}=x y^{-1}x_2(3)=x y^{-1}\cdot 2,\\
  2\cdot y x^{-1}=t y_2(4)y x^{-1}=t4x^{-1}=t x_1(4)=t\cdot 1.
\end{gather*}
In this manner, we formally recover the presentation of the Basilica
biset from the graph of bisets $\mathfrak B$, using the relations
$y_1(3)y=x3$ etc.\ appearing in Definition~\ref{defn:p1biset}.

There is another sort of mating, obtained directly from the Julia
sets. The \emph{geometric mating} of two polynomials $p,q$ of same
degree is the map obtained from
$z\mapsto z^2\colon(\C\cup\{\infty\})\selfmap$ through the quotient of
$\{|z|\le 1\}$ by the lamination of $p$ and through the quotient of
$\{|z|\ge 1\}$ by the lamination of $q$ embedded into the outer disc
via the map $z\to\frac1z$. In case both $p$ and $q$ are
post-critically finite quadratic polynomials that are not in the
conjugate limbs of the Mandelbrot set, the formal and geometric
matings of $p$ and $q$ are isotopic. See~\cite{cheritat:shishikura}
for a discussion on the various types of mating.

\begin{thm}
  Let $p$ be a post-critically finite quadratic polynomial that lies
  outside of the Basilica limb.  Write $H=\pi_1(\C\setminus P(p),*)$. Then
  the biset of $(z^2-1)\sqcup p$ is the fundamental biset of the following
  graph of bisets:
\begin{center}
  \begin{tikzpicture}
    \begin{scope}
      \node (A1) at (0,0) {$T$};
      \node (C) at (2,0) {$B(p)$};
      \node (A2) at (4,0) {$U$};
      \draw (C) edge [bend right] node[above] {$y_1$} (A1)
               edge [bend left] node[below] {$x_2$} (A1)
               edge [bend right] node[below] {$y_2$} (A2)
               edge [bend left] node[above] {$x_1$} (A2);
    \end{scope}
    \begin{scope}[xshift=6cm]
      \node (C) at (3,0) {$H$};
      \node (A) at (0,0) {$1$};
      \draw (C) edge [bend right] node[above] {$x$} (A)
                edge [bend left] node[below] {$y$} (A);
    \end{scope}
  \end{tikzpicture}
\end{center}
with the same maps $\rho,\lambda$ and inclusions as above.
\end{thm}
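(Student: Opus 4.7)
The plan is to combine Theorem~\ref{thm:mating} with the lamination decomposition of $B(z^2-1)$ computed immediately before the statement. The hypothesis that $p$ lies outside the Basilica limb of the Mandelbrot set guarantees that the formal mating $(z^2-1)\sqcup p$ is isotopic to the geometric mating (see~\cite{cheritat:shishikura}), so the equator of the mating sphere $\Sigma$ coincides with the circle $C$ figuring in the Basilica lamination.

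I would then apply van Kampen's Theorem~\ref{thm:vankampenbis} directly to the geometric mating, choosing a $1$-dimensional cover of $\Sigma\setminus P$ adapted simultaneously to the Basilica lamination on the southern hemisphere and to the structure of $p$ on the northern hemisphere. Concretely, the cover would consist of thin neighborhoods of the two Basilica leaves (each retracting to an arc, yielding the trivial bisets $T$ and $U$ just as in the pure Basilica computation); a closed neighborhood of the $p$-hemisphere bounded by the equator, which deformation retracts to $\C\setminus P(p)$ and contributes the vertex biset $B(p)$; and four small simply-connected overlap regions at the endpoints of the two leaves on the equator, contributing the four edges with trivial edge bisets. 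On the level of graphs, $\rho=\lambda=\one$ because each piece retracts onto itself under both $f$ and $i$, so the underlying graph is exactly the three-vertex, four-edge graph in the statement.

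The main obstacle, and where the hypothesis on $p$ is essential, is to verify that the four edge inclusions $x_1,y_1,x_2,y_2$ into $B(p)$ are precisely the composition of the Basilica edge inclusions into the regular cyclic biset $S$ (computed above) with the congruence from Theorem~\ref{thm:mating} that embeds $S$ into $B(p)$ as the ``biset at infinity'' of $p$. This is a matter of tracking basepoints and B\"ottcher parameterizations across the equator: the four boundary points $\exp(\pm 2i\pi/3)$, their $f$-preimages, and the basepoints $*_i,\dagger_j$ sit simultaneously on the Basilica leaves and on the circle at infinity of $p\colon\C\setminus P(p)\dashrightarrow\C\setminus P(p)$, so the congruence of Theorem~\ref{thm:mating} transports the embeddings $y_1(3)=1$, $x_2(3)=2$, $x_1(4)=1$, $y_2(4)=t^{-1}\cdot 2$ computed in the Basilica case verbatim into $B(p)$, now viewing $t$ as a loop around infinity in $H=\pi_1(\C\setminus P(p),*)$.

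By Theorem~\ref{thm:vankampenbis} the fundamental biset of the resulting graph of bisets is $B((z^2-1)\sqcup p)$, concluding the proof. An alternative route, which I expect to give the same graph of bisets but at the cost of invoking an extra lemma, would be to apply Theorem~\ref{thm:mating} first to obtain a two-vertex graph of bisets and then ``expand'' the $z^2-1$ vertex using the Basilica lamination decomposition; the simultaneous van Kampen argument avoids having to prove that such vertex-expansion preserves the fundamental biset.
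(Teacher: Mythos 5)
Your proposal is correct and follows essentially the same route as the paper: the paper's (very terse) proof likewise observes that $(z^2-1)\sqcup p$ is homotopic to the correspondence of Figure~\ref{Fig:GenLamBas} with $\{|z|\ge1\}$ replaced by the filled-in Julia set of $p$, and then invokes Theorem~\ref{thm:vankampenbis} for the cover by the leaf pieces and the $p$-hemisphere, exactly as you do. Your extra care in tracking the four edge inclusions through the congruence at infinity, and your explicit use of the Basilica-limb hypothesis to identify the formal and geometric matings, are details the paper leaves implicit but are consistent with its argument.
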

\begin{proof}
  The map $(z^2-1)\sqcup p$ is homotopic to the map shown on
  Figure~\ref{Fig:GenLamBas} with $\{|z|\ge 1\}$ replaced by the filled-in
  Julia set of $p$. The statement becomes a corollary of
  Theorem~\ref{thm:vankampenbis}.
\end{proof}

\begin{bibdiv}
\begin{biblist}
\bibselect{math}
\end{biblist}
\end{bibdiv}
\end{document}